\crefname{diag}{Diagram}{Diagrams}
\DeclareMathAlphabet{\mymathbb}{U}{BOONDOX-ds}{m}{n}
\def\twocell[#1]{\arrow[#1, dash, phantom, "\Rightarrow"{scale=1.125, yshift=-.4pt, description, allow upside down, sloped, inner sep=0pt}]}
\tikzset{curve/.style={settings={#1},to path={(\tikztostart)
			.. controls ($(\tikztostart)!\pv{pos}!(\tikztotarget)!\pv{height}!270:(\tikztotarget)$)
			and ($(\tikztostart)!1-\pv{pos}!(\tikztotarget)!\pv{height}!270:(\tikztotarget)$)
			.. (\tikztotarget)\tikztonodes}},
	settings/.code={\tikzset{quiver/.cd,#1}
		\def\pv##1{\pgfkeysvalueof{/tikz/quiver/##1}}},
	quiver/.cd,pos/.initial=0.35,height/.initial=0}
\theoremstyle{definition}
\newtheorem{introthm}{Theorem}
\newtheorem{theorem}{Theorem}[section]
\newtheorem{corollary}[theorem]{Corollary}
\newtheorem{lemma}[theorem]{Lemma}
\newtheorem{proposition}[theorem]{Proposition}
\newtheorem*{claim*}{Claim}
\newtheorem{construction}[theorem]{Construction}
\newtheorem{definition}[theorem]{Definition}
\newtheorem{example}[theorem]{Example}
\newtheorem{notation}[theorem]{Notation}
\newtheorem{remark}[theorem]{Remark}
\newtheorem*{remark*}{Remark}
\newtheorem{warning}[theorem]{Warning}
\newcommand{\PreCat}[1]{\Cat^\ast_{#1}}
\newcommand{\PreOp}[2]{\Op^{#2,\ast}_{#1}}
\newcommand{\PreNmCat}[2]{\Mack_{#1}^{#2,\ast}(\Cat)}
\newcommand{\NmCat}[2]{\Mack_{#1}^{#2}(\Cat)}
\newcommand{\Nstr}{\cN\textup{-}\otimes}
\newcommand{\PCat}[1]{\Cat_{#1}}
\newcommand{\POp}[2]{\Op_{#1}^{#2}}
\title{Global Picard Spectra and Borel Parametrized Algebra}
\author{Phil Pützstück}
\address{Phil Pützstück, FB Mathematik und Informatik, Universität Münster, Einsteinstraße 62, 48149 Münster, Germany}
\begin{document}

\begin{abstract}
    We answer a question of Schwede
    on the existence of global Picard spectra associated to his ultra-commutative global ring spectra; given an ultra-commutative global ring spectrum $R$, we show there exists a global spectrum $\pic_\eq(R)$ assembling the Picard spectra of all underlying
    $G$-equivariant ring spectra $\res_G R$ of $R$ into one object,
    in that for all finite groups $G$, the genuine fixed points are given by $\pic_\eq(R)^G \simeq \pic(\Mod_{\res_G R}(\Sp_G))$.

    Along the way, we develop a generalization of Borel-equivariant objects
    in the setting of parametrized higher algebra.
    We use this to assemble the symmetric monoidal categories of $G$-spectra
    for all finite groups $G$ together with all restrictions and norms into a single `normed global category',
    and build a comparison functor which allows us to import
    ultra-commutative $G$-equivariant or global ring
    spectra into the setting of parametrized higher algebra.
\end{abstract}

\begingroup\parskip=0pt
\maketitle
\tableofcontents
\endgroup

\section{Introduction}

\subsection*{Picard Spectra}

The Picard group of a symmetric monoidal category is an interesting invariant capturing information
about objects which are invertible with respect to the monoidal structure.
Classically, this is applied to categories of vector bundles on geometric spaces,
giving a group of line bundles.

In the context of higher category theory,
a symmetric monoidal ($\infty$-)category\footnote{In this paper, we will refer to $(\infty,1)$-categories simply as \emph{categories}.} $\cC$ has an underlying
$\E_\infty$-monoid $\cC^\simeq$ in spaces,
and taking units we obtain an $\E_\infty$-group $(\cC^\simeq)^\times$
which is often called the Picard space of $\cC$.
By May's recognition principle, we may equivalently
consider this as a connective spectrum,
the so-called Picard spectrum $\pic(\cC)$ of $\cC$.
Most of the interesting information lies in the Picard group $\pi_0\pic(\cC)$,
however it is often convenient to consider the entire Picard spectrum, as it has better categorical properties;
the induced functor $\pic \colon \CMon(\Cat) \to \Sp_{\geq 0}$ preserves limits,
and is amenable to descent-theoretic methods \cite{Mathew-Stojanoska}.

In the realm of stable homotopy theory,
the first example to consider is that of the category of spectra $\Sp$;
one checks that the only invertible spectra are shifts of the sphere,
and the Picard group is thus isomorphic to $\Z$.
More generally, one often considers $\E_\infty$-rings $R \in \CAlg(\Sp)$,
and the Picard groups/spectra of their symmetric monoidal
module categories $\pic(R) \coloneqq \pic(\Mod_R(\Sp))$
to learn about $\tensor_R$-invertible $R$-modules.
Interesting examples reflecting Bott-Periodicity are $\pi_0\pic\KU \cong \Z/2$ and $\pi_0\pic \KO \cong \Z/8$,
see \cite{Mathew-Stojanoska}.
Using the functoriality of $\Mod_R(\Sp)$ in $R$, we can build the functor\footnote{Since the categories $\Mod_R(\Sp)$ are presentable, it follows that $\pic(\Mod_R(\Sp))$ is always small and we can ignore size issues here.}
\begin{equation}\label{eq:pic-composite}
    \pic \colon \CAlg(\Sp) \xto{\Mod_{(-)}(\Sp)} \CMon(\Cat) \xto{\pic} \Sp.
\end{equation}
This project started with a question of Schwede
of whether the above can be generalized to the setting of equivariant and global stable homotopy theory.
To state our positive answer to this, let us introduce some notation.

For a finite group $G$, we write $\Sp_G$ for the (symmetric monoidal)
category of (genuine) $G$-equivariant spectra.
Given a commutative algebra $R \in \CAlg(\Sp_G)$, we have a Picard spectrum
$\pic(\Mod_{\res^G_H R}(\Sp_H))$ for each subgroup $H \leq G$,
and we can ask in what sense this system of spectra can itself
by assembled into the genuine fixed points of some $G$-spectrum.
In \cite[Remark 5.1.18]{Global} Schwede gives evidence that this should be possible
for ultra-commutative ring spectra,
which are simply strictly
commutative algebras in some suitable model category of $G$-equivariant spectra,
such as $G$-orthogonal or $G$-symmetric spectra.
We define $\UComm_G$ as the ($\infty$-)category of such, by Dwyer--Kan
localizing at the underlying $G$-equivariant weak equivalences.

Similarly, we will consider the category of global spectra $\Sp_\gl$
(for finite groups) as introduced by Schwede in \cite{Global},
and define the category ultra-commutative global ring spectra $\UCom_\gl$
as the Dwyer--Kan localization of the category of strictly commutative algebras
in a suitable model of spectra at the underlying global weak equivalences.
A global spectrum $X$ has an underlying $G$-spectrum $\res_G X$
and a genuine fixed points spectrum $X^G$ for every finite group $G$.

The following theorem then gives a positive answer to Schwede's question:

\begin{introthm}[{{Constructions \ref{con:pic-g} and \ref{con:pic-eqv}, and \cref{lem:pic-g-vs-eq}}}]\label{introthm:pic}
    ~\begin{enumerate}
        \item For a fixed finite group $G$ there is a functor $\pic_G \colon \UCom_G \to \Sp_G$ such that
            \begin{equation}\label{eq:g-pic}
                \pic_G(R)^H \simeq \pic(\Mod_{\res^G_H R}(\Sp_H)),\quad H \leq G.
            \end{equation}
            Moreover, there are natural equivalences $\res^G_H \pic_G \simeq \pic_H \res^G_H$.

        \item There is a functor $\pic_\eq \colon \UCom_\gl \to \Sp_{\gl}$ such that
            \[
                \pic_\eq(R)^G \simeq \pic(\Mod_{\res_G R}(\Sp_G))
            \]
            for all finite groups $G$.
            Also $\res_G \pic_\eq(R) \simeq \pic_G(\res_G R)$ naturally in $R$.
    \end{enumerate}
\end{introthm}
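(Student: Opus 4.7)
The plan is to use the parametrized higher-algebra machinery developed in the body of the paper. For both parts the strategy is the same: (i) assemble the categories $\Sp_H$ into a single parametrized symmetric monoidal object encoding all restrictions and (for part~1) all norms; (ii) use the comparison functor advertised in the abstract to import an ultra-commutative ring spectrum as a commutative algebra $R$ in this parametrized setting; (iii) form its parametrized module category $\underline{\Mod}_R$, whose fiber over $H$ is $\Mod_{\res^G_H R}(\Sp_H)$ (resp.\ $\Mod_{\res_G R}(\Sp_G)$ in the global case); and (iv) apply a parametrized Picard functor landing in $\Sp_G$ (resp.\ $\Sp_\gl$) whose fibers at $H$ compute the classical Picard spectrum of the fiber of $\underline{\Mod}_R$ at $H$.

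For part~(1), the paper equips $\Sp_G$ with its $G$-symmetric monoidal enhancement, so the comparison functor sends $R \in \UCom_G$ to a normed commutative algebra in this parametrized category. A parametrized module construction, formally analogous to the non-parametrized case, produces $\underline{\Mod}_R$ with the claimed fibers; crucially the $\Mod$ functor is compatible with restrictions of the base ring, which is what allows a fiberwise description. The parametrized Picard functor then outputs a $G$-spectrum whose genuine $H$-fixed points are $\pic(\Mod_{\res^G_H R}(\Sp_H))$. Naturality of the comparison functor, of parametrized modules, and of parametrized Picard with respect to restriction of the ambient normed category gives the equivalence $\res^G_H \pic_G \simeq \pic_H \res^G_H$. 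Part~(2) is identical in form but works with the normed global category of spectra assembled over all finite groups, and the compatibility $\res_G \pic_\eq \simeq \pic_G \res_G$ follows from naturality of the whole pipeline under restriction from global to $G$-equivariant.

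The principal obstacle, which the bulk of the paper appears to address, is ensuring that the parametrized Picard functor actually lands in the correct category of \emph{genuine} equivariant or global spectra rather than a Borel approximation that would miss the transfers and norms; producing a $G$-spectrum structure on the assembly of $\pic(\Mod_{\res^G_H R}(\Sp_H))$ requires genuine Mackey-functor data that is not visible from the Picard groups alone. The Borel parametrized-algebra framework and the comparison functor are designed precisely to witness this genuine structure. A secondary but nontrivial issue is bridging strict model-categorical ultra-commutative algebras with fully coherent normed commutative algebras in the parametrized setting; this is the content of the comparison functor cited in the abstract, which must be shown to land in the subcategory of $N_\infty$-algebras with \emph{all} norms present in order for subsequent constructions (modules, Picard) to see the full equivariant or global structure.
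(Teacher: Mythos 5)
Your proposal correctly captures the paper's strategy: assemble the symmetric monoidal categories $\Sp_H$ with restrictions and Hill--Hopkins--Ravenel norms into a normed $G$-category $\und{\Sp}_G^\tensor$ (resp.\ a normed global category $\und{\Sp}^\tensor$), use the comparison functor $\Phi_G$ (resp.\ $\Phi_\eq$) built via Borelification to land ultra-commutative ring spectra in $\und{\CAlg}_G(\und{\Sp}_G^\tensor)$ (resp.\ $\und{\CAlg}_\eq(\und{\Sp}^\tensor)$), form the parametrized module category $\und{\Mod}_R$, and postcompose with the parametrized Picard functor $\und{\pic}$, using the spectral Mackey functor equivalence $\Mack_G(\Sp)\simeq\Sp_G$ to land in genuine $G$-spectra; you also correctly identify the two main obstacles (Borel vs.\ genuine transfer/norm structure, and promoting strict commutativity to coherent normed algebras) that the paper's Borel-parametrized-algebra framework and derived comparison functors are built to overcome. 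This is essentially the paper's own proof, up to details such as verifying the compatibility $\res_G\pic_\eq\simeq\pic_G\res_G$ via the commuting diagrams of span categories in \cref{lem:pic-g-vs-eq}.
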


In fact, the categories $\Sp_G$ and $\Sp_\gl$ admit the common
generalization of $G$-global spectra $\Sp_{G\dgl}$ as introduced by Lenz \cite{Lenz-phd},
and we also construct an analogue $\UCom_{G\dgl} \to \Sp_{G\dgl}$ of the above functors in this case, which is moreover natural in $G$.
For $G=1$, this specializes to the global case above,
and in general there is a symmetric monoidal left and right Bousfield localization $i^* \colon \Sp_{G\dgl} \to \Sp_G$ with left adjoint $i_!$,
which allows us to precisely formulate the fact that `$G$-equivariant
and $G$-global stable homotopy theory have the same invertible objects'
as natural equivalences (c.f.~\cref{ex:pic-eq-gl}):
\[
    i_! \colon \pic(\Mod_R(\Sp_G)) \simeq \pic(\Mod_{i_!R}(\Sp_{G\dgl})),
\]
\[
    i^* \colon \pic(\Mod_{S}(\Sp_{G\dgl})) \simeq \pic(\Mod_{i^*S}(\Sp_{G})).
\]
for $R \in \CAlg(\Sp_G)$ and $S \in \CAlg(\Sp_{G\dgl})$ (no ultra-commutativity required).

A natural way to approach \cref{introthm:pic} for a finite group $G$
is by remembering the following result of Guillou--May \cite{Guillou-May}
(see also \cite[Appendix A]{CMNN} for the modern version we present here):
there is an equivalence $\Sp_G \simeq \Mack_G(\Sp) \coloneqq \Fun^\times(\Span(G),\Sp)$
between $G$-spectra and spectral $G$-Mackey functors, i.e.~spectrum-valued finite product preserving functors on the category $\Span(G) \coloneqq \Span(\F_G)$ of spans \cite{BGS-I} of finite $G$-sets.
This equivalence identifies the genuine $H$-fixed points $(-)^H \colon \Sp_G \to \Sp$
with evaluating $G$-Mackey functors at the transitive finite $G$-set $G/H$.
This perspective suggests that to generalize (\ref{eq:pic-composite})
to a functor $\pic_G$ satisfying (\ref{eq:g-pic}),
we want to assemble the symmetric monoidal categories $(\Mod_{\res^G_HR}(\Sp_H))_{H \leq G}$ into a Mackey functor $\Span(G) \to \CMon(\Cat)$ and then postcompose with $\pic$.
By semiadditivity of $\Span(G)$, such a functor is equivalently a categorical $G$-Mackey functor $\Span(G) \to \Cat$.
The idea is that the transfers of such a Mackey functor should come from composites
\[
    \Mod_{\res^G_KR}(\Sp_K)
    \xto{N_K^H} \Mod_{N_K^H\res^G_KR}(\Sp_H)
    \to \Mod_{\res^G_H R}(\Sp_H)
\]
where $N_K^H \colon \Sp_K \to \Sp_H$ denotes the symmetric monoidal
Hill--Hopkins--Ravenel norm \cite{HHR} and the second functor
basechanges along `normed multiplication maps' $\Nm_K^H\res^G_KR \to \res^G_HR$
that are present on ultra-commutative $G$-ring spectra $R$,
lifting equivariant power operations present on $\und{\pi}_0R$.

One perspective on ultra-commutativity which is amenable to building such categorical Mackey functors of module categories comes from parametrized category theory; with this Ansatz, we are left with the following two tasks:
\begin{enumerate}
    \item[($\dagger$)] build categorical Mackey functors of module categories for the `normed algebras' studied in parametrized higher algebra.

    \item[($\star$)] build a comparison functor which lets us view ultra-commutative ring spectra as normed algebras in certain `normed categories' of equivariant or global spectra.
\end{enumerate}

We elaborate on these points below.

\subsection*{Parametrized Higher Category Theory \& Parametrized Higher Algebra}

The field of parametrized higher category theory \cite{BDGNS}
was inspired by the perspective on equivariant stable homotopy theory
developed by Hill--Hopkins--Ravenel in their landmark solution of the Kervaire Invariant One Problem \cite{HHR},
which centers around the study of indexed (co)products and indexed tensor products
(incorporating the symmetric monoidal norms).
In $G$-equivariant category theory, the basic objects of study are functors $\Orb_G^\op \to \Cat_\infty$
known as $G$-categories, where $\Orb_G$ denotes the category of transitive $G$-sets for a finite group $G$.
The main philosophy is that when studying categories of $G$-equivariant objects such as $\Sp_G$,
we should consider them as coming together with the corresponding categories of $H$-equivariant
objects $\Sp_H$ for all $H \leq G$, as well as all their restriction functors $\res^H_K \colon \Sp_H \to \Sp_K$.
This defines the so-called $G$-category of $G$-spectra
$\und{\Sp}_G \colon \Orb_G^\op \to \Cat$.
For the interplay with parametrized higher algebra, it will often be convenient
to limit-extend and view $G$-categories as finite product preserving functors $\F_G^\op \to \Cat$.

A normed structure on a $G$-category $\cC \colon \F_G^\op \to \Cat$ is then an extension to a $G$-Mackey functor
(also known as a $G$-symmetric monoidal $G$-$\infty$-category \cite{Nardin-Shah}):
\[
    \cC^\tensor \colon \Span(G) \to \Cat.
\]
This encodes a symmetric monoidal structure on $\cC(H)$ and each restriction functor
$\res^H_K \colon \cC(H) \to \cC(K)$, as well as symmetric monoidal `norm' functors
$\cC^\tensor(K = K \to H) \colon \cC(H) \to \cC(K)$
which lift the $|H/K|$-fold tensor products, for each $K \leq H \leq G$ (as well as higher coherences).
For example, there is a normed $G$-category of $G$-spectra $\und{\Sp}_G^\tensor$
extending $\und{\Sp}_G$, which encodes the usual symmetric monoidal structure on each $\Sp_H$,
and sends forwards maps $K = K \to H$ in $\Span(G)$ to the symmetric monoidal Hill--Hopkins--Ravenel norms $N_K^H \colon \Sp_K \to \Sp_H$.
That an equivariant symmetric monoidal structure
should incorporate these norms
was already observed by Hill--Hopkins \cite{Hill-Hopkins-Closure,Hill-Hopkins-Gsym}.

A normed algebra in a normed $G$-category $\cC^\tensor$ is defined as a section of the cocartesian unstraightening $\int \cC^\tensor \to \Span(G)$ which is cocartesian on backwards maps.
This definition is originally due to Bachmann--Hoyois \cite{Bachmann-Hoyois},
and can be unraveled to encode a version of the `normed multiplications'
we mentioned above.

All of the above also works in the setting of global homotopy theory.
Here, a normed global category $\cC^\tensor$ consists of a finite product preserving functor
\[
    \cC^\tensor \colon \Span_\Forb(\Fglo) \to \Cat
\]
where the domain denotes the category of spans in the category $\Fglo$ of finite 1-groupoids
of the form $K \leftarrow H \to G$, where the forwards map $H \to G$ lies in the wide subcategory $\Forb \subset \Fglo$
on the faithful functors. Equivalently, $\Fglo \simeq \F[\Glo]$ is the free finite coproduct completion
of the $(2,1)$-category $\Glo$ of finite groups, group homomorphisms and conjugations,
and likewise $\FOrb \simeq \F[\Orb]$ where $\Orb \subset \Glo$ is the wide subcategory on injective group homomorphisms.
One can show that each $\und{\Sp}_G^\tensor$ is actually restricted from a single normed global category
$\und{\Sp}^\tensor$ of equivariant spectra sending $G \mapsto \Sp_G$,
which we construct in this paper.
We will also build a normed global category of global spectra $\und{\Sp}_\gl^\tensor$,
which sends $G$ to the category of $G$-global spectra $\Sp_{G\dgl}$ mentioned above.

Even more generally, consider a category $\cF$ equipped with a wide subcategory $\cN \subset \cF$.
Under suitable hypotheses on the pair $(\cF,\cN)$ (it is an extensive span pair in the sense of \cite{Tambara})
which are satisfied by $(\F_G,\F_G)$ and $(\FGlo,\Forb)$,
one has a good theory of $\cN$-normed $\cF$-categories and $\cN$-normed algebras
specializing to the above.
In this setting, we give a highly coherent construction of normed module categories,
taking care of the task $(\dagger)$ mentioned above.

\pagebreak
\begin{introthm}[{{\cref{thm:mod,rem:mod-functoriality}}}]
    For $\cC$ an $\cN$-normed $\cF$-category
    compatible with geometric realizations
    and $R \in \CAlg_{\cF}^\cN(\cC)$ a normed algebra.
    Then there exists an $\cN$-normed $\cF$-category
    of modules $\und{\Mod}_R(\cC)$ which sends a span $A \xleftarrow{f} B \xto{n} C$ to the composite
    \[
        \Mod_{RA}(\cC(A))
        \xto{f^*} \Mod_{RB}(\cC(B))
        \xto{n_\tensor} \Mod_{n_\tensor RB}(\cC(C))
        \xto{RC \tensor_{(n_\tensor RB)}(-)} \Mod_{RC}(\cC(C)).
    \]
    where $f^* = \cC(A \xleftarrow{f} B = B)$ and $n_\tensor = \cC(B = B \xto{n} C)$.
    Moreover, this is functorial in both $R$ and $\cC$ in the expected way, c.f.~\cref{rem:mod-functoriality}.
\end{introthm}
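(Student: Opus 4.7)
My plan is to construct $\und{\Mod}_R(\cC)\colon \Span_\cN(\cF) \to \Cat$ by post-composing the normed algebra $R$, viewed as a section of the cocartesian unstraightening $\int\cC^\tensor \to \Span_\cN(\cF)$, with a suitably universal modules functor. This reduces the construction to classical (though $\infty$-coherent) functoriality of module categories, extended parametrically along the span calculus, and in spirit is analogous to how Bachmann--Hoyois obtain normed module categories in the motivic setting.

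The ingredients are Lurie's symmetric monoidal module category $\Mod_A(\mathcal{D})^\tensor$ for an algebra $A$ in a symmetric monoidal category $\mathcal{D}$ compatible with geometric realizations, together with its covariant functorialities in (i) lax symmetric monoidal functors $F\colon \mathcal{D} \to \mathcal{D}'$, giving pushforwards $F_\ast\colon \Mod_A(\mathcal{D}) \to \Mod_{F(A)}(\mathcal{D}')$, and (ii) algebra maps $g\colon A \to B$ in a fixed $\mathcal{D}$, giving base-change $B\tensor_A(-)\colon \Mod_A(\mathcal{D}) \to \Mod_B(\mathcal{D})$. Packaging (i) and (ii) yields a universal $\Mod$ functor out of a category whose objects are pairs $(\mathcal{D}, A)$ and whose morphisms simultaneously encode a symmetric monoidal functor and an algebra map over it. Applied to the section $R$, a backwards map $f\colon A \leftarrow B$ (on which $R$ is cocartesian, so $RB \simeq f^*RA$) is sent to the restriction $f^*\colon \Mod_{RA}(\cC(A))\to\Mod_{RB}(\cC(B))$, while a forwards map $n\colon B \to C$---on which $R$ encodes a normed multiplication $n_\tensor RB \to RC$---is sent to $n_\tensor$ followed by base-change along this multiplication, exactly as in the claimed formula.

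The main obstacle is assembling the two functorialities of $\Mod$ into one coherent $(\infty,1)$-categorical construction that can be composed with sections out of $\Span_\cN(\cF)$; concretely, one wants a cocartesian fibration over a category of ``pairs'' into which $\int\cC^\tensor$ maps via $R$, so that the whole thing unstraightens to a functor on the span category. Compatibility of $\cC$ with geometric realizations enters precisely to make the base-change step well-defined, matching the hypothesis of the theorem. Product-preservation of the resulting functor $\Span_\cN(\cF) \to \Cat$ should follow from product-preservation of $\cC^\tensor$ together with the fact that $\Mod$ respects products of pairs. Finally, the functorialities in $R$ and $\cC$ stated in \cref{rem:mod-functoriality} are automatic from the naturality of the universal $\Mod$ construction and of the cocartesian unstraightening.
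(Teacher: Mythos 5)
Your proposal matches the paper's approach: the paper also postcomposes the section $R$ (as a product-preserving map $T \to \int\cC$ over $S$) with Lurie's universal module-category functor $\Theta$ from \cite[\S4.8.3--4.8.5]{HA}, after reformulating the domain $\Cat^{\Alg}(\cK)$ of that functor as $\Mon(\Cat_{*\sslash}(\cK))$ and introducing $\Cat^{\CAlg}(\cK) \coloneqq \Mack(\Cat_{*\sslash}(\cK))$ so that, by semiadditivity of $T$, the section $R$ together with $\cC$ canonically defines a product-preserving map $T \to \Cat^{\CAlg}(\cK)$. The ``main obstacle'' you flag is thereby resolved; the only part your sketch underestimates is the work (via lax natural transformations and the fibrational model for them) needed to make the construction simultaneously functorial in the context $T$ and lax functorial in $\cC$.
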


We are therefore left with solving the second task ($\star$) mentioned above.

\subsection*{Borel Parametrized Algebra}

A common occurrence in equivariant homotopy theory
is to embed the `naive' theory of spaces or spectra with $G$-action,
i.e.~the categories $\Fun(BG,\An)$ or $\Fun(BG,\Sp)$,
into their `genuine' counterparts $\An_G \simeq \Fun(\Orb_G^\op,\An)$
and $\Sp_G \simeq \Fun^\times(\Span(G),\Sp)$
by means of right Kan extending along the fully faithful
inclusions $BG \subseteq \Orb_G^\op$ or $BG \to \Span(G)$.
Objects in the image of the resulting fully faithful inclusions
$\An^{BG} \subseteq \An_G$ and $\Sp^{BG} \subseteq \Sp_G$
are called Borel, and we will refer to this process of turning
an object with $G$-action into a genuinely $G$-equivariant one
as Borelification.

As it turns out, an analogous strategy also works for normed categories.
In complete generality, we introduce the notion of a Borel-inclusion $(\cE,\cM) \subseteq (\cF,\cN)$
of extensive span pairs, which induces a fully faithful inclusion $\Span_\cM(\cE) \subseteq \Span_\cN(\cF)$
along which we can right Kan extend $\cM$-normed $\cE$-categories to $\cN$-normed $\cF$-categories.

Examples include the $G$-equivariant case $(\F[BG],\F[BG]) \subseteq (\F_G,\F_G)$
as well as the global case $(\F,\F) \subseteq (\FGlo,\FOrb)$.
Here $\F$ is the category of finite sets, and $\F[BG] \subseteq \F_G$
is the free finite coproduct completion of $BG$, or equivalently the full subcategory of $\F_G$ on free finite $G$-sets.
Global Borelification defines functors
\[
    (-)^\flat \colon \Cat \hookrightarrow \Cat_\gl = \Fun^\times(\Fglo^\op,\Cat)
    \quad\text{and}\quad
    (-)^\flat \colon \CMon(\Cat) \hookrightarrow \Mack_\gl(\Cat)
\]
where $\Mack_\gl(\Cat)$ denotes the category of normed global categories,
i.e.~of global Mackey functors $\Span_\Forb(\Fglo) \to \Cat$.
Concretely (c.f.~\cref{ex:borel-global}), if $\cC^\tensor$ is a symmetric monoidal category, then $\cC^{\tensor,\flat}$
is a normed global category $\cC^{\tensor,\flat}$ which sends $G$ to $\cC^{BG}$ equipped with the pointwise
symmetric monoidal structure, and forwards subgroup inclusions $H = H \hookrightarrow G$
to certain twisted tensor products $\cC^{BH} \to \cC^{BG}$ lifting the $|G/H|$-fold tensor product.

Our main theorem in this setting is actually proven in the generality of arbitrary Borel-inclusions as detailed above,
and slightly extends a theorem of Hilman \cite[Theorem A]{Hilman-NMot} in the equivariant case, but we will only present the global version here.
To state it, let us define normed $G$-global algebras in $\cC^\tensor$
as maps $\Span_\Forb(\Fglo_{/G}) \to \int\cC^\tensor$ over $\Span_\Forb(\Fglo)$,
and use the functoriality in $G$ to make this into a global category
$\und{\CAlg}_\gl(\cC^\tensor)$ which for $G=1$ recovers $\CAlg_\gl(\cC^\tensor)$
from above.

\pagebreak
\begin{introthm}[{{\cref{prop:borel-ran-img,prop:borel-sections}}}]\label{introthm:borel}
    ~
    \begin{enumerate}
        \item There is a pullback square
            \[\begin{tikzcd}[cramped]
                {\CMon(\Cat)} & \Cat \\
                {\Mack_{\gl}(\Cat)} & {\Cat_\gl}
                \arrow["\fgt"', from=1-1, to=1-2]
                \arrow["{(-)^\flat}"', hook, from=1-1, to=2-1]
                \arrow["\lrcorner"{anchor=center, pos=0.125}, draw=none, from=1-1, to=2-2]
                \arrow["{(-)^\flat}", hook', from=1-2, to=2-2]
                \arrow["\fgt", from=2-1, to=2-2]
            \end{tikzcd}\]
            In particular, a normed global category is Borel precisely if its underlying global category
            is Borel, and global Borelification induces an equivalence between the spaces
            of symmetric monoidal structures on a category $\cC$ and of normed structures on $\cC^\flat$.

        \item There is a natural equivalence of global categories compatible with the forgetful functors
            \[\begin{tikzcd}[cramped, row sep = small]
                {\und{\CAlg}_{\gl}(\cC^{\tensor,\flat})} && {\CAlg(\cC^\tensor)^\flat} \\
                & {\cC^\flat}
                \arrow["\simeq", from=1-1, to=1-3]
                \arrow["\bbU"', from=1-1, to=2-2]
                \arrow["\bbU", from=1-3, to=2-2]
            \end{tikzcd}\]
            In particular, the global category of normed algebras in a Borel normed category
            is itself Borel, with underlying category $\CAlg_\gl(\cC^{\tensor,\flat}) \simeq \CAlg(\cC^\tensor)$.
    \end{enumerate}
\end{introthm}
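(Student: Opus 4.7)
I realize both Borelification functors $(-)^\flat$ as right Kan extensions along fully faithful inclusions and both $\fgt$ functors as restrictions. Let $i \colon \Span(\F) \hookrightarrow \Span_\Forb(\Fglo)$ be the Borel inclusion of span categories, $i_0 \colon \F^\op \hookrightarrow \Fglo^\op$ its restriction to the backward-maps subcategories, and $j \colon \Fglo^\op \hookrightarrow \Span_\Forb(\Fglo)$, $j_0 \colon \F^\op \hookrightarrow \Span(\F)$ the backward-maps inclusions. Then $\fgt = j^*, j_0^*$ and $(-)^\flat = i_*, (i_0)_*$, all restricted to finite-product preserving functors (which is preserved because the inclusions respect coproducts).

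The first step is to verify the square of source categories is a pullback: a morphism $X \to Z$ in $\Span_\Forb(\Fglo)$ with endpoints $X, Z \in \F$ is a span $X \leftarrow Y \to Z$ with $Y \to Z$ faithful, and a faithful functor into a discrete groupoid $Z$ forces $Y$ itself discrete, i.e.~in $\F$. This gives the Beck--Chevalley equivalence $j^* i_* \simeq (i_0)_* j_0^*$ for right Kan extensions, proving that the square in the theorem commutes up to canonical equivalence. For the pullback property, by fully faithfulness of both $(-)^\flat$'s it suffices to show that a normed global category $\cC^\tensor$ is Borel iff $j^* \cC^\tensor$ is. The forward direction is immediate from commutativity. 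For the converse, note $j$ is bijective on objects, so $j^*$ is conservative on natural transformations; the Beck--Chevalley identification then says that $j^*$ applied to the unit $\cC^\tensor \to i_* i^* \cC^\tensor$ agrees with the unit $j^* \cC^\tensor \to (i_0)_* i_0^* (j^* \cC^\tensor)$, which is an equivalence by the Borel hypothesis on $j^*\cC^\tensor$.

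\textbf{Plan for Part (2).} By definition $\und{\CAlg}_\gl(\cC^{\tensor,\flat})(G)$ is the space of functors $\Span_\Forb(\Fglo_{/G}) \to \int \cC^{\tensor,\flat}$ over $\Span_\Forb(\Fglo)$ that are cocartesian on backward morphisms. The plan is to exploit the Borel structure on $\cC^{\tensor,\flat}$ to show its cocartesian unstraightening is itself a right Kan extension (as a cocartesian fibration over $\Span_\Forb(\Fglo)$) of $\int \cC^\tensor$ along $i$. Granting this, a pointwise RKE argument shows that such a section over the slice $\Span_\Forb(\Fglo_{/G})$ is uniquely determined by its restriction along $\Span(\F_{/BG}) \hookrightarrow \Span_\Forb(\Fglo_{/G})$ — the pullback of $i$ over $G$, which exists by the pullback established in Part (1) applied slicewise. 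By definition this restriction is a commutative algebra in $\cC^\tensor$ together with a $BG$-action, i.e.~an object of $\CAlg(\cC^\tensor)^{BG}$. Naturality in $G$ then assembles these pointwise equivalences into the desired equivalence of global categories over $\cC^\flat$, and the compatibility with the forgetful functors $\bbU$ is built in: both agree on underlying objects by construction.

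The main obstacle I anticipate is the interchange of cocartesian unstraightening with Borel right Kan extension in Part (2); this does not hold for general limits of cocartesian fibrations, so the key technical step will be exhibiting the cocartesian unstraightening of a Borel normed category as itself Borel-in-the-fibration-sense. I would approach this by describing $\int\cC^{\tensor,\flat}$ explicitly via the pointwise RKE formula for cocartesian sections, reducing it to a slicewise cofinality statement whose input is precisely the pullback square of source categories used in Part (1).
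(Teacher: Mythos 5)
Your Part (1) plan is essentially the paper's route. The verification that the square of span categories
$\Span(\F) \to \Span_\Forb(\Fglo) \leftarrow \Fglo^\op$
over $\F^\op$ is a pullback is correct, and your observation that a faithful map into a discrete groupoid has discrete source is exactly the sieve condition driving the Borel inclusion. Your converse argument (conservativity of $j^*$ plus the Borel hypothesis on the underlying global category) also matches the paper. \emph{However}, there is a genuine gap in the step ``this gives the Beck--Chevalley equivalence.'' A pullback square of source categories does \emph{not} automatically yield Beck--Chevalley for right Kan extensions; one still has to verify that for every $X \in \Fglo^\op$ the comparison functor
$\F^\op \times_{\Fglo^\op} (\Fglo^\op)_{X/} \to \Span(\F) \times_{\Span_\Forb(\Fglo)} \Span_\Forb(\Fglo)_{X/}$
is limit-cofinal. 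Neither $j$ nor $i$ is a fibration here, so smooth/proper base change is unavailable. The paper supplies this by showing the comparison functor admits a right adjoint, using the mapping-space description of spans and the sieve condition; you need that argument to close the gap.

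Your Part (2) plan is a genuinely different route, and the central technical step is not supplied. You want to exhibit $\int \cC^{\tensor,\flat}$ as a right Kan extension of $\int \cC^\tensor$ ``in the fibration sense,'' and then deduce that sections over $\Span_\Forb(\Fglo_{/G})$ are determined by restriction to $\Span(\F[BG])$. You correctly flag that cocartesian unstraightening does not commute with limits of diagrams of categories, but the remedy you propose --- a pointwise RKE formula for cocartesian sections plus a slicewise cofinality argument --- is a plan rather than an argument, and the claim that a section of a cocartesian fibration is determined by its restriction to a full subcategory when the straightening is right Kan extended from that subcategory is not automatic (cocartesian lifts over the larger base may not be reconstructible). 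The paper sidesteps this entirely: it shows that the constructions $\Triv$, $(-)^{\cN\damalg}$ and $\Env$ all commute with restriction along $\Span(i)$ (via a pullback cube and the identification $\Ar_\bsn(\Span_\bsn(\bsf)) \simeq \Span_\bsn(\bsf) \times_{\Span_\bln(\blf)} \Ar_\bln(\Span_\bln(\blf))$), and then deduces the statement by a pure adjunction chase, with no need to reason about cocartesian fibrations being ``Borel.'' If you want to pursue your route, you would at minimum need to prove a separate lemma identifying when sections of a cocartesian fibration are determined by restriction to a full subcategory; I would recommend the envelope approach instead.
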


Moreover, global Borelification provides an extremely convenient way for constructing normed global categories from 1-categorical data. We illustrate this approach
by constructing the normed global categories $\und{\Sp}^\tensor$ and $\und{\Sp}_\gl^\tensor$
by suitably deriving the global Borelification of a 1-categorical model of the category of spectra.
Furthermore, in combination with the above results, this allows us to build comparison functors\footnote{In \cite{LLP} we show that all these comparison functors are actually equivalences.}
\begin{equation}\label{eq:intro-comparison}
    \UCom_G \xto{\Phi_G} \CAlg_G(\und{\Sp}_G^\tensor)
    \quad\text{and}\quad
    \UCom_\gl \xto{\Phi_\gl} \CAlg_\gl(\und{\Sp}_\gl^\tensor)
    \xto{i^*} \CAlg_\eq(\und{\Sp}^\tensor)
\end{equation}
all of which are compatible with forgetful functors to $\Sp_G$ respectively $\Sp_\gl$,
where $\CAlg_\eq(\und{\Sp}^\tensor)$ is again a certain category
of sections of $\int \und{\Sp}^\tensor \to \Span_\Forb(\Fglo)$.
We will also give a version for ultra-commutative $G$-global ring spectra,
and make all the comparison functors natural in $G$,
see Constructions \ref{con:ucomm-comp},\ref{con:ucomm-G} and \ref{con:ucomm-eq}.
This takes care of the remaining task $(\star)$ mentioned above,
so we can construct $\pic_G$ from \cref{introthm:pic} as the composite
\[
    \pic_G \colon \UCom_G
    \xto{\Phi_G} \CAlg_{G}(\und{\Sp}_G^\tensor)
    \xto{\und{\Mod}_{(-)}(\und{\Sp}_G^\tensor)} \Mack_G(\Cat)
    \xto{\pic_*} \Sp_G
\]
and use an analogous construction in the global case.

\subsection*{Outline}
We begin in \cref{sec:algebra}
by introducing the framework of parametrized higher algebra
we will be working in for most of this article.
This consists of a short introduction to span categories
followed by a recollection of the notions of normed categories
and their (parametrized) categories of normed algebras as developed in \cite{LLP}
and originally introduced in \cite{Bachmann-Hoyois}.

In \cref{sec:borel} we introduce the theory of (monoidal) Borelification of normed categories
and prove \cref{introthm:borel}.
Contrary to what one might expect, the specialization to the equivariant
case turns out to be more work than the global one,
as it involves some careful record keeping of various $G$-actions.

In \cref{sec:ex} we construct the normed global categories $\und{\Sp}^\tensor,\und{\Sp}_\gl^\tensor$
of equivariant and global spectra.
This is achieved by suitably `deriving' the global monoidal Borelification of a 1-categorical
model for the category of spectra.
We begin with a general account on how to derive normed categories,
and then specialize to construct $\und{\Sp}^\tensor,\und{\Sp}_\gl^\tensor$
and a normed global adjunction between them.
Moreover, we construct the comparison functors from ultra-commutative global ring spectra
to normed algebras in $\und{\Sp}^\tensor$ and $\und{\Sp}_\gl^\tensor$ mentioned above.

\cref{sec:modules} is spent on the construction of parametrized module categories.
Along the way, we recognize the domain of Lurie's functor $\Theta \colon \Cat^{\Alg} \to \Cat^{\Mod},\ (\cC,R) \mapsto \Mod_R(\cC)$ from \cite[Sections 4.8.3-4.8.5]{HA}
as the cocartesian unstraightening of $\Alg(-) \colon \Mon(\Cat) \to \Cat$
using a convenient description of it from the appendix.

In \cref{sec:pic} we begin with some
motivation and background on the classical notion
of Picard groups and Picard spectra,
and then prove \cref{introthm:pic} using the strategy outlined above.

We conclude with two short appendices:
In \cref{appendix:cocart} we show that taking monoids in the total space of the universal cocartesian fibration gives a model for the cocartesian unstraightening of the algebra functor $\Alg \colon \Mon(\Cat)\to \Cat$, and also give a version for commutative monoids.
The purpose of \cref{appendix:lax} is, as the name suggests,
to recall some background on 2-natural transformations and lax natural transformations.

\subsection*{Relation to other work}

This article is an updated version of the author's master's thesis \cite{puetzstueck}
supervised by Stefan Schwede and Kaif Hilman,
whose main goal was to answer Schwede's question on the existence of global Picard spectra
by proving a version of \cref{introthm:pic}.
Weaker versions of most results in this paper are already contained in the thesis,
and have already found use in e.g.~\cite{Tambara}.
The thesis also contained some arduous comparison results for various foundations of parametrized higher algebra
as well as a spectral Mackey functor description of $G$-global spectra.
Both of these have in the meantime independently been shown in other articles
\cite{Tambara,CLL-Semiadd}, so we decided not to include them here.

The paper \cite{LLP} was written simultaneously to this revised version
of the master's thesis. One of its main results is that the comparison functors
(\ref{eq:intro-comparison}) constructed here are in fact equivalences.
The parametrized higher algebra developed for this
is of course well suited to the results in the present article;
originally, the author's master's thesis was entirely built on top of
the theory of algebraic patterns \cite{BHS-Env},
but we have now replaced this with the theory of normed categories from \cite{LLP}.

\subsection*{Acknowledgements} I would like to thank Bastiaan Cnossen, Kaif Hilman, Tobias Lenz, Sil Linskens and Maxime Ramzi for many helpful discussion during the writing of my master's thesis.
I would also like to thank my master's advisor Stefan Schwede for
introducing me to (stable, equivariant, global) homotopy theory,
for countless insightful conversations during my master's,
and for suggesting his question from \cite[Remark 5.1.18]{Global} as a master's thesis project.

Finally, during the writing of the present updated version of my master's thesis,
I have had many useful discussions with Tobias Lenz and Sil Linskens
on content related to this article during our work on the paper \cite{LLP}.
I would also like to thank them both for feedback on a draft of this article.

This project was funded by the Deutsche Forschungsgemeinschaft (DFG, German Research Foundation) -- Project-ID 427320536 -- SFB 1442, as well as under Germany's Excellence Strategy EXC 2044 390685587, Mathematics Münster: Dynamics--Geometry--Structure.

\section{Recollections on Parametrized Higher Algebra}\label{sec:algebra}

\subsection{Span Categories}

Span categories allow us to encode algebraic data in a convenient way,
and will be the base for all that follows.
We will give only the results necessary for this paper,
and refer the reader to \cite{HHLN-Two} for a thorough treatment.

\begin{definition}
    The data of an \emph{adequate triple} $(\cC,\cC_B,\cC_F)$ consists of a category $\cC$
    together with wide subcategories $\cC_B \subset \cC$
    of `backwards' and $\cC_F \subset \cC$ of `forwards' morphisms.\footnote{\cite{HHLN-Two} uses `ingressive' respectively `egressive' for what we call forwards respectively backwards maps. Moreover, they write adequate triples in the order $(\cC,\cC_F,\cC_B)$ instead of our $(\cC,\cC_B,\cC_F)$.}
    We require that the basechange of a backwards morphism along a forwards morphism
    exists in $\cC$ and is again a backwards morphism, and vice-versa.
    In the special case $\cC_B = \cC$, we say $(\cC,\cC_F)$ is a \emph{span pair}.
\end{definition}

Given an adequate triple $(\cC,\cC_B,\cC_F)$,
there is an associated category of spans $\Span_{B,F}(\cC) = \Span(\cC,\cC_B,\cC_F)$ whose objects are those of $\cC$, and whose morphisms are spans
$X \xleftarrow{B} Y \xto{F} Z$, where the labels indicate that $Y \to X$ is a backwards
morphism (i.e.~lies in $\cC_B$) and $Y \to Z$ is a forwards morphism (i.e.~lies in $\cC_F$).
Composition is given by pullback, and the conditions on an adequate triple are exactly the minimum necessary ones to make this well-defined:
\[\begin{tikzcd}[cramped]
	&& \cdot \\
	& \cdot && \cdot \\
	\cdot && \cdot && \cdot
	\arrow["B"{description}, from=1-3, to=2-2]
	\arrow["F"{description}, from=1-3, to=2-4]
	\arrow["B"{description}, curve={height=12pt}, from=1-3, to=3-1]
	\arrow["\lrcorner"{anchor=center, pos=0.125, rotate=-45}, draw=none, from=1-3, to=3-3]
	\arrow["F"{description}, curve={height=-12pt}, from=1-3, to=3-5]
	\arrow["B"{description}, from=2-2, to=3-1]
	\arrow["F"{description}, from=2-2, to=3-3]
	\arrow["B"{description}, from=2-4, to=3-3]
	\arrow["F"{description}, from=2-4, to=3-5]
\end{tikzcd}\]

\begin{notation}
    Let $(\cC,\cC_B,\cC_F)$ be an adequate triple.
    In the special case where $\cC_B = \cC$ (i.e.~$(\cC,\cC_F)$ is a span pair),
    we will write $\Span_{F}(\cC) \coloneqq \Span_{\all,F}(\cC) \coloneqq \Span(\cC,\cC,\cC_F)$.
    Similarly, if $\cC_F = \cC$, then we write $\Span_{B,\all}(\cC) \coloneqq \Span(\cC,\cC_B,\cC)$.
    Finally, note that if $\cC$ admits pullbacks, then $(\cC,\cC,\cC)$
    is an adequate triple, and we write $\Span(\cC) \coloneqq \Span(\cC,\cC,\cC)$.
\end{notation}

A morphism of adequate triples is a functor which preserves
backwards and forwards maps as well as their basechanges along each other.
This defines a subcategory $\AdTrip \subset \Fun(\Lambda^2_2,\Cat)$,
and the above span construction upgrades to a functor
\[
    \Span \colon \AdTrip \to \Cat,\ (\cC,\cC_B,\cC_F) \mapsto \Span_{B,F}(\cC)
\]
by \cite[Definition 2.12]{HHLN-Two}. We list some important properties of span categories.

\begin{theorem}\label{thm:spans}
    Let $(\cC,\cC_B,\cC_F)$ be an adequate triple.
    \begin{enumerate}
        \item $\AdTrip$ admits all limits, and they are computed in
            $\Fun(\Lambda^2_2,\Cat)$.

        \item The functor $\Span \colon \AdTrip \to \Cat$ preserves all limits.

        \item There is a natural equivalence
            $\Span_{B,F}(\cC)^\op \simeq \Span_{F,B}(\cC)$.

        \item The map of adequate triples $(\cC,\cC^\simeq,\cC_F) \to (\cC,\cC_B,\cC_F)$
            induces an inclusion
            \[
                \cC_F \simeq \Span_{\simeq,F}(\cC) \subset \Span_{B,F}(\cC)
            \]
            sending a morphism $X \to Y$ in $\cC_F$ to the forwards map
            $X \xleftarrow{=} X \to Y$.
            Dually (e.g.~using (3)) we have a subcategory inclusion
            \[
                \cC_B^\op \simeq \Span_{B,\simeq}(\cC) \subset \Span_{B,F}(\cC).
            \]
    \end{enumerate}
\end{theorem}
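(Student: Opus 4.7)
The plan is to package these as consequences of the formalism developed in \cite{HHLN-Two}, treating the theorem as a recollection and indicating only the key ideas.

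For (1), I would construct the limit of a diagram of adequate triples by taking the levelwise limit in $\Fun(\Lambda^2_2,\Cat)$ and verifying that the resulting pair of wide subcategories still satisfies the basechange axiom. The crucial observation is that pullbacks in a limit of categories are computed componentwise: given a cospan of a forwards and a backwards morphism in the limit, its pullback exists because the corresponding pullbacks exist compatibly at each vertex of the diagram, and its two legs are forwards/backwards because these properties are inherited vertexwise. This simultaneously shows that the forgetful functor $\AdTrip \to \Fun(\Lambda^2_2,\Cat)$ creates limits.

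For (2), I would combine (1) with the description of $\Span$ as a construction built from a 2-fold Segal / twisted-arrow object as in \cite{HHLN-Two}. Since all constraints defining spans are imposed pointwise on the underlying simplicial data, limit-preservation is formal. Alternatively, one directly unwinds that a span in the limit adequate triple is a compatible family of spans in each component, producing a natural identification of $\Span$ of the limit with the limit of the $\Span$'s.

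For (3), the equivalence is implemented by the tautological involution swapping the two legs of a span; a morphism $X \xleftarrow{B} Y \xto{F} Z$ in $(\cC,\cC_B,\cC_F)$ is literally the same datum as a morphism $Z \xleftarrow{F} Y \xto{B} X$ in $(\cC,\cC_F,\cC_B)$, and the fact that composition is given by pullback (which is symmetric in the two legs) makes this involution respect composition. The coherent $(\infty,1)$-categorical version is again a statement from \cite{HHLN-Two}.

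Part (4) decomposes into two steps. First, $\Span_{\simeq,F}(\cC) \simeq \cC_F$ directly from the definition: every backwards leg is required to be an equivalence, which can be absorbed into the forwards leg, identifying the span with its forwards morphism. Second, the map of adequate triples $(\cC,\cC^\simeq,\cC_F) \to (\cC,\cC_B,\cC_F)$ induces a functor $\cC_F \simeq \Span_{\simeq,F}(\cC) \to \Span_{B,F}(\cC)$ by functoriality of $\Span$. The main point to check is that this functor is fully faithful; I expect this to be the subtlest step, and it should follow from the mapping space description of spans in \cite{HHLN-Two}, where $\Map_{\Span_{B,F}(\cC)}(X,Y)$ is the core of the category of $X \leftarrow - \to Y$ spans, and the image of $\cC_F$ picks out precisely the full subspace where the backwards leg is an equivalence. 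The dual statement follows by combining this with (3).
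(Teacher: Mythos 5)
Your proposal follows essentially the same route as the paper, which simply cites \cite{HHLN-Two} for all four statements (specifically Lemma~2.4, Theorem~2.18, Lemma~2.14 and Proposition~2.15 there); your sketches of the underlying ideas are reasonable and accurate for (1)--(3).

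In part (4) you write that ``the main point to check is that this functor is fully faithful.'' This is a genuine error in framing: the functor $\cC_F \to \Span_{B,F}(\cC)$ is emphatically \emph{not} fully faithful unless $\cC_B = \cC^\simeq$. Indeed, $\Map_{\Span_{B,F}(\cC)}(X,Y)$ is the space of all spans $X \leftarrow Z \to Y$, whereas the image of $\Map_{\cC_F}(X,Y)$ consists only of the spans whose backwards leg is an equivalence --- in general a proper subspace (e.g.\ for $\cC = \F$ with $\cC_B = \cC_F = \cC$, the mapping space of spans is much larger than $\F(X,Y)$). What the theorem asserts, and what your mapping-space analysis actually establishes, is that this functor is a \emph{subcategory inclusion}, i.e.\ a monomorphism in $\Cat$: on objects it is an inclusion, and on mapping spaces it induces an inclusion of a union of path components, precisely because ``the backwards leg is an equivalence'' is a condition stable under equivalence of spans. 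So the argument you give is correct, but the word ``fully faithful'' should be replaced by ``a (non-full, in general) subcategory inclusion'' or ``a monomorphism of categories'' throughout that step.
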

\begin{proof}
    This is \cite[Lemma 2.4, Theorem 2.18, Lemma 2.14, Prop.~2.15]{HHLN-Two}.
\end{proof}

In the introduction we mentioned that span categories are useful for encoding
algebraic data. Let us elaborate on this.
Consider the category of finite sets $\F$.
It admits pullbacks, so we can form $\Span(\F)$\footnote{Even though $\F$ is a 1-category, $\Span(\F)$ is a $(2,1)$-category!},
and one can show (c.f.~\cite[Lemma C.3]{Bachmann-Hoyois})
that $\Span(\F)$ is semiadditive, with biproducts given by coproducts in $\F$,
and projections respectively inclusions
\[
    \pr_X = (X \sqcup Y \hookleftarrow X = X)
    \qquad\text{respectively}\qquad
    i_X = (X = X \hookrightarrow X \sqcup Y).
\]

\begin{definition}
    Let $\cE$ be a category with finite products.
    We define $\Mack(\cE) \coloneqq \Fun^\times(\Span(\F),\cE)$
    and the forgetful functor $\fgt \coloneqq \ev_1 \colon \Mack(\cE) \to \cE$.
\end{definition}

Let $\Cat^\times$ denote the category of small categories admitting finite products
and functors preserving them, and let $\Cat^\sadd \subseteq \Cat^\times$
be spanned by semiadditive categories.

\begin{proposition}\label{prop:mack}
    The functor $\Mack \colon \Cat^\times \to \Cat^\sadd$ is a right Bousfield
    localization with counit $\fgt$.
    Moreover, there is a homotopy of natural equivalences
    \[
        \fgt_* \simeq \fgt \colon \Mack(\Mack(-)) \xRightarrow{\simeq} \Mack(-).
    \]
\end{proposition}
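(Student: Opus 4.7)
The plan is to exhibit $\Mack$ as the coreflection of $\Cat^\times$ onto its full subcategory $\Cat^\sadd$ with counit $\fgt$. The key input is the universal property that $\Span(\F)$ is the free semiadditive $\infty$-category on one object: for every semiadditive category $\cD$, evaluation at $1 \in \F \subset \Span(\F)$ induces an equivalence $\ev_1 = \fgt \colon \Fun^\times(\Span(\F), \cD) \xto{\simeq} \cD$. This is classical, appearing in various formulations in work of Cranch, Glasman, and Harpaz.

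Granted this, I would first check that $\Mack$ takes values in $\Cat^\sadd$. Since $\Span(\F)$ is semiadditive and finite products in $\Mack(\cE)$ are computed pointwise, the biproduct of $\Span(\F)$ equips $\Mack(\cE)$ pointwise with a semiadditive structure; more generally, $\Fun^\times(\cA, \cE)$ is semiadditive whenever $\cA$ is semiadditive and $\cE$ has finite products.

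To produce the adjunction $i \dashv \Mack$ with counit $\fgt$, where $i \colon \Cat^\sadd \hookrightarrow \Cat^\times$ denotes the (fully faithful) inclusion, I would compute for $\cD \in \Cat^\sadd$ and $\cE \in \Cat^\times$, using cartesian closedness of $\Cat^\times$,
\[
    \Fun^\times(\cD, \Mack(\cE)) \simeq \Fun^\times(\Span(\F), \Fun^\times(\cD, \cE)) \xto{\ev_1} \Fun^\times(\cD, \cE),
\]
where the second arrow is an equivalence by the universal property of $\Span(\F)$ applied to the semiadditive category $\Fun^\times(\cD, \cE)$. Unravelling, this composite is precisely post-composition with $\fgt$, which simultaneously verifies the adjunction and the identification of its counit. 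Since $i$ is fully faithful, this upgrades to a right Bousfield localization, and in particular the unit $\eta_\cD \colon \cD \to \Mack(\cD)$ is an equivalence for every $\cD \in \Cat^\sadd$.

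The second statement I would deduce formally. Since $\Mack(\cE) \in \Cat^\sadd$ for every $\cE$, the unit $\eta_{\Mack(\cE)} \colon \Mack(\cE) \to \Mack(\Mack(\cE))$ is an equivalence. The two triangle identities of the adjunction give $\fgt_{\Mack(\cE)} \circ \eta_{\Mack(\cE)} \simeq \mathrm{id}$ and $\Mack(\fgt_\cE) \circ \eta_{\Mack(\cE)} \simeq \mathrm{id}$, so both $\fgt_{\Mack(\cE)}$ and $\fgt_* \coloneqq \Mack(\fgt_\cE)$ are inverse to $\eta_{\Mack(\cE)}$ and are therefore naturally equivalent. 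The main point requiring care is packaging this identification into a genuine homotopy of natural transformations of functors $\Cat^\times \to \Cat^\sadd$; the coherent data for this is furnished by the triangle identities of the adjunction $i \dashv \Mack$ at the $(\infty,1)$-categorical level.
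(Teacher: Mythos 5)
Your proof is correct but follows a genuinely different route from the paper's. You construct the adjunction $i \dashv \Mack$ directly by exhibiting a natural equivalence of mapping categories $\Fun^\times(\cD, \Mack(\cE)) \simeq \Fun^\times(\cD, \cE)$ for $\cD$ semiadditive, via currying and the freeness of $\Span(\F)$ applied to the semiadditive category $\Fun^\times(\cD,\cE)$, and then derive $\fgt_* \simeq \fgt$ by pure adjunction yoga: both are inverse to the unit $\eta_{\Mack(\cE)}$, which is an equivalence since $i$ is fully faithful. The paper instead invokes the opposite of Lurie's characterization of localization functors to reduce the first claim to showing $\fgt_*$ is an equivalence, and then constructs an explicit natural functor $\tensor^* \colon \Mack(\cE) \to \Mack(\Mack(\cE))$ from the Cartesian product on $\F$, checking that both $\fgt$ and $\fgt_*$ are left inverse to it. Your route avoids this auxiliary construction; the paper's has the virtue of producing an explicit inverse for the unit built from the multiplicative structure on $\Span(\F)$. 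One small terminological point: the step you attribute to ``cartesian closedness of $\Cat^\times$'' is really the standard currying equivalence $\Fun^\times(\cD, \Fun^\times(\Span(\F), \cE)) \simeq \Fun^\times(\Span(\F), \Fun^\times(\cD, \cE))$ for bifunctors preserving finite products \emph{in each variable separately}; this is true and suffices for your argument, but $\Fun^\times$ is not the internal hom of the cartesian monoidal structure on $\Cat^\times$, since a functor out of $\cC \times \cD$ preserving finite products jointly need not preserve them in each variable separately, and conversely.
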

\begin{proof}
    By \cite[Theorem 1.1]{Harpaz} $\Span(\F)$
    is in fact the free semiadditive category on one generator,
    i.e.~$\fgt_\cE \colon \Mack(\cE) \to \cE$ is an equivalence whenever $\cE$ is semiadditive.
    Since $\Span(\F)$ is semiadditive, so is $\Mack(\cE)$ by \cite[Corollary 2.4]{GGN},
    so $\Mack$ really does take values in $\Cat^\sadd$.
    In view of \cite{HTT}*{Proposition 5.2.7.4${}^\op$}
    it remains to show that $\fgt_* \simeq \fgt$.
    To this end, note that the cartesian product $\times \colon \F \times \F \to \F$
    preserves coproducts in each variable and also preserves pullbacks,
    hence induces a functor
    $\tensor \colon \Span(\F) \times \Span(\F) \simeq \Span(\F \times \F) \to \Span(\F)$
    such that $\tensor$ preserves biproducts in each variable and $1 \tensor - \simeq - \tensor 1 \simeq \id$.
    So precomposing with $\tensor$ gives
    \[
        \tensor^* \colon \Mack(\cE) \to \Fun^{\times,\times}(\Span(\F) \times \Span(\F), \cE) \simeq \Mack(\Mack(\cE)).
    \]
    Now note that both $\fgt$ and $\fgt_*$ are left inverse to $\tensor^*$.
    Since $\fgt$ is an equivalence by semiadditivity of $\Mack(\cE)$,
    we conclude that $\fgt \simeq \fgt_*$ are inverse to $\tensor^*$.
\end{proof}

\begin{remark}\label{rem:mack-vs-cmon}
    We will sometimes refer to $\Mack(\cE)$ as the category of commutative monoids in $\cE$.
    Often, this name is instead reserved for a certain full subcategory $\CMon(\cE) \subseteq \Fun(\F_*,\cE)$,
    c.f.~\cite[Definition 2.4.2.1]{HA}.
    By \cite[Proposition C.1]{Bachmann-Hoyois} there is a functor $\F_* \to \Span(\F)$
    restriction along which induces a natural equivalence
    $\Mack(\cE) \xto{\simeq} \CMon(\cE)$ compatible with the forgetful functors to $\cE$,
    so our notion of commutative monoid agrees with the one of Lurie.
\end{remark}

\begin{remark}
    Consider a category $\cE$ with finite products and $\Phi \in \Mack(\cE)$.
    Let us explain how $\Phi$ encodes the structure of a commutative monoid
    on $M \coloneqq \Phi(1) \in \cE$.
    \begin{itemize}
        \item We have $\Phi(X) = M^{|X|}$, and in particular $\Phi(\empty) = * \in \cE$
            is terminal.

        \item Backwards maps encode auxiliary data;
            a map $f \colon Y \to X$ in $\F$ induces
            \[
                f^* \coloneqq \Phi(X \leftarrow Y = Y) \colon M^{|X|} \to M^{|Y|}
            \]
            where $\pr_yf^* \simeq \pr_{f(y)}$ for $y\in Y$.
            In particular, if $f$ is an injection respectively surjection,
            then $f^*$ is a projection respectively product of diagonal functors.

        \item The unique map $X \to *$ defines an $|X|$-fold multiplication
            \[
                \mu_{X} \colon \Phi(X = X \to *) \colon M^{|X|} \to M.
            \]
            In general $\mu_f \coloneqq \Phi(X = X \xto{f} Y)$ sums over the fibers,
            i.e.~$\pr_y\mu_f = \mu_{f^{-1}(y)}$.
            The special case $X = \empty$ defines a unit $\mu_{\empty} \colon * \to M$.

        \item Functoriality of $\Phi$ allows $M$ to inherit
            unitality, associativity, commutativity and the higher coherence data
            all from the corresponding data of the canonical commutative monoid
            structure on $1 \in \F$ in the cocartesian monoidal structure on finite sets.
    \end{itemize}
\end{remark}

We would like to have analogous semiadditivity properties
in more general span categories.
To this end, we consider the following definition.

\begin{definition}
    A span pair $(\cF,\cN)$ is \emph{extensive} if
    \begin{enumerate}
        \item $\cF$ is extensive, i.e.~it has finite coproducts and the coproduct functor
            \[
                \amalg \colon \prod_{i=1}^n \cF_{/A_i} \to \cF_{/\coprod_{i=1}^n A_i}
            \]
            is an equivalence for all $A_1,\dots,A_n \in \cF$;

        \item the morphisms in $\cN$ are closed under finite coproducts;

        \item $\cN$ contains the maps $\empty \to A$ and $\nabla \colon A \amalg A \to A$
            for each $A \in \cF$.
    \end{enumerate}
\end{definition}

\begin{example}\label{ex:extensive}
    For a category $T$, denote by $\F[T]$ the free finite coproduct completion of $T$,
    defined as the full subcategory $\F[T] \subseteq \PSh(T)$ on finite coproducts of representables.
    A wide subcategory $P \subset T$ is called \emph{orbital}
    if $(\F[T],\F[P])$ is a span pair,
    which is then automatically extensive, see \cite[Example 3.1.9]{Tambara}.
\end{example}

In this paper we will mostly be interested in the following two examples,
which let us reason about (globally) equivariant homotopy theory in our framework.

\begin{example}
    Let $G$ be a finite group. Then the category $P = T = \Orb_G$ of transitive
    $G$-sets is orbital in itself. The category of finite $G$-sets
    $\F_G \coloneqq \F[\Orb_G]$ thus gives an extensive span pair $(\F_G,\F_G)$,
    and we will write $\Span(G) \coloneqq \Span(\F_G)$.
\end{example}

\begin{example}
    Let $\Glo$ denote the (2,1) category of finite groups,
    group homomorphisms and conjugations, and $\Orb \subset \Glo$
    the wide subcategory on injective group homomorphisms.
    Equivalently, we can think of $\Glo$ as the category of finite connected
    1-groupoids, and $\Orb$ as the subcategory on faithful maps.
    It was shown in \cite[Example 4.2.5]{CLL-Global} that $\Orb$ is an orbital
    subcategory of $\Glo$.
    We obtain an extensive span pair $(\Fglo,\Forb) \coloneqq (\F[\Glo],\F[\Orb])$.
\end{example}

\begin{example}
    Let $(\cF,\cN)$ be an extensive span pair and $A \in \cF$.
    Then $(\cF_{/A}, \cF_{/A} \times_\cF \cN)$ is again an extensive span pair.
    This is evident from the fact that coproducts and pullbacks in $\cF_{/A}$ are computed in $\cF$, and the natural equivalence $(\cF_{/A})_{/B} \simeq \cF_{/B}$.
    We will denote the associated span category by $\Span_\cN(\cF_{/A})$.
\end{example}

\begin{proposition}[{{\cite[Proposition 2.2.5]{Tambara}}}]\label{prop:span-semiadd}
    Let $(\cF,\cN)$ be an extensive span pair.
    Then $\Span_\cN(\cF)$ is semiadditive with projections and inclusions given by
    \[
        \pr_A = (A \amalg B \leftarrow A = A)
        \qquad\text{and}\qquad
        \inc_A = (A = A \to A \amalg B).
    \]
    Hence a functor $\cC \colon \Span_\cN(\cF) \to \cE$ preserves finite products
    precisely if $\cC|_{\cF^\op}$ does.
\end{proposition}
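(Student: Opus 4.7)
The plan is to verify directly that $\emptyset \in \cF$ is a zero object in $\Span_\cN(\cF)$, and that $A \amalg B$ equipped with the indicated projections and inclusions is a biproduct of $A$ and $B$; semiadditivity will follow, and the concluding sentence about preservation of finite products is then essentially formal via \cref{thm:spans}(4).

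First, $\emptyset$ is a zero object: extensivity forces any map $X \to \emptyset$ to satisfy $X \simeq \emptyset$, so both $\mathrm{Map}_{\Span_\cN(\cF)}(\emptyset, A)$ and $\mathrm{Map}_{\Span_\cN(\cF)}(A, \emptyset)$ are contractible, using that $\emptyset \to A$ lies in $\cN$ by hypothesis. Next, $\inc_A$ is a legitimate morphism of $\Span_\cN(\cF)$: the map $A \to A \amalg B$ is the coproduct of $\id_A \in \cN$ and $\emptyset \to B \in \cN$, hence lies in $\cN$ by closure under coproducts. To show $A \amalg B$ is a product via $(\pr_A, \pr_B)$, I observe that any span $C \leftarrow X \xrightarrow{f} A \amalg B$ with $f \in \cN$ decomposes, by extensivity, as $X = X_A \amalg X_B$ with $X_\bullet = X \times_{A \amalg B} \bullet$; the restrictions $X_A \to A$ and $X_B \to B$ lie in $\cN$ by the adequate-triple axiom that basechange of a forwards map along a backwards map is again forwards. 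A direct pullback computation (using the same extensive decomposition) then shows that composing with $\pr_\bullet$ extracts precisely the summand span, yielding the desired equivalence $\mathrm{Map}(C, A \amalg B) \xrightarrow{\simeq} \mathrm{Map}(C, A) \times \mathrm{Map}(C, B)$.

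The analogous argument shows $A \amalg B$ is also a coproduct via $(\inc_A, \inc_B)$: given spans $A \leftarrow X_A \xrightarrow{f_A} C$ and $B \leftarrow X_B \xrightarrow{f_B} C$, their pairing $A \amalg B \leftarrow X_A \amalg X_B \xrightarrow{[f_A, f_B]} C$ has forward leg factoring as $\nabla \circ (f_A \amalg f_B)$, which lies in $\cN$ by closure under coproducts and the axiom $\nabla \in \cN$. The biproduct identities $\pr_A \circ \inc_A \simeq \id$ and $\pr_A \circ \inc_B \simeq 0$ reduce to the extensive computations $A \times_{A \amalg B} A \simeq A$ and $A \times_{A \amalg B} B \simeq \emptyset$. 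Combined with the zero object, this exhibits semiadditivity of $\Span_\cN(\cF)$.

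For the final sentence, the backwards inclusion $\cF^\op \hookrightarrow \Span_\cN(\cF)$ of \cref{thm:spans}(4) sends $\emptyset \in \cF^\op$ to the zero object and the product diagram $A \leftarrow A \amalg B \to B$ in $\cF^\op$ to $A \xleftarrow{\pr_A} A \amalg B \xrightarrow{\pr_B} B$ in $\Span_\cN(\cF)$, so product-preservation for $\cC$ on $\Span_\cN(\cF)$ is witnessed by the same data as for $\cC|_{\cF^\op}$. The main subtlety throughout is bookkeeping with $\cN$: the three clauses of an extensive span pair each enter precisely to guarantee $\cN$ is stable under the operations used, namely basechange along the inclusions $\bullet \to A \amalg B$, finite coproducts, and the fold $\nabla$.
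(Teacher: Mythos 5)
The paper delegates this entirely to the cited reference (Tambara, Prop.~2.2.5) and gives no argument of its own, so there is no in-paper proof to compare against; your direct verification is the natural argument and is correct. You correctly use strictness of the initial object (a consequence of extensivity) for the zero object, the three clauses of the extensive-span-pair definition in exactly the places they are needed (closure of $\cN$ under basechange for $\pr$-composition, closure under coproducts for $\inc$, and $\nabla\in\cN$ for the coproduct pairing), the disjointness axiom $A\times_{A\amalg B}B\simeq\emptyset$ for the off-diagonal biproduct identity, and the observation that the image of $\cF^\op\hookrightarrow\Span_\cN(\cF)$ already contains the (co)product diagrams for the last sentence.
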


\begin{definition}
    If $(\cF,\cN)$ is an extensive span pair and $\cE \in \Cat^\times$, we define
    \[
        \Mack_\cF^\cN(\cE) \coloneqq \Fun^\times(\Span_\cN(\cF),\cE)
    \]
    and refer to it as the category of $\cN$-normed $\cF$-monoids in $\cE$.
\end{definition}

\begin{notation}\label{not:g-gl}
    We will abbreviate $\Mack_G \coloneqq \Mack_{\F_G}^{\F_G}$ and $\Mack_\gl \coloneqq \Mack_{\Fglo}^\Forb$.
\end{notation}

A famous use of such $\F_G$-normed $\F_G$-monoids,
often simply called $G$-commutative or $G$-$\E_\infty$-monoids,
is in a theorem of Guillou--May \cite{Guillou-May} (see also \cite[Appendix A]{CMNN})
which gives an equivalence $\Mack_G(\Sp) \simeq \Sp_G$
between $G$-commutative monoids in $\Sp$ and genuine $G$-spectra.
This equivalence identifies evaluation at $G/H$
with the genuine $H$-fixed points functor $(-)^H \colon \Sp_G \to \Sp$.

Analogously to how one can consider symmetric monoidal categories
as commutative monoids in $\Cat$, we can therefore also consider
the case $\cE = \Cat$ above, which leads us to the study
of normed categories and parametrized higher algebra.

\subsection{Normed Categories}

In this subsection we recall some constructions and results on parametrized
higher algebra as developed in \cite[Section 2]{LLP}.

\begin{definition}\label{def:f-cat}
    Let $\cF$ be any category.
    \begin{itemize}
        \item We refer to $\PreCat{\cF} \coloneqq \Fun(\cF^\op,\Cat)$
            as the category of $\cF$-precategories and $\cF$-functors.

        \item If $\cF$ admits finite coproducts,
            we also define the full subcategory
            $\PCat{\cF} \coloneqq \Fun^\times(\cF^\op,\Cat)$
            and refer to the objects as $\cF$-categories.

        \item Given an $\cF$-precategory,
            we call $\lim_{\cF^\op}\cC$ the underlying category of $\cC$.
            Note that if $\cF$ admits a final object $1 \in \cF$,
            then this is simply $\cC(1)$.
    \end{itemize}
\end{definition}

As mentioned above, we will use span categories to define normed categories:

\begin{definition}\label{def:n-f-cat}
    Let $(\cF,\cN)$ be a span pair.
    \begin{itemize}
        \item We refer to $\PreNmCat{\cF}{\cN} \coloneqq \Fun(\Span_\cN(\cF),\Cat)$
            as the category of $\cN$-normed $\cF$-precategories
            and ($\cN$-)normed $\cF$-functors.

        \item If $(\cF,\cN)$ is extensive, we define the full subcategory of $\cN$-normed
            $\cF$-categories as $\Mack_\cF^\cN(\Cat) = \Fun^\times(\Span_\cN(\cF),\Cat)$.

        \item The underlying $\cF$-(pre)category of an $\cN$-normed $\cF$-(pre)category $\cC$ is defined to be the restriction $\cC|_{\cF^\op}$.
    \end{itemize}
\end{definition}

\begin{remark}
    In \cite{LLP} the notations $\Cat^*(\cF),\Cat(\cF),\text{NCat}_\cN^*(\cF),\text{NCat}_\cN(\cF)$ were used for what we write as
    $\Cat_\cF^*,\Cat_\cF,\Mack_{\cF}^{\cN,*}(\Cat),\Mack_\cF^\cN(\Cat)$.
\end{remark}

Note that an $\cN$-normed $\cF$-(pre)category is nothing but a $\Span_\cN(\cF)^\op$-(pre)category in the sense of \cref{def:f-cat}.
For $(\cF,\cN) = (\F,\F)$, we see that $\NmCat{\F}{\F} = \Mack(\Cat)$
is the category of symmetric monoidal categories.

\begin{notation}
    In the case $(\cF,\cN) = (\F_G,\F_G)$ respectively $(\cF,\cN) = (\Fglo,\Forb)$
    we refer to ($\cN$-normed) $\cF$-(pre)categories as (normed) $G$-(pre)categories respectively (normed) global (pre)categories.
    We will write $\PCat{G} \coloneqq \PCat{\F_G}$ and $\PCat{\gl} \coloneqq \PCat{\Fglo}$
    in accordance with \cref{not:g-gl},
    and use analogous notations for the precategory versions.
\end{notation}

\begin{remark}
    In the literature on parametrized higher category theory,
    the name $\cF$-category refers to what we call an $\cF$-precategory,
    see e.g.~\cite{BDGNS,CLL-Global,MW-Colimits}.
    The reason for this is that we are more interested in parametrized higher algebra,
    and while for example one can define $G$-categories as functors
    $\Orb_G^\op \to \Cat$, we cannot build a reasonable span category on $\Orb_G$,
    and need to instead limit extend to $\F_G$,
    where then the data of a normed structure becomes an extension to $\Span(\F_G)$.
\end{remark}

The study of (higher) algebra naturally involves some 2-categorical elements;
after all, we are interested in \emph{categories} of algebras,
and these are defined most naturally as certain mapping categories
in the category of operads (or symmetric monoidal categories,
using the theory of symmetric monoidal envelopes).
This is even more convenient in the theory of parametrized higher algebra,
so let us recall some basic 2-category theory we will use throughout this article. For a detailed overview, we refer the reader to \cite[Appendices C and D]{Heyer-Mann}.

\begin{remark}\label{rem:2cat}
    Most commonly, 2-category theory is synonymous with $\Cat$-enriched category theory.
    There are various ways on how to formally define this in a higher categorical setting
    \cite[Definition 4.2.1.28]{HA}, \cite{Gepner-Haugseng,Hinich}
    which were shown to be equivalent for all practical purposes in \cite{Heine}.
    For us it will be enough to consider the following special case.

    Viewing $\Cat$ as a commutative algebra in $\CAT$ via the
    cartesian symmetric monoidal structures,
    we can consider $\Cat$-modules in $\CAT$.
    If $\cC$ is such a $\Cat$-module, and each tensoring $- \times c \colon \Cat \to \cC$
    admits a right adjoint $\hom_\cC(c,-) \colon \cC \to \Cat$,
    then $\cC$ canonically inherits a $\Cat$-enrichment, i.e.~a 2-categorical
    structure with hom-categories $\hom_\cC(c,d)$, c.f.~\cite[Example C.1.11]{Heyer-Mann}.
    Moreover, the category of 2-functors between two such $\Cat$-modules
    can be defined as the category of lax $\Cat$-linear functors
    \cite[Example C.2.2]{Heyer-Mann}.
    Recall from \cite[Corollary 7.3.2.7]{HA} that the right adjoint of a $\Cat$-linear
    functor automatically inherits a lax $\Cat$-linear structure,
    so that any adjunction $L \colon \cC \rightleftarrows \cD \noloc R$
    with $\Cat$-linear $L$ upgrades to an adjunction of 2-categories
    with a natural equivalence $\hom_{\cC}(-,R-) \simeq \hom_{\cD}(L-,-)$ of hom-categories.

    For any category $T$, the slice $\Cat_{/T}$ admits a cartesian symmetric monoidal structure.
    The (symmetric monoidal) right adjoint $T \times - \colon \Cat \to \Cat_{/T}$
    then exhibits $\Cat_{/T}$ as a $\Cat$-module whose tensoring admits right adjoints
    as $\Cat$ is locally cartesian closed.
    The same tensoring is already defined on
    the subcategory $\Cat_{/T}^{E\dcc} \subset \Cat_{/T}$
    spanned by categories over $T$ admitting cocartesian lifts over the subcategory $E \subset T$ and functors preserving them.
    We denote the hom-categories by
    \[
        \Fun_{/T}(-,-) \qquad\text{respectively}\qquad \Fun_{/T}^{E\dcc}(-,-).
    \]
    In the case $E = T$, the unstraightening equivalence identifies $T \times -$
    with $\const \colon \Cat \to \Fun(T,\Cat)$.
    Since this commutes with restriction along any functor $f \colon T \to S$,
    the restriction / right Kan extension adjunction is canonically $\Cat$-linear:
    \[
        f^* \colon \Fun(S,\Cat) \rightleftarrows \Fun(T,\Cat) \noloc f_*.
    \]
\end{remark}

\begin{notation}
    In view of the above remark, $\PreCat{\cF}$ and $\PreNmCat{\cF}{\cN}$
    inherit 2-categorical structures in a preferred way, whose hom-categories we denote by
    \[
        \Fun_\cF(-,-)
        \quad\text{and}\quad
        \Fun_{\cF}^{\Nstr}(-,-).
    \]
    We will view $\PCat{\cF}$ and $\Mack_\cF^\cN(\Cat)$
    as full 2-subcategories, i.e.~having the same hom-categories.
\end{notation}

We now recall two constructions from \cite{LLP}
for building normed (pre)categories out of (pre)categories.
Both constructions work in a fibrational setting and first land in the category $\PreOp{\cF}{\cN} \coloneqq \Cat_{/\Span_\cN(\cF)}^{\cF^\op\dcc}$ of so-called $(\cF,\cN)$-preoperads.
As for (normed) precategories, we consider the full subcategory $\POp{\cF}{\cN} \subseteq \PreOp{\cF}{\cN}$ spanned by $(\cF,\cN)$-operads $\cO$,
which are preoperads such that (the cocartesian straightening of) $\cO|_{\cF^\op}$
is an $\cF$-category, i.e.~preserves finite products.

\begin{definition}[{{\cite[Definition 2.15]{LLP}}}]\label{def:n-cocart}
    Let $\cC\colon \cF^{\op}\to \Cat$ be an $\cF$-precategory.
    \begin{enumerate}
        \item We denote the cocartesian unstraightening $\int \cC \to \cF^\op$,
            viewed as a category \mbox{over $\Span_\cN(\cF)$ via the inclusion
            $\cF^\op \subset \Span_\cN(\cF)$, by $\Triv(\cC) \to \Span_\cN(\cF)$.}

        \item By \cite[Proposition 2.6]{HHLN-Two} there is an adequate
            triple on the cartesian unstraightening $p \colon \Un^\cart(\cC) \to \cF$
            given by the two subcategories
            $\cart, \Un^\ct(\cC) \times_\cF \cN \subset \Un^\cart(\cC)$
            of cartesian maps respectively maps lying over $\cN$.
            Moreover, $p$ induces a map of span categories
            \[
                \Span(p) \colon \Span_{\cart,\cN}(\Un^\ct(\cC)) \to \Span_\cN(\cF).
            \]
            We will denote this by $\cC^{\Ncoprod} \to \Span_\cN(\cF)$.
    \end{enumerate}
\end{definition}

\begin{lemma}[{{\cite[Lemma 2.16]{LLP}}}]\label{lem:unfurl}
    If $\cC$ is an $\cF$-precategory, then $\Triv(\cC)$ and $\cC^\Ncoprod$
    are $(\cF,\cN)$-preoperads, i.e.~admit cocartesian lifts over $\cF^\op$.
    Moreover, there is a pullback square
    \[\begin{tikzcd}
        {\int\cC} & {\cC^{\Ncoprod}} \\
        {\cF^\op} & {\Span_\cN(\cF)}
        \arrow[from=1-1, to=1-2]
        \arrow[from=1-1, to=2-1]
        \arrow["\lrcorner"{anchor=center, pos=0.125}, draw=none, from=1-1, to=2-2]
        \arrow[from=1-2, to=2-2]
        \arrow[from=2-1, to=2-2]
    \end{tikzcd}\]
    which induces a natural map $\incl \colon \Triv(\cC) \to \cC^{\Ncoprod}$ of $(\cF,\cN)$-preoperads.
\end{lemma}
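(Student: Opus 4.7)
The plan is to first establish the pullback square by reading off what a morphism in the fibre product must be, and then deduce both preoperad conditions by combining this with a standard span-category lifting result from \cite{HHLN-Two}.

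\textbf{The pullback square.} A morphism in $\cC^{\Ncoprod}$ is a span $a \xleftarrow{\beta} z \to b$ in $\Un^\cart(\cC)$ with $\beta$ cartesian and the forwards leg lying over $\cN$. For its image in $\Span_\cN(\cF)$ to be a backwards span $A \xleftarrow{f} B = B$ we must have $Z = B$ and the forwards leg lying over $\id_B$, so the forwards leg is a fibre morphism in $\cC(B)$; moreover the cartesian condition on $\beta$ forces $z \simeq f^*a$. What remains is then a single morphism $f^*a \to b$ in $\cC(B)$, which is exactly a morphism $a \to b$ in $\int\cC$ lying over $f^\op \in \cF^\op$. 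This identifies $\cC^{\Ncoprod} \times_{\Span_\cN(\cF)} \cF^\op$ with $\int\cC = \Triv(\cC)$ compatibly with projections, and the universal property then produces the natural map $\incl \colon \Triv(\cC) \to \cC^{\Ncoprod}$.

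\textbf{Preoperad structures.} For $\cC^{\Ncoprod}$ I would invoke the general principle from \cite{HHLN-Two} that if $q \colon (D, D_B, D_F) \to (C, C_B, C_F)$ is a morphism of adequate triples which is a cartesian fibration whose cartesian lifts all lie in $D_B$, then the induced functor $\Span(q)$ admits cocartesian lifts over backwards morphisms $C_B^\op \subset \Span(C)$, given explicitly by these cartesian lifts viewed as backwards spans. Applied to $p \colon \Un^\cart(\cC) \to \cF$ with the adequate triple $(\Un^\cart(\cC), \cart, \Un^\cart(\cC) \times_\cF \cN)$, whose cartesian lifts lie in $\cart$ by definition, this produces the required cocartesian lifts in $\cC^{\Ncoprod}$ over $\cF^\op$: the lift of $f \colon B \to A$ at $a$ is the backwards span $a \xleftarrow{\mathrm{cart}} f^*a = f^*a$. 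These then pull back along $\cF^\op \hookrightarrow \Span_\cN(\cF)$ to cocartesian lifts in $\Triv(\cC)$, which under the identification of the previous paragraph coincide with the canonical cocartesian lifts of $\int\cC \to \cF^\op$. Hence $\Triv(\cC)$ is also an $(\cF,\cN)$-preoperad, and $\incl$ preserves cocartesian lifts over $\cF^\op$ by the universal property of the pullback, making it a map of $(\cF,\cN)$-preoperads.

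The main obstacle will be to pin down the precise statement of the span-lifting lemma from \cite{HHLN-Two} and to check that our adequate triple on $\Un^\cart(\cC)$ really satisfies its hypotheses; once this is settled, the pullback identification and the compatibility of $\incl$ with cocartesian edges are essentially formal consequences of the definitions of $\Triv$ and $(-)^\Ncoprod$.
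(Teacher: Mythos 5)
Your proposal is correct in substance; note that the paper simply cites \cite{LLP} for this lemma and gives no proof of its own, so there is nothing internal to compare against. Two remarks. First, the preoperad claim for $\Triv(\cC)$ requires no argument at all: by definition $\Triv(\cC) \to \Span_\cN(\cF)$ is the composite $\int\cC \to \cF^\op \subset \Span_\cN(\cF)$, and since the first map is a cocartesian fibration and the second is a (non-full) wide subcategory inclusion, cocartesian lifts over $\cF^\op$ are tautologically available; routing this through the pullback square and the lifting result for $\cC^\Ncoprod$ is unnecessarily indirect. Second, for the pullback square one can argue more structurally than your fibre-by-fibre check: $\Span$ preserves limits of adequate triples (\cref{thm:spans}), and the pullback $(\Un^\cart(\cC),\cart,\cN) \times_{(\cF,\cF,\cN)} (\cF,\cF,\simeq)$ in $\AdTrip$ is the triple $(\Un^\cart(\cC),\cart,\text{fibrewise maps})$, whose span category is exactly the Barwick--Glasman--Nardin dual of the cartesian fibration $\Un^\cart(\cC)\to\cF$, which is $\int\cC\to\cF^\op$. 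Your explicit computation of the morphisms is fine and amounts to the same thing, but this phrasing also makes the naturality of $\incl$ immediate. The ``general principle'' you invoke for cocartesian lifts in $\cC^{\Ncoprod}$ along backwards morphisms is the standard unfurling statement; it is available in \cite{HHLN-Two} (and also in \cite[Appendix~C]{Bachmann-Hoyois}), and in any case you have essentially re-proved it by writing down the lift $a \xleftarrow{\mathrm{cart}} f^*a = f^*a$ and checking its universal property, so this is not a real gap.
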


\begin{example}\label{ex:n-a}
    Let $A \in \cF$ and denote by $\und{A} = \cF(-,A)$ the represented $\cF$-category.
    The cartesian unstraightening of $\und{A}$ is given by the right fibration $\cF_{/A} \to \cF$.
    In particular, the above pullback square becomes
    \[\begin{tikzcd}[cramped]
        {\Triv(\und{A})} & {(\cF_{/A})^\op} & {\Span_\cN(\cF_{/A})} & {\und{A}^{\Ncoprod}} \\
        & {\cF^\op} & {\Span_\cN(\cF)}
        \arrow[Rightarrow, no head, from=1-1, to=1-2]
        \arrow[from=1-2, to=1-3]
        \arrow[from=1-2, to=2-2]
        \arrow["\lrcorner"{anchor=center, pos=0.125}, draw=none, from=1-2, to=2-3]
        \arrow[Rightarrow, no head, from=1-3, to=1-4]
        \arrow[from=1-3, to=2-3]
        \arrow[from=2-2, to=2-3]
    \end{tikzcd}\]
\end{example}

\begin{theorem}[{{\cite[Prop.~2.26 and Thm.~2.57]{LLP}}}]\label{thm:ncoprod-triv}
    Let $(\cF,\cN)$ be a span pair.
    \begin{enumerate}
        \item The above constructions form $\Cat$-linear left adjoints
            \[
                \Triv, (-)^{\Ncoprod} \colon \PreCat{\cF} \to \PreOp{\cF}{\cN}
            \]
            such that the natural transformation $\incl \colon \Triv \Rightarrow (-)^{\Ncoprod}$ is $\Cat$-linear.

        \item The right adjoint of $\Triv$ is the forgetful functor $\PreOp{\cF}{\cN} \to \PreCat{\cF}$
            with unit given by the obvious equivalence $\id \simeq \fgt \circ \Triv$.

        \item If $(\cF,\cN)$ is extensive, both adjunctions restrict to $\PCat{\cF} \rightleftarrows \POp{\cF}{\cN}$.
    \end{enumerate}
\end{theorem}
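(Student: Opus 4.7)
My plan is to establish the two adjunctions and the additional structural claims by reducing to the universal properties of (co)cartesian unstraightening and the span functor of \cref{thm:spans}, rather than by producing the right adjoints by abstract nonsense. The first adjunction $\Triv \dashv \fgt$ will be almost tautological, while the adjunction for $(-)^\Ncoprod$ will require genuine work via a Yoneda-style reduction.

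For $\Triv \dashv \fgt$, the key observation is that $\Triv(\cC) = \int \cC$ lives entirely over the subcategory $\cF^\op \subseteq \Span_\cN(\cF)$. Hence a morphism of preoperads $\Triv(\cC) \to \cO$ is the same data as a map $\int\cC \to \cO|_{\cF^\op}$ over $\cF^\op$, preserving cocartesian lifts. By cocartesian straightening over $\cF^\op$ this is exactly a natural transformation $\cC \Rightarrow \fgt(\cO)$ of $\cF$-precategories. Naturality in $\Cat$-tensorings upgrades the bijection to an equivalence of hom-categories, giving the $\Cat$-enriched adjunction, and the unit is tautologically an equivalence since $\fgt\circ\Triv$ first unstraightens $\cC$ and then straightens it back.

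For the adjunction $(-)^\Ncoprod \dashv R$, I would argue as follows. Unwinding the definition, a morphism $\cC^\Ncoprod \to \cO$ must send an object $(A,c)$ with $c \in \cC(A)$ to an object of $\cO_A$, and a span-morphism $A \xleftarrow{f} B \xto{n} A'$ of $\cC^\Ncoprod$, presented as a morphism $f^*c \to n^*c'$ in $\cC(B)$, to a compatible morphism in $\cO$. To identify $R$ I would first test against the representables: by \cref{ex:n-a}, $\und A^\Ncoprod \to \Span_\cN(\cF)$ is the forgetful functor from $\Span_\cN(\cF_{/A})$, and one can show that $\Fun^{\Nstr}_\cF(\und A^\Ncoprod,\cO)$ is equivalent to the fiber $\cO_A$ suitably refined by norms. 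Yoneda then forces $R\cO$ to be the $\cF$-precategory $A \mapsto \Fun^{\Nstr}_\cF(\und A^\Ncoprod,\cO)$, and one checks directly that the resulting natural transformation $\Fun_\cF(\cC,R\cO) \to \Fun^{\Nstr}_\cF(\cC^\Ncoprod,\cO)$ is an equivalence by reducing to the case of representables, which populate $\PreCat{\cF}$ under colimits; here the formalism of cartesian unstraightening guarantees that $(-)^\Ncoprod$ sends colimits of $\cF$-precategories to colimits in $\PreOp{\cF}{\cN}$, since $\Un^\cart$ is an equivalence and $\Span$ preserves the relevant limits (hence, on the left-adjoint side, the relevant colimits).

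The remaining claims are then relatively formal. $\Cat$-linearity of both left adjoints reduces to showing $\Triv(K\times\cC) \simeq K \times \Triv(\cC)$ and $(K\times\cC)^\Ncoprod \simeq K\times\cC^\Ncoprod$ over $\Span_\cN(\cF)$, both of which follow from the evident identities $\int(K\times\cC) \simeq K\times\int\cC$ and $\Un^\cart(K\times\cC) \simeq K\times\Un^\cart(\cC)$ together with compatibility of $\Span$ with products from \cref{thm:spans}(2); $\Cat$-linearity of $\incl$ then follows from the fact that the pullback square in \cref{lem:unfurl} is componentwise $\Cat$-linear. Finally, (3) is immediate: the pullback square shows that the underlying $\cF$-precategories of both $\Triv(\cC)$ and $\cC^\Ncoprod$ recover $\cC$, so if $\cC$ is an $\cF$-category (finite product preserving) then both are $(\cF,\cN)$-operads; and conversely if $\cO$ is an operad then $\fgt(\cO) = \cO|_{\cF^\op}$ is an $\cF$-category by definition, while $R\cO$ being an $\cF$-category follows from the Yoneda description combined with the fact that the representables $\und A^\Ncoprod$ send finite coproducts in $A$ to finite products in the preoperad category.

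The hard part will be making the Yoneda argument for $(-)^\Ncoprod$ rigorous — in particular, correctly identifying the image of $\und A^\Ncoprod$ under maps into a preoperad and checking that the resulting candidate for $R\cO$ is functorial in the backwards maps of $\cF$. Everything else is formal bookkeeping once the adjunction is set up.
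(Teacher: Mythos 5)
Since the paper establishes this theorem only by citation to \cite[Prop.~2.26, Thm.~2.57]{LLP} and gives no in-text argument, there is no proof in the paper itself to compare against; I'll assess your proposal on its own terms. Your treatment of $\Triv \dashv \fgt$ is correct: $\Triv(\cC)=\int\cC$ lies entirely over $\cF^\op$, so a morphism of preoperads $\Triv(\cC)\to\cO$ factors uniquely through $\cO|_{\cF^\op}=\cO\times_{\Span_\cN(\cF)}\cF^\op$ preserving cocartesian lifts, and straightening over $\cF^\op$ finishes it, with the tautological unit. The $\Cat$-linearity checks (compatibility of $\int(-)$, $\Un^\cart(-)$ and $\Span$ with the tensoring by $K\times-$) are fine, and so is the part of (3) concerning $\Triv$, $(-)^{\Ncoprod}$ and $\fgt$, which does follow directly from the pullback square of \cref{lem:unfurl}.

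The gap is in your justification that $(-)^{\Ncoprod}$ is a left adjoint. You reduce to colimit preservation via a Yoneda argument (which does correctly pin down what $R\cO(A)=\Fun_{\cF}^{\Nstr}(\und{A}^{\Ncoprod},\cO)$ must be, if $R$ exists), but your argument that $(-)^{\Ncoprod}$ preserves colimits — ``since $\Un^\cart$ is an equivalence and $\Span$ preserves the relevant limits (hence, on the left-adjoint side, the relevant colimits)'' — is a non-sequitur. That $\Span\colon\AdTrip\to\Cat$ preserves limits (\cref{thm:spans}) gives no information about colimits, and the parenthetical has no meaning here: limit preservation at most suggests $\Span$ might admit a left adjoint, which is irrelevant to whether $(-)^{\Ncoprod}$ preserves colimits. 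And colimit preservation really is the crux: the functor passes from $\Fun(\cF^\op,\Cat)$ through the (un)straightening equivalence, equips the total space with an adequate-triple structure, applies $\Span$, and then lands in a non-full subcategory $\PreOp{\cF}{\cN}\subset\Cat_{/\Span_\cN(\cF)}$ whose colimits are not computed naively in the slice. Without independently establishing colimit preservation, or instead directly constructing the unit/counit, or otherwise verifying that the formula for $R\cO$ gives a right adjoint on the nose, the Yoneda reduction does not close: it does not show that $\Fun_\cF(\cC,R\cO)\to\Fun_{\cF}^{\Nstr}(\cC^{\Ncoprod},\cO)$ is an equivalence for general $\cC$. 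Finally, the argument for $R$ preserving the $\cF$-category condition in (3) is also left as an assertion; it needs the observation that $\und{A}^{\Ncoprod}=\Span_\cN(\cF_{/A})$, extensiveness of $\cF$, and \cref{thm:spans}(2) to relate $\Span_\cN(\cF_{/A\amalg B})$ to a product, which you gesture at but do not carry out.
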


It is often more convenient to work in the straightened world,
i.e.~with fibrations admitting all cocartesian lifts.
The following allows us to achieve this for the above constructions
by freely adding the missing cocartesian lifts.

\begin{definition}\label{def:envelope}
    Let $\Ar_\cN(\Span_\cN(\cF))$ denote the full subcategory of $\Ar(\Span_\cN(\cF))$ on the forwards maps.
    By \cite[Corollary 2.22]{HHLN-Two} this is equivalent to the span category $\Span_{\pb,\cN}(\Ar_\cN(\cF))$ with morphisms of the form
    \[\begin{tikzcd}[cramped]
        \cdot & \cdot & \cdot \\
        \cdot & \cdot & \cdot
        \arrow["\cN", from=1-1, to=2-1]
        \arrow[from=1-2, to=1-1]
        \arrow["\cN", from=1-2, to=1-3]
        \arrow["\lrcorner"{anchor=center, pos=0.125, rotate=-90}, draw=none, from=1-2, to=2-1]
        \arrow["\cN", from=1-2, to=2-2]
        \arrow["\cN", from=1-3, to=2-3]
        \arrow[from=2-2, to=2-1]
        \arrow["\cN"', from=2-2, to=2-3]
    \end{tikzcd}\]
    i.e.~where backwards maps are pullback squares and forwards maps lie in $\Ar(\cN)$.
    Given $\cO \in \PreOp{\cF}{\cN}$, its envelope $\Env(\cO) \to \Span_\cN(\cF)$ is the horizontal composite
    \[\begin{tikzcd}[cramped]
        {\Env(\cO)} & {\Ar_\cN(\Span_\cN(\cF))} & {\Span_\cN(\cF)} \\
        \cO & {\Span_\cN(\cF)}
        \arrow[from=1-1, to=1-2]
        \arrow[from=1-1, to=2-1]
        \arrow["\lrcorner"{anchor=center, pos=0.125}, draw=none, from=1-1, to=2-2]
        \arrow["t"', from=1-2, to=1-3]
        \arrow["s", from=1-2, to=2-2]
        \arrow[from=2-1, to=2-2]
    \end{tikzcd}\]
\end{definition}

\begin{proposition}
    The above defines a $\Cat$-linear adjunction
    \[
        \Env \colon \PreOp{\cF}{\cN} \rightleftarrows \PreNmCat{\cF}{\cN} \noloc \fgt.
    \]
    If $(\cF,\cN)$ is even extensive, this restricts to an adjunction $\POp{\cF}{\cN} \rightleftarrows \NmCat{\cF}{\cN}$.
\end{proposition}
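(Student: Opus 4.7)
The plan is to adapt the classical free symmetric monoidal envelope construction to the parametrized setting. First, I would verify that $\Env(\cO) \to \Span_\cN(\cF)$ via the target projection admits all cocartesian lifts (so that it truly lies in $\PreNmCat{\cF}{\cN}$), then construct a unit and counit and verify the triangle identities. The $\Cat$-linearity follows almost automatically from the pullback definition, and the extensive case reduces via \cref{prop:span-semiadd} to a direct check on the underlying $\cF$-precategories.

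\textbf{Cocartesian lifts.} Given $(o, s \xto{n} t) \in \Env(\cO)$ and a span $\alpha = (t \xleftarrow{b} y \xto{f} t')$ in $\Span_\cN(\cF)$, I would construct the cocartesian lift as follows. Pull back $n$ along $b$ to obtain a pullback square with $z \coloneqq s \times_t y$ and a new forward map $n' \colon z \to y$, which remains in $\cN$ by the span pair axioms. Now use that $\cO$ has cocartesian lifts over $\cF^\op$ to lift the backward map $s \xleftarrow{b'} z$ to a cocartesian morphism $o \to o'$ in $\cO$. The desired cocartesian lift in $\Env(\cO)$ is then $(o, n) \to (o', z \xto{f n'} t')$, where the connecting morphism in $\Ar_\cN(\Span_\cN(\cF))$ has middle term $(z \xto{n'} y)$ in the span description of \cref{def:envelope}. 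Verifying this is cocartesian is a short diagram chase combining the source-fibration structure of $\Ar_\cN(\Span_\cN(\cF))$ with the cocartesian hypothesis on $\cO$.

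\textbf{Adjunction and $\Cat$-linearity.} The unit $\eta_\cO \colon \cO \to \Env(\cO)$ is induced by the diagonal $o \mapsto (o, \id_{s(o)})$, which is a map of preoperads because applying the above lift recipe to an object with identity second coordinate returns an object of the same form. The counit $\epsilon_\cD \colon \Env(\cD) \to \cD$ for a normed precategory $\cD$ sends $(d, s \xto{n} t) \mapsto n_! d$ via the cocartesian pushforward in $\cD$ (which exists for every morphism since $\cD$ is fully cocartesian), and automatically preserves all cocartesian lifts. Triangle identities then follow because $\id_!$ is the identity and cocartesian pushforwards compose. For $\Cat$-linearity, pullbacks commute on the nose with the cartesian product, so $\Env(K \times \cO) \simeq K \times \Env(\cO)$ for all $K \in \Cat$, making $\Env$ strongly $\Cat$-linear; by \cref{rem:2cat} the adjunction therefore promotes to a 2-categorical adjunction yielding the desired hom equivalences of mapping categories.

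\textbf{Extensive case and main obstacle.} When $\cO|_{\cF^\op}$ preserves finite products, I want to show $\Env(\cO)$ also does as a functor $\Span_\cN(\cF) \to \Cat$. By \cref{prop:span-semiadd} it suffices to check product preservation after restricting to $\cF^\op$. The fiber of $\Env(\cO)$ over $t \in \cF^\op$ is the category of pairs $(o, n \colon s \to t)$ with $n \in \cN$ and $o \in \cO(s)$; for $t = A \amalg B$, extensivity of $(\cF, \cN)$ gives $\cN_{/A \amalg B} \simeq \cN_{/A} \times \cN_{/B}$, which combined with $\cO(s_A \amalg s_B) \simeq \cO(s_A) \times \cO(s_B)$ from the operad hypothesis decomposes the fiber as a product, with the analogous argument going through for morphisms. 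The main obstacle I anticipate is the careful bookkeeping in the cocartesian lift step: one must juggle the source/target projections of the arrow category of a span category and invoke the span pair axioms at exactly the right moments; once this is in place, the rest of the argument is largely formal.
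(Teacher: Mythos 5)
The paper's proof of this proposition is a one-line citation (existence from \cite[Theorem E]{BHS-Env} via \cite[Theorem 2.23]{LLP}, and the extensivity addendum from \cite[Theorem 2.57(4)]{LLP}), whereas you reconstruct the adjunction from scratch by unwinding the unfurling/envelope construction. Your core ideas are the correct ones, and they are essentially the content of what the paper cites: the cocartesian lift over a span $\alpha = (t \xleftarrow{b} y \xto{f} t')$ from $(o, n)$ is obtained by pulling $n$ back along $b$ to get $n' \colon z \to y$ (again in $\cN$ by the span pair axioms), composing with $f$, and taking the cocartesian lift of the backward map $s \leftarrow z$ in $\cO$; the unit is the diagonal $o \mapsto (o, \id_{s(o)})$ (which on the fibrational side is pullback of $\cO$ along the identity section $\Span_\cN(\cF) \to \Ar_\cN(\Span_\cN(\cF))$); and the counit is the pushforward $(d, n) \mapsto n_! d$, which exists because $\cD$ has cocartesian lifts over all of $\Span_\cN(\cF)$. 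Your $\Cat$-linearity argument (pullbacks commute with $K \times -$, so $\Env(K \times \cO) \simeq K \times \Env(\cO)$) is correct and is exactly what one needs given \cref{rem:2cat}. Your reduction of the extensive case to $\cF^\op$ via \cref{prop:span-semiadd} and the decomposition $\cN_{/A \amalg B} \simeq \cN_{/A} \times \cN_{/B}$ (which holds: closure of $\cN$ under coproducts gives one direction, and pulling back an $\cN$-map along the coproduct inclusions gives the other) combined with the Segal condition on $\cO$ is also the right argument.

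The one place where your write-up is thinner than it should be is the step ``Triangle identities then follow because $\id_!$ is the identity and cocartesian pushforwards compose.'' In the $\infty$-categorical setting this is not a pointwise check: you must produce $\eta$ and $\epsilon$ as coherent natural transformations (not just object-wise assignments) and verify the triangle identities as homotopies, which a priori carries an infinite tower of higher coherences. The way this is usually handled --- and what the cited \cite{BHS-Env} does --- is to not check the triangle identities directly but to recognize $\Env$ as the free cocartesian fibration functor $\cE \mapsto \cE \times_{s, S} \Ar(S)$ (here restricted to the forward arrows $\Ar_\cN$), whose left adjointness to the forgetful functor is established once and for all at the level of the adjunction $\Cat_{/S} \rightleftarrows \Cocart(S)$, with the unit and counit falling out of the universal property rather than being assembled by hand. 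Similarly, ``automatically preserves all cocartesian lifts'' for the counit is true but does require the small diagram chase you mention identifying $\cD(\alpha)(n_!d)$ with $(fn')_!(b')_!d$ via the span composition rule. Your proposal correctly flags the cocartesian-lift bookkeeping as the main obstacle and gets that step right; just be aware that the naturality/coherence layer is what makes the general reference worth citing rather than something that ``follows formally'' once the lifts are in hand.
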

\begin{proof}
    The existence of this adjunction is a special case of \cite[Theorem E]{BHS-Env},
    c.f.~\cite[Theorem 2.23]{LLP}. The addendum is \cite[Theorem 2.57(4)]{LLP}.
\end{proof}

\begin{definition}
    We denote the right adjoint of the 2-functor $\Env \circ\,(-)^{\Ncoprod}$ by
    \[
        \und{\CAlg}_\cF^\cN \colon \PreNmCat{\cF}{\cN} \to \PreCat{\cF}
    \]
    and refer to $\und{\CAlg}_\cF^\cN(\cC)$ as the
    \emph{$\cF$-precategory of $\cN$-normed algebras in $\cC$.}
    By the above, if $(\cF,\cN)$ is extensive,
    this restricts to $\und{\CAlg}_\cF^\cN \colon \NmCat{\cF}{\cN} \to \PCat{\cF}$.
\end{definition}

We now give a more concrete description of $\und{\CAlg}_\cF^\cN(\cC)$
for some $\cN$-normed $\cF$-precategory $\cC$.
Let $\und{A} = \cF(-,A)$ denote the represented $\cF$-precategory.
It was shown in \cite[Lemma 2.2.7]{CLL-Global} that there is an equivalence
\[
    \ev_{\id_A} \colon \Fun_\cF(\und{A},\cC) \simeq \cC(A)
\]
natural in both $\cC \in \PreCat{\cF}$ and $A \in \cF^\op$.
We thus obtain natural equivalences
\begin{align*}
    \und{\CAlg}_\cF^\cN(\cC)(A)
    &\simeq \Fun_\cF(\und{A}, \und{\CAlg}_\cF^\cN(\cC))\\
    &\simeq \Fun_{/\Span_\cN(\cF)}^{\cF^\op\dcc}(\Span_\cN(\cF_{/A}),\smallint\cC)
    \simeq \Fun_{\cF}^{\Nstr}(\Env(\und{A}^{\Ncoprod}),\cC)
\end{align*}
Analogously, one has natural equivalences
\[
    \cC(A)
    \simeq \Fun_\cF(\und{A}, \cC)\\
    \simeq \Fun_{/\Span_\cN(\cF)}^{\cF^\op\dcc}((\cF_{/A})^\op,\smallint \cC)
    \simeq \Fun_{\cF}^{\Nstr}(\Env(\Triv(\und{A})),\cC).
\]
Furthermore, using that $(-)^{\Ncoprod}$ is a left adjoint and the colimit of the Yoneda embedding $\cF \to \PSh(\cF)$
is terminal, one easily shows (c.f.~\cite[Lemma 2.31]{LLP})
that the underlying category $\CAlg_\cF^\cN(\cC) \coloneqq \lim_{\cF^\op}\und{\CAlg}_\cF^\cN(\cC)$ is given by
\[
    \CAlg_\cF^\cN(\cC)
    \simeq \Fun_{/\Span_\cN(\cF)}^{\cF^\op\dcc}(\Span_\cN(\cF), \smallint\cC)
    \simeq \Fun_{\cF}^{\Nstr}(\Env((*)^\Ncoprod), \cC)
\]
where $*$ denotes the terminal $\cF$-category.
Moreover, this equivalence is compatible with the forgetful functors to $\lim_{\cF^\op} \cC \simeq \Fun_{/\cF^\op}^\cocart(\cF^\op,\cC)$.

\begin{remark}
    Normed $G$-categories are also known
    as $G$-symmetric monoidal categories \cite{Nardin-Shah}.
    The definition $\CAlg_G(\cC) = \Fun_{/\Span(G)}^{\F_G^\op\dcc}(\Span(G),\int \cC)$ is originally due to Bachmann--Hoyois \cite{Bachmann-Hoyois},
    and can also be shown to agree with the one considered by Nardin--Shah \cite{Nardin-Shah}, c.f.~\cite[Remark 5.6.4]{Tambara} or \cite[Corollary 2.59]{puetzstueck}.
\end{remark}

\begin{definition}\label{def:calg-forget}
    The forgetful $\cF$-functor
    $
        \bbU \colon \und{\CAlg}_\cF^\cN(\cC) \to \cC
    $
    is the unique one making the following square commute naturally in $\cC \in \PreNmCat{\cF}{\cN}$ and $A \in \cF^\op$:
    \[\begin{tikzcd}
    {\Fun_{\cF}^{\Nstr}(\Env(\und{A}^{\Ncoprod}), \cC)} & {\und{\CAlg}_\cF^\cN(\cC)(A)} \\
        {\Fun_\cF^{\Nstr}(\Env(\Triv(\und{A})),\cC)} & {\cC(A)}
        \arrow["\sim", from=1-1, to=1-2]
        \arrow["\Env(\incl)^*"', from=1-1, to=2-1]
        \arrow["{\mathbb{U}(A)}", from=1-2, to=2-2]
        \arrow["\sim", from=2-1, to=2-2]
        \arrow[ draw=none, from=2-1, to=2-2]
    \end{tikzcd}\]
\end{definition}

\begin{example}\label{ex:finset}
    In the special case $(\cF,\cN) = (\F,\F)$ and $A = 1 \in \F$
    one directly computes that $\Env(\incl) \colon \F^{\sqcup,\simeq} \to \F^{\sqcup}$
    is the inclusion of the groupoid core of the category of finite sets
    equipped with its cocartesian symmetric monoidal structure,
    which as morphism of cocartesian fibrations over $\Span(\F)$
    is given by $\Ar(\F)^{\pb,\op} \to \Span_{\pb,\all}(\Ar(\F)) \simeq \Ar_\F(\Span(\F))$ where `$\pb$' denotes the class of morphisms in $\Ar(\F)$
    which are pullback squares, c.f.~\cite[Example 2.41]{LLP}.
    In particular, we recognize $\F^{\sqcup,\simeq}$ as the free symmetric
    monoidal category on one object, and $\F^{\sqcup}$ as the free symmetric
    monoidal category on one algebra.
    For any symmetric monoidal category $\cC \in \Mack(\Cat)$,
    we can identify the forgetful functor $\CAlg(\cC) \to \cC$
    with the restriction $\Fun^\tensor(\F^{\sqcup},\cC) \to \Fun^\tensor(\F^{\sqcup,\simeq},\cC^\tensor)$, where both equivalences are induced by evaluating at $1 \in \F$.
\end{example}

\section{Borel Parametrized Algebra}\label{sec:borel}

In this section we define an analogue of the theory of Borel-equivariant objects
in the setting of parametrized higher algebra.
The following is the main example to keep in mind throughout this section,
and will be vital for constructions of normed global categories from 1-categorical
data in \cref{sec:ex}.

\begin{example}\label{ex:borel-global}
    Given a category $\cC$, we can right Kan extend along the inclusion
    $* \subseteq \Glo$ to obtain a Borel global category
    $\cC^\flat \in \Fun(\Glo^\op,\Cat) \simeq \Cat_\gl$
    which by the pointwise formula for right Kan extensions
    sends $G$ to $\cC^{BG} = \Fun(BG,\cC)$ with the obvious restriction functoriality.
    We will see below that the inclusions $i \colon * \subseteq \Orb \subset \Glo$
    induce a fully faithful inclusion
    $\Span(i) \colon \Span(\F) \hookrightarrow \Span_{\Forb}(\Fglo)$,
    and given a symmetric monoidal category $\cC^\tensor \in \Mack(\Cat)$,
    we may consider its monoidal global Borelification
    $\cC^{\tensor,\flat} \coloneqq \Span(i)_*\cC^\tensor \in \Mack_\gl(\Cat)$.
    As we will learn throughout this section
    (c.f.~\cref{rem:borel-glo-functoriality} below),
    this normed global category admits the following description:
    \begin{enumerate}
        \item For a map of groups $\alpha \colon G \to K$,
            the functor $\cC^{\tensor,\flat}(K \leftarrow G = G) \colon \cC^{\tensor,\flat}(K) \to \cC^{\tensor,\flat}(G)$ agrees with the usual restriction
            $\alpha^* \colon \cC^{BK} \to \cC^{BG}$.

        \item The restriction of $\cC^{\tensor,\flat}$
            along the functor $\Span(\F) \to \Span_{\Forb}(\Fglo)$
            induced by the unique coproduct-preserving functor
            $\F \to \Fglo,\ * \mapsto G$
            encodes the pointwise symmetric monoidal structure on $\cC^{BG}$.

        \item The remaining functoriality to be described is that in forwards maps
            \[
                \cC^{\tensor,\flat}(H = H \to G) \colon \cC^{BH} \to \cC^{BG}
            \]
            for inclusions of subgroups $H \leq G$.
            This sends an object $X \in \cC^{BH}$ to the $G/H$-fold tensor product
            $X^{\tensor|G/H|}$ equipped with a certain $G$-action.
            In view of two uniqueness results we will discuss below,
            it is reasonable to refer to this as being given by `the'
            symmetric monoidal norm.
            In the case where $\cC^\tensor$ is an ordinary symmetric monoidal
            1-category, this guarantees that the functor agrees with the classical symmetric monoidal norm where the $G$-action is restricted in a certain way
            from the canonical $\Sigma_{|G/H|} \wr H$ action,
            c.f.~\cite[Construction A.3.4]{Tambara}.
            In particular, if we consider some symmetric monoidal model of the 1-category
            of spectra such as symmetric spectra $\Sp^{\Sigma,\tensor}$ (see \cref{subsec:ex}),
            then we recognize $(\Sp^{\Sigma,\tensor})^\flat(H = H \to G)$ as the well-known
            Hill--Hopkins--Ravenel norm $N_H^G \colon H\Sp^\Sigma \to G\Sp^\Sigma$ from \cite{HHR}.
    \end{enumerate}
\end{example}

Before we specialize to the case of Borelifications in equivariant and global homotopy theory in \cref{subsec:borel-eq}, we will develop some general theory in the following subsection.

\subsection{General Borel Theory}

In this subsection, we define the general notion of a Borel inclusion of span pairs
and prove the general version of \cref{introthm:borel} from the introduction.

\begin{definition}\label{def:borel-inc}
    Let $(\blf,\bln)$ be a
    span pair and $i \colon \bsf \subseteq \blf$ a full subcategory
    so that $\bsn \coloneqq \bsf \cap \bln \subseteq \bln$ forms a sieve,
    i.e.~every morphism in $\bln$ with target in $\bsn$ also has source in $\bsn$.
    Then $(\bsf,\bsn)$ is again a span pair, and we call $i \colon (\bsf,\bsn) \subseteq (\blf,\bln)$ a \emph{Borel inclusion}.
\end{definition}

\begin{lemma}\label{lem:borel-ext}
    If $(\cF,\cN)$ is furthermore extensive and
    $\bsf \subseteq \blf$ is closed under coproducts,
    then the equivalence $\amalg \colon \cF_{/A} \times \cF_{/B} \simeq \cF_{/A \amalg B}$ restricts to an equivalence
    \[
        \amalg^\cE \colon (\cE \times_{\cF} \cF_{/A}) \times (\cE \times_{\cF} \cF_{/B}) \simeq \cE \times_\cF \cF_{/A \amalg B}
    \]
    In particular, picking $A,B \in \cE$, we see that also $(\cE,\cM)$ is extensive.
\end{lemma}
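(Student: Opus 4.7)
The plan is first to prove that $\amalg^\cE$ is an equivalence, and then to deduce extensivity of $(\cE,\cM)$ as a corollary. For the first part, observe that $\cE \times_\cF \cF_{/A}$ is the full subcategory of $\cF_{/A}$ on arrows $X \to A$ with $X \in \cE$, so the restriction $\amalg^\cE$ of the equivalence $\amalg$ is automatically fully faithful. The task therefore reduces to identifying its essential image, i.e.~to showing that for any $(X \to A, Y \to B) \in \cF_{/A} \times \cF_{/B}$ we have $X \amalg Y \in \cE$ if and only if $X,Y \in \cE$.

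The forward direction is immediate from the assumption that $\cE \subseteq \cF$ is closed under coproducts. For the converse, the key observation is that the coproduct inclusion $X \hookrightarrow X \amalg Y$ arises as the coproduct $\id_X \amalg (\empty \to Y)$ of morphisms in $\cN$ (using that $\empty \to Y \in \cN$ by condition (3) of extensivity of $(\cF,\cN)$), hence lies in $\cN$ by condition (2). If $X \amalg Y \in \cE$, then this inclusion is a morphism of $\cN$ whose target lies in $\cM = \cE \cap \cN$, so the sieve property of the Borel inclusion forces the source $X$ into $\cE$; by symmetry $Y \in \cE$ as well.

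For the second part, picking $A,B \in \cE$, the equivalence $\amalg^\cE$ specializes to the binary extensivity condition for $\cE$ (noting that $\cE_{/A} \simeq \cE \times_\cF \cF_{/A}$ since $\cE \subseteq \cF$ is full, and that $\cE$ inherits finite coproducts from $\cF$ by coproduct closure); the general $n$-ary case follows by induction. Closure of $\cM$ under finite coproducts is immediate from the corresponding closures of $\cN$ and of $\cE$. Finally, for $A \in \cE$, the morphisms $\empty \to A$ and $\nabla \colon A \amalg A \to A$ lie in $\cN$ by assumption and have source and target in $\cE$ (using that $\empty \in \cE$ as the empty coproduct), so they lie in $\cM$.

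The only non-formal step here is recognizing that coproduct inclusions lie in $\cN$; once this is established, the sieve property of the Borel inclusion does all the work, and the remainder is routine bookkeeping against the three clauses of the definition of an extensive span pair.
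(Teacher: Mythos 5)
Your proof is correct and follows essentially the same approach as the paper: both hinge on observing that coproduct summand inclusions lie in $\cN$ (via conditions (2) and (3) of extensivity), so that the sieve hypothesis on $\cM \subseteq \cN$ closes $\cE$ under passing to coproduct summands. You are in fact a touch more careful at this key step than the paper, which loosely asserts that $\cE \subseteq \cF$ is a sieve even though the Borel-inclusion hypothesis only controls $\cN$-morphisms; the remaining difference is cosmetic, as the paper constructs the inverse $(j_A^*, j_B^*)$ explicitly while you verify essential surjectivity of the restricted functor directly.
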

\begin{proof}
    Since $\cE \subseteq \cF$ is closed under coproducts
    $\amalg$ restricts to a functor $\amalg^\cE$ between the full subcategories.
    By generalities about extensive categories (c.f.\cite[Remark 2.2.3]{Tambara}),
    pulling back along the summand inclusions $j_A \colon A \to A \amalg B$
    and $j_B \colon B \to A \amalg B$ defines an inverse $(j_A^*,j_B^*) \colon \cF_{/A \amalg B} \to \cF_{/A} \times \cF_{/B}$ of $\amalg$.
    Since $\cE \subseteq \cF$ is a sieve, it follows that $(j_A^*,j_B^*)$ restricts
    to an inverse of $\amalg^\cE$.
    Finally, if $A,B \in \cE$, then $\amalg^\cE$ is simply the coproduct functor
    $\cE_{/A} \times \cE_{/B} \to \cE_{/ A \amalg B}$, so the above implies that $\cE$ is extensive.
    Since we defined $\cM \coloneqq \cE \cap \cN$, it is clear that then $(\cE,\cM)$ is an extensive span pair.
\end{proof}

\begin{definition}
    In the above situation of the above Lemma, we call $i \colon (\cE,\cM) \subseteq (\cF,\cN)$ an \emph{extensive Borel inclusion.}
\end{definition}

\begin{example}\label{ex:free-cop-borel}
    If $Q \subset S$ is an orbital subcategory (c.f.~\cref{ex:extensive}) and $T \subseteq S$ is a full subcategory so that $P \coloneqq T \cap Q \subseteq Q$ is a sieve,
    then $(\F[T],\F[P]) \subseteq (\F[S],\F[Q])$ is an extensive Borel inclusion.
\end{example}

\begin{example}\label{ex:borel-inc}
    Let $G$ be a finite group. The following examples are easily verified:
    \begin{enumerate}
        \item The inclusion $BG \subseteq \Orb_G$ induces via the above example the extensive Borel inclusion
            $(\F[BG],\F[BG]) \subseteq (\F_G,\F_G)$.
            This extends objects with $G$-action to genuinely $G$-equivariant objects.

        \item Similarly, we can apply the above example to the inclusion
            $(\Orb_G,\Orb_G) \subseteq (\Glo_{/G}, \Glo_{/G} \times_\Glo \Orb)$
            to obtain the extensive Borel inclusion $(\F_G,\F_G) \subseteq (\Fglo_{/G}, \FGlo_{/G} \times_{\Fglo}\Forb)$.
            This extends (genuinely) $G$-equivariant objects to $G$-global objects.

        \item As a special case of the previous point,
            we have the extensive Borel inclusion $(\F,\F) \subseteq (\Fglo,\Forb)$, which extends objects to global ones.
    \end{enumerate}
\end{example}

\begin{lemma}\label{lem:borel-inc-span-ff}
    Let $i \colon (\bsf, \bsn) \subseteq (\blf,\bln)$ be a Borel inclusion.
    Then the induced functor $\Span(i) \colon \Span_{\bsn}(\bsf) \to \Span_{\bln}(\blf)$ is fully faithful.
\end{lemma}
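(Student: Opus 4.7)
The plan is to invoke the pointwise model for mapping spaces in span categories from \cite{HHLN-Two}: for an adequate triple $(\cC, \cC_B, \cC_F)$, the mapping $\infty$-groupoid $\Span_{B,F}(\cC)(X,Y)$ is equivalent to the core of the $\infty$-category of spans $X \xleftarrow{b} Z \xto{f} Y$ in $\cC$ with $b \in \cC_B$ and $f \in \cC_F$, whose morphisms are equivalences $Z \xto{\simeq} Z'$ in $\cC$ compatible with both legs. With this in hand, fully faithfulness of $\Span(i)$ reduces to checking that for all $X, Y \in \bsf$ the induced functor between the corresponding groupoids of spans is an equivalence of $\infty$-groupoids.

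For essential surjectivity, consider a span $X \xleftarrow{b} Z \xto{n} Y$ in $\Span_{\bln}(\blf)$ with $X, Y \in \bsf$, $n \in \bln$, and $b \in \blf$ arbitrary. Since $n \in \bln$ has target $Y \in \bsf$, the sieve hypothesis on $\bsn \subseteq \bln$ forces $Z \in \bsf$ and hence $n \in \bsn = \bsf \cap \bln$. Fullness of $\bsf \subseteq \blf$ then gives that $b$ already lies in $\bsf$, so the entire span is represented in $\Span_{\bsn}(\bsf)(X,Y)$.

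For fullness on morphisms of mapping groupoids, an equivalence between two such spans with middle objects $Z, Z' \in \bsf$ is an equivalence $Z \simeq Z'$ in $\blf$ compatible with the structure maps; by fullness of $\bsf \subseteq \blf$, any such equivalence already lives in $\bsf$, and the commuting data transfer along for free. Combined with essential surjectivity, this shows the induced map of mapping groupoids is an equivalence, hence $\Span(i)$ is fully faithful. The only non-trivial step is really the first — setting up the explicit pointwise model for mapping spaces — after which the argument is a direct unwinding of the definitions, with the sieve hypothesis on $\bsn \subseteq \bln$ doing exactly the work it was designed for.
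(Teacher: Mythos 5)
Your proof takes essentially the same approach as the paper: both reduce to the mapping-space formula $\map_{\Span_\bsn(\bsf)}(X,Y) \simeq (\bsf_{/X} \times_\bsf \bsn_{/Y})^\simeq$, then check that the sieve condition forces the apex and forward leg of any span in the larger category to live in $(\bsf,\bsn)$, while fullness of $\bsf \subseteq \blf$ takes care of the backward leg. The one minor imprecision: when you check morphisms, you argue by ``fullness'' of $\bsf \subseteq \blf$ and then conclude the map of mapping groupoids is an equivalence from essential surjectivity plus fullness alone — but ess.\ surj.\ $+$ full does not in general imply an equivalence of $\infty$-groupoids (e.g.\ $BG \to B1$ for $G$ nontrivial). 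What you actually need is that $\bsf \subseteq \blf$ is \emph{fully faithful} (i.e.\ a full subcategory inclusion, inducing equivalences on mapping spaces), which it is; the paper makes this explicit by observing that fully faithful functors are closed under limits in $\Ar(\Cat)$, so the induced map on slice fiber-products is already fully faithful.
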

\begin{proof}
    Recall that $\map_{\Span_\bsn(\bsf)}(X,Y) \simeq (\bsf_{/X} \times_\bsf \bsn_{/Y})^\simeq$. We claim that under the given hypotheses we already have an equivalence of categories
    \[
        i \colon \bsf_{/X} \times_{\bsf} \bsn_{/Y} \to \blf_{/X} \times_{\blf} \bln_{/Y}
    \]
    for all $X,Y \in \bsf$.
    Since $\bsf \subseteq \blf$ is fully faithful, also $\bsf_{/X} \subseteq \blf_{/X}$ is for any $X \in \bsf$.
    Moreover, the sieve condition implies that the inclusion $\bsn_{/Y} \to \bln_{/Y}$
    is in fact an equivalence for all $Y \in \bsf$.
    This yields essential surjectivity of $i$,
    and we get fully faithfulness from the fact that fully faithful functors
    are closed under limits in $\Ar(\Cat)$.
\end{proof}

\begin{definition}
    Given a Borel inclusion $i \colon (\bsf,\bsn) \subseteq (\blf,\bln)$,
    right Kan extension along $i$ respectively $\Span(i)$
    defines fully faithful 2-functors (c.f.~\cref{rem:2cat})
    \[
        (-)^\flat \coloneqq i_* \colon
        \PreCat{\bsf} \subseteq \PreCat{\blf}
        \quad\text{and}\quad
        (-)^\flat \coloneqq \Span(i)_* \colon \PreNmCat{\bsf}{\bsn} \subseteq \PreNmCat{\blf}{\bln}
    \]
    which we will refer to as \emph{Borelification} respectively \emph{monoidal Borelification.}
\end{definition}

\begin{lemma}\label{lem:borel-product}
    If $i$ is even an extensive Borel inclusion,
    the above functors restrict to
    \[
        (-)^\flat \colon \PCat{\bsf} \subseteq \PCat{\blf}
        \quad\text{and}\quad
        (-)^\flat \colon \Mack_\bsf^\bsn(\Cat) \subseteq \Mack_\blf^\bln(\Cat).
    \]
\end{lemma}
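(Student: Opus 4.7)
The plan is to prove the two assertions separately, reducing the second to the first via a base-change argument.

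For the first assertion, I use the pointwise formula $(i_*\cC)(A) \simeq \lim_{i/A} \cC$, where the indexing category is equivalent to $\bsf \times_\blf \blf_{/A}$ (the full subcategory of $\blf_{/A}$ on maps with source in $\bsf$), with the diagram given by the projection to $\bsf$. The key input is Lemma~\ref{lem:borel-ext}, which provides the equivalence
\[
    \bsf \times_\blf \blf_{/A \amalg A'} \simeq (\bsf \times_\blf \blf_{/A}) \times (\bsf \times_\blf \blf_{/A'})
\]
under which the projection to $\bsf$ corresponds to the coproduct functor $(B_1, B_2) \mapsto B_1 \amalg B_2$. Since $\cC$ sends finite coproducts in $\bsf$ to products in $\Cat$, the limit over the right-hand side splits as a product of limits over the two factors, yielding $(i_*\cC)(A \amalg A') \simeq (i_*\cC)(A) \times (i_*\cC)(A')$. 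The terminal case is analogous, as $i/\emptyset_\blf$ reduces to $(\emptyset_\bsf, \id)$.

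For the second assertion, Proposition~\ref{prop:span-semiadd} reduces the problem to showing that the restriction $(\Span(i)_* \cC^\otimes)|_{\blf^\op}$ preserves finite products. My plan is to deduce this from the first assertion via the base-change equivalence
\[
    (\Span(i)_* \cC^\otimes)|_{\blf^\op} \simeq i_*(\cC^\otimes|_{\bsf^\op}),
\]
applied to the product-preserving restriction $\cC^\otimes|_{\bsf^\op}$.

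The main technical step is establishing this base change, which I would prove by exhibiting a coinitial (i.e.\ left cofinal) functor $\Phi \colon i/A \to A/\Span(i)$ sending $(B, i(B) \to A)$ to the pure-backwards span $A \leftarrow B = B$, so that the two pointwise limits agree. Here $A/\Span(i)$ has as objects pairs $(B \in \bsf, \sigma \colon A \to B \text{ in } \Span_\bln(\blf))$, i.e.\ spans $A \xleftarrow{\beta} C \xrightarrow{\gamma} B$ with $\gamma \in \bln$; crucially, the sieve condition forces $C \in \bsf$, so $\gamma \in \bsn$. Coinitiality of $\Phi$ is verified via the slice criterion: for each $(B_0, \sigma_0 = A \xleftarrow{\beta_0} C_0 \xrightarrow{\gamma_0} B_0)$, I expect the triple $(C_0, \beta_0, \id_{C_0})$ to be an initial object of the slice $(i/A) \times_{A/\Span(i)} (A/\Span(i))_{/(B_0, \sigma_0)}$, with the structural map to $(B_0, \sigma_0)$ given by the forwards span $C_0 = C_0 \xrightarrow{\gamma_0} B_0$. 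Unwinding the composition-of-spans data, a morphism from this initial object to $(B, \alpha, h)$ amounts to specifying the backwards leg $h \colon C_0 \to B$ of the comparison span, which is already uniquely pinned down by the data. The interplay between the sieve condition (providing $C \in \bsf$) and extensivity (already used in the first assertion) is the main substantive point.
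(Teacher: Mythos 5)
Your proposal follows the same overall route as the paper. For the second assertion, the paper likewise reduces via \cref{prop:span-semiadd} to the underlying $\blf$-precategory and invokes the base-change equivalence $(\Span(i)_*\cC^\tensor)|_{\blf^\op} \simeq i_*(\cC^\tensor|_{\bsf^\op})$, which is exactly the left Beck--Chevalley square of \cref{prop:borel-ran-img}; the paper's proof of that square is also a cofinality argument for your comparison functor, resting on the same use of the sieve condition to see that the apex $C$ of a span $A \leftarrow C \to B$ with $B \in \bsf$ and $C \to B$ in $\bln$ lies in $\bsf$. The only difference is how cofinality is verified: the paper exhibits the comparison functor as a left adjoint (the right adjoint sends $A \leftarrow C \to B$ to $A \leftarrow C$), while you check the slice criterion directly. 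Two small corrections there: your $\Phi$ must be defined on the \emph{opposite} of $\bsf \times_\blf \blf_{/A}$ (a map $B \to B'$ over $A$ induces a backwards span $B' \to B$, not $B \to B'$, in $\Span_\bsn(\bsf)$), and with the conventions straightened out the distinguished object $(C_0,\beta_0,\mathrm{id})$ is \emph{terminal}, not initial, in the comma category --- it is precisely the component of the counit of the paper's adjunction. Neither affects the conclusion, since weak contractibility is all that cofinality requires. For the first assertion your argument is actually more direct than the paper's: you split the limit using \cref{lem:borel-ext} and product-preservation of $\cC$, whereas the paper spends most of its effort verifying that the \emph{canonical} comparison map $(i_*\cC)(A \amalg B) \to (i_*\cC)(A) \times (i_*\cC)(B)$, given by restriction along the summand inclusions, is the map realizing this splitting (identifying it with the unit of an adjunction that is an equivalence by semiadditivity of $\Cat^\times$). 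You should add that check, since showing the two sides are abstractly equivalent is not quite the same as showing the structure map of $i_*\cC$ is an equivalence; but this is a routine completion of your strategy rather than a gap in it.
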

\begin{proof}
    Note that this is obvious when the Borel inclusion is of the form discussed in \cref{ex:free-cop-borel}.
    We will postpone the proof of the case of the monoidal Borelification to the proof of \cref{prop:borel-ran-img} below.

    Let $\cC \in \PCat{\cE}$.
    For $X \in \cF$ we write $\cE_{/X} \coloneqq \cE \times_\cF \cF_{/X}$
    and $\pi_X \colon \cE_{/X} \to \cE$ for the projection.
    Let $A,B \in \cF$ and consider the coproduct inclusions $j_A \colon A \to A \amalg B$ and $j_B \colon B \to A \amalg B$.
    By the pointwise formula for right Kan extensions we can describe
    the map $\cC^\flat(A \amalg B) \to \cC^\flat(A) \times \cC^\flat(B)$ as
    \begin{equation}\label{eq:idkman}
        (\cE_{/j_A}^{\op,*}, \cE_{/j_B}^{\op,*})
        \colon \lim_{(\cE_{/A \amalg B})^\op} \pi_{A\amalg B}^*\cC
        \to \left(\lim_{(\cE_{/A})^\op} \pi_A^*\cC\right) \times \left(\lim_{(\cE_{/B})^\op} \pi_B^*\cC\right),
    \end{equation}
    given by restricting along $(\cE_{/j_A})^\op \colon (\cE_{/A})^\op \to (\cE_{/A \amalg B})^\op$
    and $(\cE_{/j_B})^\op$.
    By \cref{lem:borel-ext} and its proof, we note that
    pulling back along $j_A$ defines a right adjoint $j_A^* \colon \cE_{/A \amalg B} \to \cE_{/A}$ of $\cE_{/j_A}$,
    and analogously for $B$.
    Thus $(j_A^*)^\op$ and $(j_B^*)^\op$ are limit-cofinal, and we can postcompose (\ref{eq:idkman}) to get
    \begin{align*}
        \lim_{(\cE_{/A \amalg B})^\op} \pi_{A\amalg B}^*\cC
        \xto{(\cE_{/j_A}^{\op,*},\cE_{/j_B}^{\op,*})} &\left(\lim_{(\cE_{/A})^\op} \pi_A^*\cC\right) \times \left(\lim_{(\cE_{/B})^\op} \pi_B^*\cC\right)\\
        \xto[\simeq]{((j_A^*)^{\op,*},(j_B^*)^{\op,*})} &\left(\lim_{(\cE_{/A\amalg B})^\op} \cC(\pi_A(j_A^*-))\right) \times \left(\lim_{(\cE_{/A \amalg B})^\op} \cC(\pi_B(j_B^*-))\right)\\
        \simeq\ &\lim_{X \in (\cE_{/A \amalg B})^\op} \cC(\pi_A(j_A^*X)) \times \cC(\pi_B(j_B^*X)).
    \end{align*}
    The entire composite is induced by a map
    $\phi \colon \cC(\pi_{A \amalg B}-) \Rightarrow \cC(\pi_{A}j_A^*-) \times \cC(\pi_Bj_B^*-)$
    in $\Fun^\times((\cE_{/A \amalg B})^\op,\Cat)$,
    which at $X \to A \amalg B$ comes from the summand inclusions $X \times_{A \amalg B} A \to X$
    and likewise for $B$.
    We claim that this is in fact an equivalence, so that by 2-out-of-3 also (\ref{eq:idkman})
    is an equivalence.

    To this end, note that $\cC(\pi_{A \amalg B}-) \Rightarrow \cC(\pi_Aj_A^*-)$
    is the unit of the adjunction induced by precomposing with the adjunction
    $\cE_{/j_A} \colon\cE_{A} \rightleftarrows \cE_{A \amalg B} \noloc j_A^*$, evaluated at $\pi_{A \amalg B}^*\cC$.
    By \cite[Theorem B]{Horev-Yanovski} it follows that also the functor
    \[
        (\cE_{/j_A}^{\op,*},\cE_{/j_B}^{\op,*}) \colon \Fun^\times((\cE_{/A \amalg B})^\op,\Cat)
        \to \Fun^\times((\cE_{/A})^\op,\Cat) \times \Fun^\times((\cE_{/B})^\op,\Cat)
    \]
    admits a right adjoint sending $(\cA,\cB)$ to $\cA(j_A^*-) \times \cB(j_B^*-)$,
    and that the unit at $\pi_{A \amalg B}^*\cC$ is precisely $\phi$.
    We claim that this is actually an adjoint equivalence, hence $\phi$ is an equivalence.

    Namely, this is expressing the semiadditivity of $\Cat^\times$;
    for any $\cC,\cD \in \Cat^\times$ the maps $j_\cC \coloneqq (-,*) \colon \cC \to \cC \times \cD$
    and $j_\cD \coloneqq (*,-) \colon \cD \to \cC \times \cD$ exhibit $\cC \times \cD$ as coproduct of $\cC,\cD$ in $\Cat^\times$. Indeed,
    $(j_\cC^*,j_\cD^*) \colon \Fun^\times(\cC \times \cD,\cB) \to \Fun^\times(\cC,\cB) \times \Fun^\times(\cD,\cB)$
    is inverse to sending $(F,G)$ to $F \times G \colon \cC \times \cD \to \cB \times \cB \xto{\times} \cB$
    by direct inspection.
\end{proof}

The following proposition characterizes the image of the monoidal Borelification,
and shows that writing $(-)^\flat$ for both kinds of Borelifications is unproblematic.

\begin{proposition}\label{prop:borel-ran-img}
    Given a Borel inclusion $(\bsf,\bsn) \subseteq (\blf,\bln)$
    the Beck--Chevalley transformation in the following left square is an equivalence:
    \[\begin{tikzcd}[cramped]
        {\PreCat{\cE}} & {\PreNmCat{\cE}{\cM}} & {\PCat{\cE}} & {\Mack_\bsf^\bsn(\Cat)} \\
        {\PreCat{\cF}} & {\PreNmCat{\cF}{\cN}} & {\PCat{\cF}} & {\Mack_\blf^\bln(\Cat)}
        \arrow["{(-)^\flat}"', hook, from=1-1, to=2-1]
        \arrow["\res"', from=1-2, to=1-1]
        \arrow["{(-)^\flat}", hook', from=1-2, to=2-2]
        \arrow["{(-)^\flat}"', hook, from=1-3, to=2-3]
        \arrow["\res"', from=1-4, to=1-3]
        \arrow["{(-)^\flat}", hook', from=1-4, to=2-4]
        \arrow["BC"{description}, Rightarrow, from=2-2, to=1-1]
        \arrow["\res", from=2-2, to=2-1]
        \arrow["BC"{description}, Rightarrow, from=2-4, to=1-3]
        \arrow["\res", from=2-4, to=2-3]
    \end{tikzcd}\]
    If $i$ is furthermore an extensive Borel inclusion,
    then also the right square exists and commutes via the Beck--Chevalley map.
    In particular, an $\bln$-normed $\blf$-(pre)category
    is Borel if and only if its underlying $\blf$-(pre)category is Borel.
    In other words, the above squares are cartesian.
\end{proposition}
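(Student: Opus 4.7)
The plan is to verify the Beck--Chevalley map pointwise via a cofinality argument. For $\cC \in \PreNmCat{\cE}{\cM}$ and $X \in \cF^\op$, the pointwise formula for right Kan extensions expresses both sides of the Beck--Chevalley map as limits: the left side is $\lim_{(y, \sigma) \in X \downarrow \Span(i)} \cC(y)$, with $\sigma \colon X \to y$ a morphism in $\Span_\cN(\cF)$, and the right side is $\lim_{(z, \tau) \in X \downarrow i^\op} \cC(z)$, with $\tau \colon z \to X$ a morphism in $\cF$. The Beck--Chevalley map arises from restriction along the canonical inclusion
\[
    \phi \colon X \downarrow i^\op \to X \downarrow \Span(i), \qquad (z, \tau) \mapsto (z,\ X \xleftarrow{\tau} z \xrightarrow{\id_z} z),
\]
which sends the morphism $\tau$ in $\cF$ to the corresponding backwards-only span; it therefore suffices to show that $\phi$ is limit-cofinal.

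By the dual of the cofinality criterion \cite{HTT}*{Theorem 4.1.3.1}, we must verify that for each $b = (y, \sigma) \in X \downarrow \Span(i)$ the comma category $\phi \downarrow b$ is weakly contractible. Pick a representative span $X \xleftarrow{g} W \xrightarrow{n} y$ for $\sigma$; by the sieve hypothesis on the Borel inclusion, $W \in \cE$ and $n \in \cM$, so $\sigma$ factors canonically in $\Span_\cN(\cF)$ as the backwards span $X \xleftarrow{g} W = W$ followed by the forwards span $W = W \xrightarrow{n} y$ (using part (4) of \cref{thm:spans}). A careful unwinding of morphisms in the $(\infty,1)$-categorical slice $X \downarrow \Span(i)$ then identifies $\phi \downarrow b$ with the following category of ``factorizations through $\cE$'': objects are triples $(z \in \cE, \tau \colon z \to X \text{ in } \cF, \beta \colon W \to z \text{ in } \cE)$ satisfying $\tau \beta = g$, and morphisms $(z_1, \tau_1, \beta_1) \to (z_2, \tau_2, \beta_2)$ are morphisms $\varphi \colon z_2 \to z_1$ in $\cE$ with $\tau_1 \varphi = \tau_2$ and $\varphi \beta_2 = \beta_1$. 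Under this identification the canonical factorization of $\sigma$ exhibits $(W, g, \id_W)$ as a \emph{terminal} object: the unique morphism from any $(z, \tau, \beta)$ is $\varphi = \beta$, with both compatibility conditions automatic. Hence $\phi \downarrow b$ is weakly contractible, $\phi$ is limit-cofinal, and the Beck--Chevalley map is an equivalence.

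For the extensive case, we need to upgrade monoidal Borelification to $\Mack$-categories. By semiadditivity of $\Span_\cN(\cF)$ (\cref{prop:span-semiadd}), a functor out of $\Span_\cN(\cF)$ preserves finite products iff its restriction to $\cF^\op$ does. For $\cC \in \Mack_\cE^\cM(\Cat)$, the Beck--Chevalley equivalence just proved identifies the restriction of $\Span(i)_* \cC$ to $\cF^\op$ with $i^\op_*(\cC|_{\cE^\op})$, which is product-preserving by the underlying Borelification case of \cref{lem:borel-product}. Hence $\Span(i)_*$ restricts to $\Mack_\cE^\cM(\Cat) \to \Mack_\cF^\cN(\Cat)$, and the Beck--Chevalley equivalence for the right square follows from that for the left square by restriction to the product-preserving full subcategories.

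The principal technical obstacle is the concrete description of $\phi \downarrow b$ above. Morphisms in $\Span_\cN(\cF)$ are represented by spans only up to equivalence, and a morphism in the slice $X \downarrow \Span(i)$ consists of a morphism in $\Span_\cM(\cE)$ together with a span equivalence witnessing the required compatibility with $\sigma$. Some care is needed to show that, after using the sieve condition to choose representative spans sharing the middle object $W$, such data reduces exactly to the single factorization datum $\beta \colon W \to z$; once this reduction is in hand, the terminal object $(W, g, \id_W)$ falls out of the backward--forward factorization of $\sigma$.
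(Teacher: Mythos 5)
Your overall strategy — reduce the Beck--Chevalley map to limit-cofinality of the comparison functor $\phi$, and use the sieve condition on the Borel inclusion to control the middle object of the span — is the right one and matches the paper's proof in substance. But your execution diverges at the last step, and the paper's route is noticeably cleaner. Instead of exhibiting a terminal object in each comma category $\phi\downarrow b$ directly, the paper first notes that the \emph{full} inclusion $L\colon(\blf^\op)_{X/}\to\Span_\bln(\blf)_{X/}$ admits a right adjoint $R$ sending a span $(X\leftarrow B\to A)$ to the map $(X\leftarrow B)$, which is easy to verify from the mapping space formula $\Span_\bln(\blf)(A,B)\simeq(\cF_{/A}\times_\cF\cN_{/B})^\simeq$. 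The sieve condition then guarantees that $R$ restricts to a right adjoint of $\phi$ on the relevant full subcategories, so $\phi$ is a left adjoint and hence limit-cofinal. Since a left adjoint is exactly a functor all of whose comma categories $\phi\downarrow b$ have terminal objects, this is logically equivalent to what you are claiming — your $(W,g,\id_W)$ is precisely $R(y,\sigma)$ — but it sidesteps entirely the ``careful unwinding'' of comma categories that you flag as the principal technical obstacle. That unwinding is genuinely delicate in the $\infty$-categorical setting: a 1-cell in $\phi\downarrow b$ comes with coherence data, namely a path in the mapping space $\Span_\cN(\cF)(X,y)$ relating two composite spans, and reducing this to the single datum $\beta\colon W\to z$ requires an argument you don't supply. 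Constructing the right adjoint $R$ at the level of $L$ and restricting avoids this issue altogether, because one never needs a fiberwise description. Your handling of the extensive case via \cref{prop:span-semiadd} and \cref{lem:borel-product} is correct and matches the paper. I'd recommend replacing your terminal-object argument with the adjoint-functor argument; as written, your proof has a genuine gap at the step you acknowledge, even though the underlying idea is correct.
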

\begin{proof}
    Given the case for normed precategories,
    we can provide the proof that monoidal Borelification restricts to normed categories,
    as claimed in \cref{lem:borel-product}.
    Namely, recall from \cref{prop:span-semiadd} that a functor $\Span_\cN(\cF) \to \Cat$ preserves finite products
    if and only if its restriction to $\cF^\op$ does.
    Now for $\cC \in \Mack_\cE^\cM(\Cat)$ the left square above tells us that $\cC^\flat|_{\cF^\op} \simeq (\cC|_{\cE^\op})^\flat$, and since the latter is actually an $\cF$-category
    by the first case of \cref{lem:borel-product},
    we see that also $\cC^\flat \in \Mack_\cF^\cN(\Cat)$.
    From this also the commutativity of the right square above immediately follows from that of the left one,
    so it remains to argue the case of normed precategories.

    By the pointwise formula for right Kan extensions,
    we need to show that the following functor is limit-cofinal for each $X \in \blf^\op$:
    \[
        \phi \colon \bsf^\op  \times_{\blf^\op} (\blf^\op)_{X/} \to \Span_{\bsn}(\bsf) \times_{\Span_{\bln}(\blf)} \Span_{\bln}(\blf)_{X/}.
    \]
    Using the mapping space formula
    $\Span_{\bln}(\blf)(A,B) \simeq (\cF_{/A} \times_\cF \cN_{/B})^\simeq$
    one easily checks that the inclusion $L \colon (\blf^\op)_{X/} \to \Span_{\bln}(\blf)_{X/}$
    admits a right adjoint $R$ which sends
    a span $(X \leftarrow B \to A)$ to $X \leftarrow B$.
    Now on the above full subcategories,
    $L$ restricts to $\phi$, and also $R$ will restrict to a functor right adjoint to $\phi$;
    If $\alpha = (X \leftarrow B \to A)$ is an object in the target
    of $\phi$, then $A \in \bsf$ and $B \to A$ lies in $\bln$,
    so by definition of a Borel inclusion we have that $B$ lies in $\bsf$,
    hence $R(\alpha) = (X \leftarrow B)$ lies in
    $\bsf^\op \times_{\blf^\op} (\blf^\op)_{X/}$ as well.
    This proves that $\phi$ is a left adjoint,
    hence limit-cofinal, and so we are done.

    For the last statement, let us fix the following notation for the inclusions
    \[\begin{tikzcd}[ampersand replacement=\&,cramped]
        {\bsf^\op} \& {\Span_\bsn(\bsf)} \\
        {\blf^\op} \& {\Span_\bln(\blf)}
        \arrow["k", from=1-1, to=1-2]
        \arrow["i"', hook, from=1-1, to=2-1]
        \arrow["I", hook', from=1-2, to=2-2]
        \arrow["\ell"', from=2-1, to=2-2]
    \end{tikzcd}\]
    and consider $\cC^\tensor \colon \Span_\bln(\blf) \to \Cat$
    such that $\cC \coloneqq \ell^*\cC^\tensor$ is right Kan extended from $\bsf^\op$,
    i.e.~such that $\eta^i_{\cC} \colon \cC \to i_*i^*\cC$ is an equivalence.
    We need to show that $\cC^\tensor$ is right Kan extended from $\Span_\bsn(T)$,
    i.e.~that $\eta^I_{\cC^\tensor} \colon \cC^\tensor \to I_*I^*\cC^\tensor$ is an equivalence.
    Note that $\ell^*$ is conservative as $\ell$ is essentially surjective,
    so it suffices to check that $\ell^*\eta^I_{\cC^\tensor}$ is an equivalence.
    This follows from the following commutative square,
    which exists by general facts on Beck--Chevalley maps
    (see \cite[Lemma C.2]{CLL-Adams}):
    \[\begin{tikzcd}[cramped]
        {i_*k^*I^*\cC^\tensor} & {i_*i^*\ell^*\cC^\tensor} \\
        {\ell^*I_*I^*\cC^\tensor} & {\ell^*\cC^\tensor}
        \arrow["\simeq"', from=1-2, to=1-1]
        \arrow["(BC)I_*", from=2-1, to=1-1]
        \arrow["\simeq"', draw=none, from=2-1, to=1-1]
        \arrow["{\eta^i_{\ell^*\cC^\tensor}}"', from=2-2, to=1-2]
        \arrow["\simeq", draw=none, from=2-2, to=1-2]
        \arrow["{\ell^*\eta^I_{\cC^\tensor}}"', from=2-2, to=2-1]
    \end{tikzcd}\]
\end{proof}

\begin{remark}\label{rem:monoid-unique-lift}
    Taking horizontal fibers over some $\bsf$-(pre)category $\cC$
    in the above pullback squares tells us that Borelification
    induces an equivalence between
    the spaces of $\bsn$-normed structures on $\cC$
    and $\bln$-normed structures on $\cC^\flat$.
    In particular, an $\bsn$-normed structure on $\cC$
    lifts \emph{uniquely} to an $\bln$-normed structure on $\cC^\flat$.
\end{remark}

Another crucial property of monoidal Borelifications is that they essentially
don't change categories of normed algebras.

\begin{proposition}\label{prop:borel-sections}
    For a Borel inclusion
    $i \colon (\bsf,\bsn) \subseteq (\blf,\bln)$
    and $\cN$-normed $\cF$-precategory $\cC^\tensor$,
    restriction along $\Span(i)$ induces
    a natural equivalence of $\blf$-precategories compatible with the forgetful functors:
    \[\begin{tikzcd}[cramped]
        {\und{\CAlg}_{\blf}^\bln(\cC^{\tensor,\flat})} && {\und{\CAlg}_{\bsf}^\bsn(\cC^\tensor)^\flat} \\
        & {\cC^\flat}
        \arrow["{\Span(i)^*}", from=1-1, to=1-3]
        \arrow["\simeq"', draw=none, from=1-1, to=1-3]
        \arrow["\bbU"', from=1-1, to=2-2]
        \arrow["{\bbU^\flat}", from=1-3, to=2-2]
    \end{tikzcd}\]
    where we identify the underlying $\cF$-precategory of $\cC^{\tensor,\flat}$ with $\cC^\flat$ via \cref{prop:borel-ran-img}.
\end{proposition}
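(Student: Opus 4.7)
The plan is to deduce the equivalence from uniqueness of adjoints, reducing the problem to an equivalence of left adjoints which can then be checked using the combinatorics of span categories.

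By the adjunctions $\Env \circ (-)^\Ncoprod \dashv \und{\CAlg}$ (c.f.~\cref{thm:ncoprod-triv} and the adjunction following \cref{def:envelope}), $\Span(i)^* \dashv (-)^\flat$, and $i^* \dashv (-)^\flat$, the two $\blf$-precategories $\und{\CAlg}_{\blf}^\bln \circ (-)^\flat$ and $(-)^\flat \circ \und{\CAlg}_{\bsf}^\bsn$ are right adjoint to, respectively,
\[
    L_1 \coloneqq \Span(i)^* \circ \Env \circ (-)^\Ncoprod
    \quad\text{and}\quad
    L_2 \coloneqq \Env \circ (-)^\Ncoprod \circ i^*,
\]
viewed as $\Cat$-linear functors $\PreCat{\blf} \to \PreNmCat{\bsf}{\bsn}$. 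By uniqueness of adjoints it therefore suffices to exhibit a natural equivalence $L_1 \simeq L_2$, and afterwards to identify the resulting mate with restriction along $\Span(i)$.

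The construction of $L_1 \simeq L_2$ splits into two steps. First, for $\cD \in \PreCat{\blf}$, one checks $(i^*\cD)^{\Ncoprod} \simeq \Span(i)^*\cD^{\Ncoprod}$: by \cref{def:n-cocart} both sides are obtained by applying $\Span$ to an adequate triple, and the equivalence follows from \cref{thm:spans}(2) combined with the identification $\Un^{\cart}(i^*\cD) \simeq \Un^{\cart}(\cD) \times_\blf \bsf$, under which cartesian edges and $\bln$-lying edges restrict to cartesian edges and $\bsn$-lying edges (using $\bsn = \bln \cap \bsf$). Second, unwinding \cref{def:envelope} and reassociating the resulting pullbacks, the identification $\Span(i)^*\Env(\cO) \simeq \Env(\Span(i)^*\cO)$ reduces to the single arrow-category identity
\[
    \Ar_\bln(\Span_\bln(\blf)) \times_{t,\Span_\bln(\blf)} \Span_\bsn(\bsf) \simeq \Ar_\bsn(\Span_\bsn(\bsf))
\]
along the target projection $t$. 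This is the principal obstacle: it requires combining the fully-faithfulness of $\Span(i)$ from \cref{lem:borel-inc-span-ff} with the sieve condition of \cref{def:borel-inc}, which forces every morphism of $\bln$ with target in $\bsf$ to have source in $\bsf$ and to lie in $\bsn$.

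Combining both steps yields $L_1 \simeq L_2$, and unwinding the mate correspondence shows that the associated equivalence of right adjoints is implemented by restriction of sections along $\Span(i)$, as required. For the compatibility with the forgetful functors of \cref{def:calg-forget}, the same two identifications apply verbatim with $(-)^\Ncoprod$ replaced by $\Triv$ -- in that case Step 1 is essentially just the commutation of cartesian unstraightening with pullback -- and they are natural in the $\Cat$-linear transformation $\incl \colon \Triv \Rightarrow (-)^\Ncoprod$ from \cref{lem:unfurl}. Applying $\Fun_{\bsf}^{\bsn\textup{-}\otimes}(-,\cC^\tensor)$ to the resulting commutative square of envelopes then produces the triangle compatible with $\bbU$.
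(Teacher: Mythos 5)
Your proposal is correct and uses the same two key ingredients as the paper's proof — the identification $(i^*\cD)^{\cM\text{-}\amalg} \simeq \Span(i)^*(\cD^{\cN\text{-}\amalg})$ and the envelope identity $\Ar_\bsn(\Span_\bsn(\bsf)) \simeq \Span_\bsn(\bsf) \times_{\Span_\bln(\blf)} \Ar_\bln(\Span_\bln(\blf))$ — and it handles the forgetful functors in the same way (by running the argument simultaneously for $\Triv$ and $(-)^{\cN\text{-}\amalg}$ and using naturality of $\incl$). The only presentational difference is that you phrase everything as an equivalence of $\Cat$-linear left adjoints followed by the mate correspondence, whereas the paper unwinds this pointwise over representables $\und{A}$ and verifies the chain of mapping-category equivalences explicitly; the substance is the same.
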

\begin{proof}
    Recall that by definition we have natural equivalences
    \[
        \und{\CAlg}_\cF^\cN(\cC^{\tensor,\flat})(A)
        \simeq \Fun_\cF^{\Nstr}(\Env(\und{A}^\Ncoprod), \cC^{\tensor,\flat}).
    \]
    Similarly, using the adjunction $i^* \dashv i_* = (-)^\flat$
    we have natural equivalences
    \begin{equation}\label{eq:borel-calg}
        \und{\CAlg}_\cE^\cM(\cC^{\tensor})^\flat(A)
        \simeq \Fun_\cE(i^*\und{A}, \und{\CAlg}_\cE^\cM(\cC^\tensor))
        \simeq \Fun_\cE^{\cM\dtensor}(\Env((i^*\und{A})^{\cM\damalg}),\cC^\tensor).
    \end{equation}
    For $\cD \in \Cat_\cF$ it is straightforward to check that
    $(i^*\cD)^{\bsn\damalg} \simeq \Span(i)^*(\cD^{\bln\damalg})$
    (this does not even require $i$ to be a Borel inclusion).
    In view of \cref{lem:unfurl} we then have a commutative
    cube where all faces are pullbacks:
    \[\begin{tikzcd}[cramped,sep=small]
        & {\Triv_\bsf^\bsn(i^*\cD)} && {(i^*\cD)^{\bsn\damalg}} \\
        {\Triv_\blf^\bln(\cD)} && {\cD^{\bln\damalg}} \\
        & \bsf && {\Span_\bsn(\bsf)} \\
        \blf && {\Span_\bln(\blf)}
        \arrow[from=1-2, to=1-4]
        \arrow[hook, from=1-2, to=2-1]
        \arrow[from=1-2, to=3-2]
        \arrow[hook', from=1-4, to=2-3]
        \arrow[from=1-4, to=3-4]
        \arrow[from=2-1, to=2-3]
        \arrow[from=2-1, to=4-1]
        \arrow[from=2-3, to=4-3]
        \arrow[from=3-2, to=3-4]
        \arrow["i"', hook, from=3-2, to=4-1]
        \arrow["{\Span(i)}", hook', from=3-4, to=4-3]
        \arrow[from=4-1, to=4-3]
    \end{tikzcd}\]
    Since the top horizontal maps in this cube induce the natural transformations
    $\Triv_\bsf^\bsn \Rightarrow (-)^{\bsn\damalg}$ and similarly for $(\blf,\bln)$,
    the left and right faces respectively entire volume of the above cube
    tells us that the back and front faces respectively entire volume
    of the right part in the following diagram commute:
        \[\begin{tikzcd}[cramped]
        {\PreNmCat{\bsf}{\bsn}} && {\PreOp{\bsf}{\bsn}} &&& {\PreCat{\bsf}} \\
        \\
        {\PreNmCat{\blf}{\bln}} && {\PreOp{\blf}{\bln}} &&& {\PreCat{\blf}}
        \arrow["\Env"', from=1-3, to=1-1]
        \arrow[""{name=0, anchor=center, inner sep=0}, "{(-)^{\bsn\damalg}}"', curve={height=12pt}, from=1-6, to=1-3]
        \arrow[""{name=1, anchor=center, inner sep=0}, "\Triv", curve={height=-12pt}, from=1-6, to=1-3]
        \arrow["{\Span(i)^*}", from=3-1, to=1-1]
        \arrow["{\Span(i)^*}", from=3-3, to=1-3]
        \arrow["\Env", from=3-3, to=3-1]
        \arrow["{i^*}"', from=3-6, to=1-6]
        \arrow[""{name=2, anchor=center, inner sep=0}, "\Triv", curve={height=-12pt}, from=3-6, to=3-3]
        \arrow[""{name=3, anchor=center, inner sep=0}, "{(-)^{\bln\damalg}}"', curve={height=12pt}, from=3-6, to=3-3]
        \arrow[shorten <=3pt, shorten >=3pt, Rightarrow, from=1, to=0]
        \arrow[shorten <=3pt, shorten >=3pt, Rightarrow, from=2, to=3]
    \end{tikzcd}\]
    Moreover, using the definition of a Borel inclusion, one may check that
    \[
        \Ar_\bsn(\Span_\bsn(\bsf))
        \simeq \Span_\bsn(\bsf) \times_{\Span_\bln(\blf)} \Ar_\bln(\Span_\bln(\blf)).
    \]
    By pullback pasting, this proves that also the left square commutes, c.f.~\cref{def:envelope}.
    In particular, we can extend the natural equivalences (\ref{eq:borel-calg}) to
    \[
        \und{\CAlg}_\cE^\cM(\cC^\tensor)^\flat(A)
        \simeq \Fun_\cE^{\cM\dtensor}(\Span(i)^*\Env(\und{A}^{\Ncoprod}), \cC^\tensor)
    \]
    Since $\Span(i)^*$ is a 2-functor and $\Span(i)^*\cC^{\tensor,\flat} \simeq \cC^\tensor$ via the counit, by adjunction we ultimately obtain
    the following commutative diagram
    \[\begin{tikzcd}[cramped]
        {\und{\CAlg}_\cF^\cN(\cC^{\tensor,\flat})(A)} && {\und{\CAlg}_\cE^\cM(\cC^\tensor)^\flat(A)} \\
        {\Fun_{\cF}^{\cN-\tensor}(\Env(\und{A}^{\cN-\amalg})),  \cC^{\tensor,\flat})} && {\Fun_{\cE}^{\cM-\tensor}(\Span(i)^*\Env(\und{A}^{\cN-\amalg}), \cC^\tensor)} \\
        {\Fun_{\cF}^{\cN-\tensor}(\Env(\Triv(\und{A})),  \cC^{\tensor,\flat})} && {\Fun_{\cE}^{\cM-\tensor}(\Span(i)^*\Env(\Triv(\und{A})), \cC^\tensor)} \\
        {\Fun_{\cF}(\und{A},  \cC^{\flat})} & {\cC^\flat(A)} & {\Fun_\cE(i^*\und{A},\cC)}
        \arrow["\simeq", from=2-1, to=1-1]
        \arrow["\simeq"', from=2-1, to=2-3]
        \arrow["{\Span(i)^*}", draw=none, from=2-1, to=2-3]
        \arrow["{\Env(\incl)^*}"', from=2-1, to=3-1]
        \arrow["\simeq", from=2-3, to=1-3]
        \arrow["{(\Span(i)^*\Env(\incl))^*}", from=2-3, to=3-3]
        \arrow["{\Span(i)^*}"', from=3-1, to=3-3]
        \arrow["\simeq", draw=none, from=3-1, to=3-3]
        \arrow["\simeq"', from=3-1, to=4-1]
        \arrow["\simeq", from=3-3, to=4-3]
        \arrow["\simeq"', from=4-1, to=4-2]
        \arrow["\simeq", from=4-3, to=4-2]
    \end{tikzcd}\]
    Note that the left vertical composite $\und{\CAlg}_\cF^\cN(\cC^{\tensor,\flat})(A)
    \to \cC^\flat(A)$ is by definition $\bbU(A)$.
    The right vertical composite identifies under $\Span(i)^*\Env(\Triv(\und{A}))
    \simeq \Env(\Triv(i^*\und{A}))$ and $\Env\Triv \dashv \fgt$ with
    \[
        \und{\CAlg}_\cE^\cM(\cC^\tensor)^\flat(A)
        \simeq \Fun_\cE(i^*\und{A}, \und{\CAlg}_\cE^\cM(\cC^\tensor))
        \xto[\simeq]{\bbU_*} \Fun_\cE(i^*\und{A},\cC)
        \simeq \cC^\flat(A),
    \]
    i.e.~with $\bbU^\flat(A)$, which yields the claim.
\end{proof}

\subsection{Equivariant and Global Borel Theory}\label{subsec:borel-eq}

In this subsection, we want to take a closer look
at the eponymous extensive Borel inclusion $(BG,BG) \to (\Orb_G,\Orb_G)$.
Let us begin by fixing some notation and conventions.
Recall that $\F[BG] \subseteq \PSh(BG) = \An^{BG^\op}$ is the free finite coproduct completion of $BG$,
and can equivalently be viewed as the category of finite free (right) $G$-sets.
In particular it is a full subcategory of the category of finite $G$-sets $\F_G = \F[\Orb_G]$.
For ease of notation, we write $\Span(G) = \Span(\F_G)$ and
\[
    \Mack_G(-) \coloneqq \Fun^\times(\Span(G),-)
    \quad\text{and}\quad
    \Mack_{BG}(-) \coloneqq \Fun^\times(\Span(\F[BG]),-).
\]
The above Borel inclusion thus induces a fully faithful functor $\Span(\F[BG]) \subseteq \Span(G)$, and right Kan extending along it defines the $G$-equivariant Borelification
\[
    (-)^\flat_G \colon \Mack_{BG}(\Cat) \subseteq \Mack_G(\Cat).
\]
If clear from context, we will leave out the subscript and just write $(-)^\flat$.

\begin{remark}\label{rem:left-right}
    The theory of normed categories as we recalled it in \cref{sec:algebra}
    is set up in such a way that the underlying $\F[BG]$-category of a normed category
    $\cC^\tensor \in \Mack_{BG}(\Cat)$ lies in $\PCat{\F[BG]} = \Fun^\times(\F[BG]^\op,\Cat) \simeq \Cat^{BG^\op}$,
    i.e.~is a category with \emph{right} $G$-action.
    Similarly, all the related constructions such as the functor
    $\und{\CAlg}_{BG} \colon \Mack_{BG}(\Cat) \to \PCat{\F[BG]} \simeq \Cat^{BG^\op}$
    take values in categories with right $G$-action by definition.
    To reduce the amount of $(-)^\op$'s occurring,
    we will implicitly pass between object with left and right $G$-action
    using the natural equivalence $BG \simeq BG^\op, g \mapsto g^{-1}$
    which induces an equivalence $\Cat^{BG} \simeq \Cat^{BG^\op}$ natural in $G \in \Glo^\op$.
    In particular, we will consider the forgetful functor
    $\fgt \coloneqq \fgt_{BG} \colon \Mack_{BG}(\Cat) \to \Cat^{BG}$ defined as restriction along the diagonal map in the following commutative diagram of inclusions:
    \[\begin{tikzcd}[cramped]
        BG && {\F[BG]} \\
        {BG^\op} & {\F[BG]^\op} & {\Span(\F[BG])}
        \arrow[hook, from=1-1, to=1-3]
        \arrow["\simeq"', from=1-1, to=2-1]
        \arrow[from=1-1, to=2-3]
        \arrow[from=1-3, to=2-3]
        \arrow[hook, from=2-1, to=2-2]
        \arrow[from=2-2, to=2-3]
    \end{tikzcd}\]
\end{remark}

We begin by noting that normed $\F[BG]$-categories
are really just symmetric monoidal categories with a $G$-action.

\begin{proposition}\label{prop:bg-normed}
    There is an equivalence of 2-categories $\Mack_{BG}(\Cat) \simeq \Mack(\Cat)^{BG}$
    making the following diagram commute:
    \[\begin{tikzcd}[cramped]
        & {\Cat^{BG}} \\
        {\Mack_{BG}(\Cat)} && {\Mack(\Cat)^{BG}} \\
        & {\Mack(\Cat)}
        \arrow["{\fgt_{BG}}", from=2-1, to=1-2]
        \arrow["\simeq", from=2-1, to=2-3]
        \arrow["\res"', from=2-1, to=3-2]
        \arrow["{\fgt_*}"', from=2-3, to=1-2]
        \arrow["\res", from=2-3, to=3-2]
    \end{tikzcd}\]
    Here the left $\res$ is given by restricting along the map
    $\Span(\F) \to \Span(\F[BG])$ induced by $* \to BG$,
    and $\fgt_{BG}$ is as just defined in the previous remark.
    Moreover, we have the following commutative diagram
    \[\begin{tikzcd}[cramped]
        {\Mack_{BG}(\Cat)} & {\Mack_G(\Cat)} \\
        {\Mack(\Cat)^{BG}} & {\Mack_G(\Mack(\Cat))} & {\Mack(\Cat)}
        \arrow["{(-)^\flat}", hook', from=1-1, to=1-2]
        \arrow["\simeq"', from=1-1, to=2-1]
        \arrow["{s_H^*}", curve={height=-12pt}, from=1-2, to=2-3]
        \arrow["{I_*}", hook, from=2-1, to=2-2]
        \arrow["{(-)^{hH}}"{description}, curve={height=18pt}, from=2-1, to=2-3]
        \arrow["{\fgt_*}", from=2-2, to=1-2]
        \arrow["\simeq"', draw=none, from=2-2, to=1-2]
        \arrow["{\ev_{G/H}}", from=2-2, to=2-3]
    \end{tikzcd}\]
    where $s_H \colon \Span(\F) \to \Span(G)$
    is induced by $G/H \colon * \to \Orb_G$
    and $I$ is the fully faithful inclusion
    $BG \subseteq \Span(\F[BG]) \subseteq \Span(G)$.
\end{proposition}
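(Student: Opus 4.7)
My plan is to deduce the proposition from the observation that $\Span(\F[BG])$ is, in an appropriate sense, the free semiadditive category on $BG$, so that semiadditive-target functors out of it are simply ordinary functors out of $BG$; this is the natural generalization of \cite[Theorem 1.1]{Harpaz} (the case $BG = \ast$, giving $\Span(\F)$) underlying \cref{prop:mack}. Concretely, I would first construct a fully faithful embedding $j \colon BG \hookrightarrow \Span(\F[BG])$ by composing the Yoneda inclusion $BG \subseteq \F[BG]$ with the canonical inclusion $\F[BG] \subseteq \Span(\F[BG])$ from \cref{thm:spans}(4), using the equivalence $BG \simeq BG^\op$ via inversion to match the convention in \cref{rem:left-right}.

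The key input is then that restriction along $j$ induces $\Fun^\sadd(\Span(\F[BG]),\cD) \simeq \Fun(BG,\cD)$ for every semiadditive $\cD$. Applying this to $\cD = \Mack(\Cat)$ and combining with \cref{prop:mack} gives
\[
    \Mack_{BG}(\Cat) = \Fun^\times(\Span(\F[BG]),\Cat) \simeq \Fun^\sadd(\Span(\F[BG]),\Mack(\Cat)) \simeq \Fun(BG,\Mack(\Cat)).
\]
By construction, this equivalence matches restriction along $j$ with post-composition by $\fgt \colon \Mack(\Cat) \to \Cat$. In particular, the first triangle commutes: $\fgt_{BG}$ is precisely restriction along $j$ (per \cref{rem:left-right}), while $\fgt_*$ is post-composition by $\fgt$. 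Likewise, $\res \colon \Mack_{BG}(\Cat) \to \Mack(\Cat)$ is restriction along the map $\Span(\F) \to \Span(\F[BG])$ induced by $\ast \to BG$, which under the equivalence becomes $\ev_\ast \colon \Fun(BG,\Mack(\Cat)) \to \Mack(\Cat)$, matching the right-hand $\res$.

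For the second diagram I would first show that $\fgt_*$ is an equivalence: the Fubini-style identification $\Mack_G(\Mack(\Cat)) \simeq \Mack(\Mack_G(\Cat))$ combined with $\fgt_* \simeq \fgt$ from \cref{prop:mack} and the semiadditivity of $\Mack_G(\Cat)$ does the job. The commutativity $s_H^* \simeq \ev_{G/H} \circ \fgt_*^{-1}$ then records that both send $\cD \in \Mack_G(\Cat)$ to the commutative monoid structure on $\cD(G/H)$ coming from $\Span(G)$-functoriality, and amounts to \cref{prop:mack} applied after restricting along $s_H$. The central task is comparing $(-)^\flat$ on top with $\fgt_*^{-1} \circ I_*$: both are right Kan extensions along fully faithful inclusions into $\Span(G)$, and they agree because right Kan extension commutes with post-composition by the right adjoint $\fgt$. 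The step I expect to require the most care is exactly this interlocking of the Borel right Kan extension of \cref{prop:borel-ran-img} with the universal-property equivalence for $\Span(\F[BG])$; once sorted, the homotopy-fixed-point identification $\ev_{G/H} \circ I_* \simeq (-)^{hH}$ hidden in the curved $(-)^{hH}$-arrow drops out of the pointwise formula together with the computation that the relevant comma category $BG \times_{\Span(G)} \Span(G)_{G/H/}$ is equivalent to $BH$, since equivariant maps $G \to G/H$ form a transitive $G$-set with point stabilizer $H$.
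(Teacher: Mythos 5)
Your proposal follows essentially the same route as the paper's proof: define $j \colon BG \to \Span(\F[BG])$ via the (sheared) Yoneda inclusion, invoke the Glasman/Harpaz theorem that $\Span(\F[BG])$ is the free semiadditive category on $BG$ to see $j^*$ is an equivalence, use a Fubini swap plus semiadditivity to show $\fgt_*$ is an equivalence, and verify the squares by the compatibility of restriction/right Kan extension with postcomposition by the limit-preserving $\fgt$. The decomposition of the equivalence as $\Mack_{BG}(\Cat) \simeq \Mack_{BG}(\Mack(\Cat)) \simeq \Mack(\Cat)^{BG}$, the use of $\fgt_* \simeq \fgt$ from \cref{prop:mack}, and the comparison of $(-)^\flat$ with $\fgt_* \circ I_*$ via $I_* = (-)^\flat \circ j_*$ are all exactly what the paper does.

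The one place your sketch is less careful than the paper is the closing cofinality claim for the bottom triangle. You write that $BG \times_{\Span(G)} \Span(G)_{G/H/}$ is equivalent to $BH$ "since equivariant maps $G \to G/H$ form a transitive $G$-set with point stabilizer $H$." But this comma category, taken directly over $\Span(G)$, has as objects arbitrary spans $G/H \leftarrow Y \to G$ (with $Y$ any finite free $G$-set, not just $G$ itself), so it is genuinely larger than $BH$; the transitivity observation you quote describes the set $\Hom_{\F_G}(G, G/H)$, which is the comma category for right Kan extension along $BG \subset \Orb_G$, not along $I \colon BG \to \Span(G)$. The paper sidesteps this by factoring $I_* = (-)^\flat \circ j_*$ and then using \cref{prop:borel-ran-img} to reduce $\ev_{G/H} \circ (-)^\flat$ to the pointwise formula for the non-monoidal Borel construction over $\Orb_G^\op$, where the relevant comma category really is $BH$. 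Your argument would need an additional cofinality step to substitute the smaller $\F_G$-indexed category for the full $\Span(G)$-comma category; the factoring route is cleaner and is what the paper uses.
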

\begin{proof}
    We define the equivalence as the composite
    \[
        \Mack_{BG}(\Cat)
        \xleftarrow[\simeq]{\fgt_*} \Mack_{BG}(\Mack(\Cat))
        \xrightarrow[\simeq]{j^*} \Mack(\Cat)^{BG}
    \]
    where $j \colon BG \subseteq \F[BG] \to \Span(\F[BG])$ is the inclusion.
    Since $\fgt$ is defined via restriction, it is clear from \cref{rem:2cat}
    that both $\fgt_*$ and $j^*$ are 2-functors.
    Recall from \cref{prop:mack} that $\Mack(-) \colon \Cat^\times \to \Cat^\sadd$
    is a right Bousfield localization with counit $\fgt$
    given by evaluating at $1 \in \Span(\F)$,
    and that the two forgetful forgetful functors
    $\fgt_*,\fgt \colon \Mack(\Mack(-)) \Rightarrow \Mack(-)$ are homotopic.
    Since $\Span(\F[BG])$ is semiadditive, this shows that the first map
    is an equivalence.
    The fact that $j^*$ is an equivalence is an instance of a theorem originally due to Glasman \cite[Theorem A.1]{Glasman-Stratified}
    which shows that $j$ exhibits $\Span(\F[BG])$ as the free semiadditive category on $BG$. This is also a special case of \cite[Theorem 5.29]{Harpaz}.

    Hence, everything regarding the first diagram
    follows once we verify that the following diagram commutes:
    \[\begin{tikzcd}[cramped]
        & {\Cat^{BG}} \\
        {\Mack_{BG}(\Cat)} & {\Mack_{BG}(\Mack(\Cat))} & {\Mack(\Cat)^{BG}} \\
        {\Mack(\Cat)} & {\Mack(\Mack(\Cat))} & {\Mack(\Cat)}
        \arrow["{\fgt_{BG} = j^*}", from=2-1, to=1-2]
        \arrow["\res"', from=2-1, to=3-1]
        \arrow["{\fgt_*}"', from=2-2, to=2-1]
        \arrow["\simeq", draw=none, from=2-2, to=2-1]
        \arrow["{j^*}", from=2-2, to=2-3]
        \arrow["\simeq"', draw=none, from=2-2, to=2-3]
        \arrow["\res"', from=2-2, to=3-2]
        \arrow["{\fgt_*}"', from=2-3, to=1-2]
        \arrow["\res"', from=2-3, to=3-3]
        \arrow[curve={height=18pt}, Rightarrow, no head, from=3-1, to=3-3]
        \arrow["{\fgt_*}"', from=3-2, to=3-1]
        \arrow["\simeq", draw=none, from=3-2, to=3-1]
        \arrow["\fgt", from=3-2, to=3-3]
        \arrow["\simeq"', draw=none, from=3-2, to=3-3]
    \end{tikzcd}\]
    Commutativity of the top triangle and left square is clear.
    The bottom triangle commutes by the fact that $\fgt_* \simeq \fgt$ as mentioned above,
    and the right square is given by precomposing with the equivalent composites $* \to BG \subseteq \Span(\F[BG])$
    and $* \subseteq \Span(\F) \to \Span(\F[BG])$.

    Verifying the commutativity of the second diagram in the proposition is similar;
    commutativity of the bottom triangle comes from the pointwise formula
    of right Kan extension along $I$,
    and the right triangle commutes using the facts
    $\fgt \circ s_H^* \simeq \ev_{G/H}$ and $\fgt_* \simeq \fgt \colon \Mack(\Mack(\Cat)) \to \Mack(\Cat)$.
    %\[\begin{tikzcd}[cramped]
    %    {\Mack_G(\Cat)} & {\Mack(\Cat)} \\
    %    {\Mack_G(\Mack(\Cat))} & {\Mack(\Mack(\Cat))} & {\Mack(\Cat)}
    %    \arrow["{s_H^*}", from=1-1, to=1-2]
    %    \arrow[curve={height=-12pt}, Rightarrow, no head, from=1-2, to=2-3]
    %    \arrow["{\fgt_*}", from=2-1, to=1-1]
    %    \arrow["\simeq"', draw=none, from=2-1, to=1-1]
    %    \arrow["{s_H^*}", from=2-1, to=2-2]
    %    \arrow["{\ev_{G/H}}"', curve={height=18pt}, from=2-1, to=2-3]
    %    \arrow["{\fgt_*}"', from=2-2, to=1-2]
    %    \arrow["\simeq", draw=none, from=2-2, to=1-2]
    %    \arrow["\fgt", from=2-2, to=2-3]
    %    \arrow["\simeq"', draw=none, from=2-2, to=2-3]
    %\end{tikzcd}\]
    The square commutes by definition of the equivalence $\Mack_{BG}(\Cat) \simeq \Mack(\Cat)^{BG}$, the fact that the inverse of $j^*$ is the right Kan extension $j_*$, and that $I_* = (-)^\flat \circ j_*$.
    %\[\begin{tikzcd}[cramped]
    %    & {\Mack_{BG}(\Cat)} & {\Mack_G(\Cat)} \\
    %    {\Mack(\Cat)^{BG}} & {\Mack_{BG}(\Mack(\Cat))} & {\Mack_G(\Mack(\Cat))}
    %    \arrow["{(-)^\flat}", from=1-2, to=1-3]
    %    \arrow["\simeq"', from=1-2, to=2-1]
    %    \arrow["{j_*}", from=2-1, to=2-2]
    %    \arrow["\simeq"', draw=none, from=2-1, to=2-2]
    %    \arrow["{I_*}"', curve={height=18pt}, from=2-1, to=2-3]
    %    \arrow["{\fgt_*}", from=2-2, to=1-2]
    %    \arrow["\simeq"', draw=none, from=2-2, to=1-2]
    %    \arrow["{(-)^\flat}", from=2-2, to=2-3]
    %    \arrow["\simeq"', from=2-3, to=1-3]
    %    \arrow["{\fgt_*}", draw=none, from=2-3, to=1-3]
    %\end{tikzcd}\]
\end{proof}

When no confusion can arise, we will use the above to implicitly identify the categories
$\Mack_{BG}(\Cat)$ and $\Mack(\Cat)^{BG}$, and refer to either of them
as the category of symmetric monoidal categories with $G$-action.

\begin{remark}\label{rem:borel-eqv-functoriality}
    Let $\cC^\tensor$ be a symmetric monoidal category with $G$-action.
    The functoriality of $\cC^{\tensor,\flat} \colon \Span(G) \to \Cat$ can be described as follows:
    \begin{enumerate}
        \item By \cref{prop:borel-ran-img} the underlying $G$-category
            of $\cC^{\tensor,\flat}$ agrees with $\cC^\flat$.
            The pointwise formula for right Kan extensions together with the pullback squares
            \[\begin{tikzcd}[cramped]
                BH & {(\Orb_G)_{/(G/H)}} \\
                BG & {\Orb_G}
                \arrow[hook, from=1-1, to=1-2]
                \arrow[from=1-1, to=2-1]
                \arrow["\lrcorner"{anchor=center, pos=0.125}, draw=none, from=1-1, to=2-2]
                \arrow[from=1-2, to=2-2]
                \arrow[hook, from=2-1, to=2-2]
            \end{tikzcd}\]
            then yields that the restriction $\cC^{\tensor,\flat}(G/H \leftarrow G/K = G/K)
            \simeq \cC^\flat(G/K \to G/H)$ agrees with the canonical (Beck--Chevalley) map
            $\cC^{hH} \to \cC^{hK}$.

        \item By \cref{prop:bg-normed}, we see that the restriction
            of $\cC^{\tensor,\flat}$ along the functor $s_H \colon \Span(\F) \to \Span(G)$ from above
            encodes the symmetric monoidal
            on $\cC^{hH}$ given by $(\cC^\tensor)^{hH}$.
            This describes the functoriality in fold maps.

        \item It remains to describe the functoriality in forwards maps of the form
            \[
                \cC^{\tensor,\flat}(G/K = G/K \to G/H) : \cC^{hK} \to \cC^{hH}.
            \]
            This lifts the twisted tensor product
            $X \mapsto \bigotimes_{[h] \in H/K} h.X$,
            as one checks immediately by computing the composition
            \[\begin{tikzcd}
                & {G/K} && G/e \\
                {G/K} && {G/H} && G/e
                \arrow[Rightarrow, no head, from=1-2, to=2-1]
                \arrow[from=1-2, to=2-3]
                \arrow[from=1-4, to=2-3]
                \arrow[Rightarrow, no head, from=2-5, to=1-4]
            \end{tikzcd}\]
            in $\Span(G)$, c.f.~\cite[Observation 3.2.1]{Hilman-phd}.
            It was shown in \cite[Proposition A.3.2]{Tambara}
            that if $\cC^\tensor$ is an ordinary symmetric monoidal 1-category
            with $G$-action, then there is a \emph{unique} natural transformation
            $\cC^{hK} \to \cC^{hH}$ lifting the twisted tensor product.
            Since the classical symmetric monoidal norm construction provides
            such a natural transformation, it must agree
            with the covariant functoriality of
            $\cC^{\tensor,\flat}$ in the case
            that $\cC^\tensor$ is an ordinary symmetric monoidal 1-category.
    \end{enumerate}
\end{remark}

In fact, the same argument as in \cite[Proposition A.3.2]{Tambara},
carried out in a 2-categorical setting, yields a uniqueness result
for the functors $\cC^\flat(n_K^H) \colon \cC^{hK} \to \cC^{hH}$,
where $n_K^H$ denotes the map $G/K = G/K \to G/H$ in $\Span(G)$.
To state this result, recall the notion of a 2-natural transformation
from \cref{appendix:lax}. We have the following composite of 2-functors
\[
    \Mack(\Cat)^{BG}
    \simeq \Mack_{BG}(\Cat)
    \xto{(-)^\flat} \Mack_G(\Cat)
    \xto{\ev_{n_K^H}} \Ar(\Cat)
\]
which shows that $(-)^\flat(n_K^H) \colon (\fgt)^{hK} \simeq (-)^\flat(G/K) \Rightarrow (-)^\flat(G/H) \simeq (\fgt)^{hH}$ is a 2-natural transformation
of functors $\Mack(\Cat)^{BG} \to \Cat$.
We now show that it is uniquely characterized by lifting the $|H/K|$-fold
tensor product.

\begin{proposition}\label{prop:unique-norm}
    For $K \leq H \leq G$, there is a unique 2-natural transformation
    $(\fgt)^{hK} \Rightarrow (\fgt)^{hH}$
    of 2-functors $\Mack(\Cat)^{BG} \to \Cat$ lifting the $|H/K|$-fold tensor product.
\end{proposition}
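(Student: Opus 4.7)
The plan is to prove the statement via a 2-categorical Yoneda argument by identifying corepresenting objects of $\fgt^{hK}$ and $\fgt^{hH}$. Existence of a suitable 2-natural transformation is already provided by the monoidal Borelification $\cC^{\tensor,\flat}(n_K^H)$ from \cref{rem:borel-eqv-functoriality}, which lifts the $|H/K|$-fold tensor product by the third point there; hence the real content is uniqueness.

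First, I would identify the representing objects. The forgetful 2-functor $\fgt \colon \Mack(\Cat)^{BG} \to \Cat^{BG}$ has a 2-left adjoint given by the pointwise free symmetric monoidal category construction, which on a $G$-set $S$ may be described as $\F[S]^{\sqcup,\simeq} \simeq \coprod_{n \geq 0} S^n_{h\Sigma_n}$ equipped with its induced $G$-action and the cocartesian symmetric monoidal structure (c.f.~\cref{ex:finset}). Composing this with the obvious 2-left adjoint $G/K \times - \dashv (-)^{hK}$ shows that $\fgt^{hK}$ is 2-corepresented by $\F[G/K]^{\sqcup,\simeq}$, and analogously $\fgt^{hH}$ by $\F[G/H]^{\sqcup,\simeq}$. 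Applying the 2-Yoneda lemma, the category of 2-natural transformations $\fgt^{hK} \Rightarrow \fgt^{hH}$ therefore identifies with $\hom_{\Mack(\Cat)^{BG}}\bigl(\F[G/H]^{\sqcup,\simeq},\, \F[G/K]^{\sqcup,\simeq}\bigr)$, which by unfolding the same 2-adjunctions in reverse is equivalently the category of $H$-equivariant objects in the underlying groupoid $\coprod_{n\geq 0}(G/K)^n_{h\Sigma_n}$ (with diagonal $G$-action), based at the image of $eH$.

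Finally, I would interpret the lifting condition concretely: it pins the component to $n = |H/K|$ and fixes the underlying point to the $\Sigma_n$-orbit of an enumeration $\vec{x} = (h_1K, \ldots, h_nK)$ of the subset $H/K \subseteq G/K$. Since this tuple has pairwise distinct entries, its $\Sigma_n$-stabilizer is trivial, so for each $h \in H$ there is a \emph{unique} $\sigma_h \in \Sigma_n$ with $h\cdot\vec{x} = \sigma_h\vec{x}$; this yields a unique $H$-equivariant enhancement of the specified object, and hence uniqueness of the 2-natural transformation. The main obstacle in this approach is this final contractibility check for the space of $H$-equivariant structures on the chosen point, but the distinctness of the entries of $\vec{x}$ makes it essentially tautological; the overall strategy is a 2-categorical adaptation of Tambara's 1-categorical argument in \cite[Proposition A.3.2]{Tambara}.
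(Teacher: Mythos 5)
Your proposal is correct and takes essentially the same route as the paper: corepresent $\fgt^{hK}$ by the pointwise free symmetric monoidal category on the $G$-set $G/K$ (which is $\Ind_K^G\infl_K\F^{\sqcup,\simeq}$ in the paper's notation, with underlying groupoid $(\F_{/(G/K)})^\simeq \simeq \coprod_n (G/K)^n_{h\Sigma_n}$), apply the 2-Yoneda lemma to identify $\Nat_2(\fgt^{hK},\fgt^{hH})$ with $H$-fixed points of that groupoid, and conclude uniqueness from the fact that the object corresponding to the $|H/K|$-fold tensor product — the inclusion $H/K \hookrightarrow G/K$, equivalently a tuple with pairwise distinct entries — has trivial automorphism group, so its space of $H$-equivariant enhancements is contractible once non-empty. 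The paper packages the final step as a small general lemma about lifts of rigid objects along conservative functors to homotopy fixed points, but the underlying content is the same as your "essentially tautological" contractibility observation.
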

\begin{proof}
    As noted above, one such natural transformation is given by $(-)^\flat(n_K^H)$,
    so it remains to see that there is at most one.
    We will prove this by noting that $(\fgt)^{hK}$ is corepresented
    and using the 2-categorical Yoneda lemma.
    Indeed, note that by adjunction we have natural equivalences
    \[
        (\fgt)^{hK}
        \simeq \Fun^\tensor(\F^{\sqcup,\simeq},(-)^{hK})
        \simeq \Fun^\tensor_{BK}(\infl_K\F^{\sqcup,\simeq},\res^G_K-)
        \simeq \Fun_{BG}^\tensor(\Ind_K^G\infl_K\F^{\sqcup,\simeq},-)
    \]
    where we also used that $\F^{\sqcup,\simeq}$ is the free symmetric monoidal
    category on one object, c.f.~\cref{ex:finset}.
    Note also that by semiadditivity of $\Mack(\Cat)^{BG}$ we have
    \[
        \fgt\Ind_K^G\infl_K \F^{\sqcup,\simeq}
        \simeq \fgt\Coind_K^G\infl_K\F^{\sqcup,\simeq}
        \simeq \Coind_K^G\infl_K\F^{\simeq}
    \]
    which is $\prod_{G/K}\F^\simeq \simeq (\F_{/(G/K)})^\simeq$
    (by extensiveness), equipped with a certain $G$-action.
    By the 2-categorical Yoneda lemma \cref{thm:2-yoneda}
    we now have equivalences of categories
    \[
        \Nat_2((\fgt)^{hK},(\fgt)^{hH}) \simeq (\F_{/(G/K)})^{\simeq,hH}
        \quad\text{and}\quad
        \Nat_2((\fgt)^{hK}, \fgt) \simeq (\F_{/(G/K)})^{\simeq}.
    \]
    By inspection, the transformation
    $(\fgt)^{hK} \xRightarrow{\pr} \fgt \xRightarrow{(-)^{\tensor|H/K|}} \fgt$
    corresponds to the object $(1 \to G/K)^{\tensor |H/K|} \cong (H/K \to G/K)$ in $(\F_{/(G/K)})^\simeq$ under the second equivalence.
    The category $N$ of 2-natural transformations $(\fgt)^{hK} \Rightarrow (\fgt)^{hH}$ lifting the $|H/K|$-fold tensor product is thus given by the following pullback:
    \[\begin{tikzcd}[cramped]
        N && {(\F_{/(G/K)})^{\simeq,hH}} \\
        {*} && {(\F_{/(G/K)})^\simeq}
        \arrow[from=1-1, to=1-3]
        \arrow[from=1-1, to=2-1]
        \arrow[from=1-3, to=2-3]
        \arrow[""{name=0, anchor=center, inner sep=0}, "{(H/K \to G/K)}"', from=2-1, to=2-3]
        \arrow["\lrcorner"{anchor=center, pos=0.125}, draw=none, from=1-1, to=0]
    \end{tikzcd}\]
    In particular, we see that $N$ is a space, and we claim it is contractible.
    Note that $H/K \to G/K$ does not admit any non-trivial automorphisms
    as an object in $\F_{/(G/K)}$ since it is a subobject of the terminal object $G/K$.
    Since we have already noted above that $N$ is non-empty,
    it thus suffices to show the following general fact:
    Let $\cC \in \Cat^{BG}$ and $X \in \cC$ with $\Aut_\cC(X) = \map_{\cC^\simeq}(X,X) \simeq *$.
    If the space $\cC^{hG}_X = \cC^{hG} \times_\cC \{X\}$ of lifts of $X$ to $\cC^{hG}$
    is nonempty, then it is contractible.
    To see this, note that $\cC^{hG} \to \cC$ is conservative since $BG$ only has a single object. Since $X \colon * \to \cC$ factors through $\cC^\simeq$,
    pullback pasting yields $\cC^{hG}_X \simeq \cC^{\simeq,hG}_X$.
    Now $X \colon * \to \cC^\simeq$ is fully faithful by assumption,
    so also $\cC^{\simeq,hG}_X \subseteq \cC^{\simeq,hG}$ is,
    and given two lifts $Y,Z \in \cC^{\simeq,hG}_X$, we compute
    \[
        \map_{\cC^{\simeq,hG}_X}(Y,Z)
        \simeq \map_{\cC^{\simeq,hG}}(Y,Z)
        \simeq \map_{\cC^\simeq}(X,X)^{hG}
        \simeq *^{hG}
        \simeq *
    \]
    using that the mapping space in a limit of categories is
    the limit of mapping spaces, and the underlying object of $Y$ and $Z$ is $X$.
    This shows that if $\cC^{hG}_X$ is non-empty, then it is contractible.
\end{proof}

\begin{remark}\label{rem:unique-norm}
    In view of the above uniqueness result,
    it is reasonable to say that the functoriality
    in forwards maps of the $G$-equivariant
    monoidal Borelification is given by `the' symmetric monoidal norms.
\end{remark}

We now come to a result
which gives a more concrete description of normed $G$-algebras in the monoidal Borelification of a symmetric monoidal category with $G$-action.
A version of this result has previously appeared in \cite[Theorem A]{Hilman-NMot},
although there the author works in the framework of parametrized higher algebra
as set up by Nardin--Shah \cite{Nardin-Shah},
the equivalence is only shown for the non-parametrized categories of algebras,
and without compatibility with the forgetful functors. We fix a finite group $G$.

\begin{theorem}\label{thm:borel-eqv}
    Let $\cC^\tensor$ be a symmetric monoidal category with $G$-action.
    We have a natural equivalence of categories with $G$-actions
    \begin{equation}\label[diag]{diag:borel-algebras}\begin{tikzcd}[cramped]
        {\und{\CAlg}_{BG}(\cC^\tensor)} && {\CAlg(\cC^\tensor)} \\
        & \cC
        \arrow["\simeq", from=1-1, to=1-3]
        \arrow["\bbU"', from=1-1, to=2-2]
        \arrow["\bbU", from=1-3, to=2-2]
    \end{tikzcd}\end{equation}
    The $G$-action on $\CAlg(\cC^\tensor)$ is inherited from the one on $\cC^\tensor$.
    Combining this with \cref{prop:borel-sections}, we obtain natural equivalences of $G$-categories
    \[\begin{tikzcd}[cramped]
        {\und{\CAlg}_G(\cC^{\tensor,\flat})} & {\und{\CAlg}_{BG}(\cC^\tensor)^\flat} & {\CAlg(\cC^\tensor)^\flat} \\
        & {\cC^\flat}
        \arrow["\simeq", from=1-1, to=1-2]
        \arrow["\bbU"', from=1-1, to=2-2]
        \arrow["\simeq", from=1-2, to=1-3]
        \arrow["{\bbU^\flat}", from=1-2, to=2-2]
        \arrow["{\bbU^\flat}", from=1-3, to=2-2]
    \end{tikzcd}\]
    At $G/H$ this gives an equivalence $\und{\CAlg}_G(\cC^{\tensor,\flat})(G/H) \simeq \CAlg(\cC^{\tensor,hH})$ over $\cC^{hH}$.
\end{theorem}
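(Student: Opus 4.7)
The plan is to establish the underlying identification $\und{\CAlg}_{BG}(\cC^\tensor)\simeq \CAlg(\cC^\tensor)$ over $\cC$ in $\Cat^{BG}$, and then bootstrap to the second part of the theorem by combining with \cref{prop:borel-sections}.

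Using \cref{prop:bg-normed} I may identify $\Mack_{BG}(\Cat)\simeq \Mack(\Cat)^{BG}$ and $\Cat_{\F[BG]}\simeq \Cat^{BG}$, after which $\und{\CAlg}_{BG}$ becomes a 2-functor $F\colon \Mack(\Cat)^{BG}\to \Cat^{BG}$ sitting over the forgetful $\fgt_*$. The goal reduces to a natural equivalence of 2-functors $F\simeq \CAlg_*$ compatibly with the forgetful functors, where $\CAlg_*$ denotes pointwise application of $\CAlg\colon \Mack(\Cat)\to \Cat$.

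Both sides are 2-right adjoints, which I would exploit as follows. By \cref{thm:ncoprod-triv}, $F$ is the 2-right adjoint of $\Env\circ (-)^{\F[BG]\damalg}$. On the other hand, $\CAlg$ is the $\Cat$-enriched corepresentable $\Fun^\tensor(\F^\sqcup,-)$, c.f.~\cref{ex:finset}, and hence is a $\Cat$-linear 2-right adjoint in the tensored-$\Cat$-module sense of \cref{rem:2cat}; thus $\CAlg_*$ is a 2-right adjoint whose left adjoint is the pointwise functor $L_*$ for $L := \Env\circ (-)^\Ncoprod \colon \Cat\to \Mack(\Cat)$. By uniqueness of adjoints it therefore suffices to construct, naturally in $\cD\in \Cat^{BG}$ corresponding to $\cD^\flat \in \Cat_{\F[BG]}$, a natural equivalence
\[
  \Env((\cD^\flat)^{\F[BG]\damalg}) \simeq L_*(\cD)
\]
in $\Mack(\Cat)^{BG}$, where $L_*(\cD)=\cD^\sqcup$ is equipped with the $G$-action inherited pointwise from $\cD$; compatibility with the forgetful functors will then follow by naturality in $\incl\colon \Triv\Rightarrow (-)^\Ncoprod$ from \cref{thm:ncoprod-triv} together with \cref{ex:finset}.

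To produce this equivalence, I would chase the definitions of $(-)^{\F[BG]\damalg}$ and $\Env$ from \cref{def:n-cocart,def:envelope}: the cartesian unstraightening $\Un^\cart(\cD^\flat)\to \F[BG]$ is the base change of $\Un^\cart(\cD)\to \F$ along the coproduct-preserving functor $\F[BG]\to \F$ collapsing the free orbit (twisted by the inherent $G$-action), and so the subsequent $\Span$- and $\Env$-constructions reduce under the 2-equivalence $\Mack_{BG}(\Cat)\simeq \Mack(\Cat)^{BG}$ to the analogous constructions for $\cD$ with its $G$-action, yielding the identification with $L_*(\cD)=\cD^\sqcup$. Once this is in place, the second part of the theorem is immediate from \cref{prop:borel-sections} applied to the Borel inclusion $(\F[BG],\F[BG])\subseteq (\F_G,\F_G)$ of \cref{ex:borel-inc}, and the evaluation statement $\und{\CAlg}_G(\cC^{\tensor,\flat})(G/H)\simeq \CAlg(\cC^{\tensor,hH})$ then follows from the pointwise formula for the Borelification $(-)^\flat$ together with the fact that $\CAlg$ preserves all limits. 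The main obstacle is precisely the identification of the two left adjoints, which requires careful tracing of how cartesian unstraightenings, spans, and envelopes interact with the Borelification (right Kan extension along $BG\subseteq \F[BG]$).
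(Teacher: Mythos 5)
Your reduction via uniqueness of 2-right adjoints is a legitimate, and genuinely different, decomposition of the problem from the paper's: you would prove the theorem in one stroke by identifying $\Env\circ(-)^{\F[BG]\damalg}\colon \Cat_{\F[BG]}\to\Mack_{BG}(\Cat)$ with the pointwise functor $L_*$ under the equivalences of \cref{prop:bg-normed}, whereas the paper first proves only the underlying-category identification $\CAlg_{BG}(\cC^\tensor)\simeq\CAlg(\cC^{\tensor,hG})$ (\cref{lem:borel-g-underlying}) and then bootstraps to the full $G$-equivariant statement via the shift lemma (\cref{lem:shift}) and a shearing argument (\cref{lem:shear}). Your route, if completed, would be conceptually cleaner and would not need the shift/shear machinery at all; but it is also where the gap sits.

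The essential step—the identification $\Env\bigl((\cD^\flat)^{\F[BG]\damalg}\bigr)\simeq L_*(\cD)$ natural in $\cD\in\Cat^{BG}$—is asserted, not proved, and your sketch of it is not correct as stated. The cartesian unstraightening $\Un^{\ct}(\cD^\flat)\to\F[BG]$ is \emph{not} the base change of $\Un^{\ct}(\cD)\to\F$ along $\F[BG]\to\F$: a morphism $\coprod_m G\to\coprod_n G$ in $\F[BG]$ consists of a map $m\to n$ \emph{plus} an $m$-tuple of elements of $G$, and the cartesian functoriality of $\cD^\flat$ genuinely twists by the $G$-action through these labels, which the $\F$-side is blind to. The parenthetical "twisted by the inherent $G$-action" acknowledges this but does not tell you how to produce the identification of the ensuing span categories and envelopes. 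It is also worth noting that the only case the paper actually proves—$\cD=*$, i.e.\ $\Env((*)^{BG\damalg})\simeq \infl_G\F^\sqcup$ inside \cref{lem:borel-g-underlying}—is established by a rigidity argument ($\Aut_{\Ar(\Mack(\Cat))}(\inc)\simeq *$), which works precisely because $\F^\sqcup$ has no nontrivial automorphisms; for a general $\cD$ the $G$-action on $L_*(\cD)$ is nontrivial and there is no analogous rigidity to fall back on, so you would need a different (and likely more delicate) argument than what the paper supplies. In short: the plan's skeleton is sound, but its load-bearing claim is a stronger statement than the paper ever proves directly, the offered justification is wrong, and you have not given an argument that actually closes the gap.
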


\begin{remark}
    One can show
    that in the fibrational picture,
    the equivalence $\und{\CAlg}_G(\cC^{\tensor,\flat})(G/H) \simeq \CAlg(\cC^{\tensor,hH})$
    is given by pulling back along $s_H \colon \Span(\F) \to \Span(G)$ induced
    by $* \mapsto G/H$. In other words, we have a commutative diagram
    \[\begin{tikzcd}[cramped]
        {\und{\CAlg}_G(\cC^{\tensor,\flat})(G/H)} & {\CAlg(\cC^{\tensor,hH})} \\
        {\Fun_{/\Span(G)}^{\F_G^\op\dcc}(\Span(G),\int \cC^{\tensor,\flat})} & {\Fun_{/\Span(\F)}^{\F^\op\dcc}(\Span(\F),\int\cC^{\tensor,hH})}
        \arrow["\simeq", from=1-1, to=1-2]
        \arrow["\simeq"', from=1-1, to=2-1]
        \arrow["\simeq", from=1-2, to=2-2]
        \arrow["{s^*_H}", from=2-1, to=2-2]
        \arrow["\simeq"', draw=none, from=2-1, to=2-2]
    \end{tikzcd}\]
    This was the original approach taken in the author's master's thesis,
    see \cite[Theorem 3.15]{puetzstueck}.
\end{remark}

For the proof of \cref{thm:borel-eqv} it remains to show the equivalence in \cref{diag:borel-algebras}.
The way we will go about this is to first give the description
of underlying categories, and then deduce the $G$-equivariant statement via a shifting trick.

\begin{lemma}\label{lem:borel-g-underlying}
    There is a natural equivalence $\CAlg_{BG}(\cC^\tensor) \simeq \CAlg(\cC^\tensor)^{hG} \simeq \CAlg(\cC^{\tensor,hG})$ compatible with the forgetful functors to $\cC^{hG}$.
\end{lemma}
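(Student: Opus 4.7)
The plan is to address the two equivalences separately. The second equivalence $\CAlg(\cC^\tensor)^{hG} \simeq \CAlg(\cC^{\tensor,hG})$ will be the formal one: since $\CAlg \simeq \Fun^\tensor(\F^\sqcup,-) \colon \Mack(\Cat) \to \Cat$ is corepresented, it preserves all 2-categorical limits, and applied to the limit $\cC^{\tensor,hG} = \lim_{BG} \cC^\tensor$ in $\Mack(\Cat)$ this immediately yields $\CAlg(\cC^{\tensor,hG}) \simeq \lim_{BG} \CAlg(\cC^\tensor) = \CAlg(\cC^\tensor)^{hG}$ naturally in $\cC^\tensor$. Compatibility with the forgetful follows because $\fgt(\cC^{\tensor,hG}) \simeq \cC^{hG}$ by the same argument and $\bbU \colon \CAlg \Rightarrow \fgt$ is a natural transformation of limit-preserving 2-functors.

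For the first equivalence $\CAlg_{BG}(\cC^\tensor) \simeq \CAlg(\cC^\tensor)^{hG}$, the strategy is to combine the envelope formula from \cref{sec:algebra}
\[
    \CAlg_{BG}(\cC^\tensor) \simeq \Fun^{\Nstr}_{BG}(\Env(*^{\F[BG]\damalg}), \cC^\tensor)
\]
with the equivalence $\Mack_{BG}(\Cat) \simeq \Mack(\Cat)^{BG}$ from \cref{prop:bg-normed}. The key claim to establish is that under this equivalence, the corepresenting object $\Env(*^{\F[BG]\damalg})$ corresponds to $\F^\sqcup$ equipped with the \emph{trivial} $G$-action. Granting this, since $\Mack(\Cat)^{BG}$ is $\Cat$-enriched (c.f.~\cref{rem:2cat}) and hom-categories out of a trivially acted on object compute $G$-homotopy fixed points by the adjunction $\mathrm{triv} \dashv (-)^{hG}$, one obtains
\[
    \Fun^{\Nstr}_{BG}(\F^\sqcup_{\mathrm{triv}}, \cC^\tensor) \simeq \Fun^\tensor(\F^\sqcup, \cC^\tensor)^{hG} = \CAlg(\cC^\tensor)^{hG}.
\]

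To justify the key identification, I would unstraighten $\Env(*^{\F[BG]\damalg}) \to \Span(\F[BG])$ via \cref{ex:n-a} as the functor $X \mapsto \F[BG]_{/X}$, and evaluate at the single connected generator $G \in BG \subseteq \F[BG]$. A direct calculation shows that $\F[BG]_{/G}$ is canonically equivalent to $\F \simeq \F^\sqcup$ via $(G \cdot I \to G) \mapsto I$: given any $\alpha \colon I \to J$ in $\F$, there is a unique $G$-equivariant lift $G \cdot I \to G \cdot J$ commuting over the chosen labels in $G$. The natural $G$-action by postcomposition on the labels then acts trivially on $\F$, and the symmetric monoidal structure induced on $\Env(*^{\F[BG]\damalg})(G)$ by the fold maps in $\Span(\F[BG])$ agrees with the standard coproduct structure on $\F^\sqcup$.

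The hard part will be verifying this last identification rigorously, in particular tracking the symmetric monoidal structure through the equivalence of \cref{prop:bg-normed}; this is ultimately a careful bookkeeping argument but is the main technical obstacle. Finally, compatibility with the forgetful $\bbU \colon \CAlg_{BG}(\cC^\tensor) \to \cC^{hG}$ would follow by running the parallel argument for $\Env(\Triv(*))$, which corepresents $\fgt_{BG}$ and corresponds under the equivalence to $\F^{\sqcup,\simeq}$ with trivial $G$-action; the inclusion $\Env(\Triv(*)) \hookrightarrow \Env(*^{\F[BG]\damalg})$ then corresponds to $\F^{\sqcup,\simeq} \hookrightarrow \F^\sqcup$, which corepresents $\bbU$ at the non-parametrized level by \cref{ex:finset}.
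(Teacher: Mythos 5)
Your overall plan — using the envelope formula and the equivalence $\Mack_{BG}(\Cat) \simeq \Mack(\Cat)^{BG}$ from \cref{prop:bg-normed}, and reducing to identifying the corepresenting object $\Env(*^{\F[BG]\damalg})$ with $\F^\sqcup$ carrying the trivial $G$-action — is exactly the approach the paper takes, and your treatment of the second equivalence via corepresentability of $\CAlg$ is correct (the paper effectively folds it into one chain of equivalences rather than factoring it out). The one place where your proposal genuinely diverges, and where you yourself flag the "hard part", is the justification that the $G$-action on $\F^\sqcup$ is trivial. You propose a direct unstraightened computation at the fiber $G \in \F[BG]$, identifying $\F[BG]_{/G} \simeq \F$ and then arguing the postcomposition action is trivial; you acknowledge that tracking the symmetric monoidal structure through \cref{prop:bg-normed} is the main obstacle. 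The paper sidesteps this bookkeeping entirely with a rigidity argument: it first checks (by an unstraightened fiber calculation, essentially your $\F[BG]_{/G} \simeq \F$ computation) that the underlying arrow in $\Mack(\Cat)$ after forgetting the $G$-action is $\inc \colon \F^{\sqcup,\simeq} \hookrightarrow \F^\sqcup$, and then proves separately that the automorphism space $\Aut_{\Ar(\Mack(\Cat))}(\inc)$ is contractible (using that $\F^\sqcup$ has no nontrivial symmetric monoidal self-equivalences and that $\inc$ is a monomorphism). Since the trivial $G$-equivariant structure $\infl_G \inc$ provides one lift and automorphism contractibility forces any two lifts with the same underlying object to agree, the $G$-action must be trivial — with no need to chase the action through the equivalence of \cref{prop:bg-normed} by hand. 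You may want to adopt this automorphism-contractibility step rather than attempting the direct tracking you currently sketch; it both closes the gap you identified and is the same rigidity argument used again in \cref{prop:unique-norm}.
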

\begin{proof}
    Recall our conventions of identifying $\Cat(\F[BG]) \simeq \Cat^{BG^\op}$ with $\Cat^{BG}$
    from \cref{rem:left-right}.
    Note that we have a commutative diagram
    \[\begin{tikzcd}[cramped]
        {\CAlg_{BG}(\cC^\tensor)} & {\Fun_{BG}(*,\und{\CAlg}_{BG}(\cC^\tensor))} & {\Fun_{BG}^\tensor(\Env((*)^{BG\damalg}), \cC^\tensor)} \\
        {\cC^{hG}} & {\Fun_{BG}(*,\cC)} & {\Fun^\tensor_{BG}(\Env(\Triv(*)), \cC^\tensor)}
        \arrow["\simeq", from=1-1, to=1-2]
        \arrow["\bbU"', from=1-1, to=2-1]
        \arrow["\simeq", from=1-2, to=1-3]
        \arrow["{\bbU_*}"', from=1-2, to=2-2]
        \arrow["{\incl^*}", from=1-3, to=2-3]
        \arrow["\simeq", from=2-1, to=2-2]
        \arrow["\simeq", from=2-2, to=2-3]
    \end{tikzcd}\]
    It thus suffices to identify the map
    $\incl \colon \Env(\Triv(*)) \to \Env((*)^{BG\damalg})$
    with $\infl_{G}(\F^{\sqcup,\simeq} \to \F^\sqcup)$
    under the equivalence of 2-categories from \cref{prop:bg-normed}.
    Indeed, by adjunction this will produce a commutative diagram
    \[\begin{tikzcd}[cramped]
        {\CAlg_{BG}(\cC^\tensor)} & {\Fun_{BG}^\tensor(\infl_G\F^{\sqcup,\simeq}, \cC^\tensor)} & {\Fun^\tensor(\F^{\sqcup},\cC^{\tensor,hG})} & {\CAlg(\cC^{\tensor,hG})} \\
        {\cC^{hG}} & {\Fun^\tensor_{BG}(\infl_G\F^{\sqcup}, \cC^\tensor)} & {\Fun^\tensor(\F^{\sqcup,\simeq},\cC^{\tensor,hG})} & {\cC^{hG}}
        \arrow["\simeq", from=1-1, to=1-2]
        \arrow["\bbU"', from=1-1, to=2-1]
        \arrow["\simeq", from=1-2, to=1-3]
        \arrow["{(\infl_G \inc)^*}"', from=1-2, to=2-2]
        \arrow["\simeq", from=1-3, to=1-4]
        \arrow["{\inc^*}"', from=1-3, to=2-3]
        \arrow["\bbU", from=1-4, to=2-4]
        \arrow["\simeq", from=2-1, to=2-2]
        \arrow["\simeq", from=2-2, to=2-3]
        \arrow["\simeq", from=2-3, to=2-4]
    \end{tikzcd}\]
    where the bottom horizontal composite is naturally equivalent to the identity
    by the fact that taking fixed points and forgetting the monoidal structure on $\cC$ commute.

    To this end, it is enough to show the following two things;
    point (1) shows that the underlying symmetric monoidal functor
    is $\F^{\sqcup,\simeq} \to \F^\sqcup$, and by point (2)
    it must have the trivial $G$-action:
    \begin{enumerate}
        \item The functor $\res \colon \Mack_{BG}(\Cat) \to \Mack(\Cat)$ forgetting the $G$-action
            sends $i$ to the inclusion $\inc \colon \F^{\sqcup,\simeq} \to \F^{\sqcup}$.

        \item The space of automorphisms of $\inc \in\Ar(\Mack(\Cat))$ is contractible.
    \end{enumerate}
    We begin with the second point.
    Recall that $\Fun^\tensor(\F^{\sqcup},\F^\sqcup) \simeq \CAlg(\F^\sqcup) \simeq \F$
    is induced by evaluating at $1 \in \F$. Since equivalences preserve terminal objects,
    we see that $\F^{\sqcup} \in \Mack(\Cat)$ has no non-trivial automorphisms.
    Next, note that $\inc$ is a monomorphism in $\Mack(\Cat)$,
    since each $\prod_n \F^\simeq \to \prod_n \F$ is one
    and monomorphisms in functor categories are pointwise.
    Thus we have a pullback with vertical monomorphisms
    \[\begin{tikzcd}[cramped]
        & {\Aut_{\Ar(\Mack(\Cat))}(\inc)} & {\map_{\Mack(\Cat)^{\simeq}}(\F^{\simeq,\sqcup},\F^{\simeq,\sqcup})} \\
        {*} & {\map_{\Mack(\Cat)^\simeq}(\F^{\sqcup},\F^{\sqcup})} & {\map_{\Mack(\Cat)}(\F^{\simeq,\sqcup},\F^\sqcup)}
        \arrow[from=1-2, to=1-3]
        \arrow[hook, from=1-2, to=2-2]
        \arrow["{\inc_*}", hook', from=1-3, to=2-3]
        \arrow["\simeq"', from=2-1, to=2-2]
        \arrow[""{name=0, anchor=center, inner sep=0}, "{\inc^*}"', from=2-2, to=2-3]
        \arrow["\lrcorner"{anchor=center, pos=0.125}, draw=none, from=1-2, to=0]
    \end{tikzcd}\]
    This shows $\Aut_{\Ar(\Mack(\Cat))}(\inc) \simeq *$.

    To see the first point, note that the cartesian unstraightening of
    $* \in \Fun^\times(\F[BG]^\op, \Cat)$ is simply the identity on $\F[BG]$, with all maps being cartesian.
    Thus $\Triv(*) \to (*)^{BG\damalg}$ is given by the inclusion $\F[BG]^\op \to \Span(\F[BG])$ over $\Span(\F[BG])$ and on envelopes we get the map $i$ in the following diagram
    \[\hspace{-2em}\begin{tikzcd}[cramped,column sep=small]
        {\int\F^{\sqcup,\simeq}} & {\Ar(\F)^{\pb,\op}} & {\Env(\Triv(*))|_{\Span(\F)}} & {\Ar(\F[BG])^{\pb,\op}} & {\F[BG]^\op} \\
        {\int \F^{\sqcup}} & {\Span_{\pb,\all}(\Ar(\F))} & {\Env(*^{BG\damalg})|_{\Span(\F)}} & {\Span_{\pb,\all}(\Ar(\F[BG]))} & {\Span(\F[BG])} \\
        && {\Span(\F)} & {\Span(\F[BG])}
        \arrow[Rightarrow, no head, from=1-1, to=1-2]
        \arrow[from=1-1, to=2-1]
        \arrow[dashed, from=1-2, to=1-3]
        \arrow[from=1-2, to=2-2]
        \arrow[from=1-3, to=1-4]
        \arrow["{i|_{\Span(\F)}}"', from=1-3, to=2-3]
        \arrow[from=1-4, to=1-5]
        \arrow["i"', from=1-4, to=2-4]
        \arrow["\lrcorner"{anchor=center, pos=0.125}, draw=none, from=1-4, to=2-5]
        \arrow[from=1-5, to=2-5]
        \arrow[Rightarrow, no head, from=2-1, to=2-2]
        \arrow[""{name=0, anchor=center, inner sep=0}, dashed, from=2-2, to=2-3]
        \arrow[from=2-2, to=3-3]
        \arrow[""{name=1, anchor=center, inner sep=0}, from=2-3, to=2-4]
        \arrow[from=2-3, to=3-3]
        \arrow["s", from=2-4, to=2-5]
        \arrow["t"', from=2-4, to=3-4]
        \arrow[""{name=2, anchor=center, inner sep=0}, from=3-3, to=3-4]
        \arrow["\lrcorner"{anchor=center, pos=0.125}, draw=none, from=1-2, to=0]
        \arrow["\lrcorner"{anchor=center, pos=0.125}, draw=none, from=1-3, to=1]
        \arrow["\lrcorner"{anchor=center, pos=0.125}, draw=none, from=2-3, to=2]
    \end{tikzcd}\]
    Here we repeatedly use that $\Span(-)$ preserves pullbacks,
    which shows that the top rightmost square and the rectangle consisting
    of the 3 top rightmost squares are pullbacks.
    The middle and bottom squares are pullbacks by definition,
    so by pullback pasting we get that all squares are pullbacks.

    Recall that the forgetful functor $\Mack_{BG}(\Cat) \to \Mack(\Cat)$
    is given by restricting along $\Span(\F) \to \Span(\F[BG])$,
    so it remains to see that the dashed maps are equivalences.
    Since the relevant square is a pullback,
    it suffices to check this for the bottom dashed map.
    For this, it is in turn enough to check that the comparison functor $\phi$ in the following diagram is an equivalence:
    \[\begin{tikzcd}[cramped]
        {\Ar(\F)} & P & {\Ar(\F[BG])} \\
        & \F & {\F[BG]}
        \arrow["\phi"{description}, from=1-1, to=1-2]
        \arrow[curve={height=-12pt}, from=1-1, to=1-3]
        \arrow["t"', from=1-1, to=2-2]
        \arrow[from=1-2, to=1-3]
        \arrow[from=1-2, to=2-2]
        \arrow["\lrcorner"{anchor=center, pos=0.125}, draw=none, from=1-2, to=2-3]
        \arrow["t", from=1-3, to=2-3]
        \arrow[from=2-2, to=2-3]
    \end{tikzcd}\]
    This is essentially another version of the equivalence $\F \simeq \F[BG]_{/G}$.
    Indeed, note that the above equivalence tells us that $\Ar(\F) \to \Ar(\F[BG])$ is essentially surjective;
    every map in $\F[BG]$ factors as an equivalence followed by a map in the image of $\F \to \F[BG]$.
    Since we can take $P \to \Ar(\F[BG])$ to be the identity on objects (analogous to $* \to BG$),
    it follows that $\phi$ is essentially surjective.
    To see that $\phi$ is fully faithful, we need to show that the following square is a pullback:
    \[\begin{tikzcd}[cramped]
        {\map_{\Ar(\F)}(n' \to n, m' \to m)} & {\map_{\Ar(\F[BG])}(\coprod_{n'} G \to \coprod_n G, \coprod_{m'}G \to \coprod_m G)} \\
        {\map_{\F}(n,m)} & {\map_{\F[BG]}(\coprod_n G, \coprod_m G)}
        \arrow[from=1-1, to=1-2]
        \arrow[from=1-1, to=2-1]
        \arrow[from=1-2, to=2-2]
        \arrow[from=2-1, to=2-2]
    \end{tikzcd}\]
    Taking the fiber over some map $n \to m$ in $\F$, this follows from the equivalence
    $\F_{/m} \simeq \F[BG]_{/\coprod_m G}$, which is in turn an instance
    of the fact that both $\F$ and $\F[BG]$ are extensive \cref{ex:extensive},
    that $\F[BG]_{/G} \simeq \F[BG_{/*}]$, and finally that $BG_{/*}$ is contractible (as is true
    for any slice $X_{/x}$ where $X$ is a space).
\end{proof}

For the following two proofs, it will be more convenient to work
with categories with $G$-action in the sense of finite product preserving functors
$\cC \colon \F[BG]^\op \to \Cat$,
since this allows us to consider the `shifted' $\F[BG]$-category $\cC(- \times G)$.
Recall that $\F[BG] \subseteq \An^{BG^\op}$ is by definition the category
of finite free \emph{right} $G$-sets. The Yoneda embedding $BG \subseteq \F[BG]$ defines
a left $G$-action on the right $G$-set $G$.

\begin{lemma}\label{lem:shear}
    Consider a category with right $G$-action $\cC \in \Fun^\times(\F[BG]^\op,\Cat)$.
    The left $G$-action on $G \in \F[BG]$ induces a right $G$-action on $\cC(- \times G)$, and this yields a natural equivalence
    \[
        \cC(- \times G)^{hG^\op} \simeq \cC.
    \]
\end{lemma}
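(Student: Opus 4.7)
The plan is to construct a natural comparison map $\eta \colon \cC \to \cC(-\times G)^{hG^\op}$, reduce to checking it is a pointwise equivalence at $X = G$ via the universal property of $\F[BG]$, and then verify this via an explicit shearing computation. Writing $\ell_g \colon G \to G$ for left multiplication by $g$, which is the automorphism of $G \in \F[BG]$ corresponding to $g \in \operatorname{Aut}_{BG}(*)$ under the Yoneda embedding $BG \hookrightarrow \F[BG]$, the projection $\pi_X \colon X \times G \to X$ in $\F[BG]$ satisfies $\pi_X \circ (\id_X \times \ell_g) = \pi_X$, so $\cC(\pi_X)$ is $G$-invariant and thus lifts to $\eta_X \colon \cC(X) \to \cC(X \times G)^{hG^\op}$. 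Assembling these gives a natural map $\eta$ in $\Fun^\times(\F[BG]^\op, \Cat)$.

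By the universal property of the free finite coproduct completion, restriction along $BG^\op \subseteq \F[BG]^\op$ induces an equivalence $\Fun^\times(\F[BG]^\op, \Cat) \simeq \Cat^{BG^\op}$, so it suffices to show that $\eta_G$ is an equivalence of categories with right $G$-action. For this I would use the right-$G$-equivariant isomorphism $\alpha \colon G \times G \xrightarrow{\simeq} \coprod_{c \in G} G$ in $\F[BG]$ given by $(a, h) \mapsto (ah^{-1}, h)$, under which $\id_G \times \ell_g$ transports to the self-map $(c, v) \mapsto (cg^{-1}, gv)$. Applying the product-preserving functor $\cC$ then identifies $\cC(G \times G) \simeq \Fun(G, \cC(G))$, where the extra right $G$-action becomes the twisted formula $(g \cdot f)(c) = g_* f(cg^{-1})$, and $g_*$ denotes the intrinsic action on $\cC(G)$ corresponding to $\ell_g$.

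The key step is the shearing argument: the automorphism $\Phi \colon \Fun(G, \cC(G)) \to \Fun(G, \cC(G))$ defined by $\Phi(f)(c) = c^{-1}_* f(c)$ intertwines the twisted action with the translation-only action $(g \cdot F)(c) = F(cg^{-1})$, as a direct computation using $(ab)_* = b_* a_*$ shows. Since $G$ is a free right $G$-set under translation, the homotopy fixed points of the latter on $\Fun(G, \cC(G))$ (now viewed with trivial $G$-action on the $\cC(G)$-factor) recover $\cC(G)$ via constant diagrams, and unshearing yields the formula $x \mapsto (c \mapsto c_* x)$, which agrees with $\eta_G$ by direct inspection of $\cC(\pi_1)$. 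The main obstacle I anticipate is careful variance bookkeeping, in particular verifying that the intrinsic right $G$-action on $\cC(G \times G)^{hG^\op}$ (coming from $\ell_g \times \id_G$, which under $\alpha$ corresponds to the pure reindexing $(c, v) \mapsto (gc, v)$) matches the intrinsic action on $\cC(G)$ under the resulting equivalence -- a direct computation on a fixed point $f_x(c) = c_* x$ gives $(g \cdot_{\text{int}} f_x)(c) = f_x(gc) = (gc)_* x = c_* g_* x = f_{g_* x}(c)$, as required.
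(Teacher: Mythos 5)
Your proof is correct, and it is built around the same central idea as the paper's: a shearing isomorphism identifying $\cC(G\times G)$ with a coinduction. The differences are essentially presentational, but worth noting. The paper chooses coordinates via the shearing $\Ind_e^{G_2}G_1 \xrightarrow{\cong} G_1\times G_2$, $(g_1,g_2)\mapsto (g_1,g_2g_1)$, which is simultaneously equivariant for the diagonal $G^\op$-action, the extra $G_2$-action, \emph{and} the residual $G_1$-action; applying $\cC$ then immediately gives $\cC(G_1\times G_2)\simeq\Coind_e^{G_2^\op}\cC(G_1)$ with the $G_2^\op$-action already in ``regular'' form, so that $(-)^{hG_2^\op}$ can be taken without further manipulation. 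Your shearing $(a,h)\mapsto(ah^{-1},h)$ only straightens the diagonal $G^\op$-action, leaving the extra $G$-action twisted on $\Fun(G,\cC(G))$, which is why you need the additional untwisting automorphism $\Phi(f)(c)=c^{-1}_*f(c)$ and the explicit verification $\Phi(g\cdot_{\mathrm{tw}}f)=g\cdot_{\mathrm{tr}}\Phi(f)$. That step and the final equivariance check on $f_x(c)=c_*x$ are both correct (using $(ab)_*=b_*a_*$), but they are exactly the book-keeping the paper avoids by its choice of coordinates. You also frontload an explicit comparison map $\eta$ built from $\cC(\pi_X)$; this is fine and makes the proof a touch more self-contained, though the paper simply composes its chain of equivalences without naming a comparison map. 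One small point of care: saying that ``$\cC(\pi_X)$ is $G$-invariant and thus lifts'' is slightly quick --- what you really want is that $\pi_X$ is $G$-equivariant in $\F[BG]$ for the trivial action on $X$, so that applying $\cC$ yields a map of objects with $G$-action (trivial on the source), which then corresponds by adjunction to a map into the homotopy fixed points. As stated this is implicit in your argument and does not affect correctness, but it is the mechanism that makes the lift canonical and natural in $X$.
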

\begin{proof}
    The equivalence $\Fun^\times(\F[BG]^\op,\Cat) \simeq \Cat^{BG^\op}$
    tells us that $\cC$ is uniquely determined by $\cC(G) \in \Cat^{BG^\op}$
    with right $G$-action induced by acting on $G$ from the left on the inside.
    In the following it will be notationally convenient to write $G_1,G_2,\dots$ for copies of $G$.
    Thus, it suffices to build a natural $G_1^\op$-equivariant equivalence
    \begin{equation}\label{eq:shear}
        \cC(G_1 \times G_2)^{hG_2^\op} \simeq \cC(G_1)
    \end{equation}
    We can view the product $G_1 \times G_2$ as a $G_1 \times G_2 \times G^\op$-equivariant set,
    where the right $G$-action is diagonal. Equivalently, if we only act on $G_1$ from the right,
    then we will denote this by $\Ind_{e}^{G_2}G_1 = \coprod_{G_2}G_1$.
    Recall that we have a $G_1 \times G_2 \times G^\op$ equivariant shearing isomorphism
    \[
        \Ind_e^{G_2} G_1 \xto{\cong} G_1 \times G_2, (g_1,g_2) \mapsto (g_1,g_2g_1)
    \]
    This yields natural $G_1^\op \times G_2^\op$-equivariant equivalences
    \[
        \cC(G_1 \times G_2)
        \simeq \cC(\Ind_e^{G_2} G_1)
        \simeq \Coind_e^{G_2^\op}\cC(G_1).
    \]
    Applying $(-)^{hG_2^\op}$ then gives a natural $G_1^\op$-equivariant equivalence (\ref{eq:shear}).
\end{proof}

The last ingredient for the proof of \cref{thm:borel-eqv} is the following
`shifting lemma':

\begin{lemma}[{{\cite[Lemma 2.33]{LLP}}}]\label{lem:shift}
    Let $(\cF,\cN)$ be a span pair where $\cF$ admits finite products.
    Then we have an equivalence natural in $\cN$-normed $\cF$-precategories $\cC$ and $A \in \cF^\op$:
    \[
        \und{\CAlg}_\cF^\cN(\cC)(A) \simeq \CAlg_\cF^\cN(\cC(A \times -))
    \]
    which is compatible with the forgetful functors to $\cC(A) \simeq \lim_{\cF^\op} \cC(A \times -)$.
\end{lemma}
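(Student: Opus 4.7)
The key construction is the right adjoint $\phi \colon \cF \to \cF_{/A}$ to the projection $\pi_A \colon \cF_{/A} \to \cF$, given by $\phi(X) = (X \times A \xto{\pi_A} A)$; this is a genuine adjunction since $\cF$ admits finite products. Being a right adjoint, $\phi$ preserves pullbacks. Moreover, $\phi$ sends each $\cN$-morphism $f \colon B \to C$ to $A \times f$ in $\cF_{/A}$, whose image in $\cF$ under $\pi_A$ is the base change of $f$ along the projection $A \times C \to C$, and hence lies in $\cN$ by the closure of $\cN$ under base change in a span pair. Therefore $\phi$ induces a span functor
\[
    \Span(\phi) \colon \Span_\cN(\cF) \to \Span_\cN(\cF_{/A})
\]
lying over $A \times - \colon \Span_\cN(\cF) \to \Span_\cN(\cF)$ via $\Span(\pi_A)$.

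Using the pullback $\int \cC(A \times -) \simeq \int\cC \times_{\Span_\cN(\cF), A \times -} \Span_\cN(\cF)$, I would identify $\CAlg_\cF^\cN(\cC(A \times -))$ with the full subcategory of $\Fun(\Span_\cN(\cF), \smallint\cC)$ on functors lying over $A \times -$ that are cocartesian on $\cF^\op$. Precomposition with $\Span(\phi)$ then yields the candidate comparison
\[
    \Span(\phi)^* \colon \und{\CAlg}_\cF^\cN(\cC)(A) = \Fun_{/\Span_\cN(\cF)}^{\cF^\op\dcc}(\Span_\cN(\cF_{/A}), \smallint\cC) \to \CAlg_\cF^\cN(\cC(A \times -)),
\]
which is manifestly natural in $\cC \in \PreNmCat{\cF}{\cN}$ and $A \in \cF^\op$ and compatible with the forgetful functors to $\cC(A)$ once one unfolds the identification $\cC(A) \simeq \lim_{\cF^\op}\cC(A \times -)$ provided by the finite product structure.

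\textbf{Main obstacle.} The hard part is showing that $\Span(\phi)^*$ is an equivalence. My strategy is to construct the inverse via cocartesian pushforward along the unit $\eta_{(B \xto{g} A)} = (\id_B, g) \colon B \to B \times A$ of the adjunction $\pi_A \dashv \phi$: given $F \colon \Span_\cN(\cF) \to \int\cC$ over $A \times -$ with $F|_{\cF^\op}$ cocartesian, one sets $\tilde F(B \to A) \coloneqq (\id_B, g)^* F(B) \in \cC(B)$, the cocartesian pushforward of $F(B) \in \cC(A \times B)$ along the morphism $\phi(B) \to (B \to A)$ in $(\cF_{/A})^\op$ opposite to $\eta$. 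The relation $\tilde F \circ \Span(\phi) \simeq F$ reduces to the observation that composing the relevant unit maps yields the symmetry isomorphism $B \times A \simeq A \times B$, and functoriality on $(\cF_{/A})^\op$ follows from the compatibility $(\id_B, g_B) \circ \gamma = (\gamma \times A) \circ (\id_C, g_C)$ of units in $\cF$ for a morphism $\gamma \colon (C \to A) \to (B \to A)$. The genuinely subtle part is extending $\tilde F$ coherently to forward ($\cN$-)morphisms and all higher simplices of $\Span_\cN(\cF_{/A})$; I would handle this by invoking the full 2-functoriality of $\Span$ together with the $\infty$-categorical universal property of the adjunction $\pi_A \dashv \phi$, which guarantees that cocartesian transport along $\eta$ assembles into an essentially unique $\infty$-functor, producing the desired inverse to $\Span(\phi)^*$.
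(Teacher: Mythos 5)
Your construction of $\phi \colon \cF \to \cF_{/A}$, the verification that it is a morphism of adequate triples, the functor $\Span(\phi)$, and the comparison map $\Span(\phi)^*$ are all correct and visibly natural in $\cC$ and $A$. The gap is exactly where you flag it: the inverse. You specify $\tilde F$ on objects via cocartesian transport along $\eta_{(B\to A)}$, sketch compatibility with backwards maps, and then appeal to ``the full 2-functoriality of $\Span$ together with the $\infty$-categorical universal property of the adjunction $\pi_A \dashv \phi$'' to produce a coherent extension to forward morphisms and all higher simplices of $\Span_\cN(\cF_{/A})$. There is no such universal property to invoke: objectwise and 1-morphism-wise assignments do not determine a functor $\Span_\cN(\cF_{/A}) \to \int\cC$, and the phrase as written does not name a construction that actually supplies the missing coherence data. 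Until that is done, this is not a proof.

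The fix is close to what you are reaching for, but it needs to be formulated globally rather than objectwise. Note first that the unit and counit of $\pi_A \dashv \phi$, viewed in $\Span_\cN(\cF)$ and $\Span_\cN(\cF_{/A})$ through the contravariant inclusion $\cF^\op \subset \Span_\cN(\cF)$ (and its slice analogue), become pointwise backwards and exhibit the \emph{reversed} adjunction $\Span(\phi) \dashv \Span(\pi_A)$, with unit $\eta_X$ the backwards span $X \leftarrow X\times A = X\times A$ and counit $\eps_{(B\to A)}$ the backwards span along $(\id_B, g)\colon B \to B\times A$. Since $\int\cC \to \Span_\cN(\cF)$ has cocartesian lifts over $\cF^\op$, cocartesian transport of sections along any natural transformation of base functors that is pointwise in $\cF^\op$ is an honest $\infty$-functorial operation on section categories; this is what supplies the coherence you are missing. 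The inverse is then $F \mapsto (\Span(\pi_A)\eps)_!\bigl(F \circ \Span(\pi_A)\bigr)$, i.e.\ transport of $F\circ\Span(\pi_A)$ (a section over $\Span(\pi_A\phi\pi_A)$) along the whiskered counit $\Span(\pi_A)\eps$ to a section over $\Span(\pi_A)$. Its component at $(B\to A)$ recovers your formula $(\id_B, g)^*F(B)$, and the two composites being equivalent to identities follow from whiskering the triangle identities and using that $F$ is itself cocartesian over $\cF^\op$, so that $F\circ(A\times-)\simeq (\Span(\pi_A)\Span(\phi)\eta)_!F$. Your write-up needs an argument of this kind (or an equivalent global one); as it stands, the extension of $\tilde F$ to a functor is a genuine gap.
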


\begin{proof}[Proof of \cref{thm:borel-eqv}.]
    It remains to prove that we have a natural equivalence as depicted in \cref{diag:borel-algebras}.
    Recall that originally this diagram lives in $\Fun^\times(\F[BG]^\op,\Cat)$
    (and we only considered it as one in the equivalent 2-category $\Cat^{BG}$ for notational clarity,
    see~\cref{rem:left-right}).
    We now have natural $G_1^\op$-equivariant equivalences
    \[
        \und{\CAlg}_{BG}(\cC)(G_1)
        \simeq \CAlg_{BG}(\cC(G_1 \times -))
        \simeq \CAlg(\cC(G_1 \times G_2)^{hG_2^\op})
        \simeq \CAlg(\cC(G_1))
    \]
    compatible with the forgetful functors to $\cC(G_1)$,
    where the first uses the above shifting lemma,
    the second is \cref{lem:borel-g-underlying}, and the last is \cref{lem:shear}.
\end{proof}

In the remainder of this section we discuss the case of global Borelification.
Recall from \cref{ex:borel-inc} that we have an extensive Borel inclusion
$(\F,\F) \subseteq (\Fglo,\Forb)$ which induces global Borelifications
$(-)^\flat_\gl \colon \Cat \subseteq \Cat_\gl \coloneqq \Fun^\times(\Fglo^\op,\Cat)$ and
\[
    (-)^\flat_\gl \colon \Mack(\Cat)
    \subseteq \Mack_\gl(\Cat)
    \coloneqq \Fun^\times(\Span_\Forb(\Fglo),\Cat).
\]
We also have Borel inclusions
$(\F[BG],\F[BG]) \subseteq (\F_G,\F_G) \subseteq (\Fglo_{/G},\Forb \times_{\Fglo}\Fglo_{/G})$ which give rise to Borelifications
\[\begin{tikzcd}[ampersand replacement=\&,cramped]
	{\Mack_{BG}(\Cat)} \&\& {\Mack_G(\Cat)} \\
	\& {\Mack_{G\dgl}(\Cat)}
	\arrow["{(-)^\flat_G}", hook', from=1-1, to=1-3]
	\arrow["{(-)^\flat_{G\dgl}}"', hook, from=1-1, to=2-2]
	\arrow["{(-)^\flat_{G\dgl}}", hook', from=1-3, to=2-2]
\end{tikzcd}\]
where $\Mack_{G\dgl}(\Cat) \coloneqq \Fun^\times(\Span_{\Forb}(\Fglo_{/G}),\Cat)$.
Applying \cref{prop:borel-sections} and \cref{thm:borel-eqv}
to this setting yields the following proposition.

\begin{proposition}\label{prop:gl-borel}
    There are equivalences compatible with the forgetful functors:
    \begin{enumerate}
        \item $\und{\CAlg}_\gl(\cC^\flat_\gl) \simeq \CAlg(\cC)^\flat_\gl$
            naturally in $\cC \in \Mack(\Cat)$;
        \item $\und{\CAlg}_{G\dgl}(\cC^\flat_{G\dgl}) \simeq \und{\CAlg}_G(\cC)^\flat_{G\dgl}$ naturally in $\cC \in \Mack_G(\Cat)$;
        \item $\und{\CAlg}_{G\dgl}(\cC^\flat_{G\dgl}) \simeq \CAlg(\cC)^\flat_{G\dgl}$
            naturally in $\cC \in \Mack_{BG}(\Cat)$.
    \end{enumerate}
\end{proposition}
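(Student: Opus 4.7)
The plan is to obtain all three equivalences as essentially formal consequences of the general results already proved, namely \cref{prop:borel-sections} and \cref{thm:borel-eqv}, applied to the relevant (composable) Borel inclusions from \cref{ex:borel-inc}. I expect the real content of the proposition to be bookkeeping about how these inclusions and their induced Borelifications compose; the main obstacle will be making sure that the transitivity of right Kan extensions matches up with the transitivity of taking normed algebras, so that no additional naturality check is required.

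For item (1), I apply \cref{prop:borel-sections} directly to the Borel inclusion $(\F,\F)\subseteq(\FGlo,\Forb)$ from \cref{ex:borel-inc}(3). With $\cC\in\Mack(\Cat)$ playing the role of $\cC^\tensor$ and $\cF=\F$, $\cE=\FGlo$, the conclusion of \cref{prop:borel-sections} reads $\und{\CAlg}_\gl(\cC^\flat_\gl)\simeq\und{\CAlg}_\F^\F(\cC)^\flat_\gl$; since $\und{\CAlg}_\F^\F(\cC)\simeq\CAlg(\cC)$ this gives exactly the first statement, with compatibility of the forgetful functors inherited from the proposition. Item (2) is identical, but applied to the Borel inclusion $(\F_G,\F_G)\subseteq(\FGlo_{/G},\Forb\times_{\FGlo}\FGlo_{/G})$ from \cref{ex:borel-inc}(2); naturality in $\cC\in\Mack_G(\Cat)$ follows from the naturality in \cref{prop:borel-sections}.

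For item (3), the plan is to combine (2) with \cref{thm:borel-eqv} by factoring the Borel inclusion $(\F[BG],\F[BG])\subseteq(\FGlo_{/G},\Forb\times_{\FGlo}\FGlo_{/G})$ as
\[
    (\F[BG],\F[BG])\subseteq(\F_G,\F_G)\subseteq(\FGlo_{/G},\Forb\times_{\FGlo}\FGlo_{/G}).
\]
Right Kan extension is transitive along composites of fully faithful inclusions, so for $\cC\in\Mack_{BG}(\Cat)$ the outer Borelification factors as $\cC^\flat_{G\dgl}\simeq(\cC^\flat_G)^\flat_{G\dgl}$ as normed $G$-global categories. Applying (2) to the normed $G$-category $\cC^\flat_G$ produces a natural equivalence $\und{\CAlg}_{G\dgl}((\cC^\flat_G)^\flat_{G\dgl})\simeq\und{\CAlg}_G(\cC^\flat_G)^\flat_{G\dgl}$ over $\cC^\flat_{G\dgl}$, and postcomposing with the global Borelification of the equivalence $\und{\CAlg}_G(\cC^\flat_G)\simeq\CAlg(\cC)^\flat_G$ from \cref{thm:borel-eqv} yields the desired $\und{\CAlg}_{G\dgl}(\cC^\flat_{G\dgl})\simeq\CAlg(\cC)^\flat_{G\dgl}$.

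The main thing to verify is that compatibility with the forgetful functors is preserved under this chain of identifications: at each step we are either invoking \cref{prop:borel-sections} or \cref{thm:borel-eqv}, both of which record compatibility with $\bbU$; functoriality of $(-)^\flat_{G\dgl}$ as a 2-functor then shows that the composite equivalence lies over $\cC^\flat_{G\dgl}$. Naturality in $\cC\in\Mack_{BG}(\Cat)$ is inherited from the naturality of each ingredient.
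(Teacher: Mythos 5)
Your proposal is correct and matches the paper's intended route: the paper gives no explicit proof and simply states that the claims follow by applying \cref{prop:borel-sections} and \cref{thm:borel-eqv} to the triangle of Borel inclusions displayed just above the proposition, which is exactly what you do. For item (3) there is a marginally more direct organization — apply \cref{prop:borel-sections} to the composite inclusion $(\F[BG],\F[BG])\subseteq(\Fglo_{/G},\Forb\times_{\Fglo}\Fglo_{/G})$ to get $\und{\CAlg}_{G\dgl}(\cC^\flat_{G\dgl})\simeq\und{\CAlg}_{BG}(\cC)^\flat_{G\dgl}$, then Borelify the equivalence $\und{\CAlg}_{BG}(\cC)\simeq\CAlg(\cC)$ from \cref{diag:borel-algebras} — but your factorization through $(\F_G,\F_G)$ and item (2) is equivalent via transitivity of right Kan extensions, so there is nothing to add.
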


The following proposition tells us how to relate
the monoidal Borelifications of the equivariant, global and $G$-global worlds.

\begin{proposition}\label{prop:borel-global-eq}
    Both of the following squares (and hence the rectangle)
    commute via the Beck-Chevalley transformations:
    \[\begin{tikzcd}[ampersand replacement=\&,cramped]
        {\Mack_{BG}(\Cat)} \& {\Mack_{BG}(\Cat)} \& {\Mack(\Cat)} \\
        {\Mack_G(\Cat)} \& {\Mack_{G\dgl}(\Cat)} \& {\Mack_\gl(\Cat)}
        \arrow[Rightarrow, no head, from=1-1, to=1-2]
        \arrow["{(-)^\flat_G}"', hook, from=1-1, to=2-1]
        \arrow["{(-)^\flat_{G\dgl}}"', hook, from=1-2, to=2-2]
        \arrow["{P^*}"', from=1-3, to=1-2]
        \arrow["{(-)^\flat_{\gl}}", hook', from=1-3, to=2-3]
        \arrow["{L^*}", from=2-2, to=2-1]
        \arrow["{\Pi_G^*}", from=2-3, to=2-2]
    \end{tikzcd}\]
    Here $L$ is induced by the extensive Borel inclusion
    $(\F_G,\F_G) \subseteq (\Fglo_{/G},\Forb \times_{\Fglo} \Fglo_{/G})$
    and $P \colon \Span(\F[BG]) \to \Span(\F)$
    respectively $\Pi_G$ are induced by the functors $BG \to *$
    respectively $\pi_G \colon \Fglo_{/G} \to \Fglo$.
\end{proposition}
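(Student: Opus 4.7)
The plan is to verify that each square commutes via the Beck--Chevalley map, after which the rectangle follows by pasting. Both arguments exploit that all four Borelifications in sight are right Kan extensions along fully faithful inclusions of span categories.

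For the left square, I would use that the map $L$ is itself fully faithful at the span level by \cref{lem:borel-inc-span-ff}, since it arises from the extensive Borel inclusion $(\F_G,\F_G) \subseteq (\Fglo_{/G},\Forb \times_{\Fglo} \Fglo_{/G})$. Writing $i_{G\dgl} = L \circ i_G$, functoriality of right Kan extension gives $(i_{G\dgl})_* \simeq L_* (i_G)_*$, while fully faithfulness of $L$ yields $L^* L_* \simeq \id$. Composing these produces a natural equivalence $L^* (i_{G\dgl})_* \simeq (i_G)_*$, which one checks coincides with the canonical Beck--Chevalley transformation via standard adjunction calculus.

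For the right square, $\Pi_G$ does not come from a Borel inclusion, so I would instead reduce the statement to a base-change equivalence at the level of underlying precategories. First, using $\Pi_G \circ i_{G\dgl} \simeq i_\gl \circ P$ and fully faithfulness of $i_\gl$, we directly compute $i_{G\dgl}^* \Pi_G^* (i_\gl)_* \cC \simeq P^* \cC$; hence it suffices to show that $\Pi_G^* (i_\gl)_* \cC$ is Borel as a $G$-global normed category, since then the universal property of the Borel Kan extension forces an equivalence $\Pi_G^* (i_\gl)_* \cC \simeq (i_{G\dgl})_* P^* \cC$, which one identifies with the BC map. By \cref{prop:borel-ran-img}, being Borel at the normed level is equivalent to the underlying $\Fglo_{/G}$-precategory being Borel, and after a second application of the same proposition for the global inclusion $(\F,\F) \subseteq (\Fglo,\Forb)$, this in turn boils down to the Beck--Chevalley map $\pi_G^* (k_\gl)_* \Rightarrow (k_{G\dgl})_* P^*$ being an equivalence for the $(2,1)$-categorical square
\[\begin{tikzcd}[ampersand replacement=\&,cramped]
    {\F[BG]^\op} \& {\F^\op} \\
    {\Fglo_{/G}^\op} \& {\Fglo^\op}
    \arrow["P", from=1-1, to=1-2]
    \arrow["k_{G\dgl}"', hook, from=1-1, to=2-1]
    \arrow["k_\gl", hook', from=1-2, to=2-2]
    \arrow["{\pi_G^\op}"', from=2-1, to=2-2]
\end{tikzcd}\]
of underlying categories. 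This square is a pullback, since (by the characterisation of $\F[BG] \subseteq \Fglo_{/G}$ discussed in \cref{ex:borel-inc}) an object of $\Fglo_{/G}$ lies in $\F[BG]$ if and only if its image under $\pi_G$ lies in $\F$, with the analogous statement for morphisms.

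The main obstacle I anticipate is this final precategory-level Beck--Chevalley equivalence. The key input is that $\pi_G \colon \Fglo_{/G} \to \Fglo$, being a slice projection, is a right fibration of $(2,1)$-categories; combined with the pullback condition on the square above, this yields the desired equivalence via proper base change for right Kan extensions. Once both squares are established, the rectangle's commutativity follows automatically by horizontal pasting.
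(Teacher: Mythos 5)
Your proof is correct and follows essentially the same approach as the paper. For the left square, both arguments come down to $L^*L_* \simeq \id$ (fully faithfulness of $L$) composed with $(i_{G\dgl})_* \simeq L_*(i_G)_*$. For the right square, the paper packages the reduction as a commutative cube: conservativity of the forgetful functor $\Mack_{G\dgl}(\Cat) \to \Cat_{G\dgl}$ and vertical right adjointability of the normed-to-precategory face (\cref{prop:borel-ran-img}) reduce everything to the same precategory-level Beck--Chevalley square you identify, which is then handled by smooth/proper base change since $(\Fglo_{/G})^\op \to \Fglo^\op$ is a left fibration with $(\Fglo_{/G})^\op \times_{\Fglo^\op}\F^\op \simeq \F[BG]^\op$. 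Your reorganization — showing $\Pi_G^*(i_\gl)_*\cC$ is Borel and then invoking the universal property of $(i_{G\dgl})_*$ — is a clean and arguably more transparent way to phrase the same reduction; the two key inputs (\cref{prop:borel-ran-img} applied twice, and base change along the slice projection $\pi_G$) are identical. One small terminological remark: you invoke \emph{proper} base change for a right fibration $\pi_G$, whereas the relevant form in the paper's op-conventions is \emph{smooth} base change for the left fibration $\pi_G^\op$; both phrasings describe the same theorem via duality, so this is not a gap, just a flag that the convention chosen should match the direction of the Kan extensions being compared.
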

\begin{proof}
    Recall from \cref{thm:spans} limits in $\AdTrip$ are computed pointwise in $\Cat$
    and are preserved $\Span \colon \AdTrip \to \Cat$.
    Thus we have cartesian squares
    \begin{equation}\label{eq:span-thing}\begin{tikzcd}[cramped]
        {\Span(\F[BG])} & {\Span(\F[BG])} & {\Span(\F)} \\
        {\Span(\F_G)} & {\Span_{\Forb}(\Fglo_{/G})} & {\Span_\Forb(\Fglo)}
        \arrow[Rightarrow, no head, from=1-1, to=1-2]
        \arrow["K"', hook, from=1-1, to=2-1]
        \arrow["\lrcorner"{anchor=center, pos=0.125}, draw=none, from=1-1, to=2-2]
        \arrow["P", from=1-2, to=1-3]
        \arrow["I"', hook, from=1-2, to=2-2]
        \arrow["\lrcorner"{anchor=center, pos=0.125}, draw=none, from=1-2, to=2-3]
        \arrow["J", hook', from=1-3, to=2-3]
        \arrow["L"', hook, from=2-1, to=2-2]
        \arrow["{\Pi_G}"', from=2-2, to=2-3]
    \end{tikzcd}\end{equation}
    Here $I,J,K$ denote the other extensive Borel inclusions from \cref{ex:borel-inc}.
    Since Beck-Chevalley transformations compose,
    it suffices to show that both squares induce an invertible Beck--Chevalley transformation.
    For the left square, we may equivalently show that the commuting square induced by the right Kan extensions
    is horizontally left adjointable. The Beck--Chevalley map is given by
    $L^*I_* \simeq L^*L_*K_* \xRightarrow{\eps^L K_*} K_*$, which is an equivalence
    since $L$ and hence $L_*$ is fully faithful.
    For the right square, we consider the commutative cube
    \[\begin{tikzcd}[cramped]
        & {\Mack_{BG}(\Cat)} \\
        {\Cat_{BG}} & {\Mack_{G\dgl}(\Cat)} & {\Mack(\Cat)} \\
        {\Cat_{G\dgl}} & \Cat & {\Mack_\gl(\Cat)} \\
        & {\Cat_\gl}
        \arrow[from=1-2, to=2-1]
        \arrow[from=2-2, to=1-2]
        \arrow[from=2-2, to=3-1]
        \arrow[from=2-3, to=1-2]
        \arrow[from=2-3, to=3-2]
        \arrow[from=3-1, to=2-1]
        \arrow[from=3-2, to=2-1]
        \arrow[from=3-3, to=2-2]
        \arrow[from=3-3, to=2-3]
        \arrow[from=3-3, to=4-2]
        \arrow[from=4-2, to=3-1]
        \arrow[from=4-2, to=3-2]
        \arrow[from=4-2, to=3-2]
    \end{tikzcd}\]
    Since the forgetful map $\Mack_{G\dgl}(\Cat) \to \Cat_{G\dgl}$
    is conservative and Beck--Chevalley maps compose,
    it suffices to check that the composite rectangle
    is vertically right adjointable.
    Since the front right face is vertically right adjointable
    by \cref{prop:borel-ran-img}, it remains to see that the front left face
    is vertically right adjointable.
    This is an instance of smooth/proper basechange (see e.g.~\cite[Theorem 6.4.13]{Cisinski}) since $(\Fglo_{/G})^\op \to \Fglo^\op$ is a left fibration
    and $(\Fglo_{/G})^\op \times_{\Fglo^\op} \F \simeq \F[BG]^\op$.
\end{proof}

\begin{remark}\label{rem:borel-glo-eq}
    Given a symmetric monoidal category $\cC$,
    we thus obtain a canonical equivalence
    $(\infl_G \cC)^{\flat}_G \simeq i^*(\cC_\gl^{\flat})$,
    where $i = \Pi_GL \colon \Span(G) \to \Span_{\Forb}(\Fglo)$
    is induced by $\F_G \simeq \Forb_{/G} \to \Forb \subset \Fglo$.
    The latter is how the Borel $G$-symmetric monoidal category
    of a symmetric monoidal category was defined in \cite[3.4.17]{Elmanto-Haugseng}, so our definition coincides with theirs.
\end{remark}

\begin{corollary}\label{cor:calg-comm}
    Let $\cC$ be a symmetric monoidal category. Then the following square commutes
    \[\begin{tikzcd}[cramped]
        {\CAlg_\gl(\cC^\flat_\gl)} & {\CAlg_G((\infl_G\cC)^\flat_G)} \\
        {\CAlg(\cC)} & {\CAlg(\cC^{BG})}
        \arrow["{i^*}", from=1-1, to=1-2]
        \arrow["\simeq"', from=1-1, to=2-1]
        \arrow["{J^*}", draw=none, from=1-1, to=2-1]
        \arrow["\simeq", from=1-2, to=2-2]
        \arrow["{K^*}"', draw=none, from=1-2, to=2-2]
        \arrow["{\infl_G}"', from=2-1, to=2-2]
    \end{tikzcd}\]
    where $i = \Pi_GL$ is as in the above remark and $K,J$ are as in (\ref{eq:span-thing}).
\end{corollary}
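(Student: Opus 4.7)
The plan is to reduce the commutativity of the square to the geometric identity $iK = JP$ of span functors coming from the right pullback square in \eqref{eq:span-thing}, together with the fact that all four maps in the corollary are induced by pullback of sections along such functors.

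First I would unwind the vertical equivalences as restrictions of sections. The left equivalence $\CAlg_\gl(\cC^\flat_\gl) \simeq \CAlg(\cC)$ is, by \cref{prop:gl-borel}(1) via \cref{prop:borel-sections}, induced by $J^* \colon \Fun_{/\Span_\Forb(\Fglo)}^{\cocart}(\Span_\Forb(\Fglo), \int\cC^\flat_\gl) \to \Fun_{/\Span(\F)}^{\cocart}(\Span(\F), \int\cC)$, using that $\cC^\flat_\gl$ is Borel so that $J^*\cC^\flat_\gl \simeq \cC$. The right equivalence factors as
\[
    \CAlg_G((\infl_G\cC)^\flat_G) \xrightarrow{\;K^*\;} \CAlg_{BG}(\infl_G\cC) \xrightarrow{\;\sim\;} \CAlg(\cC)^{hG} \xrightarrow{\;\sim\;} \CAlg(\cC^{BG}),
\]
where the first step is \cref{prop:borel-sections} applied to the Borel inclusion $(\F[BG], \F[BG]) \subseteq (\F_G, \F_G)$ (using $K^*(\infl_G\cC)^\flat_G \simeq \infl_G\cC$), the second is \cref{lem:borel-g-underlying}, and the third uses that $\CAlg$ commutes with limits.

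Next I would interpret the horizontal maps in the same fibrational language. The map $i^*$ is literally pullback of sections along $i = \Pi_G L \colon \Span(G) \to \Span_\Forb(\Fglo)$, once we identify $i^*\cC^\flat_\gl \simeq (\infl_G\cC)^\flat_G$ via the rectangle in \cref{prop:borel-global-eq}. The bottom map $\infl_G$ I would identify with pullback along $P \colon \Span(\F[BG]) \to \Span(\F)$: one checks directly from the pointwise formula that $P^*\cC \simeq \infl_G\cC$ as $BG$-symmetric monoidal categories (the $G$-action on $G \in \F[BG]$ projects to the identity under $P$), and under the equivalence $\CAlg_{BG}(\infl_G\cC) \simeq \CAlg(\cC)^{hG} \simeq \CAlg(\cC^{BG})$ the induced $P^*$ on algebras corresponds to the inflation functor by naturality in $\cC$.

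Then I would conclude by pullback pasting. The composite $K^* \circ i^*$ is pullback of sections along $iK = \Pi_G L K = \Pi_G I$, while the composite $\infl_G \circ J^*$ corresponds to pullback along $JP$. The right square of \eqref{eq:span-thing} gives $\Pi_G I \simeq JP$, so the two composites agree after restricting the section $s \colon \Span_\Forb(\Fglo) \to \int\cC^\flat_\gl$ once along the common composite, and everything is tracked through the natural identifications by pullback pasting.

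The main obstacle is cleanly identifying $P^*$ with $\infl_G$ through the chain of equivalences given by \cref{lem:borel-g-underlying} and $\CAlg(\cC^{BG}) \simeq \CAlg(\cC)^{hG}$; once this identification is in place and verified to be natural in $\cC$, the remainder of the argument is formal from the pullback square \eqref{eq:span-thing} and the functoriality of the span pullback construction.
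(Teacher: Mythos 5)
Your proposal is correct and follows essentially the same route as the paper: both reduce the commutativity to the commutative rectangle of span categories \eqref{eq:span-thing} and the observation that all four maps are induced by restricting (normed) sections or normed functors along these span functors. The only difference is cosmetic — you phrase the vertical maps via the fibrational sections picture and explicitly factor the right vertical through $\CAlg_{BG}(\infl_G\cC)$ using \cref{lem:borel-g-underlying}, whereas the paper's proof works with the envelope picture $\Fun^\tensor(\Env((*)^\Ncoprod),-)$ and leaves that last identification implicit; both are justified by the equivalences established in \cref{sec:algebra} and \cref{sec:borel}.
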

\begin{proof}
    The vertical maps are equivalences by \cref{prop:borel-sections},
    and commutativity follows because the maps of span categories
    form the commutative rectangle (\ref{eq:span-thing}).
    In more detail, the above square is identified with the following
    left square, which depicts the effect of the right commutative
    square of 2-categories on hom-categories:
    \[\hspace{-2em}\begin{tikzcd}[cramped,column sep=scriptsize]
        {\Fun_\gl^\tensor(\Env((*)^{\Forb\damalg}), \cC^\flat_\gl)} & {\Fun_G^\tensor(\Env((*)^{G\damalg}), (\infl_G\cC)^\flat_G)} & {\Mack_\gl(\Cat)} & {\Mack_G(\Cat)} \\
        {\Fun^\tensor(\F^\sqcup,\cC)} & {\Fun^\tensor_{BG}(\infl_G\F^{\sqcup}, \infl_G \cC)} & {\Mack(\Cat)} & {\Mack(\Cat)^{BG}}
        \arrow["{i^*}", from=1-1, to=1-2]
        \arrow["\simeq"', from=1-1, to=2-1]
        \arrow["{J^*}", draw=none, from=1-1, to=2-1]
        \arrow["\simeq", from=1-2, to=2-2]
        \arrow["{K^*}"', draw=none, from=1-2, to=2-2]
        \arrow["{i^*}", from=1-3, to=1-4]
        \arrow["{J^*}"', from=1-3, to=2-3]
        \arrow["{K^*}", from=1-4, to=2-4]
        \arrow["{P^*}"', from=2-1, to=2-2]
        \arrow["{P^*}"', from=2-3, to=2-4]
    \end{tikzcd}\]
\end{proof}

\begin{remark}\label{rem:borel-glo-functoriality}
    We can now argue for the claims in \cref{ex:borel-global}
    about the functoriality of the global monoidal Borelification.
    Namely, note that any span in $\Span_\Forb(\Fglo)$
    lies in the image of $\Span(G) \to \Span_{\Forb}(\Fglo)$
    for some large enough finite group $G$.
    The equivalence $(\cC^\tensor)^\flat_\gl|_{\Span(G)} \simeq (\infl_{G}\cC^\tensor)^\flat_{G}$ from \cref{prop:borel-global-eq}
    then allows us to deduce the claims in \cref{ex:borel-global}
    from the description of the functoriality
    of the $G$-equivariant Borelification from \cref{rem:borel-eqv-functoriality},
    applied to the case of a trivial $G$-action.
    As mentioned in \cref{rem:unique-norm} for the equivariant case,
    in view of \cref{prop:unique-norm} it is reasonable to refer to the functors
    $\cC^\flat(H = H \to G) \colon \cC^{BH} \to \cC^{BG}$
    as `the' symmetric monoidal norms.
\end{remark}

\begin{remark}
    Everything in this section which does not involve
    normed algebras, i.e.~\cref{prop:borel-ran-img,prop:bg-normed,prop:borel-global-eq},
    also works for an arbitrary 2-category $\mathbf{C}$ with enough limits in place of $\Cat$, and in fact naturally so,
    e.g.~$(-)^\flat_G \colon \Mack_{BG}(\mathbf{C}) \subseteq \Mack_G(\mathbf{C})$ can be upgraded to a natural transformation of functors $\Cat_2 \to \Cat_2$,
    and $\Mack_{BG}(\mathbf{C}) \simeq \Mack(\mathbf{C})^{BG}$
    can be upgraded to a natural equivalence of 2-categories.

    Moreover, we can even make this natural in $BG$
    using the functoriality of $\Span(\F[BG])$ in $G \in \Glo$,
    which gives a global 2-category $\Mack_{B(-)}(\mathbf{C})$
    that turns out to be Borel by the analogue of \cref{prop:bg-normed},
    i.e.~equivalent to $\Mack(\mathbf{C})^\flat$.

    In particular, using the global functor
    $\CAlg^\flat \colon \Mack(\Cat)^\flat \to \Cat^\flat$,
    the equivalence of \cref{thm:borel-eqv},
    and the above functoriality in $G \in \Glo$,
    we can assemble the functors $\und{\CAlg}_{BG}$ into a global functor
    making the following diagram commute
    \[\begin{tikzcd}[cramped]
        {\Mack_{B(-)}(\Cat)} && {\Mack(\Cat)^\flat} \\
        & {\Cat^\flat}
        \arrow["\simeq", from=1-1, to=1-3]
        \arrow["{\und{\CAlg}_{B(-)}}"', from=1-1, to=2-2]
        \arrow["{\CAlg^\flat}", from=1-3, to=2-2]
    \end{tikzcd}\]
\end{remark}

\section{Construction of Main Examples}\label{sec:ex}

The goal of this section is to construct the main examples
for us, the normed global categories $\und{\Sp}^\tensor, \und{\Sp}_\gl^\tensor$
of equivariant and global spectra, respectively.
Moreover, we build comparison functors between these
as well as between the categories of ultra-commutative (global) ring spectra
to the respective categories of normed algebras.
We approach this by suitably deriving Borel normed global categories
constructed from 1-categorical models.
We thus begin with a quick detour to set up a convenient framework
for deriving normed categories, based on the left derivable
cocartesian fibrations of \cite{Nikolaus-Scholze} (see also \cite{Hinich-Derived}).

\subsection{Dwyer--Kan Localization in Families}\label{subsec:loc}

Denote by $\Cat^\dagger$ the category of marked categories (also called relative categories), i.e.~categories $\cC$ equipped with a wide subcategory $W \subset \cC$ (the `weak equivalences'),
and functors which preserve weak equivalences (also called `homotopical functors').
Recall that Dwyer--Kan localization can be made functorial in relative categories:

\begin{proposition}[{{\cite[4.1.7.2]{HA}}}]\label{prop:functorial-dk}
    The inclusion $\Cat \hookrightarrow \Cat^\dagger, \cC \mapsto (\cC,\cC^\simeq)$
    admits a left adjoint
    \[
        \DK \colon \Cat^\dagger \to \Cat,\ (\cC,W) \mapsto \cC[W^{-1}]
    \]
    which preserves finite products.
    For any marked category $(\cC,W)$, the unit transformation
    $\gamma_\cC \colon (\cC,W) \to (\DK(\cC,W), \DK(\cC,W)^\simeq)$
    exhibits $\DK(\cC,W)$ as Dwyer--Kan localization of $\cC$ at $W$.
\end{proposition}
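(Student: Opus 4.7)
The existence of the left adjoint $\DK$ together with the fact that the unit exhibits Dwyer--Kan localization is essentially the content of the cited \cite[4.1.7.2]{HA}. Concretely, for any $(\cC,W) \in \Cat^\dagger$, one may construct $\cC[W^{-1}]$ as a pushout in $\Cat$ formally inverting $W$, and the universal property of this construction yields precisely the adjunction statement, since a homotopical functor $(\cC,W) \to (\cE,\cE^\simeq)$ is the same as a functor $\cC \to \cE$ sending $W$ into $\cE^\simeq$, which by the universal property factors uniquely through $\cC[W^{-1}]$. The plan is therefore to simply invoke \cite[4.1.7.2]{HA} (or the equivalent treatment in terms of marked simplicial sets) for all statements except the finite product preservation, which is the one non-formal assertion.

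For finite products, the terminal case is trivial since the terminal object of $\Cat^\dagger$ is the terminal category equipped with its core, and $\DK$ sends it to the terminal category. It thus suffices to verify the binary case: given $(\cC,W_\cC), (\cD,W_\cD) \in \Cat^\dagger$, the natural map
\[
    \phi \colon \DK(\cC \times \cD, W_\cC \times W_\cD) \longrightarrow \DK(\cC,W_\cC) \times \DK(\cD,W_\cD)
\]
induced by projections is an equivalence. The main step is to check this by testing against arbitrary targets $\cE \in \Cat$ via the mapping category $\Fun(-,\cE)$, using the universal properties given by the already-established adjunction.

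Concretely, $\phi^*$ identifies under the adjunction with the inclusion of the full subcategory of $\Fun(\cC \times \cD, \cE)$ consisting of functors inverting $W_\cC \times W_\cD$ into the full subcategory of those inverting both $W_\cC \times \id_\cD$ and $\id_\cC \times W_\cD$. The key observation is that these two subcategories coincide: every morphism $(w,w') \in W_\cC \times W_\cD$ factors in $\cC \times \cD$ as the composite $(w,\id) \circ (\id, w')$, so inverting the latter two classes already inverts all of $W_\cC \times W_\cD$, and the converse inclusion is obvious. Hence $\phi^*$ is an equivalence for every $\cE$, and so $\phi$ itself is an equivalence by the Yoneda lemma.

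The only subtle point I anticipate is making sure the $2$-categorical enhancement of the adjunction (i.e., working with $\Fun(-,\cE)$ rather than just homotopy classes) is rigorously available; this is precisely the content of the marked simplicial set model used in \cite[4.1.7.2]{HA}, so in practice I would either cite that directly or invoke it implicitly through the fact that Dwyer--Kan localization is a pushout in $\Cat$, which commutes with the $\Cat$-enrichment on the nose. No other obstacle is expected.
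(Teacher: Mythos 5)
The paper does not supply a proof of this proposition at all; the entire statement, including the assertion that $\DK$ preserves finite products, is attributed to \cite[4.1.7.2]{HA}, with no further argument. Your proposal cites the same reference for the adjunction and the localization property but provides a self-contained argument for product preservation, which is the one substantive claim. The argument is correct: the heart of it is that a functor $F\colon\cC\times\cD\to\cE$ inverts $W_\cC\times W_\cD$ if and only if it inverts $W_\cC\times\{\mathrm{id}\}$ and $\{\mathrm{id}\}\times W_\cD$ separately, since $(w,w')$ factors as $(w,\mathrm{id})\circ(\mathrm{id},w')$ while both axis classes lie in $W_\cC\times W_\cD$ (a wide subcategory contains all identities). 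These two full subcategories of $\Fun(\cC\times\cD,\cE)$ therefore coincide, and since under the universal property $\phi^*$ is identified with a map between them commuting with both inclusions into $\Fun(\cC\times\cD,\cE)$, it is an equivalence for every $\cE$, hence $\phi$ is an equivalence by Yoneda. One small bookkeeping slip: you describe $\phi^*$ as the inclusion of the subcategory of functors inverting $W_\cC\times W_\cD$ into the one inverting the axis classes, but $\phi^*$ actually goes the other way around $\Fun(\DK(\cC)\times\DK(\cD),\cE)\to\Fun(\DK(\cC\times\cD),\cE)$, and it is that direction which genuinely needs the factorization observation to be well defined; since the two subcategories agree this has no bearing on the conclusion. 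The trade-off is clear: the paper keeps the treatment minimal by outsourcing everything to \cite{HA}, whereas your route makes the product-preservation part of the claim transparent and independent of the details of the reference.
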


Note that a left Quillen functor $F\colon\cM \to \cN$ need not be homotopical,
yet may still be left derived to a left adjoint on the underlying
$\infty$-categories $\bfL F \colon \cM_\infty \to \cN_\infty$, see \cite[Prop.~1.5.1]{Hinich}.
In this case, $\bfL F$ is given by an (absolute) right Kan extension of $\gamma_\cN\circ F$ along $\gamma_\cM$ (cf. \cite[Ex.~A.10]{Nikolaus-Scholze}).

More generally, if a functor $\cC \colon S \to\Cat$ assigns a marked category $(\cC(A),W_A)$ to each $A \in S$
and sends morphisms to left derivable functors (see below for precise conditions),
then the cocartesian unstraightening $\int \cC \to S$ is a left derivable cocartesian fibration in the sense of \cite[Appendix A]{Nikolaus-Scholze}; in particular,
Dwyer--Kan localizing at the fiberwise weak equivalences
then produces a cocartesian fibration $(\int \cC)[W_\bullet^{-1}] \to S$ classifying a functor
\[
    S \to\Cat,\quad (f\colon A\to B)
    \mapsto \Bigl(\bfL\cC(f) \colon \cC(A)[W_A^{-1}]\to \cC(B)[W_B^{-1}]\Bigr).
\]
Below, we introduce a variant of these left derivable cocartesian fibrations
which are especially easy to construct from model-categorical examples;
it requires one to provide a lot more data, namely a left deformation
on each category $\cC(A)$, but the conditions to be left derivable become trivial to verify.

\begin{definition}
    A \emph{left deformation} on a marked category $\cC$ is an
    endofunctor $Q \colon \cC \to \cC$ together with a pointwise marked
    natural transformation $q \colon Q \Rightarrow \id_\cC$.
    If $\cD$ is another such category with left deformation
    $(P,p)$, then a functor $F \colon \cC \to \cD$ is
    \emph{compatible with the left deformations}
    if it restricts to a marked functor $\Img(Q) \to \Img(P)$
    between the essential images of the left deformations.
\end{definition}

\begin{example}[{{\cite[Example A.10]{Nikolaus-Scholze}}}]\label{lem:deform-deriv}
    Let $F \colon (\cC,Q,q) \to (\cD,P,p)$ be a functor compatible with the deformations.
    Then the transformation
    $
        \DK(FQ)\gamma_\cC \simeq \gamma_\cD FQ \xRightarrow{\gamma_\cD Fq} \gamma_\cD F
    $
    exhibits $\DK(FQ) \colon \cC[W_\cC^{-1}] \to \cD[W_\cD^{-1}]$\footnote{The weak equivalences in $\cD$ may not satisfy 2-out-of-3, so $FQ$ might not be homotopical, however equivalences do, so $\gamma_\cD FQ$ uniquely factors through $\gamma_\cC$,
        and under slight abuse of notation we denote the unique induced functor
        on localizations by $\DK(FQ)$.}
    as the left derivation $\bfL F$ of $F$,
    i.e.~as an absolute right Kan extension of $\gamma_\cD F$ along $\gamma_\cC$.
\end{example}

\begin{definition}
    Let $p \colon \cX \to S$ be a cocartesian fibration
    and suppose that each fiber $\cX_A$ is equipped
    with a marking $W_A$ and a left deformation $(Q_A,q_A)$.
    We say $p$ is \emph{left deformable} if for every morphism $f \colon A \to B$ in $S$,
    the pushforward $f_! \colon \cX_A \to \cX_B$ is compatible with the left deformations.
    We equip $\cX$ with the fiberwise weak equivalences $(\cX,W_\bullet)$.
\end{definition}

Note that if $K \to S$ is any functor then $\cX \times_{S} K \to K$
canonically inherits the structure of a left deformable cocartesian fibration.

\begin{theorem}\label{thm:ldeform-cfib}
    Let $p \colon \cX \to S$ be a left deformable cocartesian fibration.
    \begin{enumerate}
        \item By the universal property of Dwyer--Kan localization,
            we obtain a commuting triangle
            \[\begin{tikzcd}[cramped]
                \cX && {\cX[W_\bullet^{-1}]} \\
                & S
                \arrow["{\gamma_\cX}", from=1-1, to=1-3]
                \arrow["p"', from=1-1, to=2-2]
                \arrow["q", from=1-3, to=2-2]
            \end{tikzcd}\]
            Then $q$ is also a cocartesian fibration
            and $\gamma_\cX$ preserves cocartesian lifts
            of morphisms $f \colon A \to B$ whose pushforward
            $f_! \colon \cX_A \to \cX_B$ is homotopical.

        \item Specifically, for any such functor,
            the morphism $\cX \times_S K \to \cX[W_\bullet^{-1}]\times_S K$
            exhibits the latter as Dwyer--Kan localization
            of $\cX \times_S K$ at the edges mapped into $W_\bullet$.
            In particular, we can identify
            $(\gamma_\cX)_A \colon \cX_A \to \cX[W_\bullet^{-1}]_A$ with
            the Dwyer--Kan localization $\gamma_A \coloneqq \gamma_{\cX_A} \colon \cX_A \to \cX_A[W_A^{-1}]$.

        \item For every map $f \colon A \to B$ in $S$, the lax square
induced by the above triangle
            \[\begin{tikzcd}[cramped]
                {\cX_A} & {\cX_B} \\
                {\cX[W_\bullet^{-1}]_A} & {\cX[W_\bullet^{-1}]_B}
                \arrow["{f_!^p}", from=1-1, to=1-2]
                \arrow["{(\gamma_\cX)_A}"', from=1-1, to=2-1]
                \arrow["{(\gamma_\cX)_B}", from=1-2, to=2-2]
                \arrow[Rightarrow, from=2-1, to=1-2]
                \arrow["{f_!^q}"', from=2-1, to=2-2]
            \end{tikzcd}\]
            exhibits $f^q_!$ as the left derived functor
            $\bfL f_!^p = \DK(f_!^pQ_A)$ of $f_!^p$,
            i.e.~as the absolute right Kan extension of
            $\gamma_B f_!^p$ along $\gamma_A$.
            The natural transformation is invertible if $f^p_!$ is homotopical.
    \end{enumerate}
\end{theorem}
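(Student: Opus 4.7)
The plan is to show that $p$ is a \emph{left derivable} cocartesian fibration in the sense of \cite[Appendix A]{Nikolaus-Scholze} and then invoke their fiberwise Dwyer--Kan localization theorem. The hypothesis that each pushforward $f_!^p$ is compatible with the chosen left deformations says precisely that $f_!^p Q_A$ is a homotopical functor landing in $\Img(Q_B)$, and the transformation $f_!^p q_A \colon f_!^p Q_A \Rightarrow f_!^p$ provides the data used to left derive $f_!^p$, as recorded in Example \ref{lem:deform-deriv}. After checking the coherence of this derivability data under composition (which follows formally from compatibility), \cite[Appendix A]{Nikolaus-Scholze} produces the cocartesian fibration $q \colon \cX[W_\bullet^{-1}] \to S$ whose fiber over $A$ is the individual localization $\cX_A[W_A^{-1}]$ and whose pushforward along $f$ is $\bfL f_!^p$.

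This directly yields the identification $(\gamma_\cX)_A \simeq \gamma_A$ in part (2); the general statement for $K \to S$ follows because left deformability is stable under arbitrary basechange, the fiberwise weak equivalences in $\cX \times_S K$ are by definition the preimage of $W_\bullet$, and the Dwyer--Kan localization of the total category is detected fiberwise in this situation and so commutes with pullback.

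For (3), Example \ref{lem:deform-deriv} supplies the concrete description $\bfL f_!^p \simeq \DK(f_!^p Q_A)$ together with the comparison $\DK(f_!^p Q_A) \gamma_A \simeq \gamma_B f_!^p Q_A \xRightarrow{\gamma_B f_!^p q_A} \gamma_B f_!^p$. The mate of the lax square from the defining triangle is obtained by choosing a $q$-cocartesian lift of $f$; under the identification $f_!^q \simeq \bfL f_!^p$ coming from the localization theorem, this mate matches the above comparison. When $f_!^p$ is itself homotopical, the identity serves as a left deformation, so the comparison reduces to applying $\gamma_B f_!^p$ to the fiberwise weak equivalence $q_A$ and is therefore invertible. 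Finally, (1) follows from (3) together with the general fact that a functor between cocartesian fibrations sends a cocartesian lift of $f$ to a cocartesian lift if and only if the associated mate $f_!^q \gamma_A \Rightarrow \gamma_B f_!^p$ is invertible.

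\textbf{Main obstacle.} The principal technical task is aligning our definition of left deformable cocartesian fibration with the precise form of derivability used in \cite[Appendix A]{Nikolaus-Scholze}, and carrying out the mate computation for the lax square in (3). Both can be reduced to the case $S = \Delta^1$ by pullback along the edge corresponding to $f$, at which point the statement comes down to a single derivable functor where Example \ref{lem:deform-deriv} applies directly.
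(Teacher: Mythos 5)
Your overall strategy is the same as the paper's: verify that a left deformable cocartesian fibration is left derivable in the sense of Nikolaus--Scholze, then invoke their localization theorems (A.14, A.15), which directly yield (1), (2), and (3). Your identification of $\bfL f_!^p$ with $\DK(f_!^p Q_A)$ via \cref{lem:deform-deriv}, and the way you deduce the assertions from the NS theorems, match the paper's argument.

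However, your ``Main obstacle'' paragraph mis-locates the real work. You claim that aligning left deformability with NS left derivability ``can be reduced to the case $S = \Delta^1$,'' but that is only enough to show that each individual pushforward $f_!^p$ admits a left derived functor — which you already noted is immediate from \cref{lem:deform-deriv}. The genuinely nontrivial part of NS left derivability is the \emph{compositional} coherence: for $A \to B \to C$ in $S$, the canonical map $\bfL g_! \circ \bfL f_! \Rightarrow \bfL (g_! \circ f_!)$ must be an equivalence, and verifying this requires considering $S = \Delta^2$. This is precisely where the left deformability hypothesis (that $f_!$ restricts to $\Img(Q_A) \to \Img(Q_B)$) is essential: it guarantees $f_! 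Q_A$ lands in $\Img(Q_B)$, so applying the deformation $q_B$ to its image is still a pointwise weak equivalence, and the composite of the two derived functors agrees with the derived composite. The paper carries this out explicitly, and the remark afterwards emphasizes that the weaker ``left derivable'' condition of NS does \emph{not} formally imply this coherence (indeed, NS's Example A.13 silently uses the stronger cofibrant-preservation assumption). Your parenthetical ``which follows formally from compatibility'' is correct in substance but deserves to be the centerpiece of the proof rather than an aside, and the $\Delta^1$ reduction claim should be replaced by $\Delta^2$.
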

\begin{proof}
    By \cite[Theorem A.14, A.15]{Nikolaus-Scholze},
    it suffices to verify that a left deformable cocartesian
    fibration is left derivable in the sense of
    \cite[Definition A.8]{Nikolaus-Scholze},
    for which it suffices to consider $S = \Delta^2$,
    with $p$ classified by a diagram
    % https://q.uiver.app/#q=WzAsMyxbMCwxLCJcXGNNIl0sWzEsMCwiXFxjTiJdLFsyLDEsIlxcY08iXSxbMCwxLCJGIl0sWzEsMiwiRyJdLFswLDIsIkdGIiwyXV0=
    \begin{equation}\label{diag:ldeform-cfib}\begin{tikzcd}
        & \cN \\
        \cM && \cO
        \arrow["F", from=2-1, to=1-2]
        \arrow["G", from=1-2, to=2-3]
        \arrow["H"', from=2-1, to=2-3]
    \end{tikzcd}\end{equation}
    Let $(P,p)$ and $(Q,q)$ be the given left deformations on $\cM$ and $\cN$.
    By assumption and \cref{lem:deform-deriv} $F,G,H$ are left derivable
    with $\bfL F \simeq \DK(FP),\ \bfL G\simeq \DK(GQ)$ and $\bfL H \simeq \DK(HP)$.
    It remains to check compatibility of these left derivations
    with respect to composition,
    but the canonical morphism $\bfL G \circ \bfL F \Rightarrow \bfL H$
    is identified with the following equivalence:
    \[
        \DK(GQ) \circ \DK(FP)
        \simeq \DK(GQFP)
        \xRightarrow[\simeq]{\DK(GqFP)} \DK(GFP)
        \xRightarrow{\simeq} \DK(HP).
    \]
    This uses that $FP$ has image in $\Img(Q)$ and thus $GqFP$ is pointwise marked.
\end{proof}

The last sentence of the above proof
is one reason we consider left deformable cocartesian fibrations
instead of the more general left derivable ones of \cite{Nikolaus-Scholze}.
In general, both homotopical or left Quillen functors could occur
for either $F$ or $G$ in (\ref{diag:ldeform-cfib}).
While it is clear that the composite $GF$ is left derivable when both
$F$ and $G$ are homotopical, both are left Quillen, or $F$ is left Quillen
and $G$ is homotopical, it is not clear to the author whether
the remaining case is even true in full generality;
the above proof really rests on the assumption that $F$ would
also preserve cofibrant objects, i.e.~restrict to the images
of the left deformations.
The argument given in \cite[Example A.13]{Nikolaus-Scholze} misses this,
although it works in their specific example of a symmetric monoidal model category
since there also the cocartesian pushforwards along the inert maps preserve cofibrant objects.

We are now interested in conditions under which a homotopical map between two left deformable cocartesian fibrations
localizes to a map which preserves cocartesian edges. To this end, we will need the following general lemma.

\begin{lemma}\label{lem:cocart-morphisms}
    Let $p_i \colon \cX^i \to S,\ i=0,1$ be two cocartesian fibrations and suppose we have a functor
    $F \colon \cX^0 \to \cX^1$ over $S$.
    Under the unstraightening equivalence $\Fun(\Delta^1,\Cat_{/S}) \simeq \Cocart(\Delta^1)_{/S \times \Delta^1}$,
    we can view this as a morphism of cocartesian fibrations
    \[\begin{tikzcd}[cramped]
        {\int F} && {\Delta^1 \times S} \\
        & {\Delta^1}
        \arrow["p", from=1-1, to=1-3]
        \arrow["q"', from=1-1, to=2-2]
        \arrow[from=1-3, to=2-2]
    \end{tikzcd}\]
    Then $F$ preserves cocartesian edges if and only if $p$ is a cocartesian fibration.
\end{lemma}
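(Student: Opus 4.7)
The plan is to unpack when $p \colon \int F \to \Delta^1 \times S$ admits cocartesian lifts via a direct analysis of mapping-space pullback conditions. Recall the explicit structure of $\int F$: objects are pairs $(i, x)$ with $i \in \{0, 1\}$ and $x \in \cX^i$; morphisms within a fiber are those of $\cX^i$, we have $\map_{\int F}((0, x), (1, y)) \simeq \map_{\cX^1}(F(x), y)$, and there are no morphisms $(1, -) \to (0, -)$. Edges in $\Delta^1 \times S$ come in three types: $(\mathrm{id}_0, f)$, $(\mathrm{id}_1, f)$, and $(\iota, f)$ where $\iota \colon 0 \to 1$. I will characterize $p$-cocartesian edges of $\int F$ in each case.

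The crucial case concerns edges over $(\mathrm{id}_0, f)$: I claim that an edge $\alpha \colon (0, x) \to (0, y)$ in $\int F$ is $p$-cocartesian iff its underlying morphism $x \to y$ is $p_0$-cocartesian \emph{and} $F(\alpha)$ is $p_1$-cocartesian. The first condition arises from testing against objects $(0, z) \in \int F$; the second from testing against $(1, w) \in \int F$, using $\map_{\int F}((0, -), (1, w)) \simeq \map_{\cX^1}(F(-), w)$ to convert the pullback condition into $p_1$-cocartesianness of $F(\alpha)$. The two remaining cases are easier: an edge $(1, y) \to (1, y')$ over $(\mathrm{id}_1, f)$ is $p$-cocartesian iff it is $p_1$-cocartesian (no extra condition arises since test morphisms into the $\{0\}$-fiber do not exist), and an edge $(0, x) \to (1, y)$ over $(\iota, f)$, corresponding to a morphism $F(x) \to y$ in $\cX^1$, is $p$-cocartesian iff that morphism is $p_1$-cocartesian.

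Both directions then follow quickly. For the ``if'' direction, assume $F$ preserves cocartesian edges: cocartesian lifts in the three cases are furnished by the $p_0$-cocartesian lift $x \to f_!^0 x$ in $\cX^0$ (which has the required $F$-property by assumption), the $p_1$-cocartesian lift in $\cX^1$, and $F(x) \to f_!^1 F(x)$, respectively. For the ``only if'' direction, suppose $p$ is cocartesian: then for any $f \colon s \to s'$ in $S$ and $x \in \cX^0$ over $s$, a $p$-cocartesian lift of $(\mathrm{id}_0, f)$ at $(0, x)$ exists, and the characterization forces it to be the essentially unique $p_0$-cocartesian lift $\alpha \colon x \to f_!^0 x$ (since the lift must live in the $\{0\}$-fiber); the extra condition from the characterization then says $F(\alpha)$ is $p_1$-cocartesian, so $F$ preserves cocartesian edges.

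The main subtlety I expect is the asymmetric extra condition for $\{0\}$-fiber edges: one might naively expect that a $p_0$-cocartesian edge is automatically $p$-cocartesian in $\int F$, but the presence of forward morphisms into the $\{1\}$-fiber forces the nontrivial condition on $F$; getting this mapping-space computation precisely right is what makes the argument go through. A cleaner abstract alternative uses the iterated straightening identifications $\Cocart(\Delta^1 \times S) \simeq \Fun(\Delta^1, \Cocart(S))$ and $\Cocart(\Delta^1)_{/(\Delta^1 \times S)} \simeq \Fun(\Delta^1, \Cat_{/S})$; the forgetful functor between them corresponds to postcomposition with the subcategory inclusion $\Cocart(S) \subseteq \Cat_{/S}$, so $\int F$ lifts to $\Cocart(\Delta^1 \times S)$ iff $F$ (viewed as an arrow in $\Cat_{/S}$) lifts to an arrow in $\Cocart(S)$, i.e., iff $F$ preserves cocartesian edges.
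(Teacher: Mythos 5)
Your main hands-on argument is correct, and it takes a genuinely different route from the paper. The paper invokes the local-cocartesian-fibration criterion ([HTT, 2.4.2.11]) to reduce the question to checking that locally $p$-cocartesian edges compose; it then observes that the only nontrivial composition to verify is (a $p_0$-cocartesian lift over $S$ in the $\{0\}$-fiber) followed by (a $q$-cocartesian edge into the $\{1\}$-fiber), and settles both directions with one short argument using right-cancellability of cocartesian edges. You instead work directly with the pullback-square definition of a $p$-cocartesian edge and give a complete case-by-case characterization of $p$-cocartesianness over the three edge types $(\mathrm{id}_0,f)$, $(\mathrm{id}_1,f)$, $(\iota,f)$ of $\Delta^1\times S$; the lemma then falls out by inspecting when cocartesian lifts of the first type exist. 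Your approach is more elementary and more explicit (one sees exactly where the condition ``$F$ preserves cocartesian edges'' is forced, namely when testing a $\{0\}$-fiber edge against objects in the $\{1\}$-fiber), at the cost of more bookkeeping; the paper's approach is shorter by leaning harder on the locally-cocartesian machinery and the factorization/cancellation properties it provides. Both the characterization you state and the deduction from it are correct as written.

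On your abstract alternative: the idea is right in spirit, but as stated it is dangerously close to circular. The claimed compatibility --- that under $\Cocart(\Delta^1)_{/\Delta^1 \times S} \simeq \Fun(\Delta^1,\Cat_{/S})$ the subcategory $\Cocart(\Delta^1 \times S)$ corresponds to $\Fun(\Delta^1,\Cocart(S))$ --- is essentially the content of the lemma. If you want to use it as a proof, you would need to either cite a precise reference for this iterated-straightening compatibility (it is of the same flavor as the fiberwise criterion for cocartesian fibrations over a product) or prove it, and the proof would look very much like your explicit argument. So treat it as a useful mnemonic for the statement rather than as a substitute for the proof.
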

\begin{proof}
    It follows from \cite[2.4.2.11]{HTT} that $p$ is a locally cocartesian fibration,
    and an edge $x \to z$ in $\int F$ is locally $p$-cocartesian if and only if
    it factors as $x \to y \to z$ where $x \to y$ is $q$-cocartesian
    and $y \to z$ is $p_{q(y)}$-cocartesian.
    Recall that $p$ is a cocartesian fibration if and only if these locally $p$-cocartesian edges compose.
    By the description just given,
    the only non-trivial case left to check is that the composite of a $p_{0}$-cocartesian map $x \to x'$
    followed by a $q$-cocartesian map $x' \to Fx'$ factors as a $q$-cocartesian map
    followed by a $p_1$-cocartesian one.
    We have the following left commutative square in $\int F$ lying over the right one in $S \times \Delta^1$:
    \[\begin{tikzcd}[cramped]
        x & {Fx} && {(s,0)} & {(s,1)} \\
        {x'} & {Fx'} && {(s',0)} & {(s',1)}
        \arrow[from=1-1, to=1-2]
        \arrow[from=1-1, to=2-1]
        \arrow[from=1-2, to=2-2]
        \arrow[from=1-4, to=1-5]
        \arrow[from=1-4, to=2-4]
        \arrow[from=1-5, to=2-5]
        \arrow[from=2-1, to=2-2]
        \arrow[from=2-4, to=2-5]
    \end{tikzcd}\]
    The map $x \to Fx'$ is $q$-cocartesian hence locally $p$-cocartesian.
    Now if $F$ preserves cocartesian edges, then $Fx \to Fx'$ is $p_1$-cocartesian,
    and we have thus verified that locally $p$-cocartesian edges compose, so that $p$ is a cocartesian fibration.
    Conversely, if $p$ is a cocartesian fibration, then by right-cancellability of cocartesian edges
    the map $Fx \to Fx'$ is $p$-cocartesian, hence $p_1$-cocartesian, proving that $F$ preserves cocartesian edges.
\end{proof}

\begin{proposition}\label{prop:deriv-cocart-mor}
    Let $p \colon \cX \to S$ and $q \colon \cY \to S$ be left deformable cocartesian fibrations and $f \colon A \to B$ a morphism in $S$.
    Consider a homotopical functor $\alpha \colon \cX \to \cY$ which preserves cocartesian lifts of $f$,
    and suppose that one of the following two conditions hold:
    \begin{enumerate}
        \item The pushforwards $f_! \colon \cX_A \to \cX_B$ and $f_! \colon \cY_A \to \cY_B$ are homotopical.
        \item $\alpha|_{\cX_A} \colon \cX_A \to \cY_A$ and $\alpha|_{\cX_B} \colon \cX_B \to \cY_B$
            are compatible with the deformations.
    \end{enumerate}
    Then $\DK(\alpha)$ preserves cocartesian lifts of $f$.
    In particular, if $\alpha$ preserves all cocartesian edges and is fiberwise compatible with the deformations,
    then $\DK(\alpha)$ is a map of cocartesian fibrations, i.e.~preserves all cocartesian edges.
\end{proposition}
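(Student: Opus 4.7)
My plan is to handle the two sufficient conditions using separate strategies. In both cases the aim is to verify that the canonical Beck--Chevalley 2-cell associated to $\DK(\alpha)$ and $f$ is invertible.

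For Case~(1), I argue directly using the preservation result of \cref{thm:ldeform-cfib}(1). Since $f_!^p$ and $f_!^q$ are homotopical, that theorem says $\gamma_\cX$ and $\gamma_\cY$ preserve cocartesian lifts of $f$. Combined with the hypothesis on $\alpha$, the composite $\gamma_\cY \circ \alpha$ preserves $f$-cocartesian lifts, and by homotopicality of $\alpha$ it is canonically identified with $\DK(\alpha) \circ \gamma_\cX$. Any object $y \in \cX[W_\bullet^{-1}]_A$ is equivalent to $\gamma_\cX(x)$ for some $x \in \cX_A$ by essential surjectivity of the Dwyer--Kan localization, so the cocartesian lift of $f$ starting at $y$ is equivalent to the image under $\gamma_\cX$ of a cocartesian lift $x \to f_!^p x$ in $\cX$. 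Applying $\DK(\alpha)$ yields $\gamma_\cY$ of an $f$-cocartesian edge in $\cY$ (by the hypothesis on $\alpha$), which is itself cocartesian in $\cY[W_\bullet^{-1}]$ by a second application of \cref{thm:ldeform-cfib}(1).

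For Case~(2), I instead combine \cref{lem:cocart-morphisms} with \cref{thm:ldeform-cfib}. Pulling back along the map $\Delta^1 \to S$ classifying $f$ reduces to the situation $S = \Delta^1$ where $\alpha$ preserves all cocartesian edges; \cref{lem:cocart-morphisms} then exhibits the unstraightening as a cocartesian fibration $\int \alpha \to \Delta^1_\alpha \times \Delta^1_f$. Endowed with the fiberwise markings and deformations coming from $\cX$ and $\cY$, this is a left deformable cocartesian fibration: the horizontal cocartesian pushforwards are compatible with the deformations by left deformability of $\cX$ and $\cY$, the vertical ones $\alpha_A, \alpha_B$ are compatible by hypothesis, and the diagonal is compatible as a composite of such. \cref{thm:ldeform-cfib} then yields a cocartesian fibration $(\int \alpha)[W_\bullet^{-1}] \to \Delta^1_\alpha \times \Delta^1_f$ whose corner fibers are the fiberwise Dwyer--Kan localizations by \cref{thm:ldeform-cfib}(2). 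Viewed as a map of cocartesian fibrations over $\Delta^1_f$, this becomes identified with $\DK(\alpha)$, so \cref{lem:cocart-morphisms} applied in reverse shows that $\DK(\alpha)$ preserves cocartesian lifts of $f$.

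The main obstacle will be the verification in Case~(2) that the localized total space $(\int \alpha)[W_\bullet^{-1}]$ truly incarnates $\DK(\alpha)$ as a map of cocartesian fibrations over $\Delta^1_f$; this will amount to tracing through the universal property of the Dwyer--Kan localization from \cref{prop:functorial-dk} together with the fiberwise identification of \cref{thm:ldeform-cfib}(2), and requires some care in the reduction step since in Case~(2) the pushforwards are not homotopical, so \cref{thm:ldeform-cfib}(2) only guarantees fiberwise (rather than absolute) compatibility of the localization with pullback. The final ``in particular'' assertion then follows by applying Case~(2) separately to each morphism of $S$.
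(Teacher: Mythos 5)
Your proposal matches the paper's proof essentially step for step: Case (1) by the commutativity of the localization square and \cref{thm:ldeform-cfib}(1) plus essential surjectivity and uniqueness of cocartesian lifts, and Case (2) by reducing along $f \colon \Delta^1 \to S$, applying \cref{lem:cocart-morphisms}, equipping $\int\alpha \to \Delta^1 \times S$ with the fiberwise data to get a left deformable cocartesian fibration, deriving it via \cref{thm:ldeform-cfib}, and reading off the conclusion with a second application of \cref{lem:cocart-morphisms}. One small correction to the ``main obstacle'' you flag: your worry that \cref{thm:ldeform-cfib}(2) only gives \emph{fiberwise} compatibility with pullback is a misreading — part (2) is stated for an \emph{arbitrary} functor $K \to S$ (not just points of $S$), which is precisely what justifies restricting along $f\colon\Delta^1 \to S$; and the identification of $(\int\alpha)[W_\bullet^{-1}]$ with $\int\DK(\alpha)$ over $\Delta^1_\alpha$ is immediate from the fact that $\alpha$ is homotopical, so no further universal-property chase is needed.
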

\begin{proof}
    Since $\alpha$ is homotopical, we have a commutative diagram
    \[\begin{tikzcd}[cramped,sep=scriptsize]
        \cX && \cY \\
        & S \\
        {\DK(\cX)} && {\DK(Y)}
        \arrow["\alpha", from=1-1, to=1-3]
        \arrow["p"{description}, from=1-1, to=2-2]
        \arrow["{\gamma_\cX}"', from=1-1, to=3-1]
        \arrow["q"{description}, from=1-3, to=2-2]
        \arrow["{\gamma_\cY}", from=1-3, to=3-3]
        \arrow["{p'}"{description}, from=3-1, to=2-2]
        \arrow["{\DK(\alpha)}"', from=3-1, to=3-3]
        \arrow["{q'}"{description}, from=3-3, to=2-2]
    \end{tikzcd}\]
    Under the first assumption, we get from \cref{thm:ldeform-cfib}(1) that
    $\gamma_\cX$ and $\gamma_\cY$ preserve cocartesian lifts of $f$.
    Since they are also essentially surjective, uniqueness of cocartesian lifts with a given source
    implies that every $p'$-cocartesian lift of $f$
    is the image under $\gamma_\cX$ of a $p$-cocartesian lift of $f$. The claim follows by commutativity of the diagram.

    To derive the same conclusion using the second possible assumption, note that we may reduce
    to the case where $\alpha$ preserves all cocartesian edges and is fiberwise compatible with the
    left deformations by pulling back along $f \colon \Delta^1 \to \cC$ and using \cref{thm:ldeform-cfib}(2).
    We then have a cocartesian fibration $\int \alpha \to \Delta^1 \times S$ by \cref{lem:cocart-morphisms}.
    Equip $\int \alpha$ with the fiberwise weak equivalences and left deformations of $\cX$ respectively $\cY$.
    Because $p,q$ are left deformable and $\alpha$ is fiberwise compatible with the left deformations,
    also $\int \alpha \to \Delta^1 \times S$ is left deformable, and we can take the left derivation
    \[\begin{tikzcd}[cramped,sep=scriptsize]
        {\int \alpha} && {\DK(\int \alpha)} \\
        & {\Delta^1 \times S}
        \arrow[from=1-1, to=1-3]
        \arrow[from=1-1, to=2-2]
        \arrow["r", from=1-3, to=2-2]
    \end{tikzcd}\]
    By \cref{thm:ldeform-cfib} we get that $r$ is a cocartesian fibration.
%    Since $\alpha$ is homotopical, we can consider $\int \alpha \to \Delta^1$
%    as a left deformable cocartesian fibrations where all deformations are taken to be the identity,
%    and then use part (3) of the theorem to deduce that $\DK(\int \alpha) \to \Delta^1$
%    is the cocartesian fibration classifying $\DK(\alpha) \colon \DK(\cX) \to \DK(\cY)$.
    Since $\alpha$ is homotopical, we have $\DK(\int \alpha) \simeq \int \DK(\alpha)$
    as cocartesian fibrations over $\Delta^1$.
    But then another application of \cref{lem:cocart-morphisms} shows that $\DK(\alpha)$ preserves all cocartesian edges.
\end{proof}

Consider now the case of a functor $\cC \colon S \to \Cat^\dagger$.
Then the above constructions simply recover the functorial Dwyer--Kan localization:

\begin{corollary}\label{cor:compare-dk}
    The cocartesian unstraightening
    of $\DK \circ \cC$ agrees with
    the left derivation of the canonical left deformable
    fibration $\int \cC$ where all left deformations are taken to be the identity.
\end{corollary}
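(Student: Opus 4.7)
The plan is to identify both cocartesian fibrations as a common Dwyer--Kan localization, using the universal property of $\DK$ and \cref{prop:deriv-cocart-mor} to compare cocartesian lifts. Equipping each fiber $\cC(A)$ with the identity left deformation $(Q_A, q_A) = (\id, \id)$ makes $\int \cC \to S$ into a left deformable cocartesian fibration; the only condition to verify is that every pushforward $\cC(f)_!$ is compatible with the trivial deformations, which just says $\cC(f)_!$ is homotopical, and this holds because $\cC$ factors through $\Cat^\dagger$. By \cref{thm:ldeform-cfib}(2) and (3), the left derivation $\int \cC[W_\bullet^{-1}] \to S$ has fibers canonically equivalent to $\DK(\cC(A))$ and pushforwards equivalent to $\DK(\cC(f)_!) \simeq \DK(\cC(f)_! \cdot \id)$, which matches the cocartesian unstraightening of $\DK \circ \cC$ pointwise; it remains to upgrade this pointwise comparison to an equivalence of cocartesian fibrations over $S$.

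To build a concrete map, I would use the unit $\eta \colon \id_{\Cat^\dagger} \Rightarrow \iota \DK$ of the adjunction $\DK \dashv \iota$ from \cref{prop:functorial-dk}. Whiskering with $\cC$ gives a natural transformation $\cC \Rightarrow \iota \DK \cC$ in $\Fun(S,\Cat^\dagger)$, and postcomposing with the forgetful $\Cat^\dagger \to \Cat$ and taking cocartesian unstraightening yields a morphism
\[
    \alpha \colon \int \cC \longrightarrow \int(\DK\circ\cC)
\]
of cocartesian fibrations over $S$. By construction, $\alpha|_{\cC(A)}$ is the Dwyer--Kan localization $\gamma_A \colon \cC(A) \to \DK(\cC(A))$, which is homotopical (for the marking on the target by equivalences) and trivially compatible with the identity deformations on both sides.

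Now I would apply \cref{prop:deriv-cocart-mor}: since $\alpha$ preserves all cocartesian edges, is fiberwise homotopical, and the pushforwards on both sides are homotopical (the target is the unstraightening of a functor to $\Cat$, so its pushforwards are automatically homotopical for the equivalence-marking), the localized map
\[
    \DK(\alpha) \colon \int \cC [W_\bullet^{-1}] \longrightarrow \DK\!\left(\int(\DK\circ\cC)\right) \simeq \int(\DK\circ\cC)
\]
is again a morphism of cocartesian fibrations over $S$, where the identification on the right uses that $\int(\DK\circ\cC)$ is already marked by its equivalences. On the fiber over $A$, this map is the localization of $\gamma_A$, hence naturally equivalent to the identity on $\DK(\cC(A))$. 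Being a morphism of cocartesian fibrations over $S$ that is fiberwise an equivalence, $\DK(\alpha)$ is an equivalence of cocartesian fibrations, which gives the desired identification.

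The main subtlety I anticipate is bookkeeping: one must check that the identifications of fibers supplied by \cref{thm:ldeform-cfib}(2) are compatible with those coming from unstraightening $\DK\circ\cC$, and that the naturality of the unit $\eta$ really produces, after unstraightening and localizing, the identity (up to coherent equivalence) on each fiber and on each pushforward. Both verifications, however, are formal consequences of the universal property of $\DK$ once the left-derivable structure is in place, so the argument reduces to invoking \cref{thm:ldeform-cfib} and \cref{prop:deriv-cocart-mor} as above.
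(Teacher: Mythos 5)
Your argument is correct and follows the paper's own proof: both construct the comparison map $\alpha \colon \int\cC \to \int(\DK\circ\cC)$ from the adjunction unit, observe that $\alpha$ inverts the fiberwise weak equivalences and hence descends to a fiberwise equivalence $\DK(\alpha)$, and invoke \cref{prop:deriv-cocart-mor} to see that $\DK(\alpha)$ preserves cocartesian edges. The only cosmetic difference is that the paper passes $\alpha$ through the universal property of $\DK(\int\cC)$ directly, whereas you apply $\DK$ to both sides and then identify $\DK(\int(\DK\circ\cC))\simeq\int(\DK\circ\cC)$; these amount to the same thing.
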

\begin{proof}
    The unit of the adjunction $\DK \colon \Cat^\dagger \rightleftarrows \Cat$
    induces a map of cocartesian fibrations $\alpha \colon \int \cC \to \int \DK \circ \cC$ over $S$,
    which clearly inverts the fiberwise weak equivalences,
    and thus uniquely factors through
    $\DK(\alpha) \colon \DK(\int \cC) \to \int \DK \circ \cC$ over $S$.
    Since $\DK(\alpha)$ is clearly a fiberwise equivalence,
    it remains to see that it preserves cocartesian edges,
    which follows from the above proposition.
\end{proof}

\begin{corollary}\label{cor:subfib}
    Let $p \colon \cX \to S$ be a left deformable cocartesian fibration
    and $i \colon \cX^0 \subseteq \cX$ be a fully faithful map of cocartesian fibrations over $S$
    with $\Img(Q_A) \subseteq \cX^0_A$ for all $A \in S$.
    Then we have an equivalence $\cX^0[W_\bullet^{-1}] \xrightarrow{\simeq} \cX[W_\bullet^{-1}]$ of cocartesian fibrations over $S$.
    Note that we can take $\cX^0$ to be the full subcategory generated by the images of the left deformations.
\end{corollary}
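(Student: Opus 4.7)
The plan is to endow $\cX^0$ with the structure of a left deformable cocartesian fibration for which the inclusion $i$ becomes a morphism, and then reduce to a fiberwise comparison that follows from a standard Dwyer--Kan argument for left deformations.

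First I would verify that $\cX^0 \to S$ inherits the structure of a left deformable cocartesian fibration from $\cX$. For each $A \in S$, equip $\cX^0_A$ with the marking $W_A \cap \cX^0_A$; because $\Img(Q_A) \subseteq \cX^0_A$, the endofunctor $Q_A$ corestricts to an endofunctor of $\cX^0_A$, and by fullness each $q_{A,x}$ for $x \in \cX^0_A$ is a marked morphism in $\cX^0_A$, so $(Q_A, q_A)$ restricts to a left deformation on $\cX^0_A$. Since $i$ preserves cocartesian edges, each pushforward $f_!$ on $\cX^0$ is a restriction of the corresponding pushforward on $\cX$ and hence automatically inherits compatibility with the left deformations. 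The inclusion $i$ is thus a homotopical functor preserving all cocartesian edges that is fiberwise compatible with the deformations, so by \cref{prop:deriv-cocart-mor}(2) the induced functor $\DK(i) \colon \cX^0[W_\bullet^{-1}] \to \cX[W_\bullet^{-1}]$ preserves all cocartesian edges.

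To conclude that $\DK(i)$ is an equivalence of cocartesian fibrations over $S$, it suffices by \cref{thm:ldeform-cfib}(2) to verify this fiberwise. We are thus reduced to the following lemma: if $(Q, q)$ is a left deformation on a marked category $(\cC, W)$ and $\cC^0 \subseteq \cC$ is a full subcategory with $\Img(Q) \subseteq \cC^0$, then the inclusion induces an equivalence $\DK(\cC^0) \xrightarrow{\simeq} \DK(\cC)$.

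For this fiberwise lemma I would construct an explicit inverse. Since $\Img(Q) \subseteq \cC^0$, the endofunctor $Q$ corestricts to a functor $Q' \colon \cC \to \cC^0$, and by the same naturality-of-$q$ argument used in \cref{lem:deform-deriv} the composite $\gamma_{\cC^0} \circ Q'$ inverts $W$ and hence descends to a functor $\widetilde{Q} \colon \DK(\cC) \to \DK(\cC^0)$. The natural transformation $\gamma_\cC \circ q$ exhibits $\DK(i) \circ \widetilde{Q} \simeq \id_{\DK(\cC)}$ via the universal property of $\gamma_\cC$, since $(\DK(i) \circ \widetilde{Q}) \circ \gamma_\cC = \gamma_\cC \circ Q$; analogously, restricting $q$ to $\cC^0$ witnesses $\widetilde{Q} \circ \DK(i) \simeq \id_{\DK(\cC^0)}$. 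The main obstacle is the fiberwise lemma, and specifically confirming that $\gamma_{\cC^0} \circ Q'$ inverts $W$ given that $Q$ is not itself assumed homotopical; this is handled exactly as in the footnote to \cref{lem:deform-deriv} by using the naturality square of $q$, with the additional bookkeeping that both $QX$ and $QY$ lie in $\cC^0$ so that the manipulation takes place inside $\cC^0$.
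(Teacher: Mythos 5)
Your proof follows the same overall strategy as the paper's: endow $\cX^0$ with the inherited left deformable structure, invoke \cref{prop:deriv-cocart-mor}(2) to get that $\DK(i)$ preserves cocartesian edges, and reduce to a fiberwise statement via \cref{thm:ldeform-cfib}(2). Where you differ is in the fiberwise step: the paper factors the inclusion $\cX^0_A \hookrightarrow \cX_A$ through $\Img(Q_A)$, argues that both diagonal inclusions $\Img(Q_A) \hookrightarrow \cX^0_A$ and $\Img(Q_A) \hookrightarrow \cX_A$ become equivalences after localization (with $Q_A$ supplying the inverse in each case), and concludes by two-out-of-three; you instead construct the inverse $\widetilde{Q}$ of the horizontal map directly. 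This is the same mechanism packaged differently --- the paper is effectively running your argument twice, once per diagonal. Both presentations are fine.

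Two small points. First, your remark that ``the manipulation takes place inside $\cC^0$'' is not quite right: the naturality square of $q$ at a weak equivalence $f \colon X \to Y$ involves $X$ and $Y$, which need not lie in $\cC^0$. What actually makes the descent work is the same device as the footnote to \cref{lem:deform-deriv}: one shows $\gamma_{\cC^0}(Q'f)$ is an equivalence after the fact, rather than showing $Q'f$ is a weak equivalence, and one cannot expect the square itself to land in $\cC^0$. This is the same wrinkle the paper's diagonal-equivalence claim relies on, so it is not a defect of your argument specifically, but your stated justification is a little off. Second, your proof does not address the final sentence of the statement (that one may take $\cX^0$ to be the full subcategory spanned by the images of the deformations): this requires checking that the resulting $\cX^0 \to S$ actually inherits cocartesian lifts, which the paper verifies by observing that the pushforwards $f_! \colon \cX_A \to \cX_B$ restrict to $\Img(Q_A) \to \Img(Q_B)$. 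Since the rest of the corollary takes the hypothesis ``$\cX^0 \subseteq \cX$ is a fully faithful map of cocartesian fibrations'' as given, you would need this one extra observation to cover the addendum.
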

\begin{proof}
    Any such $\cX^0$ clearly inherits the structure of a left deformable cocartesian fibration from $\cX$.
    In particular, this makes $i$ automatically homotopical and fiberwise compatible with the left deformations,
    so that by \cref{prop:deriv-cocart-mor}(2) the induced map $\DK(i) \colon \DK(\cX^0) \to \DK(\cX)$
    is a morphism of cocartesian fibrations over $S$.
    It is therefore enough to check that $\DK(i)$ is a fiberwise equivalence.
    But by \cref{thm:ldeform-cfib}(2) we know that over $A \in S$
    this is the horizontal map in the following commutative triangle:
    \[\begin{tikzcd}[cramped]
        & {\Img(Q_A)[W_{A}^{-1}]} \\
        {\cX^0_A[W_A^{-1}]} && {\cX_A[W_A^{-1}]}
        \arrow["\simeq"', from=1-2, to=2-1]
        \arrow["\simeq", from=1-2, to=2-3]
        \arrow["{i_A[W_{A}^{-1}]}"', from=2-1, to=2-3]
    \end{tikzcd}\]
    Here the diagonal maps are equivalences since the left deformation $Q_A$ induces an inverse for both,
    and thus $\DK(i)$ is an equivalence.
    For the last point, it remains to verify that if we define $\cX^0 \subseteq \cX$ to be the full subcategory
    spanned by the images of the left deformations, then it inherits cocartesian lifts from $\cX$.
    This follows from the fact that the cocartesian pushforwards $\cX_A \to \cX_B$ restrict to $\Img(Q_A) \to \Img(Q_B)$.
\end{proof}

We will need a version of the above corollary for maps of left deformable cocartesian fibrations that may not be homotopical.

Let $p \colon \cX \to S$ be a left deformable cocartesian fibration
and $\cX^0 \subseteq \cX$ be the full subcategory on the essential images of the left deformations,
so that $p^0 \colon \cX^0 \to S$ is again left deformable by the above.
Let $q \colon \cY \to S$ and $q^0 \colon \cY^0 \to S$ be another such pair.
Suppose now that we have a map of cocartesian fibrations $\alpha \colon \cX \to \cY$
which is fiberwise compatible with left the deformations,
hence restricts to a homotopical functor $\alpha^0$ as in the following square of cocartesian fibrations over $S$:
\[\begin{tikzcd}[cramped]
    {\cX^0} && \cX \\
    & S \\
    {\cY^0} && \cY
    \arrow[hook', from=1-1, to=1-3]
    \arrow["{p^0}"{description}, from=1-1, to=2-2]
    \arrow["{\alpha^0}"', from=1-1, to=3-1]
    \arrow["p"{description}, from=1-3, to=2-2]
    \arrow["\alpha", from=1-3, to=3-3]
    \arrow["{q^0}"{description}, from=3-1, to=2-2]
    \arrow[hook, from=3-1, to=3-3]
    \arrow["q"{description}, from=3-3, to=2-2]
\end{tikzcd}\]
Note that by unstraightening, we can think of the above square as a fully faithful
inclusion $\int \alpha^0 \subseteq \int \alpha$ over $\Delta^1 \times S$,
where both source and target are cocartesian fibrations by \cref{lem:cocart-morphisms}.
Equipping $\int \alpha \to \Delta^1 \times S$ with the fiberwise weak equivalences and left deformations
of $\cX$ respectively $\cY$, we then get that $\int \alpha \to \Delta^1 \times S$ is a left deformable
fibration, and $\int \alpha_0 \subseteq \int \alpha$ is spanned by the images of the left deformations.
In particular, we find us in the situation of the previous corollary, and may conclude that
also $\int \alpha_0 \to \Delta^1 \times S$ is a left deformable cocartesian fibration,
and that the inclusion induces an equivalence $\DK(\int \alpha_0) \simeq \DK(\int \alpha)$
of cocartesian fibrations over $\Delta^1 \times S$.
Since $\alpha_0$ is homotopical, we have $\DK(\int \alpha_0) \simeq \int \DK(\alpha_0)$
as cocartesian fibrations over $\Delta^1$, and hence $\DK(\alpha_0)$ preserves cocartesian edges
by another application of \cref{lem:cocart-morphisms}.
Similarly, the lax map of cocartesian fibrations $\int \alpha \to \DK(\int \alpha)$ over $\Delta^1$
classifies the left derived functor of $\alpha$
(essentially by definition, c.f.~\cite[Definition 2.3.3, Corollary 4.2]{Hinich-Derived}).
Overall, this proves the following proposition:

\begin{proposition}\label{prop:idk}
    In the above situation, the left derived functor of $\alpha$ exists and agrees with $\DK(\alpha^0)$,
    in that we have a coherent cube where all faces
    except the right one commute, and the right exhibits $\bfL \alpha$ as the left derived functor of $\alpha$:
    \[\begin{tikzcd}[cramped,row sep=scriptsize]
        & {\cX_0} && \cX \\
        {\cY_0} && \cY \\
        & {\DK(\cX_0)} && {\DK(\cX)} \\
        {\DK(\cY_0)} && {\DK(\cY)}
        \arrow[hook, from=1-2, to=1-4]
        \arrow["{\alpha_0}"{description}, from=1-2, to=2-1]
        \arrow[from=1-2, to=3-2]
        \arrow["\alpha"{description}, from=1-4, to=2-3]
        \arrow[from=1-4, to=3-4]
        \arrow[hook, from=2-1, to=2-3]
        \arrow[from=2-1, to=4-1]
        \arrow[from=2-3, to=4-3]
        \arrow["\simeq"{description, pos=0.2}, from=3-2, to=3-4]
        \arrow["{\DK(\alpha_0)}"', pos=0.3, from=3-2, to=4-1]
        \arrow[Rightarrow, from=3-4, to=2-3]
        \arrow["{\bfL\alpha}"{description}, from=3-4, to=4-3]
        \arrow["\simeq"{description, pos=0.3}, from=4-1, to=4-3]
    \end{tikzcd}\]
    Moreover, all morphisms except possibly the vertical ones are morphisms of cocartesian fibrations over $S$.
\end{proposition}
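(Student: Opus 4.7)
The plan is to follow essentially the strategy already sketched in the paragraph preceding the proposition: use the cocartesian unstraightening to repackage the square of maps into a single fibration over $\Delta^1 \times S$, and then apply \cref{cor:subfib} to that combined fibration.

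More precisely, I would first invoke \cref{lem:cocart-morphisms} twice to interpret $\alpha$ and $\alpha^0$ as cocartesian fibrations $\int\alpha,\int\alpha^0\to\Delta^1\times S$ (this uses that $\alpha$ and $\alpha^0$ preserve cocartesian lifts over $S$). The fully faithful natural inclusion $\int\alpha^0\subseteq\int\alpha$ lies over $\Delta^1\times S$. Equipping $\int\alpha\to\Delta^1\times S$ with the fiberwise weak equivalences and left deformations inherited from $\cX$ (over $\{0\}\times S$) and $\cY$ (over $\{1\}\times S$), the hypotheses -- namely that $\alpha$ is fiberwise compatible with the deformations, and that $p,q$ are left deformable -- immediately imply that $\int\alpha\to\Delta^1\times S$ is a left deformable cocartesian fibration, and that $\int\alpha^0$ is precisely the full subcategory spanned by the images of the left deformations.

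Next, \cref{cor:subfib} applied to $\int\alpha^0\subseteq\int\alpha$ produces an equivalence $\DK(\int\alpha^0)\xrightarrow{\simeq}\DK(\int\alpha)$ of cocartesian fibrations over $\Delta^1\times S$. Since $\alpha^0$ is homotopical, the fiberwise Dwyer--Kan localization commutes with unstraightening over $\Delta^1$: concretely, $\DK(\int\alpha^0)\simeq\int\DK(\alpha^0)$ as cocartesian fibrations over $\Delta^1$, by an application of \cref{cor:compare-dk} combined with \cref{thm:ldeform-cfib}(2). Composing with the above equivalence and invoking \cref{lem:cocart-morphisms} in reverse shows that $\DK(\alpha^0)$ preserves cocartesian lifts over $S$, which gives the lower horizontal map of cocartesian fibrations in the cube.

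To obtain the coherent cube and the identification $\bfL\alpha\simeq\DK(\alpha^0)$, I would observe that the lax map of cocartesian fibrations $\int\alpha\to\DK(\int\alpha)$ over $\Delta^1$ classifies the left derived functor of $\alpha$ by the very definition of left derivability for cocartesian fibrations (cf.\ \cref{thm:ldeform-cfib}(3) applied with $S=\Delta^1$). Transporting along the equivalence $\DK(\int\alpha)\simeq\int\DK(\alpha^0)$ identifies this left derived functor with $\DK(\alpha^0)$, and the naturality of everything in sight assembles all the data into the required cube. The only potentially subtle point is checking that all the structures really glue over $\Delta^1\times S$ rather than separately over $\Delta^1$ and $S$; this should follow because cocartesian fibrations over a product $\Delta^1\times S$ are the same data as morphisms of cocartesian fibrations over $S$, which is exactly the content of \cref{lem:cocart-morphisms}, so the main obstacle is just bookkeeping rather than genuine mathematical content.
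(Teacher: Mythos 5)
Your proposal is correct and follows essentially the same approach as the paper: repackage the square as an inclusion $\int\alpha^0\subseteq\int\alpha$ of cocartesian fibrations over $\Delta^1\times S$ via \cref{lem:cocart-morphisms}, apply \cref{cor:subfib} to this inclusion, identify $\DK(\int\alpha^0)\simeq\int\DK(\alpha^0)$ over $\Delta^1$ using that $\alpha^0$ is homotopical, and read off that the lax map $\int\alpha\to\DK(\int\alpha)$ over $\Delta^1$ classifies the left derived functor. The only cosmetic difference is the reference you cite for the last step (you use \cref{thm:ldeform-cfib}(3) with $S=\Delta^1$, whereas the paper appeals directly to the definition of left derivability from Hinich), but this is immaterial.
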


\subsection{Models for $G$-equivariant and $G$-global Spectra}\label{subsec:ex}

Using the tools of the previous subsection,
we can now construct the normed global categories of equivariant and global spectra
$\und{\Sp}^\tensor$ and $\und{\Sp}_\gl^\tensor$.
Moreover, we build comparison functors which allow us to view ultra-commutative (global) ring spectra
as certain normed algebras in the above global categories.

We will follow the articles \cite{CLL-Global,CLL-Equivariant}
and use the closed symmetric monoidal 1-category $\Sp^{\Sigma,\tensor}$ of symmetric spectra (based on simplicial sets) as a base model for the 1-category of spectra.\footnote{Originally, ultra-commutative global ring
spectra were defined as strictly commutative
algebras in the category of \emph{orthogonal} spectra, considered up
to underlying global weak equivalence tested on all compact Lie groups, c.f.~\cite{Global}.
Restricting our attention to finite groups, this agrees
with strictly commutative algebras in symmetric spectra considered
up to the corresponding notion of global weak equivalence.
The details of such a comparison were spelled out in \cite[Appendix A]{LLP}.}
The category of $G$-equivariant objects $G\Sp^{\Sigma,\tensor} \coloneqq \Fun(BG,\Sp^{\Sigma,\tensor})$
equipped with the pointwise symmetric monoidal structure
is also known as the category of $G$-symmetric spectra \cite{Hausmann-GSym},
and supports various model structures modelling $G$-equivariant and $G$-global homotopy theory \cite{Lenz-phd}.
We begin by summarizing the relevant model-categorical results.

\begin{theorem}\label{thm:model}
    For a finite group $G$, the category $G\Sp^\Sigma$
    admits a flat $G$-global as well as flat and projective $G$-equivariant (combinatorial, stable)
    model structures with the following properties:
    \begin{enumerate}
        \item Both equivariant model structures have as weak equivalences
            the $G$-stable weak equivalences of \cite[Definition 2.35]{Hausmann-GSym}.
            The $G$-global equivalences are those maps $f \colon X \to Y$ in $G\Sp^\Sigma$
            such that $\alpha^*f$ is a $K$-stable equivalence
            for every map $\alpha = (K \to G) \in \Glo_{/G}$.
            The equivariant model structures both model the (symmetric monoidal) ($\infty$-)category
            of genuine $G$-spectra $\Sp_G$,
            and the global one models the ($\infty$-)category of $G$-global spectra
            $\Sp_{G\dgl}$ by definition.

        \item The cofibrations in all 3 model structures are independent of the group action,
            i.e.~defined on underlying symmetric spectra,
            so the flat equivariant and global cofibrations agree.
            Every projective cofibration is flat.

        \item Restricting along general group homomorphism
            preserves flat/projective cofibrations,
            but is only left Quillen for the $G$-equivariant projective or $G$-global flat model structure.
            Restricting along injections is homotopical and hence left Quillen for the global
            and both equivariant model structures.

        \item All model structures are monoidal with cofibrant unit $\S_G$,
            so the tensor product restricts to a homotopical functor
            $\tensor \colon G\Sp^{\Sigma}_{\cflat} \times G\Sp^{\Sigma}_{\cflat} \to G\Sp^{\Sigma}_{\cflat}$
            for both the $G$-equivariant and $G$-global weak equivalences,
            and analogously in the $G$-equivariant projective case.

        \item For $H \leq G$ the symmetric monoidal norm restricts to a homotopical functor
            $N_H^G \colon H\Sp^{\Sigma}_{\cflat} \to G\Sp^{\Sigma}_\cflat$
            for both the $G$-equivariant and $G$-global weak equivalences,
            and likewise in the $G$-equivariant projective case.
    \end{enumerate}
\end{theorem}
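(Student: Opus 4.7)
The strategy is to assemble this as a compilation of results from the literature together with a few elementary comparisons. The $G$-equivariant flat and projective model structures on $G\Sp^\Sigma$ are constructed and their basic properties (including combinatoriality, stability, and point (1)) are established in \cite{Hausmann-GSym}; the $G$-global flat model structure with the asserted class of weak equivalences is constructed in \cite{Lenz-phd}. The identification of the homotopy theories with $\Sp_G$ respectively $\Sp_{G\dgl}$ in (1) is then either by definition (for $\Sp_{G\dgl}$), or follows from the Guillou–May / Hausmann comparisons for $\Sp_G$ already present in those sources. So after quoting, point (1) requires no further work.

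For (2), the flat cofibrations in all three model structures coincide with the flat cofibrations of underlying symmetric spectra: this is because in each case the generating (acyclic) cofibrations are built by the standard `flat cell' construction which only depends on the underlying symmetric spectrum structure (and the model structures differ only in their acyclic cofibrations being testable by different families of equivalences). The same comparison shows every projective cofibration is flat. Both assertions are contained in or immediately follow from the constructions in \cite{Hausmann-GSym,Lenz-phd}.

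Point (3) is a direct check on generating cofibrations and trivial cofibrations together with an inspection of the definition of the relevant weak equivalences: restriction $\alpha^*\colon G\Sp^\Sigma\to H\Sp^\Sigma$ along a group homomorphism preserves underlying symmetric spectra, hence preserves flat and projective cofibrations. It is left Quillen for $G$-equivariant projective because trivial projective cofibrations are acyclic on underlying symmetric spectra, and for $G$-global flat precisely by the definition of $G$-global equivalences (which are tested over all maps $K\to G$, so pulling back to $K$ preserves them tautologically). Homotopicality along injections reduces by a $2$-out-of-$3$ / change-of-group argument to the fact that restriction along subgroup inclusions preserves equivariant stable equivalences, which is \cite[Proposition 2.something]{Hausmann-GSym}; I would cite the precise statements from there and from \cite{Lenz-phd}.

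Points (4) and (5) are the most technical, but both are already available: cofibrancy of $\S_G$ is immediate from the definitions, monoidality of the flat and projective equivariant structures is \cite{Hausmann-GSym}, monoidality of the flat $G$-global structure is \cite{Lenz-phd}, and homotopicality of $\tensor$ on flat objects (for all three notions of weak equivalence) follows by the standard argument that in a monoidal model category with cofibrant unit, the tensor product restricts to a homotopical functor on cofibrant objects (combined with the characterization of $G$-global equivalences in terms of restrictions). For the norm, the main obstacle is that $N_H^G$ is not itself left Quillen between the equivariant projective structures (the natural target model structure involves $\Sigma_{[G:H]}\wr H$-equivariance), so one cannot simply invoke left Quillen functoriality; however, on flat cofibrant objects the explicit point-set formula for $N_H^G$ shows it lands in flat cofibrant objects and preserves the appropriate notion of weak equivalence. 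This is carried out for the equivariant case in \cite{Hausmann-GSym} and for the $G$-global case in \cite{Lenz-phd}, and I would quote those verbatim. The expected main difficulty, and the place where real work is being referenced rather than done, is establishing that $N_H^G$ preserves $G$-global equivalences on flat spectra: the argument proceeds by reducing via the definition of $G$-global equivalences to a check on each restriction $\alpha^*$ along $K\to G$, followed by the double-coset / geometric fixed point formula for the norm that is worked out in \cite{Lenz-phd}.
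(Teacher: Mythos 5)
Your proposal follows essentially the same strategy as the paper: the theorem is a compilation of facts from \cite{Hausmann-GSym}, \cite{Lenz-phd} and \cite{Lenz-Stahlhauer}, and the paper's proof is likewise a list of citations. Most of your sketch matches, and in particular your argument for why arbitrary restriction preserves $G$-global equivalences (tautologically from the definition via composition of group homomorphisms) is exactly what the paper uses.

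However, your justification for why restriction is left Quillen for the $G$-equivariant \emph{projective} model structure does not hold up. You argue ``because trivial projective cofibrations are acyclic on underlying symmetric spectra.'' That is true—indeed \emph{every} $G$-stable equivalence is an underlying stable equivalence, since $\pi_*^e(-)$ is part of the data—but this observation does not distinguish projective from flat and so cannot be the right reason. After applying $\alpha^*$ for $\alpha\colon K\to G$, knowing the map is an underlying stable equivalence does not tell you it is a $K$-stable equivalence: one would still need control over $\pi_*^L$ for $L\le K$. The actual argument, which the paper attributes to \cite[Section 5.1]{Hausmann-GSym} and \cite[Lemma 9.3]{CLL-Equivariant}, proceeds by checking on generating (trivial) cofibrations: the projective ones are built from orbit cells $G/H_+\wedge(-)$, and $\alpha^*(G/H)$ decomposes as a finite coproduct of $K$-orbits $\coprod K/L_j$, so $\alpha^*$ carries generating (trivial) cofibrations to (trivial) cofibrations of the $K$-eq projective structure. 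In the flat case the class of cofibrations is too large for this cell-based argument, which is precisely why restriction along non-injective homomorphisms fails to be left Quillen for the flat $G$-equivariant structure. You should replace your underlying-acyclicity argument with the orbit-cell argument (or just cite the relevant passages directly, as the paper does).

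One further small remark: for the norm preserving $G$-global equivalences, the paper also references \cite[Corollary 5.19]{Lenz-Stahlhauer} alongside \cite{Lenz-phd}; your sketch only mentions the latter.
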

\begin{proof}
    Existence of the equivariant respectively global model structures
    with the mentioned weak equivalences and cofibrations is shown in \cite[Theorem 4.7,4.8]{Hausmann-GSym}
    respectively \cite[Theorem 3.1.40]{Lenz-phd}.
    The statements about the monoidal structure and norm are shown in \cite[Section 6]{Hausmann-GSym}
    in the equivariant case, and in \cite[Proposition 3.1.62]{Lenz-phd},\cite[Corollary 5.19]{Lenz-Stahlhauer}.
    It follows from \cite[Remark 2.20]{Hausmann-GSym} that both the flat and projective cofibrations
    do not depend on the $G$-action.
    Every projective cofibration is also a flat cofibration
    by definition, cf.~\cite[2.18]{Hausmann-GSym}.
    That restriction along injections is compatible
    with equivariant weak equivalences and cofibrations
    also follows directly from the definitions, compare \cite[Section 5.2]{Hausmann-GSym}.
    Since both types of cofibrations do not depend on the group,
    restriction along arbitrary group homomorphisms preserves them.
    That such restrictions are left Quillen
    for the projective model structure follows directly from this and
    \cite[Section 5.1]{Hausmann-GSym}, compare \cite[Lemma 9.3]{CLL-Equivariant}.
    It is clear from the definition of $G$-global equivalences that arbitrary restrictions are homotopical for them.
\end{proof}

In view of \cref{ex:borel-global} the monoidal global Borelification
\[
    (\Sp^{\Sigma,\tensor})^\flat : \Span_{\Forb}(\Fglo) \to \CAT
\]
has the following functoriality
\begin{itemize}
    \item A backwards map $K \xleftarrow{\alpha} G = G$ is sent to the restriction
        $\alpha^* : K\Sp^{\Sigma} \to G\Sp^{\Sigma}$.
    \item Restriction along $\Span(\F) \to \Span_{\Forb}(\Fglo)$ induced by $* \mapsto G$
        classifies the usual symmetric monoidal category $G\Sp^{\Sigma,\tensor}$ of $G$-symmetric spectra.
    \item A forwards inclusion $H = H \hookrightarrow G$ is sent to the multiplicative
        Hill--Hopkins--Ravenel norm $N_H^G : H\Sp^{\Sigma} \to G\Sp^{\Sigma}$.
\end{itemize}
Let $p : \int (\Sp^{\Sigma,\tensor})^\flat \to \Span_{\Forb}(\Fglo)$
denote the cocartesian unstraightening.

\begin{construction}[Construction of $\und{\Sp}^\tensor$]\label{con:sp-eq}
    We equip each fiber $p^{-1}(\coprod_{i=1}^n G_i) = \prod_{i=1}^n G_i\Sp^\Sigma$
    with weak equivalences $\prod_{i=1}^n W_{G_i\text{-stable}}$
    and left deformations induced by the functorial cofibrant replacements of the projective $G_i$-equivariant
    model structures.
    By \cref{thm:model} this yields a left deformable cocartesian fibration $p_\eqv$.
    In view of \cref{cor:subfib},
    we can take fiberwise full subcategories on the flat and projectively
    cofibrant spectra and obtain obtain a commutative diagram
    of cocartesian fibrations over $\Span_{\Forb}(\Fglo)$
    where the labels denote which cocartesian arrows are preserved ($\cc$ means all):
    \[\begin{tikzcd}[cramped]
        {\int (\Sp^{\Sigma,\tensor}_{\projcof})^\flat} & {\int(\Sp^{\Sigma,\tensor}_\cflat)^\flat} & {\int (\Sp^{\Sigma,\tensor})^\flat} \\
        & {\int\und{\Sp}^\tensor}
        \arrow["{\cc}", hook, from=1-1, to=1-2]
        \arrow["{\cc}"', from=1-1, to=2-2]
        \arrow["{\cc}", hook, from=1-2, to=1-3]
        \arrow["{\Span(\Forb)\dcc}"{description}, from=1-2, to=2-2]
        \arrow["{\Forb^\op\dcc}", from=1-3, to=2-2]
    \end{tikzcd}\]
    Here all downwards arrows exhibit $\int \und{\Sp}^\tensor$
    as the left derived cocartesian fibration of their source,
    i.e.~as the Dwyer-Kan localization of its source at the
    $W_{\bullet\text{-stable}}$ equivalences.
    In particular, by \cref{cor:compare-dk}, we can equivalently
    define $\und{\Sp}^\tensor$ as the functorial Dwyer--Kan localization
    of $(\Sp^{\Sigma,\tensor}_{\projcof})^\flat$.
    The horizontal arrows clearly preserve all cocartesian morphisms,
    and by \cref{thm:ldeform-cfib}
    the vertical ones preserve cocartesian lifts of those
    morphisms which induce homotopical pushforward functors.
    For example, it follows from \cref{thm:model}
    that when restricting to flat spectra,
    all relevant functors except inflations are homotopical,
    and hence the middle vertical localization preserves
    cocartesian lifts of morphisms in the subcategory
    $\Span(\Forb) \subset \Span_{\Forb}(\Fglo)$.
\end{construction}

\begin{construction}[Construction of $\und{\Sp}_\gl^\tensor$]\label{con:sp-gl}
    Analogously to the above, by \cref{thm:model}
    we obtain a left deformable cocartesian fibration $p_\gl$
    by equipping each fiber of $p$ with the $G$-global equivalences
    and left deformations induced by the functorial cofibrant
    replacement in the flat $G$-global model structure.
    We can also restrict to flat
    spectra in the global case and obtain a commutative diagram of cocartesian fibrations
    over $\Span_{\Forb}(\Fglo)$
    \[\begin{tikzcd}[cramped]
        {\int (\Sp^{\Sigma,\tensor}_\cflat)^\flat} && {\int (\Sp^{\Sigma,\tensor})^\flat} \\
        & {\int\und{\Sp}_\gl^\tensor}
        \arrow["{\cc}", hook, from=1-1, to=1-3]
        \arrow["{\cc}"', from=1-1, to=2-2]
        \arrow["{\Fglo^\op\dcc}", from=1-3, to=2-2]
    \end{tikzcd}\]
    with the downwards arrows exhibiting $\int \und{\Sp}_\gl^\tensor$
    as the left derived cocartesian fibration of their source,
    i.e.~as Dwyer-Kan localization at the $W_{\bullet\text{-global}}$ equivalences.
    In particular, by \cref{cor:compare-dk}, we can equivalently
    define $\und{\Sp}_\gl^\tensor$ as the functorial Dwyer--Kan localization
    of $(\Sp^{\Sigma,\tensor}_{\cflat})^\flat$.
\end{construction}

\begin{warning}
    The localization of $G\text{-}\Sp^{\Sigma}_{\projcof}$ at the $G$-\emph{global}
    weak equivalences does \emph{not} give the category of $G$-global spectra.
    Namely, by \cref{prop:lquil} any $G$-equivariant equivalence between
    projectively cofibrant symmetric spectra is already a $G$-global equivalence,
    and thus the localization actually agrees with $\Sp_G$.
    The functor $\Sp_G \to \Sp_{G\dgl}$ induced by inverting
    $G$-global equivalences for $G\Sp^{\Sigma}_{\projcof} \to G\Sp^{\Sigma}_{\cflat}$
    is the fully faithful left adjoint $\Sp_G \subseteq \Sp_{G\dgl}$,
    whose image consists of those $G$-global spectra whose underlying
    global spectrum is `left induced' i.e.~in the image of the left adjoint
    $\Sp \subseteq \Sp_\gl$, i.e.~has constant geometric fixed points.
\end{warning}

\begin{proposition}\label{prop:sp-gl-univ-prop}
    Let $\und{\Sp}^\tensor, \und{\Sp}_\gl^\tensor$ be as above.
    \begin{enumerate}
        \item The restriction $\und{\Sp} \coloneqq \und{\Sp}^\tensor|_{\Glo^\op}$
            is the global category of equivariant spectra
            constructed in \cite{CLL-Equivariant}.
            Thus $\und{\Sp}$ is the free equivariantly
            presentable equivariantly stable global category on one generator.

        \item Analogously, $\und{\Sp}_\gl \coloneqq \und{\Sp}_\gl^\tensor|_{\Glo^\op}$ is the global
            category of global spectra from \cite{CLL-Global},
            and hence the free globally presentable equivariantly stable global category
            on one generator.

        \item For every injection $p \colon H \hookrightarrow G$,
            the derived multiplicative norms
            $\Sp_H \to \Sp_G$ of $\und{\Sp}^\tensor$
            and $\Sp_{H\dgl} \to \Sp_{G\dgl}$
            of $\und{\Sp}_\gl^\tensor$
            preserve sifted colimits.
    \end{enumerate}
\end{proposition}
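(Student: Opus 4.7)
For parts (1) and (2), my plan is to identify our construction directly with the CLL constructions. Restricting the cocartesian unstraightening $\int (\Sp^{\Sigma,\tensor})^\flat$ along the backwards direction $\Glo^\op \subset \Span_\Forb(\Fglo)$ yields precisely the functor $\Glo^\op \to \Cat^\dagger$ sending a group homomorphism to the corresponding restriction of $G$-symmetric spectra, equipped with the appropriate weak equivalences; this is exactly the starting data for the derivations performed in \cite{CLL-Equivariant,CLL-Global}. Since the left deformations we employ (projective respectively flat cofibrant replacement on each fiber) match those used in the CLL papers, \cref{cor:compare-dk} together with \cref{thm:ldeform-cfib} identifies the resulting global categories with the $\und{\Sp}$ and $\und{\Sp}_\gl$ of CLL. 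The stated universal properties are then imported directly from \cite{CLL-Equivariant} and \cite{CLL-Global}.

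For part (3), the plan is to leverage the fact that the derived norm $N_H^G$ lifts the $|G/H|$-fold tensor product, as recorded in \cref{rem:borel-glo-functoriality} together with the uniqueness statement of \cref{prop:unique-norm}. The abstract principle at play is the following: in any symmetric monoidal $\infty$-category whose tensor product preserves colimits separately in each variable, the $n$-fold tensor product $X \mapsto X^{\tensor n}$ preserves sifted colimits, simply because by definition the diagonal $J \to J^n$ is cofinal for any sifted $J$. To apply this in the normed setting, I would use joint conservativity of the geometric fixed points $\{\Phi^K \colon \Sp_G \to \Sp\}_{K \leq G}$ (respectively the analogous jointly conservative family for $\Sp_{G\dgl}$), each of which preserves all colimits, together with the standard double-coset-style formula expressing $\Phi^K N_H^G X$ as an iterated tensor product in $\Sp$ of terms of the form $\Phi^L (c_g)^* X$. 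Each such composite then preserves sifted colimits by the abstract principle above, so joint conservativity yields the claim.

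The main obstacle I anticipate lies in part (3), specifically in pinning down the geometric fixed points formula for norms at the level of our global categories and checking that it agrees with the classical Hill--Hopkins--Ravenel version. Given the identification in parts (1) and (2) with the CLL framework, however, the cleanest route is likely to import sifted colimit preservation of norms directly from \cite{CLL-Equivariant} and \cite{CLL-Global}, where such statements appear as part of the structure of the universal properties cited above. Parts (1) and (2) themselves should be essentially formal once the matching of deformation data is recorded carefully.
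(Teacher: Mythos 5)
Parts (1) and (2) of your proposal match the paper's proof: the paper likewise identifies $\und{\Sp}^\tensor|_{\Glo^\op}$ and $\und{\Sp}_\gl^\tensor|_{\Glo^\op}$ with the Dwyer--Kan localizations $\DK \circ ((\Sp^{\Sigma,\tensor}_{\projcof})^\flat, W_{\bullet\text{-stable}})$ and $\DK \circ ((\Sp^\Sigma)^\flat, W_{\bullet\text{-global}})$ via \cref{cor:compare-dk} and imports the universal properties from the CLL references.

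For part (3), your geometric-fixed-points argument is a genuinely different route from the paper's. The paper instead exploits the left deformable structure: the norm $H\Sp^\Sigma_{\cflat} \to G\Sp^\Sigma_{\cflat}$ is homotopical, geometric realizations and filtered colimits in $\Sp^\Sigma$ are homotopical and preserve flat spectra, and one reduces to the pointset-level claim that the 1-categorical HHR norm commutes with filtered colimits and geometric realizations up to isomorphism (citing \cite[Proposition 5.5.7]{Tambara} and \cite[Proof of Theorem 5.10]{LLP} for the details). Your approach via joint conservativity of geometric fixed points and the double-coset formula for $\Phi^K N_H^G$ is a standard and correct strategy in the $G$-equivariant case, but as you note it needs the geometric fixed points formula to be available at the level of the abstract normed category, which is nontrivial to set up from scratch; moreover, in the $G$-global case the jointly conservative geometric fixed point family and the analogue of the double-coset formula are more delicate. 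The paper's reduction to 1-categorical statements sidesteps all of that and stays entirely within the model-categorical framework already established.

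However, your fallback plan for part (3) — ``import sifted colimit preservation of norms directly from \cite{CLL-Equivariant} and \cite{CLL-Global}'' — contains a genuine error. Those references construct only the global categories $\und{\Sp}$ and $\und{\Sp}_\gl$, i.e.\ the restrictions to $\Glo^\op$; they do not construct the normed structures, so the norm functors (the covariant/forward functoriality of $\und{\Sp}^\tensor$ and $\und{\Sp}_\gl^\tensor$) do not appear there and no statement about them can be imported. The norms are new to the present paper's construction via global Borelification, which is exactly why part (3) requires a separate argument rather than a citation to CLL.
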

\begin{proof}%{{{
    For point (1), note that we have already remarked in \cref{con:sp-eq} that
    \[
        \und{\Sp}^\tensor \simeq \DK \circ\ ((\Sp^{\Sigma,\tensor}_{\projcof})^\flat, W_{\bullet\text{-stable}}).
    \]
    Restricting along $\Glo^\op \to \Span_{\Forb}(\Fglo)$, this yields
    precisely the definition of $\und{\mathscr{S}p}$ used in \cite[Section 9.1]{CLL-Equivariant},
    and so the claims follow from Theorems 9.4 and 9.5 of op.~cit.

    For point (2), we consider the morphism
    $\gamma_\gl \colon \int (\Sp^{\Sigma,\tensor})^\flat \to \int \und{\Sp}_\gl^\tensor$
    of cocartesian fibrations over $\Span_{\Forb}(\Fglo)$ from \cref{con:sp-gl},
    which exhibits its target $\int \und{\Sp}_\gl^\tensor \to \Span_{\Forb}(\Fglo)$
    as the left derivation of its source.
    By \cref{thm:ldeform-cfib}(2) this is stable under pullback along
    $\Glo^\op \to \Span_{\Forb}(\Fglo)$,
    where the domain is now (the straightening of) a homotopical functor,
    so \cref{cor:compare-dk} we get an equivalence of global categories
    \[
        \und{\Sp}_\gl \simeq \DK \circ ((\Sp^\Sigma)^\flat, W_{\bullet\text{-global}}).
    \]
    This is precisely the definition of $\und{\mathscr{S}p^\gl}$ from \cite[Section 7.1]{CLL-Global},
    and so the claims follow from Theorem 7.3.2 and Corollary 7.3.3 of op.~cit.

    For the equivariant case of (3), note
    that by \cref{thm:model} the norm $H\Sp_{\cflat}^{\Sigma} \to G\Sp_{\cflat}^\Sigma$ is homotopical and yields the norm $\Sp_H \to \Sp_G$ on localizations.
    One then checks that geometric realizations
    and filtered colimits in $\Sp^\Sigma$ are homotopical and preserve
    flat symmetric spectra, and thus reduces to showing
    that the pointset level norm $H\Sp^\Sigma \to G\Sp^\Sigma$
    commutes with filtered colimits and geometric realizations
    up to isomorphism.
    For the details, we refer the reader to \cite[Proposition 5.5.7]{Tambara}.

    The global case is similar;
    by \cref{thm:model} the norm $H\Sp_{\cflat}^\Sigma \to G\Sp_{\cflat}^\Sigma$ is homotopical for the $H$- respectively $G$-global weak equivalences, and one checks that geometric realizations and filtered colimits
    in $G\Sp^\Sigma$ are homotopical for the $G$-global weak equivalences.
    Hence the claim again follows from the pointset level statement.
    For the details, we refer the reader to \cite[Proof of Theorem 5.10]{LLP}.
\end{proof}%}}}

\begin{definition}\label{def:gsym-sp}
    For a finite group $G$,
    we define the $G$-symmetric monoidal $G$-category of $G$-spectra as the restriction
    $\und{\Sp}_G^\tensor \coloneqq \und{\Sp}^\tensor|_{\Span(G)} \in \Mack_G(\CAT)$
    and $\und{\Sp}_G \coloneqq \und{\Sp}_G^\tensor|_{\Orb_G^\op}$.
    In view of \cref{prop:borel-global-eq},
    we may equivalently define $\und{\Sp}_G^\tensor$ as the functorial
    Dwyer--Kan localization
    \[
        \und{\Sp}_G^\tensor \simeq \DK \circ ((\infl_G\Sp^{\Sigma,\tensor}_{\projcof})^\flat_G, W_{\bullet\text{-stable}}).
    \]
\end{definition}

\begin{remark}\label{rem:sp-g-univ-prop}
    It was shown in \cite[Theorem 9.13]{CLL-Equivariant}
    that $\und{\Sp}_G$ is the free $G$-presentable
    $G$-stable $G$-category on one generator.
    Moreover, the underlying `fiberwise symmetric monoidal'
    $\Orb_G^\op \to \Mack(\CAT),\ G/H \mapsto \Sp_H$
    is the initial presentably symmetric monoidal $G$-stable $G$-category
    of \cite[Theorem 4.10]{Cnossen-Ambidexterity}.
\end{remark}

\begin{corollary}\label{cor:sp-compat-sift}
    $\und{\Sp}_G^\tensor, \und{\Sp}^\tensor$ and $\und{\Sp}_\gl^\tensor$
    are compatible with sifted colimits,
    i.e.~factor through the subcategory $\CAT(\sift)$ on (large) categories admitting
    sifted colimits and functors preserving them.
\end{corollary}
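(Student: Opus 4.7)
The plan is to reduce the statement to three classes of generating morphisms in the span categories $\Span_\Forb(\Fglo)$ and $\Span(G)$, and then check sifted colimit preservation for each of them using results already established in this section.

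First, I would observe that it suffices to verify two things: (i) each fiber $\Sp_G$ and $\Sp_{G\dgl}$ admits sifted colimits, and (ii) every morphism $\alpha$ in the indexing span category is sent to a functor preserving sifted colimits. Point (i) is immediate from \cref{prop:sp-gl-univ-prop}(1),(2), which identifies these categories with values of the global categories $\und{\Sp}$ and $\und{\Sp}_\gl$ that are equivariantly (resp.\ globally) presentable, hence in particular cocomplete. For point (ii), the closure of sifted-colimit-preserving functors under composition means I only need to check on a generating class of morphisms: backwards maps, subgroup inclusions viewed as forwards maps, and the multiplicative and unit structure maps coming from $\Span(\F) \to \Span_\Forb(\Fglo)$ (i.e.\ fold maps $G \amalg G \to G$ and $\emptyset \to G$).

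Next I would handle each class. Backwards maps induce restriction functors $\alpha^*$ along group homomorphisms, which admit both left and right adjoints (induction and coinduction) by the presentability results of \cref{prop:sp-gl-univ-prop}, hence preserve all colimits. Forwards subgroup inclusions $H \hookrightarrow G$ are sent to the derived Hill--Hopkins--Ravenel norms $N_H^G$, and these preserve sifted colimits by \cref{prop:sp-gl-univ-prop}(3). Fold maps are sent to the binary tensor product functor $\tensor \colon \Sp_G \times \Sp_G \to \Sp_G$; since the symmetric monoidal structure is presentably symmetric monoidal (another consequence of the universal property in \cref{prop:sp-gl-univ-prop}, via \cref{rem:sp-g-univ-prop}), $\tensor$ preserves colimits separately in each variable, and the standard fact that a bifunctor preserving colimits in each variable separately preserves sifted colimits jointly (computed via the diagonal) gives what we need. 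The unit map $\emptyset \to G$ goes to a functor out of the terminal category, which trivially preserves sifted colimits.

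Finally, the case of $\und{\Sp}_G^\tensor$ follows from the case of $\und{\Sp}^\tensor$ since by \cref{def:gsym-sp} it is obtained by restriction along $\Span(G) \to \Span_\Forb(\Fglo)$. No single step should be an obstacle here: the only non-formal input is the sifted-colimit preservation of the norms, which has already been shown in \cref{prop:sp-gl-univ-prop}(3) by reduction to the pointset level. Everything else is a direct consequence of presentability and the previously established universal properties.
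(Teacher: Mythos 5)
Your proposal is correct and lays out the expected argument; the paper states this corollary without a proof body, as an immediate consequence of \cref{prop:sp-gl-univ-prop} and the discussion preceding it, and your decomposition into backwards maps, subgroup-inclusion forward maps, and fold/unit maps — each checked against presentability, \cref{prop:sp-gl-univ-prop}(3), and the standard bifunctor-sifted-colimit fact respectively — is the natural way to spell it out.

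One citation-level imprecision worth correcting. For $\und{\Sp}^\tensor$, you justify cocontinuity of the backwards maps by appealing to the \emph{equivariant} presentability from \cref{prop:sp-gl-univ-prop}(1). But equivariant presentability concerns restrictions along injective group homomorphisms (the $\Orb$-part) only; the backwards maps of $\und{\Sp}^\tensor$ also include inflations along non-injective group homomorphisms, and those are not directly covered. They still preserve all colimits, but for a different reason: by \cref{thm:model}(3) they are left Quillen for the projective model structure, and by construction (\cref{con:sp-eq}) the functoriality of $\und{\Sp}^\tensor$ in these morphisms is the left derived functor. Alternatively, one can route through the globally presentable $\und{\Sp}_\gl^\tensor$ and the two-sided Bousfield localization of \cref{prop:lquil}. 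A similar (harmless) slip: \cref{rem:sp-g-univ-prop} on presentably symmetric monoidal is stated only for $\und{\Sp}_G$; for the global versions, the needed separate cocontinuity of the tensor follows directly from the compatible monoidal model structures in \cref{thm:model}(4). Neither issue affects the validity of the argument.
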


To compare $\und{\Sp}^\tensor$ and $\und{\Sp}^\tensor_\gl$, we will need the following Proposition.

\begin{proposition}\label{prop:lquil}
    The identity on $G\Sp^\Sigma$ induces Quillen adjunctions
    \[
        G\Sp^\Sigma_{G\text{-eqv.~proj.}} \rightleftarrows G\Sp^\Sigma_{G\text{-global flat}}
        \qquad\text{and}\qquad
        G\Sp^\Sigma_{G\text{-global flat}} \rightleftarrows G\Sp^\Sigma_{G\text{-eqv.~flat}}.
    \]
    These derive to exhibit $\Sp_G$ as left and right Bousfield localization of $\Sp_{G\dgl}$:
    % https://q.uiver.app/#q=WzAsMixbMCwwLCJcXFNwXkciXSxbMiwwLCJcXFNwXntHLVxcZ2x9Il0sWzAsMSwiaV8hIiwxLHsiY3VydmUiOi0yLCJzdHlsZSI6eyJ0YWlsIjp7Im5hbWUiOiJob29rIiwic2lkZSI6InRvcCJ9fX1dLFsxLDAsImleKiIsMV0sWzAsMSwiaV8qIiwxLHsiY3VydmUiOjIsInN0eWxlIjp7InRhaWwiOnsibmFtZSI6Imhvb2siLCJzaWRlIjoiYm90dG9tIn19fV1d
    \[\begin{tikzcd}
        {\Sp_G} && {\Sp_{G\dgl}}
        \arrow["{i_!}"{description}, bend left=20pt, hook, from=1-1, to=1-3]
        \arrow["{i^*}"{description}, from=1-3, to=1-1]
        \arrow["{i_*}"{description}, bend right=20pt, hook', from=1-1, to=1-3]
    \end{tikzcd}\]
\end{proposition}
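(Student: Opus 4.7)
My plan is to verify each of the two identity adjunctions is Quillen in the claimed direction, derive them to the displayed adjunctions between $\Sp_G$ and $\Sp_{G\dgl}$, and then recognize $i_!$ and $i_*$ as fully faithful by identifying their composites with $i^*$ as derivations of a Quillen equivalence.

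The second Quillen adjunction should be immediate: by \cref{thm:model}(2) the cofibrations in $G\Sp^\Sigma_{G\text{-global flat}}$ and $G\Sp^\Sigma_{G\text{-eqv.~flat}}$ coincide (both are the flat cofibrations), and by \cref{thm:model}(1) every $G$-global equivalence is a $G$-equivariant equivalence (take $\alpha = \id_G$ in the defining family). Thus the identity from $G\Sp^\Sigma_{G\text{-global flat}}$ to $G\Sp^\Sigma_{G\text{-eqv.~flat}}$ will even be homotopical and preserves cofibrations, hence is left Quillen.

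For the first Quillen adjunction, projective cofibrations are already flat (\cref{thm:model}(2)), so I need only show that every projective acyclic cofibration is a $G$-global equivalence. The key claim is that every $G$-equivariant stable equivalence between projectively cofibrant symmetric spectra is already a $G$-global equivalence. This will follow from Ken Brown's lemma applied to each restriction $\alpha^* \colon G\Sp^\Sigma \to K\Sp^\Sigma$, which by \cref{thm:model}(3) is left Quillen between the projective model structures and therefore preserves weak equivalences between projectively cofibrant objects: given such $f$, each $\alpha^*f$ with $\alpha \colon K \to G$ in $\Glo_{/G}$ is then a $K$-equivariant equivalence, i.e.\ $f$ is a $G$-global equivalence. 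I will then upgrade the claim to arbitrary projective acyclic cofibrations via a standard small-object argument, using that the generating projective acyclic cofibrations have projectively cofibrant source and target, and that the $G$-global acyclic flat cofibrations are closed under pushout, transfinite composition, and retract.

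Deriving the two Quillen adjunctions will yield adjunctions $i_! \dashv i^*$ and $i^* \dashv i_*$ between $\Sp_G$ and $\Sp_{G\dgl}$, where the shared middle functor $i^*$ is identified in both cases with the universal functor from $\Sp_{G\dgl}$ to $\Sp_G$ obtained by further inverting the $G$-equivariant equivalences in $G\Sp^\Sigma$. To see $i_!$ is fully faithful, I will observe that composing the two Quillen adjunctions gives the identity Quillen adjunction $G\Sp^\Sigma_{G\text{-eqv.~proj.}} \rightleftarrows G\Sp^\Sigma_{G\text{-eqv.~flat}}$, which is a Quillen equivalence since both sides present $\Sp_G$; deriving then yields $i^*i_! \simeq \id_{\Sp_G}$. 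The dual composite of right Quillen functors will similarly give $i^*i_* \simeq \id_{\Sp_G}$, proving $i_*$ is fully faithful. The hard part will be the key claim in the third paragraph: while Ken Brown handles the cofibrant case cleanly given \cref{thm:model}(3), reducing an arbitrary projective acyclic cofibration to the cofibrant case requires care with the explicit form of the generating set.
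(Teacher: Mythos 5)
The paper does not prove this proposition from scratch — it simply cites \cite[Proposition 3.3.1 and Corollary 3.3.3]{Lenz-phd}. Your proposal is therefore a genuinely different (and more self-contained) route, reconstructing the model-categorical argument from the ingredients in \cref{thm:model}. The core idea is sound and essentially reverse-engineers what Lenz proves: you correctly identify that the second Quillen adjunction is trivial (identical cofibrations, global equivalences are contained in equivariant ones), and that the first reduces to the key fact that $G$-stable equivalences between projectively cofibrant objects are $G$-global, proved via Ken Brown's lemma applied to the restrictions $\alpha^*$, which are left Quillen for the projective model structures by \cref{thm:model}(3). This is precisely the fact the paper later extracts from this proposition (see the Warning immediately after it), so you have put your finger on the real content.

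Two points to tighten. First, as you acknowledge, to make the first Quillen adjunction precise one needs either that the generating projective acyclic cofibrations of the stable model structure have projectively cofibrant source (true in this setting, but requiring a reference to Hausmann's generating sets), or alternatively one can sidestep generators entirely by checking that the derived unit is already an equivalence: for $X$ projectively cofibrant the unit is a global fibrant replacement $X \to X^{f_\gl}$, a global equivalence and hence an equivariant one. Second, your appeal to the composite Quillen equivalence for fully faithfulness of $i_!$ and $i_*$ is valid, but deserves a sentence of justification: the composite derived adjunction $i^*i_! \dashv i^*i_*$ being an adjoint equivalence gives a natural isomorphism $i^*i_! \simeq \id_{\Sp_G}$, and one then invokes the standard fact that for $L \dashv R$ with $RL$ naturally isomorphic to the identity, the left adjoint is automatically fully faithful (this follows from $\epsilon_{LX}$ being split epi via the triangle identity). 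A more hands-on alternative is to check directly that the derived unit (resp.\ counit) is a global fibrant (resp.\ cofibrant) replacement map, hence an equivariant equivalence. Either way your overall plan is correct; the paper simply outsources all of this to \cite{Lenz-phd}.
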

\begin{proof}
    This is \cite[Proposition 3.3.1 and Corollary 3.3.3]{Lenz-phd}.
\end{proof}

\begin{construction}[Comparing $\und{\Sp}^\tensor$ and $\und{\Sp}_\gl^\tensor$]\label{con:comp-eq-gl}
    Consider the following diagram
    \begin{equation}\label[diag]{diag:eq-gl-comp}\begin{tikzcd}[cramped]
        {\int (\Sp^{\Sigma,\tensor})^\flat} & {\int (\Sp^{\Sigma,\tensor})^\flat} & {\int (\Sp^{\Sigma,\tensor})^\flat} \\
        {\int \und{\Sp}^\tensor} & {\int \und{\Sp}^\tensor_\gl} & {\int \und{\Sp}^\tensor}
        \arrow["\id", from=1-1, to=1-2]
        \arrow["{\gamma_\eqv}"', from=1-1, to=2-1]
        \arrow["\id", from=1-2, to=1-3]
        \arrow["{\gamma_\gl}", from=1-2, to=2-2]
        \arrow["{\gamma_\eqv}", from=1-3, to=2-3]
        \arrow[Rightarrow, from=2-1, to=1-2]
        \arrow["{i_! \coloneqq \bfL\id}"', hook, from=2-1, to=2-2]
        \arrow["{i^* \coloneqq \DK(\id)}"', from=2-2, to=2-3]
    \end{tikzcd}\end{equation}
    where $\gamma_\eqv$ respectively $\gamma_\gl$ denote the localization at the fiberwise equivariant respectively global weak equivalences.
    The right square commutes since $G$-global equivalences are in particular $G$-equivariant weak equivalences,
    so the top right horizontal map is homotopical and localizes to $i^*$.
    The left square is obtained via \cref{prop:idk}
    by equipping the left respectively right $\int (\Sp^{\Sigma,\tensor})^\flat$ with the structure of left deformable
    cocartesian fibrations via the $G$-equivariant projective respectively $G$-global flat model structures
    (e.g.~on a product $\prod_{i=1}^n G_i$ we use the pointwise $G_i$-equivariant resp.~$G_i$-global equivalences).
    The fact that all cocartesian pushforward functors are compatible with the resulting deformations
    easily follows from \cref{thm:model}.
    We then note that the identity map is fiberwise compatible with the deformations
    since $G$-stable equivalences between projectively cofibrant spectra are already $G$-global equivalences
    by \cref{prop:lquil}.
    We get that $i_!$ preserves cocartesian edges,
    and is equivalently expressed as the localization of the homotopical map where we restrict to the images of the deformations on both sides:
    \[
        (\smallint (\Sp^{\Sigma,\tensor}_{\projcof})^\flat,W_{\bullet\text{-stable}})
        \subseteq (\smallint (\Sp^{\Sigma,\tensor}_{\cflat})^\flat,W_{\bullet\text{-global}}).
    \]
    Since $i^*$ is both the left and right derived functor of $\id$ (c.f.~\cite[Proposition 4.3]{Hinich-Derived})
    and clearly $\id \dashv \id$, we get that also $i_! \dashv i^*$ since deriving preserves adjunctions (c.f.~\cite[Proposition 2.3.5]{Hinich-Derived}).
    Fiberwise, this is given by the right Bousfield localization $\Sp_G \rightleftarrows \Sp_{G\dgl}$
    of the above proposition.
    Finally, we note that by the dual of \cite[Proposition 7.3.2.6]{HA} $i_!$ admits a relative
    right adjoint, so by uniqueness of adjoints $i_! \dashv i^*$ is a relative adjunction over $\Span_\Forb(\Fglo)$.
\end{construction}

We summarize the above discussion into the following proposition.

\begin{proposition}\label{prop:e-lax-right-adjoint}
    The fully faithful normed global functor $i_! \colon \und{\Sp}^\tensor \subseteq \und{\Sp}^\tensor_\gl$
    admits a lax normed right adjoint $i^*$ which is strong on $\Span(\Forb)$.
    Restricting along $\Span(\F) \to \Span_\Forb(\Fglo), * \mapsto G$,
    this recovers the right Bousfield localization $i_! \colon \Sp_{G} \rightleftarrows \Sp_{G\dgl} \noloc i^*$ and shows that the natural lax symmetric
    monoidal structure on $i^*$ is strong.
\end{proposition}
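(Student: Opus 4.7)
The bulk of the work has already been carried out in \cref{con:comp-eq-gl}, which constructs $i_!\dashv i^*$ as a relative adjunction over $\Span_\Forb(\Fglo)$ with $i_!$ a map of cocartesian fibrations (hence a strong normed functor). The plan is thus to verify three remaining points: (a) $i_!$ is fiberwise fully faithful; (b) $i^*$ preserves cocartesian lifts of morphisms in $\Span(\Forb)$; (c) the restriction along $\Span(\F)\to\Span_\Forb(\Fglo)$ induced by $\ast\mapsto BG$ gives back the stated right Bousfield localization with strong symmetric monoidal $i^*$.

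For (a), fiberwise over $BG\in\Fglo$ the adjunction $i_!\dashv i^*$ specializes to $\Sp_G\rightleftarrows \Sp_{G\dgl}$ from \cref{prop:lquil}, and the left adjoint there is fully faithful as part of a right Bousfield localization. That $i^*$ is automatically a lax normed functor (i.e.\ a relative right adjoint exists as a functor over $\Span_\Forb(\Fglo)$) follows from the standard fact that a strong left adjoint of cocartesian fibrations induces a lax right adjoint by means of the Beck--Chevalley mate transformation, which was already noted at the end of \cref{con:comp-eq-gl}.

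For (b), I would apply \cref{thm:ldeform-cfib}(3) (or equivalently the second case of \cref{prop:idk}) at each morphism $f$ in $\Span_\Forb(\Fglo)$: the lax comparison $f^*_\eq\circ i^*\to i^*\circ f^*_\gl$ is obtained by deriving the identity at $f$, and is an equivalence as soon as the cocartesian pushforwards of $\int(\Sp^{\Sigma,\tensor})^\flat$ along $f$ are homotopical for both the equivariant stable and the global weak equivalences on the full subcategory of flat spectra. By the third clause of \cref{thm:model} restrictions along injective group homomorphisms are homotopical for both model structures, and by the fifth clause the Hill--Hopkins--Ravenel norms along subgroup inclusions are likewise homotopical on flat spectra for both. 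Since these cover exactly the backwards and forwards maps in $\Span(\Forb)$, the Beck--Chevalley map is an equivalence on all of $\Span(\Forb)$ and $i^*$ is strong there. I expect this homotopicality bookkeeping to be the main obstacle, but it is completely handled by \cref{thm:model} once one sets up the derivation via \cref{prop:idk}.

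For (c), I would observe that the functor $\Span(\F)\to\Span_\Forb(\Fglo)$ sending $\ast\mapsto BG$ in fact factors through $\Span(\Forb)$: any morphism in $\Span(\F)$ arises from set-theoretic maps, which on connected components are identity homomorphisms $BG\to BG$ and thus lie in $\Forb$. Combined with (b), this shows that the restriction of $i^*$ along $\Span(\F)\to\Span_\Forb(\Fglo)$ lands in the locus where $i^*$ is strong, promoting the canonical lax symmetric monoidal structure on $i^*\colon\Sp_{G\dgl}\to\Sp_G$ to a strong one. The identification of this restricted adjunction with the Bousfield localization of \cref{prop:lquil} is immediate from how $i_!$ and $i^*$ were defined fiberwise in \cref{con:comp-eq-gl}.
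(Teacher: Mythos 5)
Your proposal is correct and follows the paper's route: identify $i^*$ as the Dwyer--Kan localization of the identity on $\int(\Sp^{\Sigma,\tensor}_\cflat)^\flat$ from global to equivariant weak equivalences, and check that the relevant cocartesian pushforwards are homotopical so that the localization preserves cocartesian lifts over $\Span(\Forb)$. The only imprecision is in the citations: the conclusion is actually drawn from \cref{prop:deriv-cocart-mor}(1) (for a \emph{homotopical} map between two left deformable fibrations, not \cref{thm:ldeform-cfib}(3), which concerns a single fibration, nor \cref{prop:idk}, which has no ``cases'' and treats the non-homotopical situation), and besides restrictions along injections (\cref{thm:model}(3)) and norms (\cref{thm:model}(5)) one must also cite homotopicality of the tensor product on flat spectra (\cref{thm:model}(4)) to account for the fold maps, which together with the norms make up the forwards maps in $\Span(\Forb)$.
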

\begin{proof}
    It only remains to see that $i^*$ is strong on $\Span(\Forb)$.
    Clearly $i^*$ is equivalently given by the localization of the homotopical map
    \[
        (\smallint (\Sp^{\Sigma,\tensor}_\cflat)^\flat, W_{\bullet\text{-global}}) \xto{\id} (\smallint (\Sp^{\Sigma,\tensor}_\cflat)^\flat, W_{\bullet\text{-stable}}).
    \]
    By \cref{thm:model} all cocartesian pushforwards on the left hand side are homotopical,
    and the cocartesian pushforwards along maps in $\Span(\Forb)$
    are homotopical for the right side. The claim then follows from \cref{prop:deriv-cocart-mor}.
\end{proof}

\begin{remark}
    By work of Lenz--Stahlhauer \cite{Lenz-Stahlhauer} there is a robust theory of so-called
    global model categories, which essentially consist of a collection of (two Quillen equivalent) model structures
    on the category of $G$-equivariant objects in $\cC$ for varying finite groups $G$,
    and these must satisfy various compatibility conditions with respect to restriction
    along group homomorphisms.
    For example, $(\Sp^{\Sigma})^\flat$ equipped with the levelwise $G$-global flat model structures
    can be endowed with the structure of a global model category, see \cite[Example 2.3, Example 2.14]{Lenz-Stahlhauer}.
    Similarly, there is a notion of global Quillen adjunction.
    The methods developed in \cref{subsec:loc} (and its dual version for cartesian fibrations
    which are right-derivable) can be used in a fashion analogous to the above
    to derive global model categories and global Quillen adjunctions
    to global ($\infty-$)categories and adjunctions between them.
\end{remark}

We will now build the comparison functors from categories of ultra-commutative $G$(-global) ring spectra
to our categories of normed algebras.

\begin{construction}\label{con:ucomm-comp}
    We have a commutative diagram of global functors
    \[\begin{tikzcd}[cramped]
        {\CAlg(\Sp^{\Sigma,\tensor}_\cflat)^\flat} & {\und{\CAlg}_\gl((\Sp^{\Sigma,\tensor}_\cflat)^\flat)} && {\und{\CAlg}_\gl(\und{\Sp}_\gl^\tensor)} \\
        & {(\Sp^{\Sigma}_\cflat)^\flat} && {\und{\Sp}_\gl}
        \arrow["{\bbU^\flat}"', from=1-1, to=2-2]
        \arrow["\simeq"', from=1-2, to=1-1]
        \arrow["{\und{\CAlg}_\gl(\gamma_\gl)}", from=1-2, to=1-4]
        \arrow["\bbU", from=1-2, to=2-2]
        \arrow["\bbU", from=1-4, to=2-4]
        \arrow["{\gamma_\gl}"', from=2-2, to=2-4]
    \end{tikzcd}\]
    where $\gamma_\gl$ denotes the localization from \cref{con:sp-gl},
    and the equivalence compatible with the forgetful functors comes from \cref{prop:gl-borel}.
    In particular, since the forgetful functors are conservative,
    it is now clear that in level $G \in \Glo^\op$, the entire top horizontal composite inverts $G$-global equivalences.
    Hence, if we define the global category $\und{\UComm}_\gl$ as the functorial Dwyer--Kan localization
    $\DK \circ (\CAlg(\Sp^{\Sigma,\tensor}_\cflat)^\flat, W_{\bullet\text{-global}})$,
    then by \cref{cor:compare-dk},
    the universal property of DK-localization yields a unique factorization of global functors
    \begin{equation}\label[diag]{diag:ucomm-gl}\begin{tikzcd}[cramped]
        {\und{\CAlg}_\gl((\Sp^{\Sigma,\tensor}_\cflat)^\flat)} && {\und{\CAlg}_\gl(\und{\Sp}_\gl^\tensor)} \\
        {\CAlg(\Sp^{\Sigma,\tensor}_\cflat)^\flat} && {\und{\UComm}_\gl}
        \arrow["{\und{\CAlg}_\gl(\gamma_\gl)}", from=1-1, to=1-3]
        \arrow["\simeq"', from=1-1, to=2-1]
        \arrow["{\gamma_\gl}"', from=2-1, to=2-3]
        \arrow["{\Phi_\gl}"', dashed, from=2-3, to=1-3]
    \end{tikzcd}\end{equation}
    Note that by construction $\Phi_\gl$ is compatible with the forgetful functors to $\und{\Sp}_\gl$.
\end{construction}

\begin{construction}\label{con:ucomm-G}
    Analogously to the above,
    for a fixed $G$ one obtains a commutative diagram of $G$-categories
    \[\begin{tikzcd}[cramped]
        {\und{\CAlg}_G((\infl_G\Sp^{\Sigma,\tensor}_\cflat)^\flat)} && {\und{\CAlg}_G(\und{\Sp}_G^\tensor)} \\
        {(\infl_G\CAlg(\Sp^{\Sigma,\tensor}_\cflat))^\flat} && {\und{\UComm}_G}
        \arrow["{\und{\CAlg}_G(\gamma_G)}", from=1-1, to=1-3]
        \arrow["\simeq"', from=1-1, to=2-1]
        \arrow["{\gamma_G}"', from=2-1, to=2-3]
        \arrow["{\Phi_G}"', dashed, from=2-3, to=1-3]
    \end{tikzcd}\]
    where the left vertical equivalence comes from \cref{thm:borel-eqv},
    and $\gamma_G$ exhibits $\und{\UCom}_G$ as functorial Dwyer--Kan localization
    of $(\infl_G \CAlg(\Sp^{\Sigma,\tensor}_\cflat))^\flat$
    at the  underlying $H$-stable weak equivalences in level $G/H$.
    Again, by construction $\Phi_G$ is compatible with the forgetful functors to $\und{\Sp}_G$.
\end{construction}

\begin{construction}\label{con:ucomm-eq}
    Recall the functor $i^* \colon \int \und{\Sp}^\tensor_\gl \to \int \und{\Sp}^\tensor$ from \cref{prop:e-lax-right-adjoint}. Since it preserves cocartesian edges over $\Span(\Forb)$, postcomposition with $i^*$ induces a global functor
    \[
        \und{\CAlg}_\gl(\und{\Sp}_\gl^\tensor)
        \to \und{\CAlg}_\eq(\und{\Sp}^\tensor) \coloneqq \Fun_{/\Span_\Forb(\Fglo)}^{\Forb^\op\dcc}(\Span_{\Forb}(\Fglo_{/\bullet}), \smallint \und{\Sp}^\tensor)
    \]
    and thus a global comparison functor $\Phi_\eq \coloneqq (i^*)_* \Phi_\gl \colon \und{\UCom}_\gl \to \und{\CAlg}_\eq(\und{\Sp}^\tensor)$.
    Note that $\und{\CAlg}_\eq(\und{\Sp}^\tensor)$ admits a forgetful functor $\bbU$ to
    \[
        \Fun_{/\Fglo^\op}^{\Forb^\op\dcc}((\FGlo_{/\bullet})^\op,\smallint \und{\Sp})
        \eqqcolon \plaxlim_{K \in (\Fglo_{/\bullet})^\Op} \Sp_K
        \simeq \und{\Sp}_\gl
    \]
    where the equivalence was shown in \cite{Globalization}.
    This yields a commutative diagram
    \[\begin{tikzcd}[cramped,column sep=scriptsize]
        & {\und{\CAlg}_\gl(\und{\Sp}_\gl^\tensor)} & {\und{\CAlg}_\eq(\und{\Sp}_\gl^\tensor)} & {\und{\CAlg}_\eq(\und{\Sp}^\tensor)} \\
        {\und{\Sp}_\gl} & {\lim_{K \in (\Fglo_{/\bullet})^\op}\Sp_{K\dgl}} & {\laxlim^\dagger_{K \in (\Fglo_{/\bullet})^\op}\Sp_{K\dgl}} & {\laxlim^\dagger_{K \in (\Fglo_{/\bullet})^\op}\Sp_{K}} & {\und{\Sp}_\gl}
        \arrow[hook, from=1-2, to=1-3]
        \arrow[from=1-2, to=2-2]
        \arrow["{i^*}", from=1-3, to=1-4]
        \arrow[from=1-3, to=2-3]
        \arrow[from=1-4, to=2-4]
        \arrow[Rightarrow, no head, from=2-1, to=2-2]
        \arrow[hook, from=2-2, to=2-3]
        \arrow["{i^*}", from=2-3, to=2-4]
        \arrow["\simeq", from=2-4, to=2-5]
    \end{tikzcd}\]
    The bottom horizontal composite is equivalent to the identity,
    by the triangle identity for the Globalization adjunction constructed in \cite{Globalization},
    and the fact that $i_! \colon \und{\Sp} \to \und{\Sp}_\gl$ exhibits the target as the globalization of its source by \cite[Theorem B]{Globalization}.
    Thus also $\Phi_\eq$ is compatible with the forgetful functors to $\und{\Sp}_\gl$.

    One can equivalently construct $\Phi_\eq$ exactly as $\Phi_\gl$,
    by replacing the top horizontal map in \cref{diag:ucomm-gl} with postcomposition
    by $\gamma_\eq \colon \int (\Sp^{\Sigma,\tensor}_\cflat)^\flat \to \int \und{\Sp}^\tensor$
    which by \cref{con:sp-eq} then lands in $\und{\CAlg}_\eq(\und{\Sp}^\tensor)$.
    That this agrees with the previous definition of $\Phi_\eq$
    follows from the commutativity of the right square in \cref{diag:eq-gl-comp}.
\end{construction}

\begin{remark}
    We will not need this here, but let us remark
    that in joint work with Tobias Lenz and Sil Linskens \cite{LLP}
    we show that all the above comparison functors $\Phi_\gl,\Phi_\eq,\Phi_G$ are equivalences.
    In particular, we also have an equivalence of global categories $\und{\CAlg}_\gl(\und{\Sp}^\tensor_\gl) \simeq \und{\CAlg}_\eq(\und{\Sp}^\tensor)$.
    Moreover, we show that the comparison functors $\Phi_G$ for varying $G \in \Glo$
    assemble into an equivalence of global categories.
    Note that it is a priori not obvious at all how to even make the association $G \mapsto \CAlg_G(\und{\Sp}_G^\tensor)$
    into a global category, as there does not seem to be any natural map
    $\CAlg(\Sp) \to \CAlg_G(\und{\Sp}_G^\tensor)$ one could write down.
\end{remark}

There is another model for $\und{\Sp}_G,\und{\Sp}$ and $\und{\Sp}_\gl$ which will be
vital for us in \cref{sec:pic}. Namely, recall that there is an equivalence
$
    \Mack_G(\Sp) \simeq \Sp_G
$
between the category of spectral $G$-Mackey functors and the category
of genuine $G$-spectra, as first observed by Guillou--May \cite{Guillou-May}
(see also \cite[Appendix A]{CMNN}).
To turn this into an equivalence of parametrized categories and also
give a global version, we consider the following general definition.

Let $(\cF,\cN)$ be an extensive span pair and $A \in \cF$.
Since coproducts and pullbacks in $\cF_{/A}$ are computed in $\cF$,
one easily checks that also $(\cF_{/A},\cF_{/A} \times_\cF \cN)$ is extensive,
and that postcomposition in $A$ defines a functor $\Span_\cN(\cF_{/\bullet})\colon \cF \to \Cat^\oplus_{/\Span_\cN(\cF)}$.
Moreover, by \cref{ex:n-a} $\Span_\cN(\cF_{/A}) \to \Span_\cN(\cF)$
is simply $\und{A}^\Ncoprod$. It follows that $\colim_{\cF} \Span_\cN(\cF_{/\bullet}) \simeq \Span_\cN(\cF)$ since $(-)^\Ncoprod$ is a left adjoint, the colimit of the Yoneda embedding $\und{(-)} \colon \cF \to \PSh(\cF)$ is terminal,
and $(*)^\Ncoprod = \Span_\cN(\cF)$.

\begin{definition}\label{def:param-mack}
    The $\cF$-category
    of $\cN$-normed $\cF$-monoids in $\cE \in \Cat^\times$ is
    \[
        \und{\Mack}_\cF^\cN(\cE) \coloneqq \Fun^\times(\Span_{\cN}(\cF_{/\bullet}), \cE).
    \]
    It has underlying category $\lim_{\cF^\op}\und{\Mack}_\cF^\cN(\cE) \simeq \Mack_\cF^\cN(\cE)$.
\end{definition}

\begin{notation}
    We will also write $\und{\Mack}_G \coloneqq \und{\Mack}_{\F_G}^{\F_G}$ and $\und{\Mack}_\gl \coloneqq \und{\Mack}_\Fglo^\Forb$.
\end{notation}

\begin{theorem}\label{thm:spectral-mack}
    There are equivalences of $G$- respectively global categories
    \[
        \und{\Mack}_G(\Sp) \simeq \und{\Sp}_G
        \quad\text{and}\quad
        \und{\Mack}_\gl(\Sp) \simeq \und{\Sp}_\gl
    \]
    which levelwise identify evaluation with genuine fixed points, in that
    that following two triangles commute for all $K \leq H \leq G$ respectively $G \in \Fglo$ and $\phi \in \Fglo_{/G}$:
    \[\begin{tikzcd}[cramped]
        {\Mack_H(\Sp)} && {\Sp_H} & {\Mack_{G\dgl}(\Sp)} && {\Sp_{G\dgl}} \\
        & \Sp &&& \Sp
        \arrow["\simeq", from=1-1, to=1-3]
        \arrow["{\ev_{H/K}}"', from=1-1, to=2-2]
        \arrow["{(-)^K}", from=1-3, to=2-2]
        \arrow["\simeq", from=1-4, to=1-6]
        \arrow["{\ev_\phi}"', from=1-4, to=2-5]
        \arrow["{(-)^\phi}", from=1-6, to=2-5]
    \end{tikzcd}\]
\end{theorem}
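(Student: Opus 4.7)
The plan is to derive both equivalences from the universal properties of $\und{\Sp}_G$ and $\und{\Sp}_\gl$ recorded in Remark \ref{rem:sp-g-univ-prop} and Proposition \ref{prop:sp-gl-univ-prop}, combined with a parametrized form of the Guillou--May theorem for the Mackey functor side. Since the underlying $G$-category of the normed $G$-category $\und{\Sp}_G^\tensor$ is the free $G$-presentable $G$-stable $G$-category on one generator (and similarly in the global case with the globally presentable equivariantly stable global category on one generator), it suffices to exhibit $\und{\Mack}_G(\Sp)$, respectively $\und{\Mack}_\gl(\Sp)$, as objects of these same universal target categories, to pick out a generator, and to check that the resulting comparison functor is a fiberwise equivalence via the classical Guillou--May equivalence and its $G$-global analogue.

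First I would verify the structural properties of $\und{\Mack}_G(\Sp)$. Fiberwise we have
\[
    \und{\Mack}_G(\Sp)(G/H) \simeq \Fun^\times(\Span(\F_H), \Sp) = \Mack_H(\Sp),
\]
which is presentable and stable (as $\Sp$ is). The restrictions are precomposition along the induction functors $\Span(\F_H) \to \Span(\F_K)$ and therefore admit both adjoints, so the $G$-category is $G$-cocomplete and $G$-complete. $G$-semiadditivity is automatic: by the Beck--Chevalley style argument underlying Definition \ref{def:param-mack}, $\und{\Mack}_G(\Sp)$ extends (uniquely) to a functor $\Span(G) \to \CAT$. Analogously, $\und{\Mack}_\gl(\Sp)$ is globally presentable and equivariantly stable, using the semiadditivity input of \cref{prop:span-semiadd} applied to the extensive span pair $(\Fglo_{/G}, \Forb_{/G})$.

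Next I would pick a generator and produce the comparison. The terminal object of $\F$ defines a canonical section of $\Span_{\cN}(\cF_{/\bullet})$, hence a natural ``Yoneda'' generator in $\und{\Mack}_G(\Sp)$, namely the spectral Burnside/sphere Mackey functor $\und{\S}$; it is clearly $G$-indexed-colimit generating. The universal properties then furnish unique $G$-presentable $G$-stable, respectively globally presentable equivariantly stable, functors
\[
    \Phi_G \colon \und{\Sp}_G \to \und{\Mack}_G(\Sp)
    \quad\text{and}\quad
    \Phi_\gl \colon \und{\Sp}_\gl \to \und{\Mack}_\gl(\Sp),
\]
characterized by sending the canonical generator to $\und{\S}$. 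On fibers $G/H$ respectively $\phi \in \Fglo_{/G}$, these agree with the classical Guillou--May equivalence $\Sp_H \simeq \Mack_H(\Sp)$, respectively the $G$-global Guillou--May equivalence $\Sp_{K\dgl} \simeq \Mack_{K\dgl}(\Sp)$ established in \cite{CLL-Semiadd}. Since both are fiberwise equivalences, they are equivalences of (global) categories. Compatibility of evaluation with genuine fixed points is built in: under the classical Guillou--May identification, $\ev_{H/K}$ corresponds to $(-)^K$, and the parametrized restrictions in $\und{\Mack}$ are precomposition along the evident inclusions of span categories, matching the restrictions between fixed points on the $\und{\Sp}$ side.

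The main obstacle will be pinning down the fiberwise Guillou--May equivalence as a $G$-functor, i.e.\ verifying naturality in restriction maps $H \leq K$; both sides are characterized by a universal property involving a distinguished generator, so I would use uniqueness of the resulting $G$-presentable $G$-stable functor (together with the fact that $\und{\S}$ and $\S_G$ correspond under Guillou--May) to collapse the potentially complicated coherence data into this single check. For the global version, the analogous step uses the fact that $\und{\Mack}_\gl(\Sp)$ is Borel-free in the sense that its global structure is detected fiberwise by the already-proven $G$-global Mackey functor comparison, so the same universal property argument transports to the global setting without additional work.
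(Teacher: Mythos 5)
Your strategy matches the paper's proof almost exactly: both rest on the universal properties from \cref{prop:sp-gl-univ-prop} (and \cref{rem:sp-g-univ-prop}), and the paper simply cites \cite[Corollary 9.14]{CLL-Semiadd} to supply the fact that $\und{\Mack}_G(\Sp)$ and $\und{\Mack}_\gl(\Sp)$ are also the free $G$-presentable $G$-stable (resp.\ globally presentable equivariantly stable) category on one generator, whence the equivalence is immediate and the fixed-point triangles come along for the ride. So the high-level outline is sound.

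Where your sketch is thinner than it should be is precisely the input that the paper delegates to that citation: the claim that $\und{\Mack}_G(\Sp)$ is $G$-stable (or equivalently that it ``extends uniquely to a functor $\Span(G)\to\CAT$''). Calling this ``automatic'' by a ``Beck--Chevalley style argument'' hides the real content. The restriction functors are precomposition along the coproduct-preserving induction maps $\Span(\F_K)\to\Span(\F_H)$, and one must show that their left and right Kan extensions (which exist because everything is presentable) actually coincide and satisfy the ambidexterity/projection-formula conditions required for $G$-semiadditivity; this is exactly the hard verification in the parametrized Guillou--May theorem, and it does not reduce to fiberwise semiadditivity of each $\Mack_H(\Sp)$. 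Similarly, ``both are fiberwise equivalences'' needs justification before you know that the universal comparison $\Phi_G$ \emph{is} fiberwise Guillou--May: the fiber $\Phi_G(G/H)$ is only determined by where the indexed generators $H/K_+\wedge\S_H$ go, which requires tracking the parametrized (not merely fiberwise) structure. You do flag this in your final paragraph, but the proposed resolution (``both sides are characterized by a universal property involving a distinguished generator'') is precisely the statement you have not yet proved for the Mackey side. In short: the architecture is right and agrees with the paper; the gap is that the $G$-semiadditivity/universal-property claim for $\und{\Mack}$ needs to be either proved (as in \cite{CLL-Semiadd}, or via the algebraic-patterns route in \cite{puetzstueck}) or explicitly cited, rather than asserted.
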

\begin{proof}
    This follows from \cref{prop:sp-gl-univ-prop} and \cite[Corollary 9.14]{CLL-Semiadd}.
    An independent proof using the language of algebraic patterns \cite{BHS-Env}
    was also given in the author's master's thesis
    \cite[Corollary 2.56, Remark 2.57]{puetzstueck}.
\end{proof}

\begin{remark}
    In fact, one can even define natural normed structures on $\und{\Mack}_G(\Sp)$
    and $\und{\Mack}_\gl(\Sp)$, and it is shown in \cite[Section 8]{LLP}
    that the above equivalences then upgrade to equivalences
    of normed categories with $\und{\Sp}_G^\tensor$ and $\und{\Sp}_\gl^\tensor$.
    Note that the definition of $\und{\Mack}_\gl$ in \cite{LLP}
    agrees with ours by \cite[Remark 4.1.10]{Tambara}.
\end{remark}

\section{Modules}\label{sec:modules}

Consider an $\cN$-normed $\cF$-category $\cC$ for some extensive span pair $(\cF,\cN)$,
and let $R \in \CAlg_\cF^\cN(\cC)$ be a normed algebra in $\cC$.
The goal of this section is to construct (under suitable hypotheses) an $\cN$-normed $\cF$-category
$\und{\Mod}_R(\cC)$ of $R$-modules in $\cC$.
Moreover, we will upgrade the association $R \mapsto \und{\Mod}_R(\cC)$,
into a parametrized functor, and also analyze the naturality with respect to normed functors $\cC \to \cD$.

Recall that if $\cC$ is some symmetric monoidal category and $R \in \CAlg(\cC)$,
then we need $\cC$ to admit geometric realizations and its tensor product to preserve
them in both variables separately to also obtain an induced symmetric monoidal structure on $\Mod_R(\cC)$.
The resulting tensor product is then given by the geometric realization of the Bar construction
$M \tensor_R N \simeq |M \tensor R^{\tensor \bullet} \tensor N|$, see e.g.~\cite[Section 4.4]{HA}.
The analogue of this condition in the parametrized setting is that our normed categories
are compatible with geometric realizations, i.e.~factor through the category
$\Cat(\Delta^\op) \subset \Cat$ of categories admitting geometric realizations
and functors preserving them. We have already verified this for the examples
we are most interested in c.f.~\cref{cor:sp-compat-sift}.

For this section, it will be convenient to work in slightly larger generality than usual.
We introduce some notation which suggests the relation to $\cN$-normed $\cF$-categories
considered in the main text.

\begin{notation}
    Consider a map of semiadditive categories $\pi \colon T \to S$,
    and a class $\cK$ of small sifted categories.
    \begin{itemize}
        \item We refer to $\Fun^\times(S,\Cat(\cK))$
            as the category of $S$-normed categories that are
            compatible with $\cK$-colimits.
            Here $\Cat(\cK) \subset \Cat$ is the subcategory of categories
            admitting $\cK$-colimits and functors preserving them.
        \item Given an $S$-normed category $\cC$,
            we define the category of normed algebras in context $T$ to be
            the category
            \[
                \und{\CAlg}_S(\cC)(T)
                \coloneqq \Fun_{/S}^{\pr\dcc}(T,\smallint \cC)
                = \Fun_{/S}^\times(T,\smallint \cC)
            \]
            of maps $T \to \int \cC$ over $S$ which preserve cocartesian lifts of projection maps in $S$,
            or equivalently (by \cref{lem:obvious}) of product preserving functors over $S$.
            Note that this is completely functorial in $T$, and we denote the resulting functor by
            $\und{\CAlg}_{S}(\cC) \colon (\Cat^\sadd_{/S})^\op \to \Cat$.

        \item We refer to functors $(\Cat^\sadd_{/S})^\op \to \Cat$ as $S$-based categories,
            and natural transformations of them as $S$-based functors.
    \end{itemize}
\end{notation}

\begin{remark}\label{rem:mod-functoriality}
    We give a list of desiderata our parametrized module construction
    should satisfy. First, let us describe what it should do to a normed algebra.
    \begin{enumerate}
        \item[(i)] Given an $S$-normed category $\cC \in \Fun^\times(S,\Cat(\cK))$
            and a normed algebra $R \in \und{\CAlg}_S(\cC)(T)$,
            we want a $T$-normed category $\und{\Mod}_R(\cC) \in \Fun^\times(T,\Cat(\cK))$
            sending a map $f \colon A \to B$ in $T$ to the composite
            \[
                \hspace{3em}\Mod_{RA}(\cC(\pi A))
                \xto{f_\tensor} \Mod_{f_\tensor RA}(\cC(\pi B))
                \xto{RB \tensor_{(f_\tensor RA)} (-)} \Mod_{RB}(\cC(\pi B)),
            \]
            where $f_\tensor = \cC(\pi f)$.
    \end{enumerate}
    Next, we will describe the desired functoriality and naturality
    properties this construction should satisfy.
    \begin{enumerate}
        \item[(ii)] A map of algebras $R \to S$ in $\und{\CAlg}_S(\cC)(T)$ should induce a normed functor
            \[
                S \tensor_R - \colon \und{\Mod}_R(\cC) \to \und{\Mod}_S(\cC)
            \]
            given by the usual basechange
            $SA \tensor_{RA} - \colon \Mod_{RA}(\cC(\pi A)) \to \Mod_{SA}(\cC(\pi A))$
            at each $A \in T$.

        \item[(iii)] In fact, the above should be entirely functorial in $R \in \und{\CAlg}_S(\cC)(T)$
            and natural in $T$, ultimately giving rise to an $S$-based functor
            \[
                \und{\Mod}_{(-)}(\cC) \colon \und{\CAlg}_S(\cC) \to \Fun^\times(-,\Cat(\cK))
            \]

        \item[(iv)] Finally, the entire construction should also be functorial in the $S$-normed category $\cC$;
            In particular, given a normed functor $F \colon \cC \to \cD$, we want a natural transformation of $S$-based functors
            \[
                \und{\Mod}_{(-)}(\cC) \Rightarrow \und{\Mod}_{F(-)}(\cD)
            \]
            which in context $T \in (\Cat_{/S}^\sadd)^\op$ is given by the lax commuting square
            \begin{equation}\label{diag:mod-lax-nat-square}\begin{tikzcd}[cramped,column sep = scriptsize]
                {\und{\CAlg}_S(\cC)(T)} && {\Fun^\times(T,\Cat(\cK))} & R & {\und{\Mod}_R(\cC)} \\
                {\und{\CAlg}_S(\cD)(T)} && {\Fun^\times(T,\Cat(\cK))} & FR & {\und{\Mod}_{FR}(\cD)}
                \arrow["{\und{\Mod}_{(-)}(\cC)}", from=1-1, to=1-3]
                \arrow["{\und{\CAlg}_S(F)(T)}"', shorten <=1pt, from=1-1, to=2-1]
                \arrow["F"{description}, Rightarrow, from=1-3, to=2-1]
                \arrow[Rightarrow, no head, from=1-3, to=2-3]
                \arrow[maps to, from=1-4, to=1-5]
                \arrow[maps to, from=1-4, to=2-4]
                \arrow["F", Rightarrow, from=1-5, to=2-5]
                \arrow["{\und{\Mod}_{(-)}(\cD)}"', from=2-1, to=2-3]
                \arrow[maps to, from=2-4, to=2-5]
            \end{tikzcd}\end{equation}
            Here $F \colon \und{\Mod}_R(\cC) \to \und{\Mod}_{FR}(\cD)$ is a $T$-normed functor
            which at $A \in T$ is given by $\Mod_{RA}(\cC(\pi A)) \to \Mod_{FRA}(\cD(\pi A))$
            induced by the symmetric monoidal functor $F(A)$.
    \end{enumerate}
\end{remark}

One convenient way to package all of the above data succinctly is as follows.
Recall the notion of lax natural transformations from \cref{appendix:lax}.

\begin{theorem}\label{thm:mod}
    There exists a lax natural transformation
    \[\begin{tikzcd}[cramped]
        {\Fun^\times(S,\Cat)} &&& {\Fun^\times((\Cat_{/S}^\sadd)^\op, \what{\Cat})}
        \arrow[""{name=0, anchor=center, inner sep=0}, "{\und{\CAlg}_S(-)}", curve={height=-18pt}, from=1-1, to=1-4]
        \arrow[""{name=1, anchor=center, inner sep=0}, "{\const \Fun^\times(-,\Cat(\cK))}"', curve={height=18pt}, from=1-1, to=1-4]
        \arrow["\lax"', shorten <=5pt, shorten >=5pt, Rightarrow, from=0, to=1]
        \arrow["{\und{\Mod}}", draw=none, from=0, to=1]
    \end{tikzcd}\]
    satisfying all of the above points, e.g.~having lax naturality squares as in (\ref{diag:mod-lax-nat-square}).
\end{theorem}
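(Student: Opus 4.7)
The strategy is to reduce the theorem to Lurie's classical module construction applied fiberwise, using the appendix's identification of the domain of Lurie's $\Theta \colon \Cat^{\Alg} \to \Cat^{\Mod}$ from \cite[Sections 4.8.3--4.8.5]{HA} as the cocartesian unstraightening of $\Alg \colon \Mon(\Cat) \to \Cat$.

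The first step is to package the classical construction, together with its commutative analog, into a lax natural transformation of 2-functors $\Mack(\Cat(\cK)) \to \what{\Cat}$ whose source is $\CAlg(-)$ and whose target is the constant functor at $\Cat(\cK)$. At a symmetric monoidal category $\cC$ this produces the functor $\Mod_{\bullet}(\cC) \colon \CAlg(\cC) \to \Cat(\cK),\ R \mapsto \Mod_R(\cC)$, and its lax naturality in $\cC$ encodes base change of modules along symmetric monoidal functors. The fibrational description of the appendix is essential here, as it presents the entire assignment $(\cC,R) \mapsto \Mod_R(\cC)$ as a single map of cocartesian fibrations rather than a pointwise construction.

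Next, for $\cC \in \Fun^\times(S, \Cat(\cK))$ and $T \in \Cat^\sadd_{/S}$ with structure map $\pi$, I would interpret a normed algebra $R \in \Fun^\times_{/S}(T, \smallint \cC)$ as assigning coherently to each $A \in T$ an algebra $RA \in \CAlg(\cC(\pi A))$ together with structure maps $\cC(\pi f)(RA) \to RB$ for every $f \colon A \to B$ in $T$; this is precisely the unstraightened form of a section of the $\CAlg$-fibration over $T$. Postcomposing with the universal $\Mod$ of the previous paragraph yields the functor $\und{\Mod}_R(\cC) \colon T \to \Cat(\cK),\ A \mapsto \Mod_{RA}(\cC(\pi A))$ with the expected functoriality on $f$ given in (i), and finite-product preservation follows from the standard equivalence $\Mod_{R_1 \times R_2}(\cC_1 \times \cC_2) \simeq \Mod_{R_1}(\cC_1) \times \Mod_{R_2}(\cC_2)$. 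The desiderata (ii)--(iv) then fall out by construction: (ii) and (iii) from the functoriality of $\Mod_{\bullet}(\cD)$ in the algebra variable and in the $T$-context, and (iv) together with its lax square \cref{diag:mod-lax-nat-square} from the lax naturality in $\cC$ of the universal $\Mod$, applied fiberwise over each $A \in T$.

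The main obstacle is the coherent packaging: ensuring that the fiberwise constructions truly assemble into a genuine lax natural transformation of 2-functors on $\Fun^\times(S,\Cat)$, rather than a merely pointwise construction for each individual $\cC$. This is exactly where the fibrational description from the appendix becomes indispensable, since it lets the entire parametrized module construction be realized as postcomposition with a single universal map of cocartesian fibrations; all higher coherences then come for free from the functoriality of straightening/unstraightening, rather than having to be chased by hand through a forest of compatibilities.
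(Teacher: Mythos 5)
Your strategy is sound and matches the paper's at a high level: reduce to Lurie's $\Theta$ through the appendix's identification of $\Cat^{\Alg}(\cK)$ with $\Mon(\Cat_{*\sslash}(\cK))$, and use the fibrational/unstraightened picture so that all higher coherences are handled at once rather than by hand. However, there are two places where you gesture at the crux without supplying it, and these are precisely where the work in the paper's proof happens.

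First, you propose building a ``universal $\Mod$'' lax natural transformation $\CAlg(-)\Rightarrow^{\lax}\const\,\Cat(\cK)$ over $\Mack(\Cat(\cK))$ and then ``applying it fiberwise'' to a normed algebra $R\colon T\to\int\cC$ over $S$. This order of operations is not what the paper does, and ``apply fiberwise'' is exactly the coherence problem you flag as the main obstacle. The paper instead \emph{directly} constructs the lax natural transformation $\Fun_{/S}^\times(\bullet,-)\Rightarrow^{\lax}\const\,\Fun^\times(\bullet,\Cat_{*\sslash}(\cK))$ of functors out of $\Fun^\times(S,\Cat(\cK))$, and only at the very end postcomposes with the plain (not lax) $S$-based natural transformation induced by $\fgt^{-1}$ and $\Theta'$. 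The precise mechanism is the lax map of cocartesian fibrations
\[
\Fun^\times(T,\Cat_{*\sslash}(\cK))\times_{\Fun^\times(T,\Cat(\cK))}\Fun^\times(S,\Cat(\cK))\longrightarrow\Fun^\times(T,\Cat_{*\sslash}(\cK))\times\Fun^\times(S,\Cat(\cK))
\]
over $\Fun^\times(S,\Cat(\cK))$, combined with the general observation (\cref{lem:cocart-lax-square}) that the lax map $(\id,p)\colon\cE\to\cE\times\cC$ for a cocartesian fibration $p\colon\cE\to\cC$ encodes the ``inclusion of fibers'' lax natural transformation $\Str(p)\Rightarrow^{\lax}\const\cE$. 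Your proposal does not identify this specific construction, and without it the claim that the coherences ``come for free'' is an assertion rather than an argument.

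Second, your proposal says nothing about how the lax natural transformations for the varying contexts $T\in(\Cat^\sadd_{/S})^\op$ assemble coherently. The paper takes care to show that the above diagram is functorial in $T$, yielding a functor $(\Cat^\sadd_{/S})^\op\times\Delta^1\to\Cocart^\lax(\Fun^\times(S,\Cat(\cK)))$ landing in $\Cocart$ on the endpoints, and then invokes a currying lemma for lax natural transformations (\cref{prop:lax-curry}) together with the lax straightening equivalence (\cref{thm:lax-str}) to convert this into a single lax natural transformation valued in $\Fun^\times((\Cat^\sadd_{/S})^\op,\what{\Cat})$. This currying step is essential to reach the target category in the statement, and is absent from your outline; the phrase ``a single universal map of cocartesian fibrations'' undersells what is actually needed, namely a $T$-indexed family of such maps together with a nontrivial currying argument.
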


\begin{remark}
    Let $(\cF,\cN)$ be an extensive span pair.
    We specialize the above results to $\cN$-normed $\cF$-categories by picking
    $S = \Span_\cN(\cF)$. Recall from \cref{def:param-mack}
    that the $\cF$-category structure on $\und{\Mack}_\cF^\cN(\Cat(\cK))$
    is induced by precomposing with
    $\Span_{\cN}(\cF_{/\bullet}) \colon \cF \to \Cat^\sadd_{/\Span_\cN(\cF)}$.
    Given $\cC \in \Mack_\cF^\cN(\Cat(\cK))$ we have an inclusion
    \[
        \und{\CAlg}_\cF^\cN(\cC)(A)
        \simeq \Fun_{/S}^{\cF^\op\dcc}(\Span_{\cN}(\cF_{/A}),\smallint \cC)
        \subseteq \Fun_{/S}^{\pr\dcc}(\Span_\cN(\cF_{/A}),\smallint \cC)
    \]
    of a full subcategory, since projection maps in $\Span_\cN(\cF)$ are backwards maps by \cref{prop:span-semiadd}.
    This is clearly natural in $A \in \cF^\op$.
    Precomposing the above lax natural transformation with this, we obtain a lax natural transformation
    \begin{equation}\label[diag]{diag:mod-lax-nat}\begin{tikzcd}[cramped]
        {\Mack_\cF^\cN(\Cat(\cK))} && {\Fun^\times(\cF^\op,\what{\Cat})}
        \arrow[""{name=0, anchor=center, inner sep=0}, "{\und{\CAlg}_\cF^\cN}", curve={height=-18pt}, from=1-1, to=1-3]
        \arrow[""{name=1, anchor=center, inner sep=0}, "{\const \und{\Mack}_\cF^\cN(\Cat(\cK))}"', curve={height=18pt}, from=1-1, to=1-3]
        \arrow["\lax"', shorten <=5pt, shorten >=5pt, Rightarrow, from=0, to=1]
        \arrow["{\und{\Mod}}", draw=none, from=0, to=1]
    \end{tikzcd}\end{equation}
    satisfying all the analogous properties listed above.
\end{remark}

\begin{remark}
    Our construction improves and provides further details on the construction of $\und{\Mod}_R(\cC)$
    for a fixed $R$ and $\cC$ given in \cite[Proposition 7.6(4)]{Bachmann-Hoyois}.
    This, together with the polynomial functoriality of connective algebraic K-theory \cite{BGMN},
    has been used in in \cite[Section 4.3]{Elmanto-Haugseng} to show that the connective
    equivariant algebraic K-theory of a $G$-$\E_\infty$ ring $R \in \CAlg_G(\und{\Sp}_G^\tensor)$ enhances
    to a (grouplike) space-valued Tambara functor $\Omega^\infty K(R) \colon \text{Bispan}(\F_G) \to \Spc$.
    By \cite[Theorem B]{Tambara}, this data is equivalent to a connective normed $G$-ring spectrum,
    so as noted in \cite[Corollary C]{Tambara} we obtain $\Omega^\infty K(R) \in \CAlg_G(\und{\Sp}_G^{\tensor,\geq 0})$,
    where $\und{\Sp}_G^{\tensor,\geq 0} \subseteq \und{\Sp}^\tensor_G$
    is the full normed $G$-subcategory consisting levelwise of the connective $H$-spectra.
    Our added functoriality in $R$ and $H \leq G$
    can then be used to show that this extends connective equivariant algebraic $K$-theory to a $G$-functor
    \[
        \und{K} \colon \und{\CAlg}_G(\und{\Sp}_G^\tensor) \to \und{\CAlg}_G(\und{\Sp}_G^{\tensor,\geq 0}).
    \]
\end{remark}

We now begin with our construction.
Luckily for us, Lurie already showed in \cite[Sections 4.8.3-4.8.5]{HA} that for a monoidal category $\cC$ compatible with $\cK$-colimits and an algebra $R \in \Alg(\cC)$,
the category $\RMod_R(\cC)$ of right $R$-modules with its left $\cC$-action
is functorial in both $\cC$ and $R$ in the expected sense.
He does this by building a functor $\Theta \colon \Cat^{\Alg}(\cK) \to \Cat^\Mod(\cK)$
\cite[Construction 4.8.3.24]{HA}
from a certain category of pairs $(\cC,R)$ of monoidal categories $\cC$ compatible with $\cK$-colimits
and algebras $R \in \Alg(\cC)$ to a category of pairs $(\cC,\cM)$ of monoidal categories
$\cC$ compatible with $\cK$-colimits and categories $\cM$ that admit a left $\cC$-action.
We will not need this module structure, and will thus postcompose with a forgetful functor $\Cat^\Mod(\cK) \to \Cat(\cK),\ (\cC,\cM) \to \cM$.

The idea of our construction is now that given an $S$-normed category $\cC$
and a normed algebra $R$ in it, we should obtain a functor $(\cC,R) \colon S \to \Cat^{\Alg}(\cK)$
which we can then postcompose with Lurie's $\Theta$ to obtain $\und{\Mod}_R(\cC) \colon S \to \Cat(\cK)$.
To this end, it will be convenient to give an equivalent description of $\Cat^{\Alg}(\cK)$,
which we do now.

Let $p \colon \Cat_{*\sslash} \to \Cat$ denote the universal cocartesian fibration,
i.e.~the cocartesian unstraightening of the identity on $\Cat$.
Moreover, let $p^\cK \colon \Cat_{*\sslash}(\cK) \to \Cat(\cK)$ denote the pullback
along the subcategory inclusion $\Cat(\cK) \subset \Cat$.
For a category with finite products $\cC$,
denote by $\Mon(\cC) \subseteq \Fun(\Delta^\op,\cC)$ the category of monoids in $\cC$
in the sense of \cite[Definition 4.1.2.5]{HA}.
We will need the following facts shown in \cref{appendix:cocart}, c.f.~\cref{prop:univ-cocart-prod,prop:mack-univ-calg}:
\begin{itemize}
    \item $\Cat_{*\sslash}(\cK)$ admits and $p^\cK$ preserves finite products.

    \item The functor $\Mon(p^\cK) \colon \Mon(\Cat_{*\sslash}(\cK)) \to \Mon(\Cat(\cK))$
        is a cocartesian fibration classifying $\Alg(-) \colon \Mon(\Cat(\cK)) \to \Cat$.

    \item The functor $\Mack(p^\cK) \colon \Mack(\Cat_{*\sslash}(\cK)) \to \Mack(\Cat(\cK))$
        is a cocartesian fibration classifying $\CAlg(-) \colon \Mack(\Cat(\cK)) \to \Cat$.
\end{itemize}

\begin{proposition}
    The cocartesian fibration $\Cat^{\Alg}(\cK) \to \Mon(\Cat(\cK))$
    defined by Lurie in \cite[Construction 4.8.3.24, Proposition 4.8.5.1]{HA}
    is equivalent to $\Mon(p^\cK)$.
\end{proposition}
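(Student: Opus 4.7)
The plan is to observe that both sides of the claimed equivalence are cocartesian fibrations over $\Mon(\Cat(\cK))$, and to identify them by showing that each classifies the functor $\Alg(-) \colon \Mon(\Cat(\cK)) \to \Cat$. The essential uniqueness of the cocartesian unstraightening then yields the equivalence.

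First, for $\Mon(p^\cK)$ this is precisely the content of \cref{appendix:cocart}, via the monoid analogue of \cref{prop:mack-univ-calg} stated in the bulleted list right before the proposition: $\Mon(p^\cK)$ is a cocartesian fibration classifying $\Alg$. So the real work is to show that Lurie's $\Cat^\Alg(\cK) \to \Mon(\Cat(\cK))$ has the same classifying functor. For this I would unpack \cite[Construction 4.8.3.24]{HA}: an object of $\Cat^\Alg(\cK)$ is a pair $(\cD, B)$ with $\cD \in \Mon(\Cat(\cK))$ and $B \in \Alg(\cD)$, and a morphism $(\cC, A) \to (\cD, B)$ is a $\cK$-colimit-preserving monoidal functor $F \colon \cC \to \cD$ together with a map $F(A) \to B$ in $\Alg(\cD)$. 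Then by \cite[Proposition 4.8.5.1]{HA} (or a direct unwinding of the construction) the projection to $\Mon(\Cat(\cK))$ is a cocartesian fibration whose fiber over $\cC$ is $\Alg(\cC)$ and whose cocartesian pushforward along $F \colon \cC \to \cD$ is the induced functor $\Alg(F) \colon \Alg(\cC) \to \Alg(\cD)$; this is exactly the data classifying $\Alg$.

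Alternatively, and perhaps more directly, one can build an explicit comparison functor over $\Mon(\Cat(\cK))$. An object of $\Mon(\Cat_{*\sslash}(\cK))$ is a monoid $M_\bullet \colon \Delta^\op \to \Cat_{*\sslash}(\cK)$; applying $p^\cK$ gives a monoid $p^\cK M_\bullet$ in $\Cat(\cK)$, i.e.\ a monoidal category $\cC$, and the distinguished object $M_1$ sits over the underlying category $\cC_1 = \cC$ and, using the monoid structure, acquires an algebra structure $A \in \Alg(\cC)$. This assignment $(M_\bullet) \mapsto (\cC, A)$ defines a functor $\Mon(\Cat_{*\sslash}(\cK)) \to \Cat^\Alg(\cK)$ over $\Mon(\Cat(\cK))$; both sides are cocartesian fibrations with fiber $\Alg(\cC)$ over $\cC$, and by inspection the induced map on fibers is an equivalence (both recover $\Alg(\cC)$ as monoid objects in $\cC$ in the same way). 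Since both sides have the same cocartesian pushforwards (namely $\Alg(F)$), this fiberwise equivalence is automatically a map of cocartesian fibrations, hence an equivalence.

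The main obstacle will be the bookkeeping required to match up the cocartesian edges on both sides; while each side's fibers are manifestly $\Alg(\cC)$, verifying that Lurie's cocartesian pushforwards agree with those of $\Mon(p^\cK)$ requires going carefully through \cite[\S 4.8.5]{HA}. Once this is done, the argument is formal.
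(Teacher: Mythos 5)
Your plan is in the right spirit, and your second approach (build an explicit comparison functor over $\Mon(\Cat(\cK))$ and check it is a fiberwise equivalence) is essentially what the paper does. But the proposal leaves a genuine gap: you do not say \emph{how} to produce this functor, and the informal recipe ``$M_1$ acquires an algebra structure, by inspection this is an equivalence on fibers'' does not account for the coherences needed to define a functor $\Mon(\Cat_{*\sslash}(\cK)) \to \Cat^{\Alg}(\cK)$ over $\Mon(\Cat(\cK))$. Your first approach has a similar issue at one step removed: Lurie does not straighten $\Cat^{\Alg}$ and identify the result with $\Alg(-)$; he only establishes the fibration property and the fiberwise/edgewise description, so ``both classify $\Alg(-)$'' is not something you get for free from a citation, and making it precise requires the same coherent comparison you would otherwise have to build by hand.

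The key idea the paper uses, which resolves this, is to recognize Lurie's $\Cat^{\Alg}_\all$ (the non-Segal version) as \emph{literally} the pullback of $p_* \colon \Fun(\Delta^\op,\Cat_{*\sslash}) \to \Fun(\Delta^\op,\Cat)$ along the inclusion $\Mon(\Cat) \subseteq \Fun(\Delta^\op,\Cat)$. This is obtained by unwinding Lurie's pullback definition of $\Cat^{\Alg}_\all$ through a chain of pasted pullback squares. Once that is in place, the comparison functor comes for free from the universal property: since $p$ preserves finite products, the forgetful functor $\Mon(\Cat_{*\sslash}) \to \Mon(\Cat)$ together with $\Mon(\Cat_{*\sslash}) \subseteq \Fun(\Delta^\op, \Cat_{*\sslash})$ induces a fully faithful $i \colon \Mon(\Cat_{*\sslash}) \hookrightarrow \Cat^{\Alg}_\all$ over $\Mon(\Cat)$, with no need to construct or check coherences. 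It then remains only to observe that the fibers of $\Mon(p)$ over $\cC$ are $\Alg(\cC)$ (established by the analogue of \cref{prop:mack-univ-calg}) and that $\Cat^{\Alg} \subseteq \Cat^{\Alg}_\all$ is by definition the full subcategory with the same fibers, so $i$ restricts to an equivalence $\Mon(\Cat_{*\sslash}) \xrightarrow{\ \simeq\ } \Cat^{\Alg}$, and pulling back along $\Mon(\Cat(\cK)) \to \Mon(\Cat)$ gives the $\cK$-version. In short: you correctly identified that you need a concrete comparison and that the fibers and pushforwards should both be $\Alg$, but the missing insight is that the comparison functor should be extracted from the pullback presentation of $\Cat^{\Alg}_\all$, not constructed by hand or inferred from an identification of classifying functors.
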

\begin{proof}
    We begin by recalling some details of Lurie's construction.
    Lurie uses the model $\Mon_{\Assoc}(-)$ defined in \cite[2.4.2.1]{HA}
    for monoids in a monoidal category,
    where $\Assoc^\tensor$ is the associative operad defined in \cite[Definition 4.1.1.3]{HA}.
    By \cite[Proposition 4.1.2.10]{HA} there is a natural equivalence $\Mon_{\Assoc}(-) \simeq \Mon(-)$
    induced by precomposing with an approximation $\text{Cut} \colon \Delta^\op \to \Assoc^\tensor$
    to the operad $\Assoc^\tensor$.
    This also allows us to identify the category of algebras in a monoidal category $\cC \in \Mon(\Cat)$
    with $\Fun_{/\Delta^\op}^{\inert\dcc}(\Delta^\op,\int \cC)$, the category of sections
    that are cocartesian on inert maps.
    We will implicitly use these identifications in the following.

    In \cite[4.8.3.7]{HA} Lurie defines the category $\Cat^{\Alg}$ as follows.
    Consider the evaluation $\ev \colon \Delta^\op \times \Mon(\Cat) \to \Cat$
    and let $\wtilde{\Mon}(\Cat)$ be its cocartesian unstraightening.
    We have pullback squares
    \[\begin{tikzcd}[cramped]
        {\Fun_{/\Delta^\op}(\Delta^\op,\int \cC)} & {\Cat^{\Alg}_\all} & {\Fun_{/\Delta^\op}(\Delta^\op,\wtilde{\Mon}(\Cat))} \\
        {*} & {\Mon(\Cat)} & {\Fun_{/\Delta^\op}(\Delta^\op, \Delta^\op \times \Mon(\Cat))}
        \arrow[from=1-1, to=1-2]
        \arrow[from=1-1, to=2-1]
        \arrow[from=1-2, to=1-3]
        \arrow[from=1-2, to=2-2]
        \arrow[from=1-3, to=2-3]
        \arrow[""{name=0, anchor=center, inner sep=0}, "\cC", from=2-1, to=2-2]
        \arrow[""{name=1, anchor=center, inner sep=0}, from=2-2, to=2-3]
        \arrow["\lrcorner"{anchor=center, pos=0.125}, draw=none, from=1-1, to=0]
        \arrow["\lrcorner"{anchor=center, pos=0.125}, draw=none, from=1-2, to=1]
    \end{tikzcd}\]
    Here the bottom right functor sends $\cC$ to the functor $[n] \mapsto ([n],\cC)$,
    i.e.~is curried from the identity on $\Delta^\op \times \Mon(\Cat)$.
    To compute the fiber, note that under the equivalence
    $* \simeq \Fun_{/\Delta^\op}(\Delta^\op,\Delta^\op)$ the bottom horizontal
    composite is postcomposing with $[n] \mapsto ([n],\cC)$.
    Since $\Fun_{/\Delta^\op}(\Delta^\op,-)$ preserves pullbacks,
    the description of the fiber follows from $\ev \circ\,([n] \mapsto ([n],\cC)) = \cC$
    and pullback pasting.

    Lurie defines $\Cat^{\Alg} \subseteq \Cat^{\Alg}_\all$ as the full subcategory
    spanned by the pairs $(\cC,A)$ where $\cC$ is a monoidal category and $A$ an algebra in $\cC$, i.e.~so that the fibers are
    $\Alg(\cC) = \Fun_{/\Delta^\op}^{\inert\dcc}(\Delta^\op,\int\cC)$.
    Observe that we have pullback squares
    \[\hspace{-1em}\begin{tikzcd}[cramped,column sep =tiny]
	{\Cat^{\Alg}_\all} & {\Fun_{/\Delta^\op}(\Delta^\op,\wtilde{\Mon}(\Cat))} & {\Fun(\Delta^\op,\wtilde{\Mon}(\Cat))} & {\Fun(\Delta^\op,\Cat_{*\sslash})} \\
	{\Mon(\Cat)} & {\Fun_{/\Delta^\op}(\Delta^\op, \Delta^\op \times \Mon(\Cat))} & {\Fun(\Delta^\op,\Delta^\op \times \Mon(\Cat))} & {\Fun(\Delta^\op,\Cat)} \\
	& {*} & {\Fun(\Delta^\op,\Delta^\op)}
	\arrow[from=1-1, to=1-2]
	\arrow[from=1-1, to=2-1]
	\arrow[from=1-2, to=1-3]
	\arrow[from=1-2, to=2-2]
	\arrow[from=1-3, to=1-4]
	\arrow[from=1-3, to=2-3]
	\arrow["{p_*}", from=1-4, to=2-4]
	\arrow[""{name=0, anchor=center, inner sep=0}, from=2-1, to=2-2]
	\arrow[""{name=1, anchor=center, inner sep=0}, from=2-2, to=2-3]
	\arrow[from=2-2, to=3-2]
	\arrow[""{name=2, anchor=center, inner sep=0}, from=2-3, to=2-4]
	\arrow[from=2-3, to=3-3]
	\arrow[""{name=3, anchor=center, inner sep=0}, "{\id_{\Delta^\op}}", from=3-2, to=3-3]
	\arrow["\lrcorner"{anchor=center, pos=0.125}, draw=none, from=1-1, to=0]
	\arrow["\lrcorner"{anchor=center, pos=0.125}, draw=none, from=1-2, to=1]
	\arrow["\lrcorner"{anchor=center, pos=0.125}, draw=none, from=1-3, to=2]
	\arrow["\lrcorner"{anchor=center, pos=0.125}, draw=none, from=2-2, to=3]
\end{tikzcd}\]
    The composite $\Mon(\Cat) \to \Fun(\Delta^\op,\Cat)$ is the fully faithful inclusion,
    which therefore shows that $\Cat^{\Alg}_\all$ is the pullback of $p_*$ along $\Mon(\Cat) \subseteq \Fun(\Delta^\op,\Cat)$.
    Since $p$ preserves finite products, we obtain a fully faithful inclusion $i$:
    \[\begin{tikzcd}[cramped]
        {\Mon(\Cat_{*\sslash})} & {\Cat^{\Alg}_\all} & {\Fun(\Delta^\op,\Cat_{*\sslash})} \\
        & {\Mon(\Cat)} & {\Fun(\Delta^\op,\Cat)}
        \arrow["i", hook, from=1-1, to=1-2]
        \arrow["{p_*}"', from=1-1, to=2-2]
        \arrow[hook, from=1-2, to=1-3]
        \arrow[from=1-2, to=2-2]
        \arrow["\lrcorner"{anchor=center, pos=0.125}, draw=none, from=1-2, to=2-3]
        \arrow["{p_*}", from=1-3, to=2-3]
        \arrow[hook, from=2-2, to=2-3]
    \end{tikzcd}\]
    But now it follows from the definition of $\Cat^{\Alg}$
    and the fact that $\Mon(p)$ classifies $\Alg(-)$ that $i$ is a fiberwise equivalence, hence an equivalence.
    Finally, since $\Mon(-)$ preserves pullbacks, we see that pulling back along the forgetful
    $\Mon(\Cat(\cK)) \to \Mon(\Cat)$ yields the desired equivalence of cocartesian fibrations.
\end{proof}

\begin{definition}
    We will denote $\Cat^{\CAlg}(\cK) \coloneqq \Mack(\Cat_{*\sslash}(\cK))$.
    By \cref{prop:mack-univ-calg} we have a canonical forgetful functor
    $\fgt \colon \Cat^{\CAlg}(\cK) \to \Cat^{\Alg}(\cK)$
    sending a pair $(\cC,R)$ of a symmetric monoidal category $\cC$ and a commutative algebra $R \in \CAlg(\cC)$ to the same pair $(\cC,R)$, now viewed as monoidal category and ($\E_1$-)algebra.
\end{definition}

We can now compose this with Lurie's functor
$\Theta \colon \Cat^{\Alg}(\cK) \to \Cat^{\Mod}(\cK) \to \Cat(\cK)$ discussed above.

\begin{corollary}\label{cor:theta}
    Let $\cK$ be a class of small sifted categories containing $\Delta^\op$.
    The functor
    \[
        \Theta' \colon \Cat^{\CAlg}(\cK) \xto{\fgt} \Cat^{\Alg}(\cK) \xto{\Theta} \Cat(\cK)
    \]
    preserves finite products, and sends an edge $F \colon (\cC,R) \to (\cD,S)$ in $\Cat^{\CAlg}(\cK)$
    consisting of a symmetric monoidal
    $F \colon \cC \to \cD$ and a map of algebras $FR \to S$ to the composite functor
    \[
        \Mod_R(\cC) \xto{F} \Mod_{FR}(\cD) \xto{S \tensor_{FR} -} \Mod_S(\cD).
    \]
\end{corollary}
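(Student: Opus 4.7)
The plan is to verify the two claims of the corollary by analyzing the two constituent functors of $\Theta'$ separately.

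For product preservation, I would argue in two steps. First, the forgetful $\fgt \colon \Mack(\Cat_{*\sslash}(\cK)) \to \Mon(\Cat_{*\sslash}(\cK))$ preserves finite products: via \cref{rem:mack-vs-cmon} and the defining inclusion $\Mon(\cE) \subseteq \Fun(\Delta^\op, \cE)$, it is given by restriction along a suitable functor $\Delta^\op \to \Span(\F)$ (the cut map), and restriction preserves all limits. Moreover, by \cref{prop:univ-cocart-prod} the category $\Cat_{*\sslash}(\cK)$ admits finite products, so products in $\Mack(\Cat_{*\sslash}(\cK))$ and $\Mon(\Cat_{*\sslash}(\cK))$ are computed pointwise. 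Second, Lurie's $\Theta \colon \Cat^{\Alg}(\cK) \to \Cat(\cK)$ preserves finite products: the product of $(\cC, R)$ and $(\cD, S)$ in $\Cat^{\Alg}(\cK)$ is $(\cC \times \cD, (R, S))$, and the universal property of the relative tensor product in a product monoidal category is evidently computed componentwise, so the natural comparison $\Mod_{(R, S)}(\cC \times \cD) \to \Mod_R(\cC) \times \Mod_S(\cD)$ is an equivalence.

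For the description on edges, I would unwind Lurie's construction in \cite{HA}, Sections 4.8.3--4.8.5. Under the identification established in the proposition just above, an edge in $\Cat^{\CAlg}(\cK)$ from $(\cC, R)$ to $(\cD, S)$ is precisely a symmetric monoidal functor $F \colon \cC \to \cD$ compatible with $\cK$-colimits together with a morphism $FR \to S$ in $\CAlg(\cD)$; after applying $\fgt$, we have the same data regarded as an edge of $\Cat^{\Alg}(\cK) = \Mon(\Cat_{*\sslash}(\cK))$. Lurie's formula for $\Theta$ on such a morphism (following from \cite{HA}, Proposition 4.8.5.1, and the construction of relative tensor products in Section 4.4) is the relative-tensor-product functor $S \tensor_{FR} F(-) \colon \Mod_R(\cC) \to \Mod_S(\cD)$, which tautologically factors as the monoidal functoriality $F \colon \Mod_R(\cC) \to \Mod_{FR}(\cD)$ followed by basechange $S \tensor_{FR} - \colon \Mod_{FR}(\cD) \to \Mod_S(\cD)$. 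The basechange exists precisely because $\Delta^\op \in \cK$ ensures that $\cD$ admits geometric realizations preserved by the tensor product.

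The main obstacle will be carefully matching our identification $\Cat^{\Alg}(\cK) \simeq \Mon(\Cat_{*\sslash}(\cK))$ with Lurie's original setup so that his morphism formula applies verbatim. Fortunately this was essentially done in the preceding proposition, which exhibits the equivalence as an identification of full subcategories in $\Fun(\Delta^\op, \Cat_{*\sslash}(\cK))$; fullness guarantees that the edge data corresponds on the nose, so no further bookkeeping is required.
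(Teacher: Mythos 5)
Your proposal is correct and follows essentially the same route as the paper: defer to Lurie's construction and results in \cite[Sections 4.8.3--4.8.5]{HA} for the description of $\Theta$ on edges, and use the identification of $\Cat^{\Alg}(\cK)$ with $\Mon(\Cat_{*\sslash}(\cK))$ from the preceding proposition to transfer the edge data. The one place you diverge is in verifying that $\Theta$ preserves finite products: you give a direct hands-on argument (the product in $\Cat^{\Alg}(\cK)$ is $(\cC \times \cD, (R,S))$, and since the relative tensor product is a colimit of a bar construction and colimits in a product category are computed componentwise, the comparison $\Mod_{(R,S)}(\cC \times \cD) \to \Mod_R(\cC) \times \Mod_S(\cD)$ is an equivalence), whereas the paper simply cites \cite[Notation 4.8.5.14, Theorem 4.8.5.16]{HA} which record precisely this. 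Both are fine; your version is marginally more self-contained, the paper's more economical. Your remark about $\fgt$ being a restriction along the cut functor, hence preserving limits, is a clean way to see its product preservation; the paper leaves this implicit. No gaps.
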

\begin{proof}
    Lurie constructs the functor $\Theta$ in \cite[Construction 4.8.3.24]{HA}.
    By \cite[Remark 4.8.3.25, Theorem 4.8.5.1, Theorem 4.5.3.1]{HA} this has the claimed functoriality.
    Since $\fgt$ is identified with the forgetful map $\Mack(\Cat_{*\sslash}) \to \Mon(\Cat_{*\sslash})$
    by definition and the above proposition, this clearly preserves products.
    It follows from \cite[Notation 4.8.5.14, Theorem 4.8.5.16]{HA} that also $\Theta$ and the forgetful functor $\Cat^{\Mod}(\cK) \to \Cat(\cK)$
    preserve finite products, hence $\Theta'$ does as well.
\end{proof}

We are now equipped to perform the main construction of this section.

\begin{proof}[Proof of \cref{thm:mod}]
    Note that it suffices to build a lax natural transformation
    \begin{equation}\label{eq:lax-nat}
        \Fun^\times_{/S}(\bullet,-) \xRightarrow{\lax} \const \Fun^\times(\bullet, \Cat_{*\sslash}(\cK)),
    \end{equation}
    of functors $\Fun^\times(S,\Cat(\cK)) \to \Fun^\times((\Cat^\sadd_{/S})^\op,\Cat)$
    which in context $\bullet = T \in (\Cat^\sadd_{/S})^\op$ has lax naturality squares
    \begin{equation}\label[diag]{diag:lax-nat-fws-inc}\begin{tikzcd}[cramped]
        {\Fun^\times_{/S}(T,\cC)} & {\Fun^\times(T,\Cat_{*\sslash}(\cK))} & R & {(s \mapsto (\cC,Rs))} \\
        {\Fun^\times_{/S}(T,\cD)} & {\Fun^\times(T,\Cat_{*\sslash}(\cK))} & FR & {(s \mapsto (\cD,FRs))}
        \arrow["{(\cC,-)}", from=1-1, to=1-2]
        \arrow["{F_*}"', from=1-1, to=2-1]
        \arrow["F"{description}, Rightarrow, from=1-2, to=2-1]
        \arrow[Rightarrow, no head, from=1-2, to=2-2]
        \arrow[maps to, from=1-3, to=1-4]
        \arrow[maps to, from=1-3, to=2-3]
        \arrow["F", Rightarrow, from=1-4, to=2-4]
        \arrow["{(\cD,-)}"', from=2-1, to=2-2]
        \arrow[maps to, from=2-3, to=2-4]
    \end{tikzcd}\end{equation}
    Once we have this, we can postcompose the natural transformation
    of constant functors induced by the composite of $S$-based functors
    \[
        \Fun^\times(\bullet,\Cat_{*\sslash}(\cK))
        \xleftarrow{\simeq} \Fun^\times(\bullet,\Cat^{\CAlg}(\cK))
        \xto{\Theta'_*} \Fun^\times(\bullet,\Cat(\cK)).
    \]
    The first map postcomposes with
    $\fgt \colon \Cat^{\CAlg}(\cK) = \Mack(\Cat_{*\sslash}(\cK)) \to \Cat_{*\sslash}(\cK)$,
    which is an equivalence since $\bullet$ is semiadditive, see \cref{prop:mack}.
    Using the description of the functoriality of $\Theta'$ from \cref{cor:theta}
    and the description of the lax naturality above,
    we see that the composite lax natural transformation
    $\und{\CAlg}_S(-) \Rightarrow \const \Fun^\times(-,\Cat(\cK))$
    constructed in this way satisfies all the properties listed in \cref{rem:mod-functoriality}.
    For example, evaluating at an $S$-normed category $\cC \in \Fun^\times(S,\Cat(\cK))$,
    we obtain an $S$-based functor which in context $T \in (\Cat_{/S}^\sadd)^\op$ is given by
    \begin{align*}
        \und{\CAlg}_S(\cC)(T)
        = \Fun_{/S}^{\times}(T,\smallint \cC)
        \xto{(\cC,-)} \Fun^\times(T,\Cat_{*\sslash}(\cK))
        &\simeq \Fun^\times(T,\Cat^{\CAlg}(\cK))\\
        &\xto{\Theta'_*} \Fun^\times(T,\Cat(\cK)).
    \end{align*}
    The description of the functoriality of $\Theta'$ then easily yields the first three points of \cref{rem:mod-functoriality}.

    We will first construct the lax natural transformation
    with lax naturality squares (\ref{diag:lax-nat-fws-inc}) for a fixed $T$,
    and then consider the functoriality in $T$.
    To this end, consider the following lax map of cocartesian fibrations:
    \[\hspace{-2em}\begin{tikzcd}[cramped,column sep=tiny]
        {\Fun^\times(T,\Cat_{*\sslash}(\cK)) \times_{\Fun^\times(T,\Cat(\cK))}\Fun^\times(S,\Cat(\cK))} & {\Fun^\times(T,\Cat_{*\sslash}(\cK)) \times \Fun^\times(S,\Cat(\cK))} \\
        {\Fun^\times(S,\Cat(\cK))}
        \arrow[from=1-1, to=1-2]
        \arrow["q"', from=1-1, to=2-1]
        \arrow[from=1-2, to=2-1]
    \end{tikzcd}\]
    We know from \cref{prop:univ-cocart-prod}
    that $p^\cK_* \colon \Fun^\times(T,\Cat_{*\sslash}(\cK)) \to \Fun^\times(T,\Cat(\cK))$
    is the cocartesian fibration classifying $\Fun^\times_{/T}(T,\int -)$.
    In view of the natural\footnote{Using \cite[Theorem 3.3]{HHLN-Two} one easily checks that this is even natural in $T \in (\Cat^\times_{/S})^\op$.} equivalence $\Fun^\times_{/T}(T,\pi^*-) \simeq \Fun^\times_{/S}(T,-)$,
    we see that $q$ classifies the functor $\Fun^\times_{/S}(T,\int -)$.
    It then follows from \cref{lem:cocart-lax-square} that the above lax map of cocartesian fibrations
    precisely encodes the lax natural transformation given by inclusion of the fibers of $q$
    into the total space of $q$, followed by (the natural transformation of constant functors induced by)
    the projection from the total space to $\Fun^\times(T,\Cat_{*\sslash})$.
    This is precisely the lax naturality square from (\ref{diag:lax-nat-fws-inc}).

    Note that the above diagram is functorial in $T \in (\Cat^\sadd_{/S})^\op$,
    in that each $T' \to T$ over $S$ induces a map of cocartesian fibrations for the both legs;
    this is obvious for the right one, and for $q$ follows by functoriality of pullbacks.
    Hence we have constructed a functor
    \[
        (\Cat_{/S}^\sadd)^\op \times \Delta^1 \to \Cocart^\lax(\Fun^\times(S,\Cat(\cK)))
    \]
    whose restriction to $(\Cat_{/S}^\sadd)^\op \times \{0,1\}$ factors through $\Cocart(\Fun^\times(S,\Cat(\cK)))$.
    By the natural straightening equivalence $\Fun^\lax(-,\Cat) \simeq \Cocart^\lax(-)$
    recalled in \cref{thm:lax-str}
    and \cref{prop:lax-curry}, this is equivalent to the data of a lax natural transformation of the form (\ref{eq:lax-nat}), as desired.
\end{proof}

\section{Global Picard Spectra}\label{sec:pic}

In this section we will construct
the equivariant respectively global Picard spectrum of an ultracommutative ring spectrum
using the strategy outlined in the introduction.
We begin in \cref{subsec:pic} with a brief recollection
on Picard spectra and some useful lemmas,
and then move on to the parametrized global and equivariant
picture in \cref{subsec:param-pic} where we prove \cref{introthm:pic}
from the introduction.
Along the way, we will see increasingly stronger versions of the statement
that for questions of invertibility, it makes no difference
whether one works in $G$-equivariant or $G$-global stable homotopy theory.

\subsection{Picard Groups and Spectra}\label{subsec:pic}

Given a symmetric monoidal category $(\cC,\tensor,\one)$,
an object $x \in \cC$ is invertible if there exists $x^{-1} \in \cC$ such that
$x \tensor x^{-1} \simeq \one$. Note that such a $x^{-1}$, if it exists, is unique;
the space of inverses for $x$ is $(-1)$-truncated.
The Picard group of $\cC$ is then defined as the group of isomorphism classes of invertible elements
in $\cC$, with group operation induced by the tensor product.
It is often more convenient to consider the whole commutative group
of units in the underlying commutative monoid $\cC^\simeq$ of $\cC$,
as it is categorically more well behaved.
For example, because the following composite then preserves limits:
\begin{equation}\label{def:pic}
    \pic \colon \CMon(\Cat)
    \xrightarrow{(-)^\simeq_*} \CMon(\Spc)
    \xrightarrow{(-)^\times} \CGrp(\Spc)
    \simeq \Sp_{\geq 0}.
\end{equation}
If $\cC$ is a (large) accessible category, this definition still makes sense,
since if $\one$ is $\kappa$-compact for some $\kappa$ then so is any invertible object.
In other words, if $\CAT_\acc \subseteq \what{\Cat}$ denotes the full symmetric monoidal
subcategory on accessible categories, then we have an analogously defined
functor $\pic \colon \CMon(\CAT_\acc) \to \Sp_{\geq 0}$
and by further restriction $\pic \colon \CAlg(\Pr^L) \to \Sp_{\geq 0}$.

For a fixed $\cC \in \CAlg(\Pr^L)$ the association $R \mapsto \Mod_R(\cC)$ is functorial
in $R \in \CAlg(\cC)$ as we saw in \cref{sec:modules}, so by composition one obtains
\[
    \pic_\cC \colon \CAlg(\cC) \to \Sp,\ R \mapsto \pic_\cC(R) \coloneqq \pic(\Mod_R(\cC)).
\]
When it is clear from context,
we will leave out the subscript and simply write
$\pic R \coloneqq \pic_\cC R$.
For $\cC = \Sp$ and $R \in \CAlg(\Sp)$ one easily checks that $\pi_n\pic(R)$
is given by $(\pi_0R)^\times$ for $n = 1$ and by $\pi_{n-1}(R)$ for $n \geq 2$.
So all the interesting information lies in the Picard group $\pi_0 \pic R$,
and one thinks of $\pic R$ as an interesting delooping of $\GL_1(R) = (\Omega^\infty R) \times_{\pi_0 R} (\pi_0 R)^\times$.
An easy calculation shows $\pi_0\pic\S = \Z$,
but for the classical examples $\pi_0\pic\KU \cong \Z/2$ and $\pi_0\pic \KO \cong \Z/8$
some more work is required, see e.g.~\cite[Section 7]{Mathew-Stojanoska}.

In equivariant stable homotopy theory, the situation gets even more complicated.
Consider a finite group $G$.
Since the geometric fixed point functors are all symmetric monoidal and hence preserve invertible
objects, it follows from $\pi_0\pic(\S)= \Z$ that if $X \in \Sp_G$ is an invertible
genuine $G$-spectrum, then $\Phi^HX$ is a shifted sphere for every closed subgroup $H \leq G$.
Despite this, the Picard group of the category of genuine $G$-spectra $\Sp_G$ is generally quite complicated.
One way of producing elements in $\pi_0\pic \Sp_G$ is to consider representation spheres;
given a finite dimensional orthogonal $G$-representation $V$ its representation sphere $S^V$
becomes invertible in $\Sp_G$.\footnote{In fact,
    it was shown in \cite[Corollary C.7]{Gepner-Meier} that this is essentially the defining property of genuine $G$-spectra;
one can define $\Sp_G$ as the symmetric monoidal localization $\An_*^G[\{S^V\}^{-1}]$
of pointed $G$-spaces at all representation spheres,
and $\Sigma^\infty \colon \An_*^G \to \Sp_G$ is the unique symmetric monoidal left adjoint
which inverts representation spheres.}
In this way one obtains a map $\RO(G) \to \pi_0\pic\Sp_G, V \mapsto S^V$,
but this is in general neither injective nor surjective.\footnote{For example, Stefan Schwede presents two examples disproving injectivity respectively
surjectivity in his lecture \cite{Schwede-Video} starting at minute 38.}
Because of this map, invertible $G$-spectra are often called (stable) homotopy representations.
There is much more to be said about invertible genuine $G$-spectra, but this will take us too far afield.
We refer the interested reader to \cite{GPic} and \cite{Krause}.
Surprisingly, the global picture is much simpler.

\begin{example}[{{\cite[Theorem 4.5.5]{Global}}}]\label{ex:pic-gl}
    $\pi_0\pic(\S_\gl) = \{\Sigma^n\S_\gl \mid n \in \Z\}$.
\end{example}

This is a direct consequence of the following general lemma
applied to the counit $\eps \colon i_!i^* \Rightarrow \id$
for $i_! \colon \Sp \rightleftarrows \Sp_\gl \noloc i^*$ the symmetric monoidal
adjunction from \cref{prop:e-lax-right-adjoint} for $G=1$.

\begin{lemma}[{{\cite[4.5.4]{Global}}}]\label{lem:pic-trick}
    Let $(\cC,\tensor,\one)$ be a symmetric monoidal category,
    $P \colon \cC \to \cC$ a symmetric monoidal functor,
    and $\eps \colon P \Rightarrow \id_\cC$ a symmetric monoidal natural transformation.
    Then $\eps$ induces a homotopy of the map of Picard spectra
    \[
        \pic(P) \colon \pic\cC \to \pic\cC
    \]
    to the identity. The dual statement with a symmetric monoidal
    transformation $\eta \colon \id_\cC \Rightarrow P$
    in place of $\eps$ holds as well.
\end{lemma}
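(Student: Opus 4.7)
The approach is to reduce the claim to showing that $\eps$ restricts to a natural equivalence on the full subcategory of invertible objects. Since $\pic\cC = (\cC^\simeq)^\times$ is a grouplike $\E_\infty$-space, hence an $\infty$-groupoid, any such natural equivalence of $\E_\infty$-monoid maps into it automatically yields a homotopy, and thus a homotopy $\pic(P) \simeq \id_{\pic\cC}$ in $\Sp_{\geq 0}$ via the equivalence $\CGrp(\Spc) \simeq \Sp_{\geq 0}$ used to define $\pic$ in (\ref{def:pic}).

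The key step is to show that for every invertible $x \in \cC$, the component $\eps_x \colon Px \to x$ is an equivalence. By symmetric monoidality of $\eps$ (in particular $\eps_\one \simeq \id_\one$) together with naturality applied to the equivalence $x \tensor x^{-1} \simeq \one$, we have
\[
\eps_x \tensor \eps_{x^{-1}} \simeq \eps_{x \tensor x^{-1}} \simeq \eps_\one \simeq \id_\one.
\]
After identifying $Px \tensor Px^{-1} \simeq P\one \simeq \one$ (via symmetric monoidality of $P$) and $x \tensor x^{-1} \simeq \one$ (via invertibility), this shows $\eps_x \tensor \eps_{x^{-1}}$ is an equivalence. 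Now factor this tensor as $Px \tensor Px^{-1} \xrightarrow{\eps_x \tensor \id} x \tensor Px^{-1} \xrightarrow{\id \tensor \eps_{x^{-1}}} x \tensor x^{-1}$: since the composite is an equivalence, $\eps_x \tensor \id_{Px^{-1}}$ admits a left inverse, and tensoring with $Px$ and cancelling the invertible factor $Px^{-1} \tensor Px \simeq \one$ shows $\eps_x$ itself is a split monomorphism. The dual factorization $Px \tensor Px^{-1} \xrightarrow{\id \tensor \eps_{x^{-1}}} Px \tensor x^{-1} \xrightarrow{\eps_x \tensor \id} x \tensor x^{-1}$ shows $\eps_x$ is also a split epimorphism. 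A morphism in an $(\infty,1)$-category that is simultaneously a split mono and split epi is an equivalence, as the homotopy left and right inverses must agree (via $g \simeq g\cdot\id \simeq g(fh) \simeq (gf)h \simeq h$) and hence furnish a coherent two-sided inverse.

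For the dual statement with $\eta \colon \id \Rightarrow P$, the same argument applies verbatim: the identities $\eta_\one \simeq \id_\one$ and $\eta_x \tensor \eta_{x^{-1}} \simeq \id_\one$ combined with the analogous split mono/epi factorization show $\eta_x$ is an equivalence for invertible $x$, producing the required homotopy $\id_{\pic\cC} \simeq \pic(P)$. The main obstacle is organizing the higher coherences in the split mono/epi argument; all the manipulations are straightforward at the $1$-categorical level but require care to lift to coherent $\infty$-categorical structure, particularly in identifying the two factorizations of $\eps_x \tensor \eps_{x^{-1}}$ up to the appropriate symmetry constraints.
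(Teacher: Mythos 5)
Your proposal is correct and follows essentially the same route as the paper: use monoidality of $\eps$ to show $\eps_X \tensor \eps_{X^{-1}}$ is an equivalence, factor the tensor in two orders to extract a left and a right inverse for $\eps_X$ (after cancelling the invertible factor $PX^{-1}$), and conclude $\eps_X$ is an equivalence for all invertible $X$. The only minor comment is that your closing worry about "organizing the higher coherences" is unnecessary: whether a morphism in an $\infty$-category is an equivalence is detected in the homotopy category, so the split-mono/split-epi argument is genuinely $1$-categorical and needs no further lifting.
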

\begin{proof}
    Because $\eps$ is a symmetric monoidal natural transformation, we have commutative squares
    \[\begin{tikzcd}[cramped]
        {P(X) \tensor P(Y)} & {X \tensor Y} \\
        {P(X \tensor Y)} & {X \tensor Y}
        \arrow["{\eps_X \tensor \eps_Y}", from=1-1, to=1-2]
        \arrow["\simeq"', from=1-1, to=2-1]
        \arrow[Rightarrow, no head, from=1-2, to=2-2]
        \arrow["{\eps_{X \tensor Y}}"', from=2-1, to=2-2]
    \end{tikzcd}\]
    where the vertical maps come from the symmetric monoidal structures on $P$ respectively $\id_\cC$.
    Note in particular that $\eps_\one$ is an equivalence.
    Hence if $X \in \cC$ is invertible with inverse $X^{-1}$, then the map
    \[
        \eps_{X} \tensor \eps_{X^{-1}}
        \simeq (X \circ \eps_{X^{-1}}) \circ (\eps_X \tensor PX^{-1})
        \simeq (\eps_X \tensor X^{-1}) \circ (PX \tensor \eps_{X^{-1}})
    \]
    is an equivalence. Since $P(X^{-1})$ is also an inverse for $PX$,
    it follows that $\eps_X$ admits both a left and a right inverse, hence is an equivalence.
    This proves that if $X$ is invertible, then the component $\eps_X$ is an equivalence.
    In particular, $\eps$ restricts to a natural equivalence $\pic(P) = P|_{\pic(\cC)} \simeq \id_{\pic(\cC)}$,
    as desired.
\end{proof}

\begin{corollary}\label{cor:adj-pic}
    Let $L \colon \cC \rightleftarrows \cD \noloc R$ be a symmetric monoidal adjunction
    and suppose the induced lax symmetric monoidal
    structure on $R$ is strong\footnote{
        Generally, (oplax) symmetric monoidal structures on left adjoints
        canonically give rise to lax symmetric monoidal structures on the corresponding
        right adjoints and vice versa, by \cite[Theorem A]{HHLN-Lax}.
    }.
    Then $L$ and $R$ induce mutually inverse equivalences
    $\pic(\cC) \simeq \pic(\cD)$.
\end{corollary}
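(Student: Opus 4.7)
The plan is to apply \cref{lem:pic-trick} twice, once to the unit and once to the counit of the adjunction. Since $L \dashv R$ is a symmetric monoidal adjunction with both adjoints strong symmetric monoidal, both $\pic(L) \colon \pic(\cC) \to \pic(\cD)$ and $\pic(R) \colon \pic(\cD) \to \pic(\cC)$ are well-defined maps of connective spectra, and it suffices to exhibit compatible homotopies $\pic(R) \circ \pic(L) \simeq \id$ and $\pic(L) \circ \pic(R) \simeq \id$.

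First I would note that the composite $R \circ L \colon \cC \to \cC$ is a symmetric monoidal endofunctor of $\cC$, and that the unit $\eta \colon \id_\cC \Rightarrow RL$ is a symmetric monoidal natural transformation — this is part of what it means for $L \dashv R$ to be a symmetric monoidal adjunction (equivalently, an adjunction internal to the 2-category $\Mack(\Cat)$ of symmetric monoidal categories, strong symmetric monoidal functors, and symmetric monoidal natural transformations; the hypothesis that the canonical lax structure on $R$ is strong is precisely what allows us to lift the adjunction into this 2-category). The dual part of \cref{lem:pic-trick} then gives a homotopy $\pic(R) \circ \pic(L) = \pic(RL) \simeq \id_{\pic(\cC)}$.

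Symmetrically, the composite $L \circ R \colon \cD \to \cD$ is strong symmetric monoidal, and the counit $\eps \colon LR \Rightarrow \id_\cD$ is a symmetric monoidal natural transformation. Applying the original form of \cref{lem:pic-trick} to $\eps$ produces a homotopy $\pic(L) \circ \pic(R) = \pic(LR) \simeq \id_{\pic(\cD)}$. This shows $\pic(L)$ and $\pic(R)$ are mutually inverse.

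I don't expect any serious obstacle here; the only slightly subtle point is verifying that the unit and counit of a symmetric monoidal adjunction, in the sense where both adjoints are strong, are automatically symmetric monoidal natural transformations, but this is standard 2-categorical nonsense once the adjunction is interpreted inside $\Mack(\Cat)$ (alternatively, one can deduce it directly from the triangle identities together with the strong symmetric monoidal structures on $L$ and $R$).
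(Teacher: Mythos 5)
Your proof is correct and is precisely the argument the paper leaves implicit: apply \cref{lem:pic-trick} (and its dual) to the counit and unit, respectively, using that the assumption of strong monoidality on $R$ lifts $L \dashv R$ to an adjunction internal to $\Mack(\Cat)$ so that $\eta$ and $\eps$ become symmetric monoidal natural transformations of strong symmetric monoidal functors. One tiny stylistic point: you ask for ``compatible'' homotopies, but no compatibility is needed — once $\pic(R)\circ\pic(L)\simeq\id$ and $\pic(L)\circ\pic(R)\simeq\id$, both maps are automatically equivalences and inverse to each other.
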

%\begin{proof}
%    By \cite[7.3.2.7]{HA} the fact that $L$ is strong symmetric monoidal
%    implies that we have a relative adjunction
%    i.e. we have $L^\tensor : \cC^\tensor \rightleftarrows \cD^\tensor : R^\tensor$ over $\Fin_*$,
%    where $L^\tensor$ preserves all cocartesian edges and $R^\tensor$ the inert ones,
%    giving the lax symmetric monoidal structure on $R$.
%    Now by hypothesis $R^\tensor$ actually preserves all cocartesian morphisms,
%    and so we can apply \cref{lem:pic-trick} to both $\eps : LR \Rightarrow \id$
%    and $\eta : \id \Rightarrow RL$,
%    which gives the claim.
%\end{proof}

\begin{corollary}
    Let $L \colon \cC \to \cD$ be a functor in $\CAlg(\Pr^L)$ with right adjoint $R$.
    For $A \in \CAlg(\cD)$, the functor $R' \colon \Mod_A(\cD) \to \Mod_{RA}(\cC)$
    induced by $R$ admits a symmetric monoidal left adjoint $L'$ given by the composite
    \[
        \Mod_{RA}(\cC)
        \xto{L} \Mod_{LRA}(\cD)
        \xto{A \tensor_{LRA} -} \Mod_{A}(\cD).
    \]
    If the natural lax symmetric monoidal structure on $R$ is strong
    and $R$ preserves geometric realizations, then the natural lax symmetric
    monoidal structure on $R'$ is also strong.
    In particular, in this case we obtain an equivalence $\pic(\Mod_{RA}(\cC)) \simeq \pic(\Mod_A(\cD))$.
\end{corollary}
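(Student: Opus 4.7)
The plan is to first exhibit $L'$ as a genuine left adjoint of $R'$, then upgrade the adjunction to a symmetric monoidal adjunction with strong right adjoint, and finally invoke the preceding Corollary~\ref{cor:adj-pic}. Since $L$ is a morphism in $\CAlg(\Pr^L)$, it is symmetric monoidal and cocontinuous, and so by \cite[Corollary 7.3.2.7]{HA} the right adjoint $R$ acquires a canonical lax symmetric monoidal structure; in particular $RA \in \CAlg(\cC)$ and the counit $\eps_A \colon LRA \to A$ is a map of commutative algebras in $\cD$. The functor $R'$ is then precisely the one produced by the module functoriality of Section~\ref{sec:modules} applied to the lax monoidal $R$.

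To see that the stated composite is left adjoint to $R'$, note that each factor is a left adjoint: the right adjoint of $L \colon \Mod_{RA}(\cC) \to \Mod_{LRA}(\cD)$ is given by applying $R$ and restricting along the unit $\eta_{RA} \colon RA \to RLRA$, while the right adjoint of the base change $A \tensor_{LRA}(-) \colon \Mod_{LRA}(\cD) \to \Mod_A(\cD)$ is restriction of scalars along $\eps_A$. Their composition is restriction along $R\eps_A \circ \eta_{RA}$ composed with $R$; one triangle identity reduces this to the identity on $RA$, so the composite right adjoint is naturally identified with $R'$.

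For the monoidal statement, both factors of $L'$ are symmetric monoidal: the first since $L \colon \cC \to \cD$ is, and the second because base change along a map of commutative algebras is always symmetric monoidal (\cite[Theorem 4.5.3.1]{HA}). Hence $L'$ is symmetric monoidal and $R'$ is naturally lax symmetric monoidal. To show this lax structure is strong, I would use the two-sided bar construction: for $M,N \in \Mod_A(\cD)$ we have $M \tensor_A N \simeq |B_\bullet(M,A,N)|$ with $B_n = M \tensor A^{\tensor n} \tensor N$, and similarly for $\Mod_{RA}(\cC)$. The functor $R'$ preserves geometric realizations, since the underlying functor is $R$ (which preserves them by hypothesis) and the forgetful functors from module categories detect sifted colimits (\cite[Corollary 4.2.3.5]{HA}). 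The lax structure map $R'M \tensor_{RA} R'N \to R'(M \tensor_A N)$ is then, levelwise on the bar complex, the comparison $R(M) \tensor_\cC (RA)^{\tensor n} \tensor_\cC R(N) \to R(M \tensor_\cD A^{\tensor n} \tensor_\cD N)$ coming from the lax structure of $R$, which is an equivalence since $R$ is assumed strong. Taking geometric realizations and using that both sides compute their respective colimits, we conclude that the lax structure on $R'$ is strong.

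With $L' \dashv R'$ now a symmetric monoidal adjunction whose right adjoint is strong, Corollary~\ref{cor:adj-pic} immediately yields the equivalence of Picard spectra $\pic(\Mod_{RA}(\cC)) \simeq \pic(\Mod_A(\cD))$. The main obstacle is the bar construction argument for strong monoidality of $R'$: one must be careful that the geometric realizations on both sides really compute the relative tensor products (which requires the ambient categories to be compatible with geometric realizations, as they are since they lie in $\Pr^L$), and that the levelwise equivalences induced by strongness of $R$ assemble coherently into the lax structure map on $R'$. Both follow from the machinery of \cite[Section 4.4]{HA}, but require some care to set up precisely.
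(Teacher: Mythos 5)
Your argument is correct and follows essentially the same route as the paper: exhibit $L'$ as an explicit composite left adjoint, deduce that $L'$ is symmetric monoidal factor by factor, and then argue strongness of the induced lax structure on $R'$ via the two-sided bar resolution, concluding with Corollary~\ref{cor:adj-pic}. The two minor points of divergence are that the paper verifies $L' \dashv R'$ directly by computing mapping spaces out of free $RA$-modules (using that these generate $\Mod_{RA}(\cC)$ under colimits) rather than by composing the two intermediate adjunctions and invoking a triangle identity, and that for strongness the paper explicitly identifies the abstract unit-counit formula for the lax structure map of $R'$ (via the dual of \cite[Proposition 3.2.7]{HHLN-Lax}) with the levelwise bar-comparison maps through a commutative diagram, thereby supplying precisely the coherence step you correctly flag at the end as requiring care but leave to \cite[Section 4.4]{HA}.
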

\begin{proof}
    Note that we have natural equivalences
    \[
        \Mod_{RA}(\cC)(RA \tensor c, R'M)
        \simeq \cC(c, RUM)
        \simeq \cD(Lc,UM)
        \simeq \Mod_A(\cD)(A \tensor Lc, M)
    \]
    where $U \colon \Mod_A(\cD) \to \cD$ is the forgetful functor.
    Since $L'(RA \tensor c) \simeq A \tensor Lc$,
    and the free modules $RA \tensor c$ generate $\Mod_{RA}(\cC)$ under colimits,
    this shows that $L' \dashv R'$.

    Note that $L'$ is symmetric monoidal since by \cref{thm:mod}
    (in the case $(\cF,\cN) = (\F,\F)$),
    $L$ induces a natural transformation
    $\Mod_{(-)}(\cC) \Rightarrow \Mod_{L(-)}(\cD)$
    of functors $\CAlg(\cC) \to \Mack(\Cat)$,
    and the first functor in $L'$ is the value of this natural transformation
    at $RA$, whereas the second functor is the image of $LRA \to A$
    under the functor $\Mod_{(-)}(\cD) \colon \CAlg(\cD) \to \Mack(\Cat)$.

    The symmetric monoidal structure on $L'$ automatically induces a lax symmetric monoidal
    structure on the right adjoint $R'$ by \cite[Theorem A]{HHLN-Lax}.
    By the dual of \cite[Proposition 3.2.7]{HHLN-Lax}, we can describe the structure maps as the composite
    \[
        RM \tensor_{RA} RN \xto{\eta} RL(RM \tensor_{RA} RN) \simeq R(LR \tensor_{LRA} LRN) \xto{R(\eps \tensor_{\eps} \eps)} R(M \tensor_A N).
    \]
    Recall (e.g.~from \cite[]{HA})
    that the relative tensor product is defined as the geometric realization of the Bar construction,
    which we will simply write as $M \tensor_A N \coloneqq |M \tensor A^{\tensor \bullet} N|$.
    A consequence of the naturality of $\eta$ and the lax symmetric monoidal structure on $LR$,
    we have a commutative diagram
    \[\hspace{-6.5em}\begin{tikzcd}[cramped,column sep=small]
        {RM \tensor_{RA} RN} & {RL(R M \tensor_{RA} RN)} & {R(LRM \tensor_{LRA} LRN)} & {R(M \tensor_A N)} \\
        {|RM \tensor (RA)^{\tensor \bullet}\tensor RN|} & {RL|RM \tensor (RA)^{\tensor \bullet}\tensor RN|} & {R|LRM \tensor(LRA)^{\tensor \bullet}\tensor LRN|} & {R|M \tensor A^{\tensor \bullet} \tensor N|} \\
        && {|RLRM \tensor (RLRA)^{\tensor \bullet}\tensor RLRN|} & {|RM \tensor (RA)^{\tensor \bullet} \tensor RN|}
        \arrow["\eta", from=1-1, to=1-2]
        \arrow["\simeq"', from=1-3, to=1-2]
        \arrow["{R(\eps \tensor_{\eps} \eps)}", from=1-3, to=1-4]
        \arrow[Rightarrow, no head, from=2-1, to=1-1]
        \arrow["\eta", from=2-1, to=2-2]
        \arrow["{|\eta_{RM} \tensor (\eta_{RA})^{\tensor \bullet}\tensor \eta_{RN}|}"', from=2-1, to=3-3]
        \arrow[Rightarrow, no head, from=2-2, to=1-2]
        \arrow[Rightarrow, no head, from=2-3, to=1-3]
        \arrow["\simeq"', from=2-3, to=2-2]
        \arrow["{R|\eps_M \tensor (\eps_A)^{\tensor \bullet}\tensor \eps_N|}"', shift right=2, draw=none, from=2-3, to=2-4]
        \arrow[from=2-3, to=2-4]
        \arrow[Rightarrow, no head, from=2-4, to=1-4]
        \arrow[from=3-3, to=2-3]
        \arrow["{|R\eps_M \tensor (R\eps_A)^{\tensor \bullet}\tensor R\eps_N|}"', shift right=2, draw=none, from=3-3, to=3-4]
        \arrow[from=3-3, to=3-4]
        \arrow[from=3-4, to=2-4]
    \end{tikzcd}\]
    where the unlabeled horizontal respectively vertical arrows are induced by the strong respectively
    lax symmetric monoidal structure maps followed by the colimit interchange with the geometric realization
    of $L$ respectively $R$.
    The additional hypotheses on $R$ thus guarantee that the vertical arrows are equivalences,
    and by the triangle identities it then follows that the lax symmetric monoidal structure map for $R'$ is an equivalence.
\end{proof}

\begin{example}\label{ex:pic-eq-gl}
    Let $G$ be a finite group, $R \in \CAlg(\Sp_G)$ and $S \in \CAlg(\Sp_{G\dgl})$.
    The adjunction $i_! \dashv i^*$ from \cref{prop:lquil} induces equivalences of Picard spectra
    \[
        (S \tensor_{i_!i^*S}-) \circ i_! \colon \pic(\Mod_{i^*S}(\Sp_G)) \simeq \pic(\Mod_S(\Sp_{G\dgl})) \noloc i^*.
    \]
    Picking $S = i_!R$ and identifying $R \simeq i^*i_!R$ we also get
    \[
        i_! \colon \pic(\Mod_{R}(\Sp_G)) \simeq \pic(\Mod_{i_!R}(\Sp_{G\dgl})) \noloc i^*
    \]
    In particular, $\pic(\Sp_G) \simeq \pic(\Sp_{G\dgl})$, generalizing \cref{ex:pic-gl}.
\end{example}
\begin{proof}
    This follows from \cref{prop:e-lax-right-adjoint} and the previous corollary.
\end{proof}

\subsection{Parametrized Picard Spectra}\label{subsec:param-pic}

In this subsection we finally construct the equivariant and global Picard spectra mentioned in the introduction.

\begin{definition}
    Let $(\cF,\cN)$ be an extensive span pair.
    Since $\pic \colon \Mack(\Cat) \to \Sp_{\geq 0}$
    preserves limits and $\Sp_{\geq 0} \subseteq \Sp$
    finite products, we can define the parametrized Picard spectrum functor
    by postcomposing with $\pic$:
    \[
        \und{\pic} \colon \und{\Mack}_\cF^\cN(\Cat)
        \simeq \und{\Mack}_\cF^\cN(\Mack(\Cat))
        \to \und{\Mack}_\cF^\cN(\Sp).
    \]
    Analogous to the non-parametrized version,
    given a fixed $\cN$-normed $\cF$-category $\cC$ compatible with geometric realizations,
    we also want to consider the Picard spectrum of module categories,
    and thus define
    \[
        \und{\pic}_\cC \colon \und{\CAlg}_\cF^\cN(\cC)
        \xrightarrow{\und{\Mod}_{(-)}(\cC)}
        \und{\Mack}_\cF^\cN(\Cat)
        \xrightarrow{\und{\pic}} \und{\Mack}_\cF^\cN(\Sp).
    \]
    In fact, we can even makes this lax natural in $\cC$ by composing the lax natural transformation
    from \cref{diag:mod-lax-nat} with the natural transformation of constant functors induced by $\und{\pic}$.
\end{definition}

\begin{example}
    Using the equivalences from \cref{thm:spectral-mack},
    we see that given a normed $G$-category $\cC$,
    the above definition yields a $G$-functor
    \[
        \und{\pic}_{\cC} \colon \und{\CAlg}_G(\cC) \to \und{\Sp}_G
    \]
    such that for $R \in \CAlg_G(\cC)$ we have $\pic_\cC(R)^H = \pic(\Mod_{R(G/H)}(\cC(G/H)))$.
    Similarly, if $\cD$ is a normed global category, we obtain a global functor
    \[
        \und{\pic}_{\cD} \colon \und{\CAlg}_\gl(\cD) \to \und{\Sp}_\gl.
    \]
\end{example}

It remains to use the comparison functors from Constructions \ref{con:ucomm-comp}, \ref{con:ucomm-G}
and \ref{con:ucomm-eq} to obtain equivariant and global Picard spectra of ultra-commutative ring spectra.

\begin{construction}\label{con:pic-g}
    Consider the normed $G$-category of $G$-spectra
    $\und{\Sp}_G^\tensor$ from \cref{def:gsym-sp}, which is compatible with
    sifted colimits by \cref{cor:sp-compat-sift}.
    We obtain a composite $G$-functor
    \[
        \und{\pic}_G \colon \und{\UComm}_G
        \xrightarrow{\Phi_G} \und{\CAlg}_G(\und{\Sp}_G^\tensor)
        \xrightarrow{\und{\pic}_{\und{\Sp}_G^\tensor}} \und{\Sp}_G.
    \]
    In level $G/H$, this is a functor
    \[
        \pic_H \colon \UCom_H \to \Sp_H
    \]
    that sends an ultra-commutative $H$-ring spectrum
    to the $H$-spectrum $\pic_H(R) \in \Sp_H$ with $\pic_H(R)^K \simeq \pic(\Mod_{\res^H_K R}(\Sp_K))$.
    Since $\und{\pic}_G$ is a $G$-functor, it then automatically follows
    that $\res^G_H \pic_G \simeq \pic_H \circ \res^G_H$.
\end{construction}

\begin{remark}
    A $G$-commutative Picard group (i.e.~an object in $\Mack_G(\CGrp(\An))$)
    of a $G$-symmetric monoidal category has previously
    been considered in \cite[Section 5.1]{HKZ},
    where it was used to construct a $G$-symmetric monoidal
    Thom spectrum functor
    $(\und{\An}_G)_{/\und{\pic}_G(\und{\Sp}^\tensor_G)} \to \und{\Sp}_G$.
    The work is done in the setting of \cite{Nardin-Shah},
    and their Picard $G$-space of a $G$-symmetric monoidal $G$-category $\und{\cC}^\tensor$
    is defined as the maximal $G$-subgroupoid on the invertible objects,
    with $G$-commutative monoid structure
    inherited from the $G$-symmetric monoidal structure.
    Here an object in $\und{\cC}$, i.e.~a $G$-functor $\const * \to \und{\cC}$
    adjoint to $* \to \Gamma\und{\cC} = \und{\cC}(G/e)$ is equivalently
    a section $x \colon \Orb_G^\op \to \int \und{\cC}$, and is called invertible
    if it is so fiberwise; this means for each $H \leq G$, the object $x(G/H) \in \und{\cC}(G/H)$
    is invertible in the symmetric monoidal structure on $\und{\cC}(G/H)$ defined by $\und{\cC}^\tensor$.
    Under the identification $\Mack(-) \simeq \CMon(-)$
    from \cref{rem:mack-vs-cmon}, this precisely agrees with our definition
    of postcomposing $\cC^\tensor \in \Mack_G(\CMon(\Cat))$ with
    $\pic \colon \CMon(\Cat) \xrightarrow{(-)^\simeq_*}
    \CMon(\Spc) \xrightarrow{(-)^\times} \CGrp(\Spc) \simeq \Sp_{\geq 0}$.
\end{remark}

\begin{construction}\label{con:pic-eqv}
    Recall that the parametrized module functor is actually defined on the larger source category
    \[
        \und{\Mod}_{(-)}(\und{\Sp}^\tensor)
        \colon \Fun_{/\Span_\Forb(\Fglo)}^{\pr\dcc}(\Span_\Forb(\Fglo_{/\bullet}),\smallint \und{\Sp}^\tensor)
        \to \und{\Mack}_{\gl}(\Cat).
    \]
    Moreover, by \cref{con:ucomm-eq} we also have a comparison functor
    \[
        \Phi_\eq \colon \und{\UCom}_\gl \to \und{\CAlg}_\eq(\und{\Sp}^\tensor)
        = \Fun_{/\Span_\Forb(\Fglo)}^{\Forb^\op\dcc}(\Span_\Forb(\Fglo_{/\bullet}), \smallint \und{\Sp}^\tensor).
    \]
    We thus obtain a globally functor assembling the Picard spectra
    of the equivariant ultra-commutative rings underlying a global ultra-commutative
    ring via:
    \[
        \und{\pic}_{\eq}
        \colon \und{\UCom}_\gl
        \xto{\Phi_\eq} \und{\CAlg}_\eq(\und{\Sp}^\tensor)
        \xto{\und{\Mod}_{(-)}(\und{\Sp}^\tensor)} \und{\Mack}_\gl(\Cat)
        \xto{\und{\pic}} \und{\Sp}_\gl.
    \]
    The underlying functor $\pic_\eq \colon \UComm_\gl \to \Sp_\gl$
    sends an ultra-commutative global ring spectrum $R$
    to a global spectrum $\pic_\eq(R)$ with genuine fixed points $\pic_\eq(R)^G \simeq \pic(\Mod_{\res_G R}(\Sp_G))$
    for $G \in \Glo$.
\end{construction}

The previous construction is precisely the description of the global Picard
spectrum of an ultra-commutative global ring spectrum which was conjectured
to exist in \cite[Remark 5.1.18]{Global}.
We can also consider $\und{\Sp}_\gl^\tensor$ in place of $\und{\Sp}^\tensor$.

\begin{construction}\label{con:pic-gl}
    We consider the normed global category of global spectra
    $\und{\Sp}_\gl^\tensor$ defined in \cref{con:sp-gl}.
    It is compatible with sifted colimits by \cref{cor:sp-compat-sift},
    so we may consider the following composite of global functors
    \[
        \und{\pic}_\gl \colon \und{\UComm}_\gl
        \xrightarrow{\Phi_\gl} \und{\CAlg}_\gl(\und{\Sp}_\gl^\tensor)
        \xrightarrow{\und{\pic}_{\und{\Sp}_\gl^\tensor}} \und{\Sp}_\gl,
    \]
    where the first functor was defined in \cref{con:ucomm-comp}.
    In level $G$, this is a functor
    \[
        \pic_{\gl} \colon \UComm_{G\dgl} \to \Sp_{G\dgl}
    \]
    which sends an ultracommutative $G$-global ring spectrum $R$
    to a $G$-global spectrum $\pic_{\gl}(R)$
    with genuine fixed points
    $\pic_{\gl}(R)^{\phi} \simeq \pic(\Mod_{\phi^*R}(\Sp_{K\dgl}))$
    for any group homomorphism $\phi \colon K \to G$.
\end{construction}

\begin{remark}
    Let us describe the genuine fixed points of the $G$-global Picard spectrum
    $\pic_\eq(R)$ for an ultracommutative $G$-global ring spectrum $R \in \UCom_{G\dgl}$.
    Recall that $\Phi_\eq \simeq i^* \circ \Phi_\gl$ where $i^*$ is the lax $\Span(\Forb)$-strong
    normed global right Bousfield localization $\und{\Sp}_\gl^\tensor \to \und{\Sp}^\tensor$
    from \cref{prop:e-lax-right-adjoint}.
    Now let $\phi \colon K \to G$ in $\Glo_{/G}$.
    By construction we have $\pic_\eq(R)^\phi = \pic(\Mod_{i^*((\Phi_\gl R)(\phi))}(\und{\Sp}_K))$,
    and by definition $(\Phi_\gl R)(\phi) = \phi^*R$, which we already used above.

    In more detail, since $\Phi_{\gl}R \colon \Span_\Forb(\Fglo_{/G}) \to \int \und{\Sp}_\gl^\tensor$
    is cocartesian on backwards maps, we note that it sends the span
    \[\begin{tikzcd}[cramped]
        G & K & K \\
        & G
        \arrow[Rightarrow, no head, from=1-1, to=2-2]
        \arrow["\phi"', from=1-2, to=1-1]
        \arrow[Rightarrow, no head, from=1-2, to=1-3]
        \arrow["\phi", from=1-2, to=2-2]
        \arrow["\phi", from=1-3, to=2-2]
    \end{tikzcd}\]
    to a cocartesian map. We therefore have $(\Phi_\gl R)(\phi) \simeq \phi^*R$
    where under minimal abuse of notation the right $R$
    also denotes the underlying commutative $G$-global ring spectrum
    $(\Phi_\gl R)(\id_G) \in \CAlg(\Sp_{G\dgl})$, and $\phi^* \colon \Sp_{G\dgl} \to \Sp_{K\dgl}$ is the restriction along $\phi$, i.e.~$\und{\Sp}_\gl^\tensor$ evaluated on the above span.
    Note that in the special case of $G=1$, this shows that
    \[
        \res_K R \coloneqq \Phi_\eq(R)(K) \simeq i^*\Phi_\gl(R)(K) \eqqcolon i^*\infl_K R
    \]
    for finite groups $K$ and $R \in \UCom_\gl$. Overall, this gives the formula
    \[
        \pic_\eq(R)^\phi \simeq \pic(\Mod_{i^*\phi^* R}(\Sp_K))
    \]
    Note that if $\phi$ lies in $\Forb$,
    we can commute it past $i^*$ since the latter is strong on $\Forb^\op$.
\end{remark}

In view of \cref{ex:pic-eq-gl}, this already hints at the following proposition,
which is our strongest form of the statement that $G$-equivariant and $G$-global stable homotopy
theory have the same invertible objects.

\begin{proposition}
    There is a natural equivalence of global functors
    \[
        i^* \colon \und{\pic}_\gl \simeq \und{\pic}_\eq.
    \]
    In level $G$, evaluated at some $R \in \UComm_{G\dgl}$ and on $\phi$-fixed points, this is
    \[
        i^* \colon \pic(\Mod_{\phi^*R}(\Sp_{K\dgl})) \xto{\simeq} \pic(\Mod_{i^*\phi^*R}(\Sp_K))
    \]
    from \cref{ex:pic-eq-gl}.
\end{proposition}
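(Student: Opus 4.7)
The plan is to unpack the definitions and reduce the statement to the pointwise equivalence of Picard spectra for the right Bousfield localization $i_! \colon \Sp_G \rightleftarrows \Sp_{G\dgl} \noloc i^*$ established in \cref{ex:pic-eq-gl}. Recall that by \cref{con:pic-gl} and \cref{con:pic-eqv} we have
\[
    \und{\pic}_\gl = \und{\pic}_{\und{\Sp}_\gl^\tensor} \circ \Phi_\gl
    \quad\text{and}\quad
    \und{\pic}_\eq = \und{\pic}_{\und{\Sp}^\tensor} \circ (i^*)_* \circ \Phi_\gl,
\]
where $(i^*)_*$ denotes postcomposition with the lax normed functor $i^* \colon \und{\Sp}_\gl^\tensor \to \und{\Sp}^\tensor$ of \cref{prop:e-lax-right-adjoint} (which is strong on $\Span(\Forb)$). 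It therefore suffices to construct a natural equivalence of global functors
\[
    \und{\pic}_{\und{\Sp}_\gl^\tensor}(-) \xRightarrow{\simeq} \und{\pic}_{\und{\Sp}^\tensor}((i^*)_*(-))
    \colon \und{\CAlg}_\gl(\und{\Sp}_\gl^\tensor) \to \und{\Sp}_\gl.
\]

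For the construction, I would apply the lax naturality of the module functor in the normed category entry, i.e.~point (iv) of \cref{rem:mod-functoriality} applied to $F = i^*$. Although $i^*$ is only lax normed, the construction of $\und{\Mod}_R(\cC)$ in the proof of \cref{thm:mod} is actually built out of the underlying cocartesian fibration and is compatible with any morphism that preserves cocartesian lifts over the relevant subcategory; restricted to $\Span(\Forb)$ where $i^*$ is strong, this gives a natural transformation of $\Fglo$-based functors $\und{\Mod}_{(-)}(\und{\Sp}_\gl^\tensor) \Rightarrow \und{\Mod}_{i^*(-)}(\und{\Sp}^\tensor)$. Postcomposing with $\und{\pic}$, which is a natural transformation of constant functors, yields the desired comparison $\und{\pic}_{\und{\Sp}_\gl^\tensor}(S) \to \und{\pic}_{\und{\Sp}^\tensor}(i^*S)$ for each $S \in \und{\CAlg}_\gl(\und{\Sp}_\gl^\tensor)$.

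To see this is an equivalence, by \cref{thm:spectral-mack} and conservativity of the parametrized forgetful functors it suffices to check that the comparison is an equivalence on $\phi$-fixed points for each $\phi \colon K \to G$ in $\Glo$. Unraveling the construction, the map on $\phi$-fixed points is induced by the symmetric monoidal adjunction $i_! \colon \Mod_{i^*\phi^*S}(\Sp_K) \rightleftarrows \Mod_{\phi^*S}(\Sp_{K\dgl}) \noloc i^*$ obtained from $i_! \dashv i^*$ via basechange; here we use that $i^* \colon \Sp_{K\dgl} \to \Sp_K$ is strong symmetric monoidal and preserves geometric realizations. By the corollary preceding \cref{ex:pic-eq-gl}, this adjunction induces mutually inverse equivalences on Picard spectra, and so we conclude.

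The main obstacle I anticipate is the careful handling of the lax-but-strong-on-$\Span(\Forb)$ nature of $i^*$ when extending the functoriality of \cref{thm:mod}: technically \cref{thm:mod} is stated for strict normed functors, so one must check that the underlying fibrational construction from its proof transports over a lax normed functor provided we only demand the output transformation to be a map of global categories (not of normed global categories). Since $\und{\pic}_{\und{\Sp}_\gl^\tensor}$ factors through $\und{\Sp}_\gl$ and the global structure of $\und{\Sp}_\gl$ only sees $\Span(\Forb)$, this restriction of strength is exactly what is needed and no further refinement should be necessary.
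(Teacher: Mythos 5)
Your proof is correct and follows essentially the same route as the paper's: decompose via $\Phi_\gl$ and $\Phi_\eq = i^* \circ \Phi_\gl$, apply the (appropriately extended) module functoriality of \cref{rem:mod-functoriality}(iv) to $F = i^*$, and reduce pointwise to the Picard equivalence of \cref{ex:pic-eq-gl}. Your additional discussion of the lax-vs-strong subtlety for $i^*$ --- that it is strong on $\Span(\Forb)$, which contains the projections needed for the module functor's enlarged source --- is precisely the point the paper's one-line diagram argument leaves implicit, so this is a welcome elaboration rather than a deviation.
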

\begin{proof}
    We have a diagram where the left triangle commutes and the right one
    is obtained from \cref{rem:mod-functoriality}(iv):
    \[\begin{tikzcd}[cramped]
        & {\und{\CAlg}_\gl(\und{\Sp}_\gl^\tensor)} && {\und{\Mack}_\gl(\Cat)} \\
        {\und{\UComm}_\gl} & {\und{\CAlg}_\eq(\und{\Sp}^\tensor)} && {\und{\Mack}_\gl(\Cat)} & {\und{\Sp}_\gl}
        \arrow["{\und{\Mod}_{(-)}(\und{\Sp}_\gl^\tensor)}", from=1-2, to=1-4]
        \arrow["{i^*}"', from=1-2, to=2-2]
        \arrow["{i^*}"{description}, shorten <=5pt, shorten >=5pt, Rightarrow, from=1-4, to=2-2]
        \arrow[Rightarrow, no head, from=1-4, to=2-4]
        \arrow["{\und{\pic}}", from=1-4, to=2-5]
        \arrow["{\Phi_\gl}", from=2-1, to=1-2]
        \arrow["{\Phi_\eq}"', from=2-1, to=2-2]
        \arrow["{\und{\Mod}_{(-)}(\und{\Sp}^\tensor)}"', from=2-2, to=2-4]
        \arrow["{\und{\pic}}"', from=2-4, to=2-5]
    \end{tikzcd}\]
    By \cref{rem:mod-functoriality} it is evident that the composite natural transformation evaluated on $G,R,\phi$ is precisely as claimed, and thus an equivalence.
\end{proof}

Finally, we also have the following compatibility between $\pic_G$ and $\pic_\eq$.

\begin{lemma}\label{lem:pic-g-vs-eq}
    For a finite group $G$, there is an equivalence natural in $R \in \UCom_\gl$
    \[
        \pic_G\res_G R \simeq \res_G \pic_\eq(R).
    \]
\end{lemma}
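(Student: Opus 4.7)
The plan is to verify that each constituent of $\pic_\eq$ is compatible with restriction along the fully faithful inclusion $i_G \colon \Span(G) \hookrightarrow \Span_\Forb(\Fglo)$, and then chain together the resulting equivalences. To set this up, I would first observe that by \cref{def:gsym-sp} we have $i_G^* \und{\Sp}^\tensor \simeq \und{\Sp}_G^\tensor$, and under the spectral Mackey identifications $\und{\Sp}_\gl \simeq \und{\Mack}_\gl(\Sp)$ and $\und{\Sp}_G \simeq \und{\Mack}_G(\Sp)$ from \cref{thm:spectral-mack}, the functor $\res_G \colon \Sp_\gl \to \Sp_G$ appearing on the right-hand side is identified with restriction along $i_G$. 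Since $\und{\pic}$ is defined by postcomposition with $\pic$, it automatically satisfies $i_G^*\und{\pic}(X) \simeq \und{\pic}(i_G^*X)$ for any $X \in \und{\Mack}_\gl(\Cat)$.

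It therefore suffices to construct a natural equivalence
\[
    i_G^* \und{\Mod}_{\Phi_\eq R}(\und{\Sp}^\tensor) \simeq \und{\Mod}_{\Phi_G \res_G R}(\und{\Sp}_G^\tensor),
\]
which, by the naturality of the parametrized module construction in both the normed category and the algebra established in \cref{rem:mod-functoriality}, reduces further to the compatibility statement
\[
    i_G^*(\Phi_\eq R) \simeq \Phi_G(\res_G R)
\]
in $\CAlg_G(\und{\Sp}_G^\tensor)$, natural in $R \in \UCom_\gl$.

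This last compatibility of comparison functors is the main obstacle, and I would prove it by unravelling the definitions of $\Phi_G$ from \cref{con:ucomm-G} and $\Phi_\eq = i^* \circ \Phi_\gl$ from \cref{con:ucomm-eq,con:ucomm-comp}. Both comparison functors are obtained by Dwyer--Kan localizing maps arising from \cref{thm:borel-eqv} and \cref{prop:gl-borel}, whose underlying 1-categorical inputs are related by \cref{prop:borel-global-eq}: restriction of the global Borelification $(\Sp^{\Sigma,\tensor}_\cflat)^\flat_\gl$ along $i_G$ agrees canonically with the $G$-Borelification $(\infl_G \Sp^{\Sigma,\tensor}_\cflat)^\flat_G$, and the corresponding Borel-algebra equivalences of \cref{prop:gl-borel,thm:borel-eqv} are compatible under this identification. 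The remaining task is to convert the $G$-global Dwyer--Kan localization into its $G$-equivariant analogue, which is precisely accomplished by post-composing with the lax normed right adjoint $i^*$ of \cref{prop:e-lax-right-adjoint} (strong on $\Span(\Forb)$, and in particular strong after restricting to $\Span(G)$); the result is a section that, level by level, sends $G/H$ to $\res_H R$ with the expected $G$-equivariant normed structure.

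Finally, I would observe that $\Phi_G(\res_G R)$ arises from Dwyer--Kan localizing exactly the same 1-categorical section $H \mapsto \infl_H R$ at the $H$-equivariant weak equivalences for $H \leq G$, so the universal property of Dwyer--Kan localization yields the claimed natural equivalence. Naturality in $R$ is automatic since every step is constructed functorially from the input global ring spectrum. Most of the categorical coherence here is formal once the comparison of weak equivalences is in hand; the substantive content is packaged in the Bousfield-localization setup of \cref{con:comp-eq-gl} and the strongness statement in \cref{prop:e-lax-right-adjoint}.
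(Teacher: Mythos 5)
Your proposal follows essentially the same route as the paper's proof: reduce to commutativity of the module/Picard diagram via \cref{rem:mod-functoriality}, then to the comparison-functor compatibility $i_G^*(\Phi_\eq R) \simeq \Phi_G(\res_G R)$, then close by the universal property of Dwyer--Kan localization applied to the underlying 1-categorical sections. The one place where you wave your hands a bit — "the corresponding Borel-algebra equivalences are compatible under this identification" — is exactly the content the paper isolates and proves as \cref{cor:calg-comm}, which is what makes the outer rectangle in the final diagram commute; citing that corollary explicitly would tighten the last step.
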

\begin{proof}
    Let $i \colon \Span(G) \to \Span_\Forb(\Fglo)$ denote the map
    \mbox{induced by $\F_G \simeq \FOrb_{/G} \to \Forb \subset \Fglo$.}
    Since $i$ is a map of semiadditive categories,
    we can use the naturality from \cref{rem:mod-functoriality}(iii)
    to observe that restriction along $i$ yields a commutative diagram
    \[\begin{tikzcd}[cramped,sep=scriptsize]
        {\CAlg_\eq(\und{\Sp}^\tensor)} & {\Fun^{\Forb^\op\dcc}_{/\Span_\Forb(\Fglo)}(\Span_\Forb(\Fglo), \int\und{\Sp}^\tensor)} &&& {\Mack_\gl(\Cat)} & {\Sp_\gl} \\
        {\CAlg_G(\und{\Sp}_G)} & {\Fun^{\Forb^\op\dcc}_{/\Span_\Forb(\Fglo)}(\Span(G), \int\und{\Sp}^\tensor)} &&& {\Mack_G(\Cat)} & {\Sp_G}
        \arrow[Rightarrow, no head, from=1-1, to=1-2]
        \arrow["{i^*}"', from=1-1, to=2-1]
        \arrow["{\und{\Mod}_{(-)}(\und{\Sp}^\tensor)}", from=1-2, to=1-5]
        \arrow["{i^*}"', from=1-2, to=2-2]
        \arrow["\pic", from=1-5, to=1-6]
        \arrow["{i^*}", from=1-5, to=2-5]
        \arrow["{\res_G}", from=1-6, to=2-6]
        \arrow["\simeq", from=2-1, to=2-2]
        \arrow["{\und{\Mod}_{(-)}(\und{\Sp}_G^\tensor)}"', curve={height=18pt}, from=2-1, to=2-5]
        \arrow["{\und{\Mod}_{(-)}(\und{\Sp}^\tensor)}", from=2-2, to=2-5]
        \arrow["\pic", from=2-5, to=2-6]
    \end{tikzcd}\]
    where the bottom left equivalences follows by adjunction
    since $\und{\Sp}_G^\tensor = \und{\Sp}^\tensor|_{\Span(G)}$.
    For compatibility of $i^*$ with the comparison functors $\Phi_G$ and $\Phi_\eq$,
    we consider the following diagram
    \[\begin{tikzcd}[cramped,sep=scriptsize]
        {\CAlg(\Sp^{\Sigma,\tensor}_{\cflat})} & {\UCom_\gl} & {\UCom_G} & {\CAlg(G\Sp^{\Sigma,\tensor}_\cflat)} \\
        {\CAlg_\gl((\Sp^{\Sigma,\tensor}_\cflat)^\flat)} & {\CAlg_\eq(\und{\Sp}^\tensor)} & {\CAlg_G(\und{\Sp}_G^\tensor)} & {\CAlg_G((\infl_G\Sp^{\Sigma,\tensor}_\cflat)^\flat)}
        \arrow["{\gamma_\gl}"', from=1-1, to=1-2]
        \arrow["{(\infl_G)_*}", curve={height=-18pt}, from=1-1, to=1-4]
        \arrow["{\res_G}"', dashed, from=1-2, to=1-3]
        \arrow["{\Phi_\eq}"', dashed, from=1-2, to=2-2]
        \arrow["{\Phi_G}", dashed, from=1-3, to=2-3]
        \arrow["{\gamma_G}", from=1-4, to=1-3]
        \arrow["\simeq", from=2-1, to=1-1]
        \arrow["{(\gamma_\eq)_*}", from=2-1, to=2-2]
        \arrow["{i^*}"', curve={height=18pt}, from=2-1, to=2-4]
        \arrow["{i^*}", from=2-2, to=2-3]
        \arrow["\simeq"', from=2-4, to=1-4]
        \arrow["{(\gamma_G)_*}"', from=2-4, to=2-3]
    \end{tikzcd}\]
    Here all dashed maps are induced by the universal property of Dwyer--Kan localization,
c.f.~Constructions \ref{con:ucomm-G} and \ref{con:ucomm-eq}.
    The left and right squares commute by definition.
    For commutativity of the middle square, which is the one we are interested in, it suffices by the universal property of Dwyer--Kan localization
    to check that the outer rectangle commutes,
    which we did in \cref{cor:calg-comm}.
\end{proof}

\appendix
\section{Monoids in the Universal Cocartesian Fibration}\label{appendix:cocart}

In this appendix, we show that applying $\Mon(-)$ respectively $\Mack(-)$
to the universal cocartesian fibration $\Cat_{*\sslash} \to \Cat$
gives models for the cocartesian fibrations classifying $\Alg(-) \colon \Mon(\Cat) \to \Cat$
respectively $\CAlg(-) \colon \Mack(\Cat)\to \Cat$ (c.f.~\cref{rem:mack-vs-cmon}).
This is used in \cref{sec:modules} to identify Lurie's $\Cat^{\Alg}$ with $\Mon(\Cat_{*\sslash})$, which allows us to build our parametrized module categories.

\begin{definition}
    We say a cocartesian fibration $p \colon \cE \to \cC$ is compatible with finite products
    if $\cE,\cC$ admit finite products, $p$ preserves finite products, and projections in $\cE$ are $p$-cocartesian.
\end{definition}

\begin{proposition}[{{\cite[Lemma 2.48, Corollary 2.49]{LLP}}}]\label{prop:cocart-prod}
    Let $\cC$ be a category with finite products, and consider a functor $F \colon \cC \to \Cat$.
    Then $F$ preserves finite products if and only if its cocartesian unstraightening is compatible
    with finite products.
    In this case, a morphism $X \to Y \times Z$ in $\int \cC$ is cocartesian if and only if both $X \to Y$ and $X \to Z$ are.
\end{proposition}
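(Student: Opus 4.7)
The plan is to prove both directions by explicitly analyzing products in $\int F$ and combining this with the standard mapping space formula
\[
    \Map_{\int F}((e,w),(c,x)) \simeq \coprod_{f \colon e \to c} \Map_{F(c)}(F(f)w, x)
\]
coming from the cocartesian-fiberwise factorization of morphisms in the unstraightening.

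For the forward direction, assume $F$ preserves finite products. I would construct the product of $(c,x),(d,y) \in \int F$ as $(c \times d, z)$, where $z \in F(c \times d)$ is the essentially unique object with $F(\pi_c)z \simeq x$ and $F(\pi_d)z \simeq y$ obtained from the equivalence $F(c \times d) \xto{\simeq} F(c) \times F(d)$. The candidate projections are the cocartesian lifts of $\pi_c$ and $\pi_d$ at $z$, which by construction land at $(c,x)$ and $(d,y)$. Verifying the universal property is a direct computation: applying the mapping space formula above, splitting a map $e \to c \times d$ as $(f_1,f_2)$, and using $F(c \times d) \simeq F(c) \times F(d)$ identifies $\Map_{\int F}((e,w),(c \times d, z))$ with $\Map_{\int F}((e,w),(c,x)) \times \Map_{\int F}((e,w),(d,y))$. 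The terminal object of $\int F$ is $(1_\cC, *)$ (using $F(1_\cC) \simeq *$), and compatibility of $p$ with products follows because both products and projections are built over those of $\cC$.

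For the backward direction, assume $p$ is compatible with finite products. That $F(1_\cC) \simeq *$ follows immediately from $p$ preserving the terminal object. For binary products, given any $(x,y) \in F(c) \times F(d)$, the product $(c,x) \times (d,y) \in \int F$ must lie over $c \times d$ and have cocartesian projections (by the compatibility hypothesis), so there exists $z \in F(c \times d)$ with $F(\pi_c)z \simeq x$, $F(\pi_d)z \simeq y$. This gives essential surjectivity of $\Phi \coloneqq (F(\pi_c),F(\pi_d)) \colon F(c \times d) \to F(c) \times F(d)$. Fully faithfulness then falls out of the same mapping space formula: $\Map_{F(c \times d)}(z,z')$ is the fiber of $\Map_{\int F}((c \times d, z),(c \times d,z'))$ over $\id \in \Map_\cC(c \times d, c \times d)$, and applying the universal property of $(c \times d, z')$ as a product in $\int F$ identifies this with $\Map_{F(c)}(F(\pi_c)z, F(\pi_c)z') \times \Map_{F(d)}(F(\pi_d)z, F(\pi_d)z')$, which is exactly $\Map_{F(c) \times F(d)}(\Phi z, \Phi z')$.

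For the final addendum, recall that a morphism $f \colon X \to Y$ in $\int F$ over $g \colon c \to d$ factors as a cocartesian lift $X \to (d, g_! x)$ followed by a fiberwise morphism $h \colon g_! x \to y$ in $F(d)$, and $f$ is cocartesian iff $h$ is an equivalence. Given $f \colon X \to Y \times Z$ over $(g_1,g_2) \colon c \to d \times e$, the components $X \to Y$ and $X \to Z$ factor through the fiberwise morphisms $F(\pi_d)h$ and $F(\pi_e)h$, so since $F(d \times e) \simeq F(d) \times F(e)$ via $(F(\pi_d),F(\pi_e))$, $h$ is an equivalence iff both $F(\pi_d)h$ and $F(\pi_e)h$ are. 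The main subtlety will be ensuring coherent compatibility between the cocartesian lift of a product of morphisms and the product of cocartesian lifts, but this is governed by the natural equivalence $F(c \times d) \simeq F(c) \times F(d)$ respecting the cocartesian pushforward along projections. Beyond this, the argument is essentially formal.
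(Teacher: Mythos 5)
The forward direction is sound, and the discussion of the addendum (which is stated under the standing assumption that $F$ preserves finite products, so you are free to use conservativity of $(F(\pi_c),F(\pi_d))$ there) is also fine. One presentational nit: $\Map_{\int F}((e,w),(c,x))$ is not a coproduct over $\Map_{\cC}(e,c)$ unless the latter space is discrete; it is a left fibration over it with fiber $\Map_{F(c)}(F(f)w,x)$ at $f$. Your arguments survive this correction.

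The genuine gap is in the backward direction, at the full faithfulness step. You write that you ``appl[y] the universal property of $(c\times d, z')$ as a product in $\int F$,'' but you have not established that $(c\times d,z')$ \emph{is} a product. What the essential surjectivity argument actually produces is: writing $x' = F(\pi_c)z'$ and $y' = F(\pi_d)z'$, there is \emph{some} $z'' \in F(c\times d)$ for which $(c\times d, z'') = (c,x')\times(d,y')$ holds in $\int F$, and the cocartesian projections of this product identify $\Phi z'' \simeq (x',y')$. The universal property you invoke is therefore the universal property of $(c\times d,z'')$, not of $(c\times d,z')$. The cocartesian lifts of $\pi_c,\pi_d$ at $z'$ do induce a comparison map $\theta\colon (c\times d,z') \to (c\times d,z'')$ over $\id_{c\times d}$, i.e.\ a morphism $z'\to z''$ in $F(c\times d)$ with $\Phi\theta$ an equivalence, but you cannot yet conclude that $\theta$ itself is an equivalence --- that is exactly the conservativity/full faithfulness of $\Phi$ that you are in the middle of trying to prove. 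The argument is circular at this step.

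To see that this gap is not formally closable from what has been established: your essential surjectivity argument together with the product universal property shows that $\Phi_{c,d}$ admits a right adjoint $\Psi_{c,d}$ (sending $(x,y)$ to the fiber coordinate of $(c,x)\times(d,y)$) whose counit $\Phi\Psi\Rightarrow\id$ is an equivalence, so $\Psi$ is fully faithful. But a functor with a fully faithful right adjoint is just a Bousfield localization, and is an equivalence only when the unit $\eta\colon z'\to\Psi\Phi z'$ (i.e.\ the $\theta$ above) is an equivalence. In general this need not hold for formal reasons; one needs a separate argument, using the fibrational structure, that $(F(\pi_c),F(\pi_d))$ is jointly conservative. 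This is the real content of the backward direction, and it is missing from your proof.
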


The following observations are immediate.

\begin{lemma}\label{lem:obvious}
    Let $p \colon \cE \to \cC$ be a cocartesian fibration compatible with finite products.
    \begin{enumerate}
        \item Any basechange of $p$ along a finite product preserving functor $\cC' \to \cC$
            is again a cocartesian fibration compatible with finite products.

        \item A section $s \colon \cC \to \cE$ of $p$ is cocartesian on projections if and only if it preserves
            finite products.
    \end{enumerate}
\end{lemma}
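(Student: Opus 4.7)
For part (1), I would verify each condition in turn. The pullback $p' \colon \cE' \coloneqq \cE \times_\cC \cC' \to \cC'$ is a cocartesian fibration, and a morphism in $\cE'$ is $p'$-cocartesian if and only if its image in $\cE$ is $p$-cocartesian, by the standard stability of cocartesian fibrations under basechange. Since $F$ and $p$ both preserve finite products, $\cE'$ inherits finite products computed componentwise (namely $(X,A) \times (Y,B) \simeq (X \times Y, A \times B)$), and $p'$ preserves them. The projection $(X \times Y, A \times B) \to (X, A)$ in $\cE'$ maps under the second projection $\cE' \to \cE$ to the projection $X \times Y \to X$, which is $p$-cocartesian by hypothesis; hence projections in $\cE'$ are $p'$-cocartesian.

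For part (2), the main input will be \cref{prop:cocart-prod}, together with the preliminary observation that $\cE_{1_\cC} \simeq *$. This holds because the cocartesian straightening of $p$ preserves finite products, and in particular sends $1_\cC$ to the terminal category. From this I would deduce two things: any object of $\cE$ lying over $1_\cC$ is terminal in $\cE$ (by pushing forward along a cocartesian lift into the contractible fiber $\cE_{1_\cC}$), and any morphism in $\cE$ whose target lies over $1_\cC$ is automatically $p$-cocartesian (by comparing with a chosen cocartesian lift and observing that the induced map in $\cE_{1_\cC}$ is an equivalence). Now suppose $s$ is cocartesian on projections. Then $s(1_\cC)$ is terminal in $\cE$ by the above. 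For binary products, the canonical comparison map $\phi \colon s(A \times B) \to s(A) \times s(B)$ --- induced by $s$ applied to the two projections --- lies over $\id_{A \times B}$ (as $s$ is a section), and its composites with the two projections of $s(A) \times s(B)$ recover the $p$-cocartesian morphisms $s(\pi_A)$ and $s(\pi_B)$. By \cref{prop:cocart-prod}, $\phi$ is itself $p$-cocartesian, and being a $p$-cocartesian lift of an equivalence it is itself an equivalence.

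Conversely, if $s$ preserves finite products then $s(\pi_A) \colon s(A \times B) \to s(A)$ factors as an equivalence $s(A \times B) \simeq s(A) \times s(B)$ followed by the product projection $s(A) \times s(B) \to s(A)$ in $\cE$, which is $p$-cocartesian by the compatibility assumption on $p$; hence $s(\pi_A)$ is $p$-cocartesian, and symmetrically for $s(\pi_B)$. The case of the unique map $A \to 1_\cC$ is handled by the preliminary observation about $\cE_{1_\cC}$. No step presents a genuine obstacle; as already signaled by the paper's phrasing ``the following observations are immediate,'' the lemma reduces to bookkeeping with \cref{prop:cocart-prod} and the standard stability of cocartesian fibrations under basechange.
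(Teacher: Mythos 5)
Your proof is correct. The paper does not spell out an argument for this lemma (it is presented as an immediate consequence of \cref{prop:cocart-prod}), and your write-up is precisely the bookkeeping the author had in mind: stability of cocartesian fibrations under pullback plus the componentwise description of products for part (1), and for part (2) the observation that $\cE_{1_\cC}\simeq *$ combined with the cocartesianness criterion from \cref{prop:cocart-prod} applied to the comparison map $s(A\times B)\to s(A)\times s(B)$, with the converse falling out of the compatibility condition that projections in $\cE$ are $p$-cocartesian.
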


\begin{proposition}\label{prop:fun-univ-cocart}
    Let $p \colon \Cat_{*\sslash} \to \Cat$ denote the universal cocartesian fibration,
    i.e.~the cocartesian unstraightening of the identity functor.
    If $T$ is a category, then the cocartesian fibration
    $
        p_* \colon \Fun(T,\Cat_{*\sslash}) \to \Fun(T,\Cat)
    $
    given by postcomposition with $p$ classifies the lax limit functor
    \[
        \Fun_{/T}(T,\smallint -) \colon \Fun(T,\Cat) \to \Cat
    \]
\end{proposition}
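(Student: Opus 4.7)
The plan is to verify three things: $p_*$ is a cocartesian fibration, its fibers match $\Fun_{/T}(T, \int F)$, and its cocartesian pushforward is given by postcomposition with the induced morphism of unstraightenings.

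First, I would invoke the standard fact that if $p \colon \cE \to \cC$ is a cocartesian fibration, then so is $p_* = \Fun(T, p) \colon \Fun(T, \cE) \to \Fun(T, \cC)$, with cocartesian lifts formed pointwise in $\cE$ (this is an instance of \cite[Proposition 3.1.2.1]{HTT} applied to $p$). Applied to $p \colon \Cat_{*\sslash} \to \Cat$, this gives that $p_*$ is a cocartesian fibration and pins down its cocartesian morphisms.

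Next I would compute the fibers. The fiber of $p_*$ over $F \colon T \to \Cat$ is the space of functors $T \to \Cat_{*\sslash}$ over $\Cat$ whose composite with $p$ is $F$. Since $\Cat_{*\sslash} \to \Cat$ is the cocartesian unstraightening of $\id_\Cat$, pulling it back along $F$ gives $\int F \to T$, and by the universal property of pullback, lifts of $F$ through $p$ correspond to sections of $\int F \to T$, giving a natural equivalence with $\Fun_{/T}(T, \int F)$. To identify the covariant functoriality, note that any $\alpha \colon F \Rightarrow G$ unstraightens to a morphism $\widetilde{\alpha} \colon \int F \to \int G$ of cocartesian fibrations over $T$, and postcomposition with $\widetilde{\alpha}$ defines a functor $\widetilde{\alpha}_* \colon \Fun_{/T}(T, \int F) \to \Fun_{/T}(T, \int G)$; I claim this matches $\alpha_!$. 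Indeed, the pointwise cocartesian lift of $\alpha$ at a lift $s$ of $F$ is, at each $t \in T$, the cocartesian lift of $\alpha_t \colon F(t) \to G(t)$ in $\Cat_{*\sslash}$ at $(F(t), s(t))$, namely $(G(t), \alpha_t(s(t))) = (G(t), \widetilde{\alpha}(t, s(t)))$, which is exactly $(\widetilde{\alpha} \circ s)(t)$.

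The main obstacle is upgrading these pointwise identifications to a genuine equivalence of cocartesian fibrations over $\Fun(T, \Cat)$. To do this cleanly, I would construct an explicit map of cocartesian fibrations $\Fun(T, \Cat_{*\sslash}) \to \int \Fun_{/T}(T, \int -)$ over $\Fun(T, \Cat)$ — for instance, by using the universal property of the target cocartesian unstraightening together with the natural evaluation/pullback map — and then conclude by the standard criterion that a map of cocartesian fibrations over a common base is an equivalence as soon as it is a fiberwise equivalence and preserves cocartesian edges, both of which follow from Steps 2 and 3.
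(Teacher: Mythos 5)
Your approach is genuinely different from the paper's: you aim for a hands-on identification of $p_*$ with the unstraightening of $\Fun_{/T}(T,\int -)$, whereas the paper observes that $p^\op$ is the cartesian fibration classifying $(-)^\op \colon \Cat \to \Cat$, applies \cite[Propositions 7.1, 8.1]{GHN} to $q_* = \Fun(T^\op,p^\op)$ to conclude it is a cartesian fibration classifying $\phi \mapsto \Fun_{/T^\op}(T^\op,\Un^\cart(\phi))$, and then takes opposites. Steps 1 and 2 of your plan are fine and match the ingredients the paper also needs implicitly: $p_*$ is cocartesian with pointwise cocartesian lifts by \cite[3.1.2.1]{HTT}, the fiber over $F$ is $\{F\} \times_{\Fun(T,\Cat)} \Fun(T,\Cat_{*\sslash}) \simeq \Fun_{/T}(T,\int F)$ via the defining pullback of $\int F$, and the pushforward along $\alpha\colon F \Rightarrow G$ is postcomposition with $\int\alpha$.

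The genuine gap is in Step 3, and you are right to flag it as ``the main obstacle.'' Your plan is to produce the comparison map $\Fun(T,\Cat_{*\sslash}) \to \int\Fun_{/T}(T,\int -)$ over $\Fun(T,\Cat)$ ``by using the universal property of the target cocartesian unstraightening together with the natural evaluation/pullback map,'' but this is circular as stated: the only universal property available for the target says that maps of cocartesian fibrations $\cE \to \int G$ over $B$ correspond to natural transformations $\Str^\cocart(\cE) \Rightarrow G$, so invoking it here requires already knowing $\Str^\cocart(p_*)$ --- which is precisely what the proposition asserts. Knowing the fibers and the pushforwards of $p_*$ on objects and $1$-morphisms (Steps 1--2) does not by itself pin down the classified functor $\Fun(T,\Cat) \to \Cat$ up to natural equivalence; the entire content is in assembling these pointwise identifications coherently, and there is no obvious ``evaluation/pullback'' map that outputs the required morphism of cocartesian fibrations without effectively reproving the GHN result. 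That reduction to GHN is exactly how the paper discharges this coherence problem in one step.
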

\begin{proof}
    Recall that if $p'$ is any cocartesian fibration,
    then $p'^\op$ is the cartesian fibration classifying
    $(-)^\op \circ \St^\cocart(p')$.
    Hence $q \coloneqq p^\op$ is the cartesian fibration classifying
    the equivalence $(-)^\op \colon \Cat \to \Cat$.
    It follows from \cite[Proposition 7.1, Proposition 8.1]{GHN}
    that $q_* \colon \Fun(T^\op,(\Cat_{*\sslash})^\op) \to \Fun(T^\op,\Cat^\op) \simeq \Fun(T,\Cat)^\op$
    is a cartesian fibration classifying the functor
    \[
        \Fun(T,\Cat) \to \Cat,\
        \phi \mapsto \Fun_{/T^\op}(T^\op, \Un^\cart(\phi)).
    \]
    Postcomposing this with $(-)^\op$,
    we get the claimed description of the cocartesian unstraightening
    of $p_* = (q_*)^\op$.
\end{proof}

Another more obvious description of the fiber of $p_*$ over $F \colon T \to \Cat$
is $\Fun_{/\Cat}(T,\Cat_{*\sslash})$. This is equivalent to $\Fun_{/T}(T,\int F)$
via the $\Cat$-linear adjunction of 2-categories
$F_! \colon \what{\Cat}_{/T} \rightleftarrows \what{\Cat}_{/\Cat} \noloc F^*$,
and we can think of the above proposition as making this equivalence natural in $F$.

\begin{proposition}\label{prop:univ-cocart-prod}
    The cocartesian fibration $p \colon \Cat_{*\sslash} \to \Cat$
    is compatible with finite products.
    Moreover, if $T$ is a category admitting finite products, then
    \[\begin{tikzcd}[cramped]
        {\Fun^\times(T,\Cat_{*\sslash})} & {\Fun(T,\Cat_{*\sslash})} \\
        {\Fun^\times(T,\Cat)} & {\Fun(T,\Cat)}
        \arrow[hook, from=1-1, to=1-2]
        \arrow["{p_*}"', from=1-1, to=2-1]
        \arrow["{p_*}", from=1-2, to=2-2]
        \arrow[hook, from=2-1, to=2-2]
    \end{tikzcd}\]
    is a map of cocartesian fibrations classifying $\Fun^\times_{/T}(T,\int -) \subseteq \Fun_{/T}(T,\int -)$.
\end{proposition}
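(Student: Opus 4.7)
The plan is to first verify directly that $p \colon \Cat_{*\sslash} \to \Cat$ is compatible with finite products, and then deduce the claim about functor categories by combining \cref{prop:fun-univ-cocart} with \cref{lem:obvious}. Unwinding the definition of the cocartesian unstraightening of $\id_{\Cat}$, an object of $\Cat_{*\sslash}$ is a pair $(\cC, c)$ with $c \in \cC$, a morphism $(\cC, c) \to (\cD, d)$ consists of a functor $F \colon \cC \to \cD$ together with a morphism $\alpha \colon Fc \to d$ in $\cD$, and such a morphism is $p$-cocartesian iff $\alpha$ is an equivalence. Hence the product of $(\cC, c)$ and $(\cD, d)$ is $(\cC \times \cD, (c,d))$, with projections represented by $(\pi_1, \id_c)$ and $(\pi_2, \id_d)$, which are $p$-cocartesian by the above criterion; the terminal object is $(*,*)$. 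Both are preserved by $p$ by construction, so $p$ is compatible with finite products.

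For the second statement, fix $T$ admitting finite products. Since products in the two functor categories are computed pointwise and $p$ preserves products, $p_*$ restricts to a functor $\Fun^\times(T, \Cat_{*\sslash}) \to \Fun^\times(T, \Cat)$ fitting into the displayed square. By \cref{prop:fun-univ-cocart}, the unrestricted $p_* \colon \Fun(T, \Cat_{*\sslash}) \to \Fun(T, \Cat)$ is the cocartesian fibration classifying $F \mapsto \Fun_{/T}(T, \int F)$, with the identification of fibers realized by the natural equivalence $\Fun_{/\Cat}(T, \Cat_{*\sslash}) \times_{\Fun(T,\Cat)} \{F\} \simeq \Fun_{/T}(T, \int F)$. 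Since this equivalence is computed pointwise and $p$ preserves products, a functor $T \to \Cat_{*\sslash}$ over $F$ is product-preserving iff the corresponding section $T \to \int F$ is. As $F$ preserves products, the pullback $\int F \to T$ is compatible with finite products by \cref{prop:cocart-prod}, so \cref{lem:obvious}(2) identifies product-preserving sections with those that are cocartesian on projections; combining these identifications, the fiber of the restricted $p_*$ over $F$ is $\Fun^\times_{/T}(T, \int F)$ as claimed.

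It remains to verify that the left vertical is a cocartesian fibration and that the square preserves cocartesian edges. Cocartesian lifts in the unrestricted $p_*$ are computed pointwise, so under the above equivalence, the cocartesian lift of $\alpha \colon F \to G$ at a product-preserving section $s \colon T \to \int F$ corresponds to the composite $T \xto{s} \int F \xto{\int \alpha} \int G$, where $\int \alpha$ is the induced morphism of cocartesian fibrations over $T$. Since $\int \alpha$ preserves $T$-cocartesian edges and $s$ sends the product projections of $T$ to cocartesian morphisms, so does the composite, which is therefore again product-preserving by another application of \cref{lem:obvious}(2). This shows at once that cocartesian lifts in the restricted $p_*$ exist and coincide with those in the unrestricted one, completing the proof. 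The main obstacle is really just to avoid unwinding the equivalence of \cref{prop:fun-univ-cocart} more than necessary, which is handled by the twofold application of \cref{lem:obvious}(2) on either side of it.
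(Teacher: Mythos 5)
Your proof is correct and follows essentially the same route as the paper's, relying on \cref{prop:fun-univ-cocart}, \cref{prop:cocart-prod}, and \cref{lem:obvious} in the same roles. The only variations are cosmetic: you verify compatibility with finite products by an explicit description of $\Cat_{*\sslash}$ where the paper simply notes $\id_\Cat$ preserves products and invokes \cref{prop:cocart-prod}, and for closure under cocartesian pushforward you phrase the argument in the section picture as $\int\alpha\circ s$ rather than via the paper's diagram of cocartesian morphisms in $\Cat_{*\sslash}$ and right-cancellability.
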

\begin{proof}
    Since $\id_{\Cat}$ clearly preserves finite products,
    $p$ is compatible with finite products by \cref{prop:cocart-prod}.
    Moreover, the right $p_*$ is a cocartesian fibration by the previous proposition.
    Taking fibers over $F \in \Fun^\times(T,\Cat)$ in the above square
    gives the fully faithful inclusion $\Fun_{/T}^\times(T,\int F) \subseteq \Fun_{/T}(T,\int F)$, since $p$ and $F$ in the following pullback square preserve finite products,
    so a section $T \to \int F$ preserves finite products
    if and only if $T \to \int F \to \Cat_{*\sslash}$ does:
    \[\begin{tikzcd}[cramped]
        {\int F} & {\Cat_{*\sslash}} \\
        T & \Cat
        \arrow[from=1-1, to=1-2]
        \arrow[from=1-1, to=2-1]
        \arrow["\lrcorner"{anchor=center, pos=0.125}, draw=none, from=1-1, to=2-2]
        \arrow["p", from=1-2, to=2-2]
        \arrow[curve={height=-12pt}, dashed, from=2-1, to=1-1]
        \arrow[dashed, from=2-1, to=1-2]
        \arrow["F"', from=2-1, to=2-2]
    \end{tikzcd}\]
    It remains to see that these full subcategories
    are closed under the cocartesian pushforwards of $p_* \colon \Fun(T,\Cat_{*\sslash}) \to \Fun(T,\Cat)$.
    To this end, it suffices to check that if
    $F \colon T \to \Cat_{*\sslash}$ preserves finite products
    and $\alpha \colon pF \Rightarrow G$ is some map in $\Fun^\times(T,\Cat)$,
    then the cocartesian pushforward $\alpha_!F \colon T \to \Cat_{*\sslash}$ again preserves finite products.

    Since we already know that $p\alpha_!F \simeq G$ preserves finite products,
    the fact that $p$ is compatible with finite products tells us that it suffices
    to check that $\alpha_!F$ sends projections in $T$ to $p$-cocartesian morphisms.
    So let $s,t \in T$ and consider $\pr_s \colon s \times t \to s$.
    Since $F \Rightarrow \alpha_!F$ is pointwise cocartesian, we know that $\alpha_!F(\pr_s)$
    is the unique morphism making the following square commute:
    \[\begin{tikzcd}[cramped]
        {F(s \times t)} & {\alpha_!F(s \times t)} \\
        {F(s)} & {\alpha_!F(s)}
        \arrow["\cc", from=1-1, to=1-2]
        \arrow["{F(\pr_s)}"', from=1-1, to=2-1]
        \arrow["{\alpha_!F(\pr_s)}", from=1-2, to=2-2]
        \arrow["\cc"', from=2-1, to=2-2]
    \end{tikzcd}\]
    where the horizontal morphisms are cocartesian.
    Since $F$ preserves finite products, we see that the left vertical map is also cocartesian,
    and thus right-cancellability of cocartesian morphisms implies that also the right vertical map
    is cocartesian, as desired.
\end{proof}

For a category with finite products $\cE$, we denote by $\Mon(\cE) \subseteq \Fun(\Delta^\op,\cE)$
the full subcategory on monoids in $\cE$ in the sense of \cite[Definition 4.1.2.5]{HA}.
Concretely, a functor $X \colon \Delta^\op \to \cE$ is a monoid if it satisfies the Segal condition,
that for each $[n] \in \Delta^\op$ the face maps $d_i \colon [1] = \{i-1,i\} \to [n]$
induce an equivalence $X_n \simeq \prod_{i=1}^n X_1$ (in particular $X_0 = *$).
The composite functor
\[
    \Delta^\op \xto{\text{Cut}} \Fin_* \simeq \Span_{\inj,\all}(\F) \subset \Span(\F)
\]
induces via restriction a forgetful functor $\fgt \colon \Mack(\cE) \simeq \CMon(\cE) \xto{\textup{Cut}^*} \Mon(\cE)$;
here Cut and its induced forgetful functor are discussed in \cite[Definition 2.4.2.1, Proposition 4.1.2.10]{HA},
and the composite $\Fin_* \simeq \Span_{\inj,\all}(\F) \subset \Span(\F)$ and induced equivalence $\Mack(\cE) \simeq \CMon(\cE)$ is \cite[Proposition C.1]{Bachmann-Hoyois}.

\begin{proposition}\label{prop:mack-univ-calg}
    Let $p \colon \Cat_{*\sslash} \to \Cat$ denote the universal cocartesian fibration,
    and let $\cK$ be a class of small sifted categories.
    Denote by $p^\cK \colon \Cat_{*\sslash}(\cK) \to \Cat(\cK)$ the pullback of $p$ along the subcategory
    inclusion $\Cat(\cK) \subset \Cat$.
    Consider the commutative square
    \[\begin{tikzcd}[cramped]
        {\Mack(\Cat_{*\sslash}(\cK))} & {\Mon(\Cat(\cK))} \\
        {\Mack(\Cat(\cK))} & {\Mon(\Cat(\cK))}
        \arrow["\fgt", from=1-1, to=1-2]
        \arrow["{\Mack(p^\cK)}"', from=1-1, to=2-1]
        \arrow["{\Mon(p^\cK)}", from=1-2, to=2-2]
        \arrow["\fgt", from=2-1, to=2-2]
    \end{tikzcd}\]
    Then both vertical maps are cocartesian fibrations classifying
    \[
        \CAlg(-) \colon \Mack(\Cat(\cK)) \to \Cat
        \quad\text{respectively}\quad
        \Alg(-) \colon \Mon(\Cat(\cK)) \to \Cat
    \]
    and the square gives a map of cocartesian fibrations classifying the forgetful
    transformation $\CAlg(-) \Rightarrow \Alg(-)$ of functors $\Mack(\Cat(\cK)) \to \Cat$.
\end{proposition}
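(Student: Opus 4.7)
The plan is to deduce both cocartesian fibration structures from \cref{prop:fun-univ-cocart,prop:univ-cocart-prod} applied with $T = \Delta^\op$ and $T = \Span(\F)$, respectively, using that $\Mon(-) \subseteq \Fun(\Delta^\op,-)$ is a full subcategory and $\Mack(-) = \Fun^\times(\Span(\F),-)$. First I would verify that $p^\cK$ is itself a cocartesian fibration compatible with finite products: since $\Cat(\cK) \subseteq \Cat$ is closed under finite products (componentwise) and $p^\cK$ is the pullback of $p$ along this inclusion, \cref{lem:obvious}(1) applies. Consequently the conclusions of \cref{prop:fun-univ-cocart,prop:univ-cocart-prod} hold with $p$ replaced by $p^\cK$ and $\Cat$ by $\Cat(\cK)$ throughout, by basechange.

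For the $\Mack$-case I would apply the $p^\cK$-analogue of \cref{prop:univ-cocart-prod} to $T = \Span(\F)$, which has finite products since it is semiadditive (\cref{prop:span-semiadd}), to obtain that $\Mack(p^\cK)$ is a cocartesian fibration classifying $X \mapsto \Fun^\times_{/\Span(\F)}(\Span(\F), \smallint X)$. Since $\smallint X \to \Span(\F)$ is compatible with finite products by \cref{prop:cocart-prod}, \cref{lem:obvious}(2) identifies this fiber with the category of sections cocartesian on projections. In our case this in turn agrees with $\CAlg(X) = \Fun^{\F^\op\dcc}_{/\Span(\F)}(\Span(\F), \smallint X)$: every backwards map in $\Span(\F)$ decomposes as a biproduct of backwards maps to $*$, each of which is a diagonal, and a product-preserving section automatically lifts these to (cocartesian) diagonals in $\smallint X$.

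For the $\Mon$-case, $\Delta^\op$ does not admit finite products, so I would instead apply the $p^\cK$-analogue of \cref{prop:fun-univ-cocart} to get that $p^\cK_* \colon \Fun(\Delta^\op, \Cat_{*\sslash}(\cK)) \to \Fun(\Delta^\op, \Cat(\cK))$ is a cocartesian fibration with fibers $\Fun_{/\Delta^\op}(\Delta^\op, \smallint X)$, and then restrict along the fully faithful inclusions $\Mon(-) \subseteq \Fun(\Delta^\op,-)$. Two things require checking: (i) the fiber over $X \in \Mon(\Cat(\cK))$ restricts to $\Alg(X)$, and (ii) cocartesian pushforwards of $p^\cK_*$ preserve the Segal condition. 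For (i), a lift $Y$ of $X$ corresponds to a section of $\smallint X \to \Delta^\op$, and using $X_n \simeq X_1^n$ one sees directly that the Segal condition for $Y$ is equivalent to this section being cocartesian over all Segal/inert maps. For (ii), given a monoid $F$ and a map of monoids $\alpha \colon p^\cK F \Rightarrow G$, I would argue entirely parallel to the proof of \cref{prop:univ-cocart-prod}: for each Segal map $\rho_i \colon [n] \to [1]$ we have a commutative square in $\Cat_{*\sslash}(\cK)$ with cocartesian horizontals $F_n \to (\alpha_!F)_n$ and $F_1 \to (\alpha_!F)_1$ (pointwise pushforward) and cocartesian left vertical $F(\rho_i)$ (since $F$ is a monoid); right-cancellability of cocartesian morphisms then yields that $(\alpha_!F)(\rho_i)$ is cocartesian, so $\alpha_!F$ is a monoid. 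Step (ii) is the main technical point, though it essentially recycles the argument of \cref{prop:univ-cocart-prod}.

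For the commutative square in the statement and its identification as a map of cocartesian fibrations classifying $\CAlg \Rightarrow \Alg$: both vertical maps are restrictions of $p^\cK_*$ and both horizontal forgetful functors are restriction along $\mathrm{Cut} \colon \Delta^\op \to \Span(\F)$, so the square commutes by associativity of postcomposition. Since cocartesian morphisms in both $\Mack(p^\cK)$ and $\Mon(p^\cK)$ are precisely the pointwise cocartesian natural transformations, the top horizontal functor preserves them. Finally, the induced natural transformation on classified functors is, over $X \in \Mack(\Cat(\cK))$, given by precomposition with $\mathrm{Cut}$ on sections, which is exactly the standard forgetful $\CAlg(X) \to \Alg(\mathrm{Cut}^*X)$.
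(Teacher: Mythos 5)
Your overall strategy is essentially the paper's: reduce to the unrestricted case (the paper does this via ``$\Mack$ and $\Mon$ preserve pullbacks,'' your basechange along $\Cat(\cK) \subset \Cat$ amounts to the same thing), apply \cref{prop:fun-univ-cocart,prop:univ-cocart-prod}, restrict to the subcategories of product/Segal objects, and argue the pushforwards preserve them. The pushforward-preservation argument in the $\Mon$-case and the analysis of the commutative square are correct and match the paper. However, there is a genuine error in the $\Mack$-case. You assert that the backwards maps $X \to *$ in $\Span(\F)$ are ``diagonals''; they are in fact \emph{projections}: a backwards map $X \to 1$ is induced by a map $1 \to X$ in $\F$, which is automatically a summand inclusion. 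This matters because the only cocartesianness that \cref{lem:obvious}(2) hands you is over projections; if the maps in question were diagonals (which arise from surjections $Y \twoheadrightarrow X$, not injections), your argument would not close, since nothing forces a product-preserving section to be cocartesian on diagonals. The content you are glossing is precisely the observation in the paper's proof: by \cref{prop:cocart-prod}, $s(m \leftarrow n)$ is cocartesian provided each postcomposition with $s$ of a projection $n \leftarrow 1$ is, and those composites $s(m \leftarrow n \leftarrow 1)$ are again $s$ of projections, hence cocartesian by hypothesis.

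A smaller gloss occurs in the $\Mon$-case: you state that the Segal condition for the lifted section is ``equivalent to being cocartesian over all Segal/inert maps,'' but a priori it only gives cocartesianness on the Segal maps $[n] \twoheadrightarrow [1]$. The paper records separately that cocartesian-on-Segal implies cocartesian-on-inert, by noting that precomposing an interval inclusion $\{k,\dots,\ell\} \subseteq [n]$ with a Segal map of its source yields a Segal map of $[n]$, and then running the same $\Span(\F)$-style reduction. Your pushforward argument for (ii) is fine as written.
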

\begin{proof}
    Since $\Mack(-)$ and $\Mon(-)$ preserve pullbacks,
    it suffices to show this for $\cK = \empty$.
    It follows from the previous proposition that $\Mack(p) = \Fun^\times(\Span(\F),p)$
    is a cocartesian fibration classifying $\Fun^\times_{/\Span(\F)}(\Span(\F),\int -)$.
    We claim that this functor is equivalent to $\CAlg(-) \coloneqq \Fun_{/\Span(\F)}^{\F^\op\dcc}(\Span(\F),\int -)$.
    In view of \cref{lem:obvious}, it remains to check that
    for $\cC \in \Mack(\Cat)$ with unstraightening $q \colon \int \cC \to \Span(\F)$
    and a section $s$ of $q$, if $s$ is cocartesian on projections then it is already cocartesian on all of $\F^\op$.
    Recall from \cref{prop:span-semiadd}
    that for $n,m \in \F$ the backwards inclusions $n \sqcup m \leftarrow n = n$
    and $n \sqcup m \leftarrow m = m$ exhibit $n \sqcup m$ as product of $n$ and $m$ in $\Span(\F)$.
    By \cref{prop:cocart-prod} we can check that $s(m \leftarrow n)$ is cocartesian after postcomposing
    with a cocartesian lift of the projection $\pr_i \colon n \xleftarrow{i} 1$ for each $i=1,\dots,n$
    (note also that it is clear for $m = \empty$).
    Since $s$ is cocartesian on projections, we may pick $s(n \xleftarrow{i} 1)$ to be this cocartesian lift,
    and note that the resulting composite $s(m \leftarrow n \xleftarrow{i} 1)$ is cocartesian
    as it is again a projection.

    Similarly, it follows from \cref{prop:fun-univ-cocart} that $\Fun(\Delta^\op,p)$
    is a cocartesian fibration classifying $\Fun_{/\Delta^\op}(\Delta^\op, \int -)$.
    The same argument as in \cref{prop:univ-cocart-prod} using the Segal morphisms in $\Delta^\op$
    instead of arbitrary projections shows that this restricts to a cocartesian
    fibration $\Mon(p)$ classifying the subfunctor $\Fun^\textup{Seg}_{/\Delta^\op}(\Delta^\op,-)$
    on those sections which send the Segal morphisms to projections, i.e.~which are cocartesian on the Segal maps.
    Recall (e.g.~from \cite[Proposition 4.1.3.19]{HA}) that if $\cC \in \Mon(\Cat)$,
    then $\Alg(\cC) \simeq \Fun_{/\Delta^\op}^{\inert\dcc}(\Delta^\op,\int \cC)$
    where inert maps in $\Delta$ are inclusions of intervals.
    So as above, it remains to verify that for $\cC \in \Mon(\Cat)$ with unstraightening
    $q \colon \int \cC \to \Delta^\op$ and a section $s$ of $q$, if $s$ is cocartesian on Segal maps,
    then it is already cocartesian on all inert maps.
    But note that if $\{k,k+1,\dots,\ell\} \subseteq [n]$ is the inclusion of some interval,
    then for any Segal map $\{i,i+1\} \subseteq \{k,k+1,\dots,\ell\}$ also the composition
    to $[n]$ is a Segal map, so we conclude with the same argument as before.

    Finally, we need to show that the square induces a map of cocartesian fibrations,
    i.e.~that the top horizontal map sends $\Mack(p)$-cocartesian transformations
    to $\Mon(p)$-cocartesian ones.
    But this is clear, as in both cases they are given precisely by those transformations which are pointwise $p$-cocartesian.
\end{proof}

\section{2-Natural and Lax Natural Transformations}\label{appendix:lax}

The purpose of this appendix is twofold;
we begin with a short reminder on 2-natural transformations and the 2-Yoneda lemma
which we need for the uniqueness result \cref{prop:unique-norm}.
We then recall the notion of lax natural transformations,
and prove two lemmas needed in the proof of \cref{thm:mod}.

The category $\Cat_2$ of 2-categories is cartesian closed,
and admits an internal hom $\mathbf{Fun}_2(-,-)$,
the 2-category of 2-functors.

\begin{definition}
    Let $\mathbf{A},\mathbf{B}$ be 2-categories and $F,G \colon \mathbf{A} \to \mathbf{B}$ two 2-functors. We define the category $\Nat_2(F,G)$ of 2-natural transformations from $F$ to $G$ as the hom-category $\hom_{\mathbf{Fun}_2(\mathbf{A},\mathbf{B})}(F,G)$
    inside the 2-category of 2-functors $\mathbf{Fun}_2(\mathbf{A},\mathbf{B})$.
\end{definition}

In other words, a 2-natural transformation is simply a 1-morphism
in the category $\Fun_2(\mathbf{A},\mathbf{B})$ of 2-functors,
i.e.~a functor $\Delta^1 \to \Fun_2(\mathbf{A},\mathbf{B})$.
Now we have adjunctions
\[
    \Cat(\Delta^1,\Fun_2(\mathbf{A},\mathbf{B}))
    \simeq \Cat_2(\Delta^1,\mathbf{Fun}_2(\mathbf{A},\mathbf{B}))
    \simeq \Cat_2(\mathbf{A}, \mathbf{Ar}(\mathbf{B}))
\]
so we can equivalently think of a 2-natural transformation
as a 2-functor $\mathbf{A} \to \mathbf{Ar}(\mathbf{B})$.
In the main text, we will mostly care about $\mathbf{B} = \Cat$,
and we will simply write $\Ar(\Cat) = \Fun(\Delta^1,\Cat)$
and understand it as inheriting its 2-categorical structure from $\Cat$,
as in \cref{rem:2cat}.
In this special case of $\mathbf{B} = \Cat$, the data of a 2-natural transformation
$\alpha \colon F \Rightarrow G$ essentially consists of:
\begin{itemize}
    \item a functor $\alpha_a \colon Fa \to Ga$ for every object $a \in \mathbf{A}$;
    \item a natural equivalence $\sigma_f \colon (Gf)\alpha_a \simeq \alpha_bFf$
        for every map $f \colon a \to b$ in $\mathbf{A}$;
    \item for every 2-morphism $\phi \colon f \Rightarrow f'$ in $\mathbf{A}$
        a 3-cell filling the following cylinder
        \[\begin{tikzcd}[cramped]
            Fa && Ga \\
            \\
            Fb && Gb
            \arrow["{\alpha_a}", from=1-1, to=1-3]
            \arrow[""{name=0, anchor=center, inner sep=0}, "{Ff'}"{description}, curve={height=-18pt}, from=1-1, to=3-1]
            \arrow[""{name=1, anchor=center, inner sep=0}, "Ff"{description}, curve={height=18pt}, from=1-1, to=3-1]
            \arrow[""{name=2, anchor=center, inner sep=0}, "{Gf'}"{description}, curve={height=-18pt}, from=1-3, to=3-3]
            \arrow[""{name=3, anchor=center, inner sep=0}, "Gf"{description}, curve={height=18pt}, from=1-3, to=3-3]
            \arrow["{\alpha_b}", from=3-1, to=3-3]
            \arrow["{F\phi}", shorten <=7pt, shorten >=7pt, Rightarrow, from=1, to=0]
            \arrow["{G\phi}", shorten <=7pt, shorten >=7pt, Rightarrow, from=3, to=2]
        \end{tikzcd}\]
        where the front face is filled by $\sigma_f$ and the back face by $\sigma_{f'}$;

    \item further coherence data.
\end{itemize}

We will need the following version of the 2-categorical Yoneda lemma.

\begin{theorem}\label{thm:2-yoneda}
    Let $\mathbf{A}$ be a (large) 2-category, $X \in \mathbf{A}$ an object,
    and $F \colon \mathbf{A} \to \CAT$ a 2-functor.
    Then evaluation at $\id_X$ induces an equivalence
    \[
        \Nat_2(\hom_{\mathbf{A}}(X,-),F) \xto{\simeq} F(X).
    \]
\end{theorem}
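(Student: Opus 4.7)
The plan is to deduce this from the enriched Yoneda lemma. As recalled in \cref{rem:2cat}, by the comparison results of \cite{Heine} one may identify 2-categories with $\Cat$-enriched $\infty$-categories in a homotopically meaningful way. Under this identification, a 2-functor $F \colon \mathbf{A} \to \CAT$ corresponds to a $\Cat$-enriched functor, the 2-category of 2-functors $\mathbf{Fun}_2(\mathbf{A}, \CAT)$ corresponds to the enriched functor category, and $\Nat_2(G,F)$ corresponds to the enriched natural transformation object. The statement then becomes a direct instance of the enriched Yoneda lemma for $\Cat$-enriched $\infty$-categories, which is proved in \cite{Hinich} and \cite{Gepner-Haugseng}.

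For a more hands-on construction one can exhibit the inverse to $\ev_{\id_X}$ explicitly: given $x \in F(X)$, form the 2-natural transformation $\alpha^x$ whose component at $a \in \mathbf{A}$ is the composite
\[
    \alpha^x_a \colon \hom_{\mathbf{A}}(X, a) \xrightarrow{F_{X,a}} \hom_{\CAT}(F(X), F(a)) \xrightarrow{\ev_x} F(a),
\]
with $F_{X,a}$ the functor on hom-categories induced by the 2-functor $F$. The 2-naturality data, including the equivalences $F(f) \circ \alpha^x_a \simeq \alpha^x_b \circ \hom_{\mathbf{A}}(X, f)$ for $f \colon a \to b$ and the associated 3-cells attached to 2-morphisms of $\mathbf{A}$, are supplied precisely by the (2-)functoriality of $F$. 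Unitality gives $\alpha^x_X(\id_X) \simeq F(\id_X)(x) \simeq x$, so $\ev_{\id_X} \circ (x \mapsto \alpha^x) \simeq \id_{F(X)}$. Conversely, for any 2-natural transformation $\alpha$, the 2-naturality square for $f \colon X \to a$ evaluated at $\id_X \in \hom_{\mathbf{A}}(X,X)$ yields a natural equivalence $\alpha_a(f) \simeq F(f)(\alpha_X(\id_X))$, which, together with its higher-dimensional analogues, upgrades to an equivalence $\alpha \simeq \alpha^{\alpha_X(\id_X)}$ of 2-natural transformations.

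The main obstacle to executing the direct approach in an $\infty$-categorical setting is the bookkeeping of all the higher coherence data intrinsic to 2-natural transformations, which makes a hand-written argument quite involved. This is exactly what the enriched Yoneda lemma handles cleanly, so I would simply invoke it and cite the relevant references rather than unwind the coherences by hand.
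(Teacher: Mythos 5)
Your argument takes essentially the same route as the paper: both reduce the statement to the enriched Yoneda lemma for $\Cat$-enriched $\infty$-categories via the identification of 2-categories with $\Cat$-enriched categories and cite \cite{Hinich} for the enriched Yoneda lemma (the paper additionally references \cite{Ben-Moshe} for the specific reformulation). Your hands-on sketch of the inverse is a reasonable informal supplement, but, as you correctly note, it is not a substitute for the enriched Yoneda lemma given the coherence data involved.
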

\begin{proof}
    As briefly mentioned in \cref{rem:2cat},
    2-categories are by definition $\Cat$-enriched categories.
    The 2-categorical Yoneda lemma is then a special case of the general
    enriched Yoneda lemma proven in \cite[Theorem 6.2.7]{Hinich}.
    We have taken the above reformulation of it from the proof of \cite[Proposition 13]{Ben-Moshe}.
\end{proof}

We now give a recollection on lax natural transformations.
For an extensive treatment, we refer the reader to \cite{AGH-Lax},
although we will only need the case where our domain category is actually a category as opposed to a general 2-category.

So let $\cC$ be a category (which we can view as a 2-category with only invertible 2-morphisms)
and $\mathbf{D}$ a 2-category.
Given two functors $F,G \colon \cC \to \mathbf{D}$ of 2-categories,
a lax natural transformation $\alpha \colon F \Rightarrow G$ essentially consists of a collection of morphisms
$\alpha_c \colon F(c) \to G(c)$ for each $c \in \cC$, together with lax naturality squares
\[\begin{tikzcd}[cramped]
	{F(c)} & {G(c)} \\
	{F(d)} & {G(d)}
	\arrow["{\alpha_c}", from=1-1, to=1-2]
	\arrow["Ff"', from=1-1, to=2-1]
	\arrow["{\alpha_f}"{description}, Rightarrow, from=1-2, to=2-1]
	\arrow["Gf", from=1-2, to=2-2]
	\arrow["{\alpha_d}"', from=2-1, to=2-2]
\end{tikzcd}\]
where $\alpha_f \colon G(f)\alpha_c \Rightarrow \alpha_dF(f)$ is a 2-morphism in $\mathbf{D}$.
As usual, the data of a lax natural transformation also incorporates an infinite amount of further coherences.
Formally, one can define lax natural transformations using the so-called Gray tensor product of 2-categories $\boxtimes \colon \Cat_2 \times \Cat_2 \to \Cat_2$ (see \cite[Section 2.2]{AGH-Lax}).
Specifically, one defines the category $\Fun^\lax(\cC,\mathbf{D})$ with objects the functors $\cC \to \mathbf{D}$ and morphisms the lax natural transformations via a natural adjunction equivalence
\[
    \map_{\Cat}(K,\Fun^\lax(\cC,\mathbf{D})) \simeq \map_{\Cat_2}(K \boxtimes \cC, \mathbf{D}).
\]
In the special case where $\mathbf{D} = \Cat$ is the 2-category of 1-categories,
there is a very convenient fibrational model available.

\begin{definition}
    For a category $\cC$, we denote by $\Cocart^\text{lax}(\cC) \subseteq \Cat_{/\cC}$ the \emph{full} subcategory on cocartesian fibrations. We refer to morphisms in this category as lax maps of cocartesian fibrations.
\end{definition}

\begin{theorem}[{{\cite[Theorem E]{HHLN-Lax}}}]\label{thm:lax-str}
    There is a natural (via pullback respectively precomposition) equivalence
    \[
        \Cocart^\lax(\cC) \simeq \Fun^\lax(\cC,\Cat)
    \]
    given on objects by cocartesian straightening.
\end{theorem}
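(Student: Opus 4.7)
The plan is to prove this by matching both sides with a common fibrational object over a suitable enlargement of $\cC$ that encodes the lax direction. Specifically, a lax natural transformation between functors $F,G \colon \cC \to \Cat$ is, by the defining adjunction for $\Fun^\lax$, a 2-functor $\Delta^1 \boxtimes \cC \to \Cat$ whose restriction to $\{0\} \boxtimes \cC$ and $\{1\} \boxtimes \cC$ recovers $F$ and $G$. On the fibrational side, a lax map $E \to E'$ in $\Cocart^\lax(\cC)$ between cocartesian fibrations ought to correspond to an auxiliary cocartesian fibration $\widetilde E \to \Delta^1 \boxtimes \cC$ whose fibers over the two endpoints are $E$ and $E'$. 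My strategy is to build this correspondence carefully and then apply straightening over $\Delta^1 \boxtimes \cC$.

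The first key step is to construct a functor $\Phi \colon \Cocart^\lax(\cC) \to \Fun^\lax(\cC,\Cat)$. On objects I would use the usual cocartesian straightening equivalence $\Cocart(\cC) \simeq \Fun(\cC,\Cat)$. To handle morphisms, I would view a functor $E \to E'$ over $\cC$ (not required to preserve cocartesian edges) as a cocartesian fibration over $\Delta^1 \times \cC$ via the Grothendieck construction for the arrow in $\Cat_{/\cC}$, and then lift the structure along the natural comparison $\Delta^1 \times \cC \to \Delta^1 \boxtimes \cC$. The central observation, which deserves its own lemma, is that the cocartesian fibrations over $\Delta^1 \boxtimes \cC$ whose restrictions to $\{i\} \boxtimes \cC$ remain cocartesian are precisely those arising from lax maps of cocartesian fibrations over $\cC$; the Gray tensor product is cooked up exactly so that its morphisms record not-necessarily-commuting squares of the shape appearing in the lax naturality constraint $\alpha_f \colon G(f)\alpha_c \Rightarrow \alpha_d F(f)$.

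To upgrade $\Phi$ to an equivalence, I would show essential surjectivity by unstraightening over $\Delta^1 \boxtimes \cC$ and restricting to the two endpoints, and fully faithfulness by an analogous argument on mapping spaces: $K$-shaped families of lax maps correspond to functors $K \boxtimes \Delta^1 \boxtimes \cC \to \Cat$, which by associativity of $\boxtimes$ and the defining adjunction of $\Fun^\lax$ match $K$-shaped families of lax natural transformations. In practice, it is cleanest to establish the equivalence at the level of complete Segal spaces or in a double-categorical enhancement, where both sides are computed by the same double nerve.

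The main obstacle is the straightening theorem over the Gray tensor product $\Delta^1 \boxtimes \cC$, which is not a standard cartesian product and therefore not covered by the classical straightening equivalence. Handling this honestly seems to require either a combinatorial model (e.g.~scaled simplicial sets, where the Gray tensor product is explicit and straightening is known) or a bootstrapping argument that reduces to the case $\cC = [n]$ and uses the explicit description of $\Delta^1 \boxtimes [n]$. Once the straightening over Gray products is in hand, assembling the equivalence is essentially formal; without it, matching all the higher coherences encoded by a lax natural transformation with the data of a lax map of cocartesian fibrations is the hard technical core of the argument.
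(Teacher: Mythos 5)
The paper does not itself prove this statement; it quotes it as \cite[Theorem E]{HHLN-Lax} and uses it as a black box, so there is no in-text argument with which to compare your sketch. Evaluated on its own terms, your proposal correctly identifies the shape a proof must have, and your closing remark that the genuine technical core is a straightening theorem over Gray tensor products is exactly right.

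Two things in the sketch are not quite correct as stated, though both are in principle repairable. First, the unstraightening of an arbitrary functor $E \to E'$ over $\cC$ (one not assumed to preserve cocartesian edges) is \emph{not} a cocartesian fibration over $\Delta^1 \times \cC$: it is only locally cocartesian, since the cocartesian lift of a ``vertical'' morphism $(0,c)\to(0,d)$ followed by that of a ``horizontal'' morphism $(0,d)\to(1,d)$ need not itself be cocartesian over the composite. This is precisely the content of \cref{lem:cocart-morphisms} in the present paper, which shows the total space is cocartesian over $\Delta^1 \times S$ if and only if the functor preserves cocartesian edges. The failure of composability is exactly the datum recorded by the non-invertible $2$-cell of $\Delta^1 \boxtimes \cC$, so the accurate phrasing is that a lax map gives rise to a cocartesian fibration over the Gray product $\Delta^1 \boxtimes \cC$, once a suitable notion of cocartesian fibration over an $(\infty,2)$-category has been fixed. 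Second, the comparison functor runs $\Delta^1 \boxtimes \cC \to \Delta^1 \times \cC$ (it inverts the $2$-cells), not the other way around; there is no natural functor $\Delta^1 \times \cC \to \Delta^1 \boxtimes \cC$ along which one could ``lift the structure,'' and the fibration one wants does not arise by transport from anything over $\Delta^1 \times \cC$. Both slips reinforce your own diagnosis: everything hinges on straightening for (co)cartesian fibrations over Gray tensor products, and until that is established (via a scaled-simplicial-set model where $\boxtimes$ is explicit, or by bootstrapping over $\Delta^1 \boxtimes [n]$) the remaining steps cannot be carried through.
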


An immediate consequence of this theorem is the following construction.

\begin{construction}\label{lem:cocart-lax-square}
    Let $p \colon \cE \to \cC$ be a cocartesian fibration.
    Then the lax map of cocartesian fibrations
    \[\begin{tikzcd}[cramped]
        \cE && {\cE \times \cC} \\
        & \cC
        \arrow["{(\id,p)}", from=1-1, to=1-3]
        \arrow["p"', from=1-1, to=2-2]
        \arrow["\pr", from=1-3, to=2-2]
    \end{tikzcd}\]
    classifies a lax natural transformation $\Str^\cocart(p) \xRightarrow{\text{lax}} \const \cE$
    which is pointwise given by the inclusion of fibers, with lax naturality squares
    \[\begin{tikzcd}
        {\cE_c} & \cE \\
        {\cE_d} & \cE
        \arrow["\inc_c", from=1-1, to=1-2]
        \arrow["{f_!}"', from=1-1, to=2-1]
        \arrow["\cc"{description}, Rightarrow, from=1-2, to=2-1]
        \arrow[Rightarrow, no head, from=1-2, to=2-2]
        \arrow["\inc_d"', from=2-1, to=2-2]
    \end{tikzcd}\]
    where the natural transformation is pointwise cocartesian.
    Note that the natural transformation is the unique one which is pointwise cocartesian,
    i.e.~is a cocartesian lift of $\const f$ for the cocartesian fibration
    $\Fun(\cE_c,\cE) \to \Fun(\cE_c,\cC)$.
\end{construction}

Using this, we can more generally understand where the lax naturality squares
of the lax natural transformation associated to a lax map of cocartesian fibrations
come from.
Namely, if $\alpha \colon \cE \to \cF$ is a map in $\Cocart^\lax(\cC)$
and $f \colon c \to d$ a morphism in $\cC$, straightening produces functors $f_! \colon \cE_c \to \cE_d$
and $f_! \colon \cF_c \to \cF_d$. Moreover, since $\alpha$ is a functor over $\cC$,
it restricts to $\alpha_c \colon \cE_c \to \cF_c$ and $\alpha_d \colon \cE_d \to \cF_d$.
Given $X \in \cE_c$, we can factor the image the cocartesian map $X \to f_!X$
under $\alpha$ as $\alpha_cX \xto{\cc} f_!\alpha_c X \to \alpha_df_!X$.
We claim that the maps $f_!\alpha_c X \to \alpha_d f_!X$ for varying $X$
assemble into a natural transformation
\[\begin{tikzcd}[cramped]
	{\cE_c} & {\cF_c} \\
	{\cE_d} & {\cF_d}
	\arrow["{\alpha_c}", from=1-1, to=1-2]
	\arrow["{f_!}"', from=1-1, to=2-1]
	\arrow["{\alpha_f}"{description}, Rightarrow, from=1-2, to=2-1]
	\arrow["{f_!}", from=1-2, to=2-2]
	\arrow["{\alpha_d}"', from=2-1, to=2-2]
\end{tikzcd}\]
Indeed, by whiskering the pointwise cocartesian natural transformation $\inc_c \Rightarrow \inc_df_!$
from the above construction with $\alpha$ we obtain a morphism $\alpha\inc_c \Rightarrow \alpha\inc_d f_!$
in $\Fun(\cE_c,\cF)$ lying over $\const f$.
Factoring this into its cocartesian and fiberwise part thus produces
\[\begin{tikzcd}[cramped]
	{\cE_c} && \cF \\
	& {\cF_d} \\
	{\cE_d} && \cF
	\arrow["{\alpha\inc_c \simeq \inc_c\alpha_c}", from=1-1, to=1-3]
	\arrow["{f_!\inc_c\alpha_c}"{description}, from=1-1, to=2-2]
	\arrow["{f_!}"', from=1-1, to=3-1]
	\arrow["\cc"{description}, Rightarrow, from=1-3, to=2-2]
	\arrow[Rightarrow, no head, from=1-3, to=3-3]
	\arrow["{\alpha_f}"{description}, Rightarrow, from=2-2, to=3-1]
	\arrow["{\inc_d}"{description}, from=2-2, to=3-3]
	\arrow["{\alpha\inc_d \simeq \inc_d \alpha_d}"', from=3-1, to=3-3]
\end{tikzcd}\]
and since the bottom composite lands in $\cF_d \subset \cF$, this yields the desired natural transformation.
Finally, we need the following lemma in the proof of \cref{thm:mod}.

\begin{proposition}\label{prop:lax-curry}
    Let $A,B,C$ be categories. Then a functor
    \[
        F \colon A \to \Fun^\lax(B,\Fun(C,\Cat))
    \]
    is naturally equivalent (under ``currying'') to the datum of a functor
    \[
        F^\sharp \colon C \times A \to \Fun^\lax(B,\Cat)
    \]
    so that for each $a \in A$, the restriction $F^\sharp(-,a)$ factors through $\Fun(B,\Cat)$.
\end{proposition}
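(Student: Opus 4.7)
The strategy is to translate both sides of the claimed equivalence into categories of 2-functors by combining the universal property of $\Fun^\lax$ as an internal hom against the Gray tensor product (see the paragraph preceding \cref{thm:lax-str}) with the cartesian closure of $\Cat_2$. Iterating these adjunctions produces natural identifications
\begin{align*}
 \Cat(A, \Fun^\lax(B, \Fun(C, \Cat))) &\simeq \Cat_2((A \boxtimes B) \times C, \Cat), \\
 \Cat(C \times A, \Fun^\lax(B, \Cat)) &\simeq \Cat_2((C \times A) \boxtimes B, \Cat).
\end{align*}
The proposition then reduces to establishing a natural equivalence between 2-functors out of $(A \boxtimes B) \times C$ and those 2-functors out of $(C \times A) \boxtimes B$ satisfying the stated factorization condition.

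Next, I would construct a canonical comparison 2-functor
\[
 \Phi \colon (C \times A) \boxtimes B \longrightarrow (A \boxtimes B) \times C
\]
which is the identity on objects and sends a generating morphism $((f_C, f_A), g)$ to $((f_A, g), f_C)$. Since both $C$ and $A$ are 1-categories, the only nontrivial 2-cells in $(C \times A) \boxtimes B$ are the Gray interchange cells $\sigma_{(f_C, f_A), g}$. Under the decomposition $(f_C, f_A) \simeq (\id_c, f_A) \circ (f_C, \id_a)$, these split into components $\sigma_{(\id_c, f_A), g}$ and $\sigma_{(f_C, \id_a), g}$. On 2-cells, $\Phi$ sends $\sigma_{(\id_c, f_A), g}$ to the $A$-$B$ interchange $\sigma_{f_A, g}$ in $A \boxtimes B$ paired with the identity on $\id_c$, but collapses $\sigma_{(f_C, \id_a), g}$ to an identity, reflecting the absence of lax interchange between the $C$ and $A \boxtimes B$ factors of the cartesian product.

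The content of the proposition then becomes the claim that precomposition
\[
 \Phi^* \colon \Cat_2\bigl((A \boxtimes B) \times C, \Cat\bigr) \longrightarrow \Cat_2\bigl((C \times A) \boxtimes B, \Cat\bigr)
\]
is fully faithful with essential image the 2-functors sending each $\sigma_{(f_C, \id_a), g}$ to an identity. To identify this strictness condition with the factorization condition of the proposition, I would unwind: $F^\sharp(-, a) \colon C \to \Fun^\lax(B, \Cat)$ corresponds under the Gray-tensor adjunction to a 2-functor $C \boxtimes B \to \Cat$, and its factoring through $\Fun(B, \Cat) \subseteq \Fun^\lax(B, \Cat)$ is equivalent to sending the interchange cells of $C \boxtimes B$ to identities. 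These are precisely the images of $\sigma_{(f_C, \id_a), g}$ under the inclusion $C \boxtimes B \hookrightarrow (C \times A) \boxtimes B$ sending $(c, b) \mapsto (c, a, b)$.

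The main obstacle will be verifying that $\Phi$ exhibits $(A \boxtimes B) \times C$ as the localization of $(C \times A) \boxtimes B$ at the $C$-$B$ interchange cells. This can be handled by a generators-and-relations argument: since both $C$ and $A$ are 1-categories, the 2-cells of $(C \times A) \boxtimes B$ are generated under horizontal and vertical composition by the Gray interchange cells alone, and these decompose cleanly along the product decomposition of $C \times A$. Imposing strictness on the $C$-$B$ cells leaves exactly the 2-cells arising from $A$-$B$ interchange paired with identities in $C$, which reassemble into the 2-cells of the cartesian product $(A \boxtimes B) \times C$. Naturality in $A$, $B$, and $C$ of the resulting equivalence follows from functoriality of all the constructions involved.
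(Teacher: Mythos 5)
Your opening reduction agrees with the paper: both unwind the two mapping spaces via the Gray-tensor adjunction and cartesian closure, arriving at the need to compare $\Cat_2((C\times A)\boxtimes B,\Cat)$ with the 2-functors out of $C\times(A\boxtimes B)$ (your $(A\boxtimes B)\times C$ is the same thing up to the order of the cartesian factors). The divergence is in how the two proofs finish. The paper cites \cite[Corollary 2.9.2]{AGH-Lax}, which asserts directly that the canonical comparison $(C\times A)\boxtimes B \to C\times(A\boxtimes B)$ is a localization at 2-morphisms, together with the characterization of which 2-functors descend. You instead try to produce this localization from scratch, and this is where your argument has a genuine gap.

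Concretely, your ``generators-and-relations'' paragraph asserts without justification that (i) the 2-cells of $(C\times A)\boxtimes B$ are generated under composition by the Gray interchange cells, (ii) these interchange cells decompose cleanly along the product decomposition of $C\times A$, and (iii) inverting or trivializing the $C$--$B$ interchange cells yields exactly $(A\boxtimes B)\times C$. None of these steps is elementary in the $\infty$-categorical setting: the Gray tensor product is not given by an explicit cell presentation from which one could read off generators, and 2-categorical localizations at classes of 2-morphisms are delicate — one cannot simply ``impose strictness'' and declare the result to be the cartesian product without either a careful universal-property argument or an appeal to an established result. (There is also the smaller issue that producing the 2-functor $\Phi$ in the first place — defining it on interchange cells and verifying the Gray coherences — requires more than specifying images of generating cells.) You have correctly identified the crux of the proof and its intended shape, but your sketch of the crux is a plausibility argument rather than a proof; the paper resolves it by citing \cite[Corollary 2.9.2]{AGH-Lax}, which is exactly the result you are attempting to reconstruct.
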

\begin{proof}
    By definition of the Gray tensor product, we have equivalences of mapping spaces
    \[
        \Cat(A,\Fun^\lax(B,\Fun(C,\Cat)))
        \simeq \Cat_2(A \boxtimes B, \Fun(C,\Cat))
        \simeq \Cat_2(C \times (A \boxtimes B),\Cat)
    \]
    and
    \[
        \Cat(C \times A, \Fun^\lax(B,\Cat))
        \simeq \Cat_2((C \times A) \boxtimes B, \Cat)
    \]
    Now by \cite[Corollary 2.9.2]{AGH-Lax} there is a natural localization at 2-morphisms
    $(C \times A) \boxtimes B \to C \times (A \boxtimes B)$
    and a 2-functor $\Phi \colon (C \times A) \boxtimes B \to \Cat$ (uniquely) descends to a 2-functor
    $C \times (A \boxtimes B) \to \Cat$ precisely if $\Phi(-,a,-)$ is strict for each $a \in A$,
    which is precisely the condition in the statement.
\end{proof}

\bibliography{ref}

\end{document}